\theoremstyle{definition}
\newtheorem{theorem}{Theorem}[section]
\newtheorem{lemma}[theorem]{Lemma}
\newtheorem{corollary}[theorem]{Corollary}
\newtheorem{definition}[theorem]{Definition}
\newtheorem{proposition}[theorem]{Proposition}
\newtheorem{example}[theorem]{Example}
\newtheorem{remark}[theorem]{Remark}
\newtheorem*{proposition*}{Proposition}
\newtheorem*{definition*}{Definition}
\newtheorem*{remark*}{Remark}
\newtheorem*{claim*}{Claim}
\newtheorem{question}[theorem]{Question}
\newtheorem*{acknow*}{Acknowledgement}
\newtheorem*{organization*}{Organization}
\newcommand{\diam}[0]{\mathrm{diam}}
\newcommand{\Lip}[0]{\mathrm{Lip}}
\newcommand{\lip}{\mathrm{lip}}
\newcommand{\conc}{\mathrm{conc}}
\renewcommand{\epsilon}{\varepsilon}
\newcommand{\supp}{\mathrm{supp}}
\newcommand{\scr}{\mathscr}
\newcommand{\cal}{\mathcal}
\newcommand{\Hau}{\mathrm{H}}
\newcommand{\Prkh}{\mathrm{P}}
\newcommand{\Sep}{\mathrm{Sep}}
\newcommand{\ObsDiam}{\mathrm{ObsDiam}}
\title[Invariants of pyramids]{Invariants for Gromov's pyramids and their applications
}
\date{August 3, 2023.}
\author[S.~Esaki]{Syota Esaki}
\address{Department of Applied Mathematics, Fukuoka University, Fukuoka 814-0180, JAPAN}
\email{sesaki@fukuoka-u.ac.jp}
\author[D.~Kazukawa]{Daisuke Kazukawa}
\address{Faculty of Mathematics, Kyushu University, Fukuoka 819-0395, JAPAN}
\email{kazukawa@math.kyushu-u.ac.jp}
\author[A.~Mitsuishi]{Ayato Mitsuishi}
\address{Department of Applied Mathematics, Fukuoka University, Fukuoka 814-0180, JAPAN}
\email{mitsuishi@fukuoka-u.ac.jp}
\keywords{Pyramid, concentration, dissipation, separation distance}
\begin{document}

\maketitle

\begin{abstract}
Pyramids introduced by Gromov are generalized objects of metric spaces with Borel probability measures.
We study non-trivial pyramids, where non-trivial means that they are not represented as metric measure spaces.
In this paper, we establish general theory of invariants of pyramids and construct several invariants.
Using them, we distinguish concrete pyramids.
Furthermore, we study a space consisting of non-trivial pyramids and prove that the space have infinite dimension.
\end{abstract}

\setcounter{section}{-1}
\section{Introduction}

Interested objects in the present paper are pyramids introduced by Gromov.
Before starting to state something about pyramids, we shall review why pyramids are interesting briefly, by using a few paragraphs.

Following \cite{G:isop}, a pm-space $X =(X,d,m)$ means a complete separable metric space $(X,d)$ with Borel probability measure $m$.
Such an $X$ is often called a metric measure space, in several literature (\cite{G:green}, \cite{S}).
A typical example of pm-spaces is a complete Riemannian manifold with normalized volume measure.
Let us denote by $\cal X$ the space of all isomorphism classes of pm-spaces.
In view of convergence theory, it is important to study a topology on $\cal X$.
In this paper, we focus on the box and concentration topologies introduced by Gromov \cite{G:green}.

The convergence in the box topology is close to the measured Gromov-Hausdorff convergence and coincides with the weak-Gromov convergence induced by the Gromov-Prokhorov distance introduced by Greven-Pfaffelhuber-Winter (\cite{GPW}).
This topology has been well studied and understood.
The concentration topology is motivated by concentration of measure phenomenon.
For instance, L\'evy's result (\cite{Levy}, see also \cite{L}) is understood that the standard unit $n$-sphere $\mathbb S^n(1)$ with uniform measure converges to a one-point space in the concentration topology as the dimension $n$ diverges to infinity.
Here, the concentration topology is weaker than the box topology.

In \cite{G:green}, Gromov also introduced the concept of pyramids, which are generalized objects of pm-spaces.
Furthermore, he showed that the space $\Pi$ of all pyramids provides a natural compactification of $\cal X$ for the concentration topology (see Subsection \ref{subsec:box_and_conc}).
The topology on $\Pi$ is called the weak topology.
More precisely, 
a pyramid is defined as a box-closed subset of $\cal X$ satisfying certain axioms related with the Lipschitz order $\prec$, which is a partial order on $\cal X$ (Definition \ref{def:pyramid}).

Since $\Pi$ is a compactification of $\cal X$,
we can consider the limit as a pyramid of any sequence of pm-spaces by choosing a subsequence in the weak topology.
Moreover, if a sequence $\{Y_n\}$ of pm-spaces is monotone nondecreasing in the Lipschitz order:
\[
Y_1 \prec Y_2 \prec \cdots \prec Y_n \prec Y_{n+1} \prec \cdots,
\]
then $\{Y_n\}$ is known to have the limit in $\Pi$ as $n \to \infty$ (see Proposition \ref{prop:monotone_convergence}).
Typical examples of such pyramids are as follows. For each pm-space $X=(X,d,m)$ and for $t >0$, $n \in \mathbb N$ and $p \in [1,\infty]$, we set $tX$ and $X^n_{p}$ by
\[
tX = (X,td,m) \text{ and } X^n_p = (X^n, d_{\ell_p}, m^{\otimes n}),
\]
respectively,
where $d_{\ell_p}$ is the canonical $\ell_p$-product of distance function.
Then, we obtain monotone families $\{tX\}_{t > 0}$ and $\{X^n_p\}_{n \in \mathbb N}$ (see precisely, Section \ref{sec:prelim}).
Hence, their limits as $t \to \infty$ and as $n \to \infty$ exist in $\Pi$ and denoted by $\infty X$ and $X_p^\infty$, respectively.
Thus, the notion of pyramids can grasp scaling limit and infinite product limit.

Due to Shioya's result (\cite{S:sphere}), the infinite product limit of the standard Gaussian space is important in the study of pyramids.
For $\sigma \in (0, \infty)$, let us denote by $\gamma_{\sigma^2}^1$ the $1$-dimensional (centered) Gaussian measure of variance $\sigma^2$
and consider $\Gamma^1_{\sigma^2} = (\mathbb R, |\cdot|, \gamma_{\sigma^2}^1)$, which is called the $1$-dimensional Gaussian space of variance $\sigma^2$.
The infinite ($\ell_2$-)product limit of $\Gamma_{\sigma^2}^1$ is denoted by $\Gamma_{\sigma^2}^\infty := (\Gamma_{\sigma^2}^1)_2^\infty$.
This is called the ({\it virtual}) {\it infinite dimensional Gaussian space of variance $\sigma^2$} in \cite{S:sphere}.
Shioya showed that the $n$-sphere with suitable scaling converges to $\Gamma_{\sigma^2}^\infty$ in $\Pi$ as $n \to \infty$ (see Theorem \ref{thm:Shioya_sphere}).
Furthermore, he also showed that $\Gamma_{\sigma^2}^\infty$ is not represented by a pm-space. By the similar arguments, we know that $X_{p}^{\infty}$ is also not represented by a pm-space for any $X$ except for a one-point space (See \cite[Proposition 7.37]{S}).
Moreover, Ozawa and Shioya's work \cite{OS:lim} tells us that $\Gamma_{\sigma^2}^\infty$'s are mutually different for each $\sigma$. Note that, after Shioya's work \cite{S:sphere}, several sequences of manifolds like spheres are proved to have non-trivial limits as pyramids (\cite{ST}, \cite{KS}).
However, those pyramids are not known to distinguish with $\Gamma_{\sigma^2}^\infty$ or not.

Based on their studies, we aim to answer the following fundamental questions:
\begin{question} \label{question:main}
How to distinguish two given pyramids?
Moreover, how many non-trivial pyramids are?
Here, non-trivial pyramids mean that they are not represented by pm-spaces.
\end{question}

To answer the first question in Question \ref{question:main}, we establish a general theory of invariants of pyramids and give some concrete examples of invariants.
We outline our theory and examples.
Let $\cal I$ denote the set of all $1$-homogeneous monotone invariants on $\cal X$ which are lower semicontinuous in the box topology.
The cores of our theory are
\begin{itemize}
\item
each $F \in \cal I$ is canonically extended to an invariant on $\Pi$ (Corollary \ref{thm:unique_extension}), and 
\item 
$\cal I$ distinguishes any two pyramids (Theorem \ref{thm:invariant}).
\end{itemize}

As invariants in $\cal I$, there are some well-known quantities in geometric analysis.
For example, they are the ({\it observable}) {\it variance} $V$ in the sense of \cite{NS}, the $(p,q)$-{\it Poincar\'e constant} $C_{p,q}$ and the {\it logarithmic Sobolev constant} $C_\mathrm{LS}$. Here, the latter two constants mean the best constants of the associated functional inequalities. Actually, we show that these 
are in $\cal I$, and hence
these can be extended to invariants on $\Pi$ (see Section \ref{sec:distinguish}). 
The {\it observable diameter} of a pyramid introduced in \cite{OS:lim} is also an example of invariants in our sense. 

We list results for the important invariants $V$, $C_{2,2}$ and $C_{\mathrm{LS}}$ and their application.
\begin{theorem} \label{thm:invariant_of_smooth_space}
If a pm-space $X = (X,d,m)$ is smooth, that is, there exist a smooth connected manifold $M$ possibly with boundary, a complete Riemannian metric $g$ and $f \in C^\infty(M)$ such that
\[
(X,d,m) = (M,d_g,e^{-f(x)}\mathrm{vol}_g(dx)),
\]
where $d_g$ and $\mathrm{vol}_g$ are the distance function and the volume measure induced by $g$, respectively.
Then, we have
\[
C(X_p^\infty) = C(X)
\]
for $C= C_{2,2}, C_\mathrm{LS}$ and $p\ge 2$.
Furthermore, we have
\[
\sqrt{V(X^n_p)} \le \sqrt{V(X_p^\infty)} \le
C_{2,2}(X_p^\infty) \le
C_\mathrm{LS}(X_p^\infty)
\]
for every $n \ge 1$.
\end{theorem}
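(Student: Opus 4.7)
The plan is to combine two ingredients: (i) the canonical extension of invariants in $\cal I$ to pyramids provided by Corollary~\ref{thm:unique_extension}, and (ii) classical tensorization of functional-inequality constants on smooth weighted Riemannian manifolds. The coordinate projection $X^{n+1}_p \to X^n_p$ is $1$-Lipschitz and measure preserving, so $\{X^n_p\}_n$ is monotone nondecreasing in the Lipschitz order, and Proposition~\ref{prop:monotone_convergence} identifies its limit in $\Pi$ with $X^\infty_p$. Combining monotonicity of any $F \in \cal I$ with the lower semicontinuous extension to $\Pi$ then produces the monotone convergence formula
\[
F(X^\infty_p) = \sup_{n} F(X^n_p) = \lim_{n \to \infty} F(X^n_p),
\]
which is the main tool throughout the proof.

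For the first identity, at $p = 2$ the classical tensorization of the Poincar\'e and logarithmic Sobolev constants on smooth weighted Riemannian manifolds yields $C(X^n_2) = C(X)$ for $C \in \{C_{2,2}, C_\mathrm{LS}\}$ and every $n \geq 1$. For general $p \geq 2$, the norm comparison $\|\cdot\|_p \leq \|\cdot\|_2$ on $\mathbb R^n$ shows that the identity $X^n_2 \to X^n_p$ is $1$-Lipschitz and measure preserving, hence $X^n_p \prec X^n_2$. Coupled with $X \prec X^n_p$ via a coordinate projection, monotonicity of $C$ sandwiches $C(X) \leq C(X^n_p) \leq C(X^n_2) = C(X)$, so $C(X^n_p) = C(X)$ for all $n$, and the displayed formula propagates this equality to $C(X^\infty_p) = C(X)$.

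For the chain of inequalities, $\sqrt{V(X^n_p)} \leq \sqrt{V(X^\infty_p)}$ is immediate from monotonicity of $V$ applied to $X^n_p \prec X^\infty_p$. On any pm-space $Y$, applying the Poincar\'e inequality to a $1$-Lipschitz function realizing the observable variance gives $\sqrt{V(Y)} \leq C_{2,2}(Y)$, while Rothaus's linearization argument gives $C_{2,2}(Y) \leq C_\mathrm{LS}(Y)$. Applying both to $Y = X^n_p$ and taking the supremum in $n$ via the displayed formula transports these inequalities to the pyramid $X^\infty_p$, completing the chain.

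The principal delicacy I foresee is the monotone convergence formula itself: one must verify that the canonical extension of an arbitrary $F \in \cal I$ to $\Pi$ not only is lower semicontinuous but also attains the supremum along the limit of any monotone Lipschitz-increasing sequence. Granted this bridge from the general theory of $\cal I$, the rest of the proof reduces to standard pm-space facts (tensorization of Bakry-\'Emery type constants, the norm comparison for $\ell_p$ products, and Rothaus's inequality).
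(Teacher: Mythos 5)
Your proof is correct and follows essentially the same route as the paper: tensorization of $C_{2,2}$ and $C_{\mathrm{LS}}$ for smooth weighted manifolds at $p=2$ (Propositions~\ref{prop:smooth_PI} and \ref{prop:LS_tensor}), the limit formula $F(X_p^\infty)=\lim_n F(X_p^n)$ for monotone lower semicontinuous invariants (Corollary~\ref{cor:limit}, which resolves the ``principal delicacy'' you flag), and the standard chain $\sqrt{V}\le C_{2,2}\le C_{\mathrm{LS}}$ from Remark~\ref{rem:FI_relation}. Your explicit sandwich $C(X)\le C(X_p^n)\le C(X_2^n)=C(X)$ via the norm comparison $\|\cdot\|_p\le\|\cdot\|_2$ is the step the paper leaves implicit when passing from $p=2$ to general $p\ge 2$, and it is the intended argument.
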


\begin{remark}
In Theorem \ref{thm:invariant_of_smooth_space}, if $p \in [1,2)$, then we have
\[
F(X_p^\infty) = \infty
\]
for any monotone homogeneous invariant $F \not\equiv 0$ unless $X$ consists of a single point.
This follows from Proposition \ref{prop:CLT}.
\end{remark}

Next, we see a result related with the comparison geometry. 
In comparison geometry, it is important to study convergence of sequences of Riemannian manifolds with a uniform curvature bound from below and to analyze the limit spaces. 
In the measured Gromov-Hausdorff topology and in the concentration topology, the limit is known to have
a lower Ricci curvature bound in a synthetic sense (\cite{St:I}, \cite{LV}, \cite{AGS}, \cite{FS} and \cite{OY}). 
Here, this synthetic notion of lower Ricci curvature bound was introduced by Sturm (\cite{St:I}) and by Lott-Villani (\cite{LV}) independently, which 
is called the CD$(K,\infty)$-condition, where $K$ is an arbitrary real number standing for the curvature bound.

\begin{theorem} \label{thm:CD_implies_FI}
If a pyramid $P$ is the weak limit of a sequence $\{X_n\}$ of pm-spaces such that $X_n$ is dominated by a CD($K,\infty$)-space for each $n$, where $K>0$ is a constant, then
\[
\sqrt{V(P)} \le
C_{2,2}(P) \le
C_{\mathrm{LS}}(P) \le \frac{1}{K}.
\]
\end{theorem}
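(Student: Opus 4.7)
The strategy is to establish the chain first at the level of the approximating pm-spaces $X_n$ and then to pass to the weak limit using the canonical extension of invariants in $\cal I$ to $\Pi$ provided by Corollary \ref{thm:unique_extension}. The three invariants $\sqrt{V}$, $C_{2,2}$ and $C_{\mathrm{LS}}$ will have been identified in Section \ref{sec:distinguish} as members of $\cal I$; in particular each is $1$-homogeneous, monotone under the Lipschitz order $\prec$, and lower semicontinuous in the box topology.

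On any CD$(K,\infty)$-space $Y$ with $K>0$, the classical Bakry--\'Emery--Otto--Villani theorem (available in the synthetic setting, cf.\ \cite{St:I}, \cite{LV}) yields $C_{\mathrm{LS}}(Y) \le 1/K$, Rothaus' lemma gives $C_{2,2}(Y) \le C_{\mathrm{LS}}(Y)$, and applying the $(2,2)$-Poincar\'e inequality to $1$-Lipschitz functions gives $\sqrt{V(Y)} \le C_{2,2}(Y)$. Hence
\[
\sqrt{V(Y)} \le C_{2,2}(Y) \le C_{\mathrm{LS}}(Y) \le \frac{1}{K}.
\]
By hypothesis each $X_n$ satisfies $X_n \prec Y_n$ for some such CD$(K,\infty)$-space $Y_n$, so monotonicity of the three invariants under $\prec$ immediately gives
\[
\sqrt{V(X_n)} \le \sqrt{V(Y_n)} \le \frac{1}{K},\qquad C_{2,2}(X_n) \le \frac{1}{K},\qquad C_{\mathrm{LS}}(X_n) \le \frac{1}{K},
\]
while the pointwise chain $\sqrt{V(X_n)} \le C_{2,2}(X_n) \le C_{\mathrm{LS}}(X_n)$ holds on $X_n$ itself by the same Poincar\'e and Rothaus arguments (with values in $[0,\infty]$ allowed).

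Finally, since $X_n \to P$ weakly, for each $F \in \{\sqrt{V}, C_{2,2}, C_{\mathrm{LS}}\}$ the canonical extension to $\Pi$ is lower semicontinuous in the weak topology, so
\[
F(P) \le \liminf_{n \to \infty} F(X_n) \le \frac{1}{K}.
\]
To assemble the three separate bounds into the claimed chain $\sqrt{V(P)} \le C_{2,2}(P) \le C_{\mathrm{LS}}(P) \le 1/K$ on $\Pi$, I will transfer the pointwise chain $\sqrt{V} \le C_{2,2} \le C_{\mathrm{LS}}$ from $\cal X$ to $\Pi$ using the supremum description of the canonical extension over pm-spaces dominated by $P$: if $F \le G$ on $\cal X$ for $F, G \in \cal I$, then $F(P) = \sup_{X \prec P} F(X) \le \sup_{X \prec P} G(X) = G(P)$. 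The step I expect to be most delicate is precisely this verification that the inequalities between invariants survive the extension procedure; once that is in hand, the remainder is a direct combination of classical geometric inequalities with the abstract invariant machinery of the preceding sections.
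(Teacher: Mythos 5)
Your proposal is correct and follows essentially the same route as the paper: bound $C_{\mathrm{LS}}$ on the dominating CD$(K,\infty)$-spaces, transfer to $X_n$ by monotonicity, pass to $P$ by lower semicontinuity of the canonical extension, and recover the chain $\sqrt{V}\le C_{2,2}\le C_{\mathrm{LS}}$ on $\Pi$ from the pointwise chain on $\cal X$ via the supremum description of the extension (this last step is exactly Remark \ref{rem:FI_relation}, and your worry about it is unfounded since $F(P)=\sup_{X\in P}F(X)$ preserves pointwise inequalities). The only cosmetic difference is that the paper passes just $C_{\mathrm{LS}}$ through the limit and invokes the chain at the pyramid level, whereas you pass all three invariants through the limit separately; both are valid.
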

In Theorem \ref{thm:CD_implies_FI}, the domination means the relation with respect to the Lipschitz order.

As an application of the theory of invariants, we actually distinguish the infinite-dimensional Gaussian space $\Gamma^\infty_{\sigma^2}$ and the infinite-dimensional cube $rI^\infty$. Here, $rI^\infty = (rI)^\infty_2$ is the infinite $\ell_2$-product of the interval $rI = ([0,r], |\cdot|, r^{-1}\mathcal{L}^1\lfloor_{[0,r]})$ of length $r$.

\begin{theorem} \label{thm:main}
The pyramids $\Gamma_{\sigma^2}^\infty$ and $rI^\infty$
are different to each other for every $\sigma, r > 0$.
\end{theorem}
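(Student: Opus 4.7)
The plan is to argue by contradiction. Suppose $\Gamma^\infty_{\sigma^2} = rI^\infty$ in $\Pi$ for some $\sigma, r > 0$. Since invariants in $\cal I$ extend canonically to $\Pi$ by Corollary \ref{thm:unique_extension}, any two equal pyramids must be assigned the same value by every invariant. I will compute two invariants from Section \ref{sec:distinguish}---the Poincar\'e constant $C_{2,2}$ and the observable variance $\sqrt{V}$---on both sides and extract incompatible equations in the pair $(\sigma, r)$.

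Both $\Gamma^1_{\sigma^2}$ and $rI$ are smooth pm-spaces in the sense of Theorem \ref{thm:invariant_of_smooth_space} (the interval is a compact $1$-manifold with boundary carrying a smooth weight). Applying the theorem to $C_{2,2}$ yields $C_{2,2}(\Gamma^\infty_{\sigma^2}) = C_{2,2}(\Gamma^1_{\sigma^2}) = \sigma$, from the Gaussian Poincar\'e inequality, and $C_{2,2}(rI^\infty) = C_{2,2}(rI) = r/\pi$, since the lowest positive Neumann eigenvalue of $-d^2/dx^2$ on $[0, r]$ is $(\pi/r)^2$. The assumed invariance therefore forces
\[
\sigma = r/\pi.
\]

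Next I apply $\sqrt{V}$. The identity $x \mapsto x$ on $\Gamma^1_{\sigma^2}$ is $1$-Lipschitz with variance $\sigma^2$, so combined with the chain $\sqrt{V(\Gamma^1_{\sigma^2})} \le \sqrt{V(\Gamma^\infty_{\sigma^2})} \le C_{2,2}(\Gamma^\infty_{\sigma^2}) = \sigma$ from Theorem \ref{thm:invariant_of_smooth_space}, I obtain $\sqrt{V(\Gamma^\infty_{\sigma^2})} = \sigma$. On the cube side, the identity on $rI$ gives $\sqrt{V(rI^\infty)} \ge r/\sqrt{12}$; the key step is the matching upper bound $\sqrt{V(rI^n_2)} \le r/\sqrt{12}$ for all $n$, from which $\sqrt{V(rI^\infty)} = r/\sqrt{12}$ follows. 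Granting this, invariance yields $\sigma = r/\sqrt{12}$, which together with $\sigma = r/\pi$ above produces $\pi = 2\sqrt{3}$, a contradiction.

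The main obstacle is the sharp product-level bound $\sqrt{V(rI^n_2)} \le r/\sqrt{12}$: the generic Poincar\'e bound only yields the weaker $\sqrt{V(rI^n_2)} \le r/\pi$. A product-specific argument is needed, for instance a Hoeffding or Efron--Stein decomposition combined with induction on $n$ and the one-dimensional extremality of linear functions for variance, to rule out a nonlinear $1$-Lipschitz function on $([0,r]^n, \ell_2)$ with variance exceeding that of a coordinate projection. An alternative route substitutes the log-Sobolev invariant $C_{\mathrm{LS}}$: since the Gaussian saturates $C_{\mathrm{LS}} = C_{2,2}$ while the uniform measure on an interval satisfies $C_{\mathrm{LS}}(rI) > C_{2,2}(rI) = r/\pi$ strictly, the assumed equality would force $\sigma = C_{\mathrm{LS}}(rI^\infty) > r/\pi = \sigma$, again a contradiction.
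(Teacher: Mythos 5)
Your first invariant computation is fine and matches the paper: $C_{2,2}(\Gamma_{\sigma^2}^\infty)=\sigma$ and $C_{2,2}(rI^\infty)=r/\pi$ (Proposition \ref{prop:smooth_PI}, Corollary \ref{cor:Poincare_constant}), so equality of the two pyramids would force $\sigma=r/\pi$. The problem is that neither of your two ways of producing a second, incompatible equation actually works. Your primary route hinges on the sharp bound $\sqrt{V(rI^n_2)}\le r/\sqrt{12}$, which you correctly flag as the main obstacle but do not prove. This is a genuine gap, not a routine verification: the paper itself states that for $I^\infty$ ``we only have an estimate of its variance,'' namely $r/(2\sqrt 3)\le\sqrt{V(rI^\infty)}\le r/\pi$ (Corollary \ref{cor:V(I^infty)_estimate}), and the tensorization-type arguments you suggest (Efron--Stein plus the one-dimensional inequality) only reproduce the Poincar\'e bound $r/\pi$, because the one-dimensional estimate $V_I(g)\le\frac{1}{12}\int|g'|^2$ needed to propagate the constant $1/12$ through the product is false (the sharp constant is $1/\pi^2$). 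Whether coordinate functions are extremal for the observable variance of the solid cube is precisely the open point the authors sidestep.

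Your fallback route fails for a different reason: the premise $C_{\mathrm{LS}}(rI)>C_{2,2}(rI)$ is false. The paper computes $C_{\mathrm{LS}}(I)=C_{\mathrm{LS}}(I^\infty)=1/\pi=C_{2,2}(I^\infty)$ (Corollary \ref{cor:LS_constant}, via \cite[Proposition 5.7.5]{BGL}); the uniform measure on an interval saturates the log-Sobolev constant at its Poincar\'e constant just as the Gaussian does, so $C_{\mathrm{LS}}$ yields the same relation $\sigma=r/\pi$ and no contradiction. The paper's actual proof avoids needing the exact value of $V(I^\infty)$: it introduces the invariant $F_{\Gamma^\infty}$ from Theorem \ref{thm:invariant} and pins down $F_{\Gamma^\infty}(I^\infty)=1/\sqrt{2\pi}$ exactly using the sandwich $\Gamma^\infty_{r^2/12}\subset rI^\infty\subset\Gamma^\infty_{r^2/2\pi}$ (Proposition \ref{prop:cube_vs_Gaussian}) together with the observable-diameter comparison of Lemma \ref{lem:obsdiam_comparison}. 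Then the scale-invariant ratio $\sqrt{V}/F_{\Gamma^\infty}$ equals $1$ on $\Gamma^\infty$ but is at most $\sqrt{2\pi}/\pi<1$ on $I^\infty$, using only the \emph{upper} bound $\sqrt{V(I^\infty)}\le 1/\pi$. If you want to salvage your outline, you should replace the variance computation by this $F_{\Gamma^\infty}$ argument (or prove the sharp cube variance bound, which would be a new result).
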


Our framework provides one way to answer the first question in Question \ref{question:main}. To prove this theorem, we first calculate the $(2,2)$-Poincar\'e constant $C_{2,2}$. Then we have
\[
C_{2,2}(\Gamma^\infty_{\sigma^2}) = \sigma \quad \text{ and } \quad  C_{2,2}(rI^\infty) = \frac{r}{\pi}
\]
under our definition (Definition \ref{def:2,2-PI}). 
We next have the following:

\begin{theorem} \label{thm:F_Gamma}
There exists a monotone homogeneous lower semicontinuous invariant $F$ for pyramids such that
\[
F(\Gamma_{\sigma^2}^\infty) = \sigma \quad \text{ and } \quad F(rI^\infty) = \frac{r}{\sqrt{2 \pi}}
\]
for all $\sigma, r > 0$. 
Moreover, this invariant $F$ characterizes the Gaussian space in the following sense: if a pyramid $P$ is $\ell_2$-idempotent (see Definition \ref{def:idempotent}) and $F(P)=V(P)$, then
\[
P = \Gamma_{V(P)}^\infty.
\]
\end{theorem}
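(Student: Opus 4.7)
The plan is to take $F$ to be the \emph{infimal Gaussian radius} in the Lipschitz order,
\[
F(X)\;:=\;\inf\bigl\{\sigma>0:X\prec\Gamma_{\sigma^2}^\infty\bigr\},\qquad X\in\mathcal X,
\]
with the convention $\inf\emptyset=+\infty$. Monotonicity is immediate from transitivity of $\prec$; $1$-homogeneity follows from the equivalence $tX\prec\Gamma_{\sigma^2}^\infty\iff X\prec\Gamma_{(\sigma/t)^2}^\infty$; and lower semicontinuity in the box topology reduces to the box-closedness of $\{X:X\prec\Gamma_{\sigma^2}^\infty\}$, which is a pyramid by construction. Hence $F\in\mathcal I$, and by Corollary \ref{thm:unique_extension} it extends canonically to a monotone, $1$-homogeneous, lower semicontinuous invariant on $\Pi$.

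For $F(\Gamma_{\sigma^2}^\infty)=\sigma$, the scaling $y\mapsto(s/\sigma)y$ gives $\Gamma_{s^2}^\infty\prec\Gamma_{\sigma^2}^\infty$ whenever $s\le\sigma$, and monotonicity of $V$ (with $V(\Gamma_{\sigma^2}^\infty)=\sigma^2$) rules out $s>\sigma$. For $F(rI^\infty)=r/\sqrt{2\pi}$, the upper bound is realized by the coordinatewise inverse-CDF transport $T_\sigma(x)=r\Phi(x/\sigma)$, which pushes $\gamma_{\sigma^2}^1$ to the uniform measure on $[0,r]$ with Lipschitz constant $r/(\sigma\sqrt{2\pi})$, and whose infinite product remains $1$-Lipschitz in the $\ell_2$-metric exactly when $\sigma\ge r/\sqrt{2\pi}$. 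For the matching lower bound, take any $1$-Lipschitz $T:\Gamma_{\sigma^2}^\infty\to[0,r]$ pushing the Gaussian to the uniform measure, and apply Gaussian isoperimetry to the half-space $A=\{T\le r/2\}$ of measure $1/2$: one obtains $\gamma_{\sigma^2}^\infty(A_\epsilon)\ge\Phi(\epsilon/\sigma)$, while $A_\epsilon\subseteq T^{-1}([0,r/2+\epsilon])$ by $1$-Lipschitzness, hence $\epsilon/r\ge\Phi(\epsilon/\sigma)-1/2$; dividing by $\epsilon$ and letting $\epsilon\to 0^+$ forces $\sigma\ge r/\sqrt{2\pi}$.

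For the characterization, note first that $\sqrt{V(P)}\le F(P)$ holds in general, since $P\prec\Gamma_{\sigma^2}^\infty$ forces $V(P)\le V(\Gamma_{\sigma^2}^\infty)=\sigma^2$. Now assume $P$ is $\ell_2$-idempotent and the rigidity condition $F(P)=V(P)$ (equivalently, equality in the bound $\sqrt{V(P)}\le F(P)$ above) holds; then $P\prec\Gamma_{V(P)}^\infty$ already, and it remains to produce the reverse domination. Pick $X\in P$ and a $1$-Lipschitz $f:X\to\mathbb R$: the normalized empirical sums $S_n(x_1,\dots,x_n)=\tfrac{1}{\sqrt n}\sum_{i=1}^n(f(x_i)-\bar f)$ are $1$-Lipschitz on $X_2^n$ by Cauchy--Schwarz, and their pushforwards under $m^{\otimes n}$ converge weakly (by the classical CLT) to $N(0,\mathrm{Var}(f))$. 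Since $X_2^n\in P$ by $\ell_2$-idempotence and $(\mathbb R,|\cdot|,(S_n)_\ast m^{\otimes n})\prec X_2^n$, box-closedness of $P$ delivers $\Gamma_{\mathrm{Var}(f)}^1\in P$; passing $\mathrm{Var}(f)\uparrow V(P)$ and invoking idempotence once more yields $\Gamma_{V(P)}^\infty\prec P$, whence $P=\Gamma_{V(P)}^\infty$.

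The most delicate step will be the internalization of the central limit theorem inside $P$: namely, verifying that the box-weak limit $\Gamma_{\mathrm{Var}(f)}^1$ of the one-dimensional pushforwards genuinely lies in $P$ rather than in a strictly larger pyramid, which requires a careful combination of box-closedness, the precise formulation of $\ell_2$-idempotence from Definition \ref{def:idempotent}, and the stability Proposition \ref{prop:monotone_convergence}. By comparison the sharpness inequality $F(rI^\infty)\ge r/\sqrt{2\pi}$ via Gaussian isoperimetry and the canonical extension of $F$ from $\mathcal X$ to $\Pi$ (handled uniformly by Corollary \ref{thm:unique_extension}) are comparatively routine.
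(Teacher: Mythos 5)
Your choice of invariant is exactly the paper's: $F=F_{\Gamma^\infty}$ from Theorem \ref{thm:invariant}, i.e.\ $F(P)=\inf\{t>0 : P\subset \Gamma_{t^2}^\infty\}$, and your treatment of monotonicity, homogeneity, lower semicontinuity, the value $F(\Gamma_{\sigma^2}^\infty)=\sigma$, the upper bound $F(rI^\infty)\le r/\sqrt{2\pi}$ (via the coordinatewise inverse-CDF transport, which is Proposition \ref{prop:cube_vs_Gaussian}), and the CLT-based characterization of $\Gamma_{V(P)}^\infty$ among $\ell_2$-idempotent pyramids (Proposition \ref{prop:CLT} and Corollary \ref{cor:Gauss}) all match the paper's route and are sound in outline.

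The gap is in the lower bound $F(rI^\infty)\ge r/\sqrt{2\pi}$. You ``take any $1$-Lipschitz $T:\Gamma_{\sigma^2}^\infty\to[0,r]$ pushing the Gaussian to the uniform measure,'' but $\Gamma_{\sigma^2}^\infty$ is a pyramid, not a pm-space: there is no infinite-dimensional Gaussian measure and no such map $T$ in this framework (indeed the paper stresses that $\Gamma_{\sigma^2}^\infty$ is not representable by a pm-space). Read in finite dimensions, your isoperimetric computation correctly shows $rI\not\prec\Gamma_{\sigma^2}^n$ for all $n$ when $\sigma<r/\sqrt{2\pi}$, but that does not yet exclude $rI\in\Gamma_{\sigma^2}^\infty=\overline{\bigcup_n\mathcal P\Gamma_{\sigma^2}^n}$: membership in the pyramid only requires $rI$ to be a box-limit of spaces $Y_k\prec\Gamma_{\sigma^2}^{n_k}$, and your half-space argument gives no control over such approximants. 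To close this you must package the obstruction as a monotone invariant of pyramids that survives the box-closure; this is precisely what the paper's Lemma \ref{lem:obsdiam_comparison} does, comparing $\ObsDiam(I^\infty;-\kappa)\ge 1-\kappa$ with the Ozawa--Shioya formula $\ObsDiam(\Gamma_{\sigma^2}^\infty;-\kappa)=2\sigma\Psi^{-1}\left(\tfrac{1-\kappa}{2}\right)$ as $\kappa\to 1$ — the same ``density at the median'' count as yours, but carried out on an invariant already known to be well defined on $\Pi$. Either import that lemma or prove an analogous semicontinuity statement for your isoperimetric quantity; as written, the step fails.
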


We immediately have a proof of Theorem \ref{thm:main} by using $F$ and $C_{2,2}$.

To answer the second question in Question \ref{question:main}, we prepare an infinite dimensional topological space $V$ given by
\[
V := \left\{ \alpha = (\alpha_i)_{i=1}^\infty \in [0, \infty)^{\mathbb N}\,\middle|\, \|\alpha\|_1 \le 1 \ \text{and} \ \alpha_i \ge \alpha_{i+1} \text{ for all } i \ge 1
\right\}.
\]
For each $\alpha \in V$, we construct a non-trivial pyramid $\cal X_\alpha$. 
Roughly speaking, $\cal X_\alpha$ is the set of all pm-spaces whose measure have atoms at least $\alpha$ (see Section \ref{sec:atom}).
Each $\cal X_\alpha$ is also obtained as the metric blow-up limit of a pm-space (Remark \ref{rem:X_alpha_is_blow-up_limit}) and $\cal X_\alpha$ is distinguished with $rI^\infty$ and $\Gamma_{\sigma^2}^\infty$ (Remark \ref{rem:X_alpha_is_not_I_and_Gamma}).
Then, we obtain:
\begin{theorem} \label{thm:mugen}
The correspondence $V \ni \alpha \mapsto \cal X_\alpha \in \Pi$ is a homeomorphism to the image.
In particular, the topological dimension of the space of non-trivial pyramids is infinite.
\end{theorem}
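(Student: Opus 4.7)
The plan is to verify that the assignment $V \ni \alpha \mapsto \cal X_\alpha \in \Pi$ is a continuous injection from a compact Hausdorff space into a Hausdorff space, so that it is automatically a homeomorphism onto its image; then to exhibit explicit finite-dimensional cells inside $V$ to conclude infinite topological dimension.

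First, I would topologize $V$ as a subspace of $[0,\infty)^{\mathbb N}$ with the product topology. Monotonicity together with $\|\alpha\|_1 \le 1$ forces $\alpha_k \le 1/k$, so $V$ sits as a closed subset of the compact product $\prod_{k=1}^\infty [0,1/k]$ and is therefore compact. Next, using the explicit description of $\cal X_\alpha$ from Section \ref{sec:atom}, I would establish continuity of $\alpha \mapsto \cal X_\alpha$ in the weak topology on $\Pi$. The natural strategy is to test weak convergence against the family of invariants developed in our framework (Theorem \ref{thm:invariant}): each invariant sensitive to atom masses depends continuously on $\alpha$, and since $\cal I$ separates pyramids this forces $\cal X_{\alpha^{(n)}} \to \cal X_\alpha$ whenever $\alpha^{(n)} \to \alpha$ in $V$.

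For injectivity, I would construct, for each $k \ge 1$, a monotone $1$-homogeneous box-lower-semicontinuous invariant $A_k$ on $\cal X$ whose canonical extension to $\Pi$ (Corollary \ref{thm:unique_extension}) satisfies $A_k(\cal X_\alpha) = \alpha_k$. The candidate is an appropriate regularization of the $k$-th largest atom mass of the underlying probability measure. Once $A_k(\cal X_\alpha) = \alpha_k$ holds for every $k$, the whole sequence $\alpha$ is reconstructed from $\cal X_\alpha$, giving injectivity. With continuity and injectivity in hand, compactness of $V$ and Hausdorffness of $\Pi$ in the weak topology upgrade the continuous bijection onto the image to a homeomorphism.

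For the dimension statement, I would observe that for every $n$ the subset $V_n := \{\alpha \in V : \alpha_i = 0 \text{ for all } i > n\}$ is homeomorphic to the polytope $\{(a_1,\ldots,a_n) : a_1 \ge \cdots \ge a_n \ge 0,\ \sum_i a_i \le 1\}$, which has topological dimension $n$; hence $V$ and its homeomorphic image contain an $n$-dimensional subset for every $n$. Since every $\cal X_\alpha$ is non-trivial by construction, the space of non-trivial pyramids has infinite topological dimension. The main obstacle lies in the injectivity step, namely the construction of the invariants $A_k$ and the verification that $A_k(\cal X_\alpha) = \alpha_k$ exactly: one must pair monotonicity of $A_k$ under the Lipschitz order with a sharpness statement ensuring that $\cal X_\alpha$ saturates the bound $A_k \le \alpha_k$. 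Once these invariants are produced, the remaining steps (compactness of $V$, continuity in the product topology, dimension of the polytope $V_n$) are essentially routine.
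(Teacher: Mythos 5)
Your overall topological frame (continuous injection from the compact space $V$ into the Hausdorff space $\Pi$ is automatically a homeomorphism onto its image; $n$-dimensional polytopes $V_n$ inside $V$ give infinite dimension) agrees with the paper, but both of your substantive steps have genuine gaps. The injectivity step cannot work as stated: each $\cal X_\alpha$ is scale invariant ($t\,\cal X_\alpha = \cal X_\alpha$ for all $t>0$), and, as noted in Section \ref{sec:product_and_scale_change}, any monotone homogeneous invariant takes only the values $0$ and $\infty$ on a scale-invariant pyramid. Hence no monotone $1$-homogeneous lower semicontinuous $A_k$ can satisfy $A_k(\cal X_\alpha)=\alpha_k$ when $\alpha_k\in(0,1)$, so the family of invariants you propose does not exist (and even dropping homogeneity, the ``$k$-th largest atom mass'' is anti-monotone under the Lipschitz order, since pushforwards merge atoms). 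The paper instead proves injectivity directly (Proposition \ref{prop:injective}): it feeds the concrete test space $X_\alpha=([-1,0]\cup\mathbb N,|\cdot|,(1-\|\alpha\|_1)\cal L\lfloor_{[-1,0]}+\sum_i\alpha_i\delta_i)$ into the hypothesis $\cal X_\alpha=\cal X_\beta$ and deduces via the combinatorial notion of contraction (Proposition \ref{prop:bi-contraction}) that $\alpha=\beta$.

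The continuity step is also unsupported. That $\cal I$ distinguishes pyramids means the invariants separate points; it does not mean they generate the weak topology, so ``invariant values converge $\Rightarrow$ pyramids converge'' is a non sequitur. Worse, the members of $\cal I$ are only lower semicontinuous, so one cannot even expect their values to converge along a weakly convergent sequence. The paper's continuity argument is genuinely two-sided: the upper inclusion $P\subset\cal X_\alpha$ uses the Lipschitz estimate $\rho(\cal X_\alpha,\cal X_\beta)\le\|\alpha-\beta\|_1$ (Lemma \ref{lem:continuity02}) applied to the finite truncations $\alpha[N]$, while the lower inclusion $\cal X_\alpha\subset P$ requires constructing, for $\alpha_n\to\alpha$, explicit discrete spaces $X_n\in\cal X_{\alpha_n}$ that infinitely dissipate with atoms $\alpha_c[N]$ and invoking Corollary \ref{cor:dissp_pyramid_02}; nothing in your proposal substitutes for this. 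Finally, ``every $\cal X_\alpha$ is non-trivial by construction'' is not quite right: $\cal X_0=\cal X$ and $\cal X_1=\{\ast\}$ are trivial, and non-triviality for the remaining $\alpha$ is a statement the paper actually proves (Proposition \ref{prop:delta_alpha_conc}), not a formality.
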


Remark that in the recent paper \cite{KNS}, Kazukawa, Nakajima and Shioya also show the same statement as Theorem \ref{thm:mugen} independently. 
Before appearing \cite{KNS} and this paper, the dimension of the space of non-trivial pyramids is known to be at least one, because $\{\Gamma_{\sigma^2}^\infty \}_{\sigma > 0}$ is a continuous injective curve (see also Remark \ref{rem:curve}).
Our proof is an application of the generalized concept of dissipation, which is different to the proof in \cite{KNS}. 
Here, the original notion of dissipation was introduced by Gromov \cite{G:green}.
To describe generalization, 
we extend the separation distance to with infinitely many real variables.
Moreover, we also consider a more wider class of pyramids than Theorem \ref{thm:mugen} in Section \ref{sec:atom} (see Theorem \ref{thm:homeo_with_delta}).
In addition, we consider algebraic properties of 
the set of $\cal X_\alpha$'s.
We introduce the $\ell_{p}$-product $P \otimes_p Q$ for $P, Q \in \Pi$ in Section \ref{sec:intrinsic_construction} and the product $\alpha\beta$ for $\alpha, \beta \in V$ in Section \ref{subsec:X_alpha-algebra}. Then, we show the consistency between these products.
\begin{theorem} \label{thm:algebra}
Let $p \in [1, \infty]$.
For $\alpha, \beta \in V$, we have
\[
\cal X_\alpha \otimes_p \cal X_\beta = \cal X_{\alpha \beta}.
\]
Furthermore, the map
\[
V \ni \alpha \mapsto \cal X_\alpha \in \{ \cal X_\beta \mid \beta \in V \}
\]
is an isomorphism as topological monoids.
\end{theorem}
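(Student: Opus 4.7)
The proof splits into two parts: (i) the product identity $\mathcal{X}_\alpha\otimes_p\mathcal{X}_\beta=\mathcal{X}_{\alpha\beta}$, and (ii) the topological-monoid isomorphism, which will follow almost immediately from (i) together with Theorem \ref{thm:mugen}. The basic computational input is that atoms of measures are multiplicative under product measures: if $X,Y$ are pm-spaces with non-increasing atom sequences $a,b$, then $X\times_p Y$ has atom sequence equal to the non-increasing rearrangement of $\{a_ib_j\}_{i,j}$, which is exactly the product $a\cdot b$ of Section \ref{subsec:X_alpha-algebra}. This is a purely measure-theoretic statement, independent of $p$, which explains why the conclusion holds for all $p\in[1,\infty]$. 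A second elementary ingredient is that the Lipschitz order only merges atoms (via a 1-Lipschitz measure-preserving push-forward), so the defining ``atom condition'' of $\mathcal{X}_\gamma$ is preserved downward in $\prec$.

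For the forward inclusion $\mathcal{X}_\alpha\otimes_p\mathcal{X}_\beta\subseteq\mathcal{X}_{\alpha\beta}$, I take representatives $X\in\mathcal{X}_\alpha$ and $Y\in\mathcal{X}_\beta$ and apply atom multiplicativity to conclude that $X\times_p Y\in\mathcal{X}_{\alpha\beta}$; since $\mathcal{X}_{\alpha\beta}$ is a pyramid, hence box-closed and Lipschitz-downward closed, it absorbs the generated pyramid $\mathcal{X}_\alpha\otimes_p\mathcal{X}_\beta$. The reverse inclusion $\mathcal{X}_{\alpha\beta}\subseteq\mathcal{X}_\alpha\otimes_p\mathcal{X}_\beta$ is the technical heart of the argument. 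My plan is to use Remark \ref{rem:X_alpha_is_blow-up_limit}, which realizes each $\mathcal{X}_\gamma$ as the weak limit of the scalings $tZ_\gamma$ of a canonical atomic pm-space $Z_\gamma$ with atom profile $\gamma$, and to choose $Z_{\alpha\beta}=Z_\alpha\times_p Z_\beta$ (whose atom profile is indeed $\alpha\beta$ by the multiplicativity above). Since scaling commutes with $\ell_p$-products, $t(Z_\alpha\times_p Z_\beta)=(tZ_\alpha)\times_p(tZ_\beta)$ lies in $\mathcal{X}_\alpha\otimes_p\mathcal{X}_\beta$ for every $t$, and passing to the $t\to\infty$ limit gives the desired inclusion.

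The main obstacle is exchanging limits when $\alpha$ or $\beta$ has infinite support: the blow-up limit is itself a limit of finite-dimensional products, so identifying the joint scaling limit with the pyramid $\otimes_p$-product requires either a separate compactness argument in $\Pi$ or an invocation of monotone convergence (Proposition \ref{prop:monotone_convergence}) applied to a double sequence, together with continuity of $\otimes_p$ in its arguments. Once Part 1 is in hand, Part 2 is a short wrap-up: Theorem \ref{thm:mugen} already gives that $\alpha\mapsto\mathcal{X}_\alpha$ is a homeomorphism of $V$ onto its image, Part 1 shows it intertwines the product on $V$ with the $\otimes_p$-product on $\Pi$ (so the image is closed under $\otimes_p$ and forms a submonoid), the unit $(1,0,0,\dots)\in V$ maps to the pyramid of the one-point space (the $\otimes_p$-unit), and continuity of the product on $V$ is a standard property of the non-increasing rearrangement of $\ell^1$-summable sequences. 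Collectively these yield the isomorphism of topological monoids.
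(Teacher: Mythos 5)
Your proposal follows essentially the same route as the paper: the forward inclusion via multiplicativity of atoms under product measures together with the fact that $\mathcal{X}_{\alpha\beta}$ is a downward- and box-closed pyramid, and the reverse inclusion by scaling the canonical atomic spaces, noting that $nX_\alpha\times_p nX_\beta$ lies in the generating set of $\mathcal{X}_\alpha\otimes_p\mathcal{X}_\beta$ while the sequence $\{n(X_\alpha\times_p X_\beta)\}_n$ infinite dissipates with atoms $\alpha\beta$, with the monoid statement then following from Theorem \ref{thm:mugen} and the continuity of the sorting map (Lemma \ref{lem:sorting_map_is_continuous}). The limit-exchange obstacle you flag does not actually arise: Corollary \ref{cor:dissp_pyramid_02} applies directly to this single sequence, which is exactly how the paper closes the argument.
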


\begin{organization*}
After this section, the present paper is divided into Sections \ref{sec:prelim}--\ref{sec:atom}.
In Section \ref{sec:prelim}, we recall several notions and fundamental properties to state our results.
In particular, we recall definitions of pyramids and the topology on the space $\Pi$ of all pyramids.
In Section \ref{sec:product_and_scale_change}, we explain invariant pyramids under two basic operations, the scaling and the product, respectively, and investigate their fundamental properties.
In Section \ref{sec:invariant}, we develop a general theory of invariants of pyramids.
In Section \ref{sec:distinguish}, we prove Theorems \ref{thm:CD_implies_FI} and \ref{thm:main}.
Finally, in Section \ref{sec:atom}, Theorems \ref{thm:mugen} and \ref{thm:algebra} are proved.
In this section, we generalize the notion of infinite dissipation 
and use this to prove Theorem \ref{thm:mugen}.
\end{organization*}

\begin{acknow*}
The present authors would like to thank S.~Honda,
Y.~Kitabeppu and T.~Shioya for their valuable discussions and useful comments. They also would like to thank S.~Sakata for suggesting the starting question. S.E. is supported in part by JSPS KAKENHI Grant Numbers JP17K14206, JP22H04942 and JP23K03158. 
D.K. is supported in part by JSPS KAKENHI Grant Numbers JP20J00147 and JP22K20338.
A.M. is supported in part by JSPS KAKENHI Grant Numbers JP20K03598, JP17H01091 and JP21H00977.
\end{acknow*}

\section{Preliminaries} \label{sec:prelim}

To explain what we deal in the present paper, let us recall and fix several notions.
Main references are \cite{G:green}, \cite{S} and \cite{V}.

\subsection{Basic concepts about metrics and measures} \label{subsec:basic}
Let us fix several notations.
Let $Y=(Y,d_Y)$ be a metric space.
Let us denote by $\mathscr B(Y)$ the Borel $\sigma$-field of $Y$ and by $\mathscr P(Y)$ the set of all Borel probability measures on $Y$.
For each $x \in Y$, $\delta_x$ is the Dirac measure at $x$.
For a measurable function $f : Y \to \mathbb R$ and $m \in \mathscr P(Y)$, we set
\[
E_{m}(f) := E_{(Y,m)}(f) := \int_Y f \,dm,
\]
whenever it is defined, which is called
the expectation of the random variable $f$.

For each $x \in Y$ and $r > 0$, we set
\begin{align*}
B_r(x) &:= \left\{ y \in Y \,\middle|\, d_Y(x,y) \le r\right\}, \\
U_r(x) &:= \left\{ y \in Y \,\middle|\, d_Y(x,y) < r\right\},
\end{align*}
which are the closed $r$-ball centered at $x$ and the open $r$-ball centered at $x$, respectively.
For a subset $Y' \subset Y$, we set $d_Y(Y',\cdot) = \inf_{y \in Y'}d_Y(y,\cdot)$, 
which is the distance from $Y'$.
Then we set $B_r(Y') = \{d_Y(Y',\cdot) \le r\}$ and $U_r(Y') = \{d_Y(Y',\cdot) < r \}$.

For a map $f : Y \to Z$ to another metric space $Z = (Z,d_Z)$, we define
\[
\Lip(f) := \sup_{y \ne y' \in Y} \frac{d_Z(f(y), f(y'))}{d_Y(y,y')}\,\, \in [0,\infty]
\]
which is called the {\it Lipschitz constant} of $f$.
Let us use the convention that $\Lip(f)=0$ if $Y$ is a one-point space.
When $\Lip(f) \le L$ (resp. $\Lip(f) < \infty$), we say that $f$ is {\it $L$-Lipschitz} (resp. {\it Lipschitz}).
The set of all real-valued Lipschitz functions on $Y$ is denoted by $\Lip(Y)$.
The subset of $\Lip(Y)$ consisting of $1$-Lipschitz (resp. bounded) functions is denoted by $\Lip_1(Y)$ (resp. $\Lip_b(Y)$).
For $f: Y \to Z$ and $x \in Y$, the {\it asymptotic Lipschitz constant} of $f$ at $x$ is defined by
\begin{equation} \label{def:asymp-Lip}
\lip_a(f)(x) := \lim_{r \to 0} \Lip(f|_{B_r(x)}) = \inf_{r > 0} \Lip(f|_{B_r(x)}).
\end{equation}
From the definition, $\lip_a(f)(x) = 0$ if $x$ is isolated.

For two closed subsets $A,A'$ of $Y$, the {\it Hausdorff distance} between them is defined by
\[
d_\Hau^{d_Y}(A,A') = d_\Hau^{(Y,d_Y)}(A,A') := \inf \left\{ r > 0 \,\middle|\,
B_r(A) \supset A', B_r(A') \supset A
\right\}.
\]
It is possibly $+\infty$.

When $m \in \mathscr P(Y)$ is fixed, for measurable functions $f,f' : Y \to \mathbb R$, we set
\begin{align*}
d_\mathrm{KF}(f,f') &=
d_\mathrm{KF}^m (f,f')
= d_\mathrm{KF}^{(Y,m)}(f,f')
\\
&:= \inf \left\{ \epsilon > 0 \,\middle|\, m(\{y \in Y \mid |f(y)-f'(y)| > \epsilon \}) \le \epsilon \right\}\,\, \in [0,1]
\end{align*}
which is called the {\it Ky Fan distance} between $f$ and $f'$ with respect to $m$.
Note that this is independent of $d_Y$.

For two measures $\mu, \mu' \in \mathscr P(Y)$, the {\it Prokhorov distance} between them is defined by
\begin{align*}
d_\mathrm{P}(\mu, \mu') &= d_\Prkh^{d_Y}(\mu,\mu') = d_\Prkh^{(Y,d_Y)}(\mu,\mu')  \\
&:= \inf \left\{ \epsilon >0 \,\middle|\,
\mu(B_\epsilon(A) ) + \epsilon \ge \mu'(A) \text{ for all } A \in \scr B(Y)
\right\}\,\, \in [0,1].
\end{align*}

For a signed Borel measure $\nu$ on $Y$, the total variation norm of $\nu$ is defined as
\[
\| \nu \|_\mathrm{TV} := |\nu|(Y).
\]
Note that it is independent of the choice of $d_Y$.

We will use the following famous extension procedure of Lipschitz functions, called the McShane-Whitney extension:
\begin{proposition}
\label{prop:Lip_ext}
Let $Y$ be a metric space and $A$ a non-empty subset of $Y$.
Let $f : A \to \mathbb R$ be a $L$-Lipschitz function.
Then, the function defined by $\tilde f(x) = \inf_{y \in A} \left( f(y) + Ld_Y(x,y) \right)$ for $x \in Y$, is $L$-Lipschitz on $Y$, which is an extension of $f$.
\end{proposition}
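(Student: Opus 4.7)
The plan is to verify the two assertions separately: that $\tilde f$ agrees with $f$ on $A$, and that $\tilde f$ is $L$-Lipschitz on all of $Y$. A preliminary step is to check that $\tilde f$ is well-defined with finite values, since the infimum could a priori be $-\infty$. I would handle this first by fixing any reference point $y_0 \in A$ and using the $L$-Lipschitz property of $f$ on $A$ together with the triangle inequality to bound $f(y) + Ld_Y(x,y)$ below by $f(y_0) - Ld_Y(x,y_0)$ uniformly in $y \in A$. This both guarantees $\tilde f(x) \in \mathbb R$ and foreshadows how the Lipschitz bound will propagate.

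For the extension property, I would fix $x \in A$ and note that plugging $y = x$ into the infimum immediately gives $\tilde f(x) \le f(x)$. The reverse inequality comes directly from the $L$-Lipschitz property of $f$ on $A$: for every $y \in A$, one has $f(x) \le f(y) + Ld_Y(x,y)$, and taking the infimum over $y \in A$ yields $f(x) \le \tilde f(x)$. Hence $\tilde f|_A = f$.

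For the Lipschitz estimate on $Y$, the key computation is to fix arbitrary $x, x' \in Y$ and note that for every $y \in A$, the triangle inequality $d_Y(x,y) \le d_Y(x,x') + d_Y(x',y)$ gives
\[
f(y) + Ld_Y(x,y) \le Ld_Y(x,x') + f(y) + Ld_Y(x',y).
\]
Taking the infimum over $y \in A$ on both sides yields $\tilde f(x) \le Ld_Y(x,x') + \tilde f(x')$, and by symmetry one then obtains $|\tilde f(x) - \tilde f(x')| \le Ld_Y(x,x')$.

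The main obstacle is essentially nonexistent: the argument reduces to routine manipulations of the triangle inequality. The only mild subtlety worth addressing explicitly at the outset is the well-definedness (that the infimum is not $-\infty$); after that, both the extension property and the Lipschitz constant follow by taking infima in elementary inequalities.
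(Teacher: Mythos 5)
Your proof is correct and is the standard McShane--Whitney argument; the paper states this proposition without proof (citing it as a classical extension procedure), so there is nothing to compare against. Your attention to the finiteness of the infimum is the one point worth making explicit, and you handle it correctly.
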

Furthermore, if $f$ is bounded, then truncating $\tilde f$, we obtain a bounded Lipschitz extension of $f$.

\begin{definition} \label{def:entropy}
For $m \in \mathscr P(Y)$ and a nonnegative measurable function $f : Y \to \mathbb R$, we define the {\it entropy} of $f$ with respect to $m$ by
\[
\mathrm{Ent}_m(f) = \int_Y f \log f\,dm - \left(\int_Y f\,dm \right)\log \left(\int_Y f\,dm\right)
\]
whenever $f \log (1+f) \in L^1(m)$.
Otherwise, we set $\mathrm{Ent}_m(f) = \infty$.
Here, we use the convention that $0 \log 0 := 0$.
\end{definition}
Due to Jensen's inequality, the entropy is always nonnegative.
Remark that the entropy is $1$-homogeneous, that is,
\begin{equation} \label{eq:ent_is_homog}
\mathrm{Ent}_m(tf) = t\, \mathrm{Ent}_m(f)
\end{equation}
holds for every $t > 0$.

For a metric space $Y$, 
we define an equivalent relation $\sim$ on $\Lip_1(Y)$ by $\varphi \sim \psi$ provided that there exists $c \in \mathbb R$ such that $\varphi(x) = \psi(x)+c$ on $Y$.
The equivalent class of $\varphi$ is denoted by $[\varphi]$ and the set of all equivalent classes is denoted by $\cal L_1(Y)$.
For $m \in \mathscr P(Y)$, a metric on $\cal L_1(Y)$ is induced by the Ky Fan metric $d_\mathrm{KF}^m$. 

Recall that $(\cal L_1(Y), {d_\mathrm{KF}^m})$ is compact
(see \cite[Proposition 4.46]{S}).

\subsection{Metric measure spaces} \label{subsec:mm-space}
We call a triple $X=(X,d,m)$ a metric measure space (for short, an mm-space), if $(X,d)$ is a complete separable metric space and $m$ is a nonnegative Borel measure on $(X,d)$.
Following \cite{G:isop}, we say that an mm-space $(X,d,m)$ is a {\it pm-space} if $m(X)=1$.
For an mm-space $X=(X,d,m)$ and $t > 0$, we set $tX = (X,td,m)$ the metric scale change of $X$.
From now on, for an mm-space $X=(X,d,m)$, we often write $d=d_X$ and $m=m_X$.

For an mm-space $X$, we denote by $\diam\, X$ the diameter of $\supp\,m_X$ and by $\# X$ the number of elements of $\supp\, m_X$.
Here,
\[
\supp\, m_X := \left\{ x \in X \mid m_X(B_r(x)) > 0 \text{ for all } r > 0\right\}
\]
is the support of $m_X$.
Note that $\supp\, m_X$ is closed in $X$ and that $m_X(X - \supp\, m_X) = 0$ because $X$ is Polish.

We say that two mm-spaces $X$ and $Y$ are {\it mm-isomorphic}, if there exists a Borel map $f : X \to Y$ which is an isometry between $\supp\, m_X$ to $\supp\,m_Y$ and the push-forward measure $f_\# m_X$ coincides with $m_Y$.
We call such an $f$ an {\it mm-isomorphism}.
It is clear that if $X$ and $Y$ are mm-isomorphic, then $m_X(X)=m_Y(Y)$ holds.
Furthermore, we denote by $\cal X$ the set of all mm-isomorphism classes of pm-spaces.
The class consisting of $X$ with $\# X=1$ is denoted by $\ast$.
We often call an mm-isomorphism class of pm-spaces itself a pm-space.

\begin{definition}
We say that a pm-space $X=(X,d,m)$ is {\it smooth} if $X$ is a smooth connected manifold possibly with boundary, with a complete Riemannian metric $g$, such that $d=d_g$ is the distance function induced by $g$ and that $m$ is a weighted measure which forms
\[
m(dx) = e^{-f(x)} \mathrm{vol}_g(dx)
\]
for some $f \in C^\infty(X)$.
\end{definition}

\subsection{Box distance and observable distance} \label{subsec:box_and_conc}
To study several convergence phenomena of pm-spaces, Gromov defined two metrics on $\cal X$.
One of them is the box distance denoted by $\square$, and another one is the observable distance denoted by $d_\conc$.
We recall their definitions.

For a pm-space $X$, a {\it parameter} of $X$ is a Borel map $f : [0,1] \to X$ satisfying $f_\# \cal L^1 = m_X$.
Here, the interval $[0,1]$ has the usual topology and $\cal L^1$ denotes the Lebesgue measure restricted to $[0,1]$.
For two pm-spaces $X, Y$, the {\it box distance} is defined by
\[
\square(X,Y) := \inf_{a,b}
\inf
\left\{ \epsilon > 0 \,\middle| \,
\begin{aligned}
& \text{there exists a Borel set $I_0 \subset {[0,1]}$ with } \\
& \text{$\mathcal L^1(I_0) \ge 1 - \epsilon$ such that} \\
& \text{$|a^\ast d_X - b^\ast d_Y| \le \epsilon$ on $I_0$}
\end{aligned}
\right\}.
\]
Here, $a$ and $b$ run over all parameters of $X$ and $Y$, respectively, and
$a^\ast d_X$ stands for the function
\[
a^\ast d_X(s,t) := d_X(a(s), a(t))
\]
for $s,t \in [0,1]$.
A sequence of pm-spaces is said to {\it box-converge} if it converges with respect to $\square$.
The box distance and the box-convergence are known to have several equivalent formulations (\cite{S}, \cite{K:thesis}, \cite{GPW}, \cite{N}).
In particular, we mainly use the following characterization of box-convergence, due to the second author:
\begin{lemma}[\cite{K:thesis}] \label{lem:mG_convergece}
Let $X_n, X \in \cal X$, $n \in \mathbb N$.
If $X_n$ box-converges to $X$ as $n \to \infty$, then there exists a complete separable metric space $Y$ having isometric embeddings $\iota_n : X_n \to Y$ and $\iota : X\to Y$ such that
the push-forward measure $(\iota_n)_\# m_{X_n}$ converges to $\iota_\#m_X$ weakly in $\mathscr P(Y)$.
\end{lemma}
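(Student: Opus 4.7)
The plan is to manufacture a single ambient space $Y$ by gluing together copies of $X$ and all $X_n$ along ``nearly isometric'' parameters, and then to derive weak convergence of the pushed-forward measures by a change-of-variables argument on $[0,1]$.

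First I would unpack the definition of box convergence. For each $n$, fix $\epsilon_n \downarrow 0$, parameters $a_n : [0,1] \to X_n$ and $b_n : [0,1] \to X$, and a Borel set $I_n \subset [0,1]$ with $\mathcal{L}^1(I_n) \geq 1-\epsilon_n$ and $|a_n^\ast d_{X_n} - b_n^\ast d_X| \leq \epsilon_n$ on $I_n \times I_n$. A minor but essential preprocessing step is to arrange all $b_n$'s to coincide with a single parameter $\varphi : [0,1] \to X$. This is possible because any two parameters of a pm-space differ by a measure-preserving Borel automorphism of $[0,1]$ (up to $\mathcal{L}^1$-null sets); composing $a_n$ with the corresponding rearrangement yields parameters $\varphi_n : [0,1] \to X_n$ with the same estimate relative to a fixed $\varphi$.

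Second, form the disjoint union $Z := X \sqcup \bigsqcup_{n} X_n$ and equip it with a pseudometric $\hat d$ extending $d_X$ on $X$ and $d_{X_n}$ on each $X_n$, where the cross-distances are defined by
\[
\hat d(x, y_n) := \inf_{s \in I_n}\bigl(d_X(x, \varphi(s)) + d_{X_n}(y_n, \varphi_n(s))\bigr) + \epsilon_n
\]
for $x \in X$, $y_n \in X_n$, and analogously $\hat d(y_n, y_m)$ is defined as the shortest $X$-mediated detour. Triangle inequality is automatic from the shortest-path definition, and the point that $\hat d|_{X \times X} = d_X$ and $\hat d|_{X_n \times X_n} = d_{X_n}$ is where the hypothesis $|\varphi_n^\ast d_{X_n} - \varphi^\ast d_X| \leq \epsilon_n$ on $I_n \times I_n$ is used: any detour through $\varphi(s), \varphi(t)$ picks up at least $d_X(x,x') - 2\epsilon_n + 2\epsilon_n$ or a comparable quantity, so the internal distance is never beaten. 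Taking the metric identification of $(Z, \hat d)$ and then the metric completion yields a complete separable space $Y$ together with isometric embeddings $\iota : X \to Y$ and $\iota_n : X_n \to Y$.

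Third, weak convergence of $(\iota_n)_\# m_{X_n}$ to $\iota_\# m_X$ follows by noting that both measures are realized as pushforwards of $\mathcal{L}^1$ under the maps $\iota\circ\varphi$ and $\iota_n\circ\varphi_n$ respectively, and that by construction $\hat d(\iota\varphi(s), \iota_n\varphi_n(s)) \leq \epsilon_n$ for $s \in I_n$. For any bounded Lipschitz $F$ on $Y$,
\[
\left| \int_Y F\,d\iota_\# m_X - \int_Y F\,d(\iota_n)_\# m_{X_n} \right| \leq \Lip(F)\,\epsilon_n + 2\|F\|_\infty\,\epsilon_n,
\]
which tends to $0$ and hence gives weak convergence by the portmanteau characterization.

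The main obstacle is the second step: verifying that $\hat d$ is a genuine pseudometric whose restrictions agree with the original metrics on each piece. The additive $\epsilon_n$ corrections are needed exactly so that a short detour through the parameter cannot undercut the internal distance, and this has to be checked uniformly across all pairs. The re-parametrization trick in the first step is a secondary technicality, but without it the cross-terms $\hat d(y_n, y_m)$ for different $n, m$ have no common reference frame and the construction breaks down.
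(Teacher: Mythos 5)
Your overall strategy --- glue $X$ and all the $X_n$ along near-optimal parameters into one pseudometric space, pass to the metric quotient and its completion, and read off weak convergence from the pointwise estimate $\hat d(\iota\varphi(s),\iota_n\varphi_n(s))\le\epsilon_n$ on $I_n$ --- is sound and is the standard route to this lemma. The additive $+\epsilon_n$ correction does exactly what you say: for the triangle inequalities mixing the two pieces one takes $s,t\in I_n$ nearly optimal and uses $|d_X(\varphi(s),\varphi(t))-d_{X_n}(\varphi_n(s),\varphi_n(t))|\le\epsilon_n$, so no detour undercuts an internal distance; the amalgamation of the various $X\sqcup X_n$ along $X$ is the usual gluing and causes no trouble; and testing against bounded Lipschitz functions does give weak convergence, with the bound $(\Lip(F)+2\|F\|_\infty)\epsilon_n$ you state.

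The one genuine error is the ``preprocessing'' step. It is \emph{not} true that any two parameters of a pm-space differ by a measure-preserving Borel automorphism of $[0,1]$ up to null sets: take $X=([0,1],|\cdot|,\mathcal L^1)$, $b=\mathrm{id}$ and $b'(t)=2t\bmod 1$; both are parameters, but $b'=b\circ T$ would force $T=b'$ a.e., which is essentially two-to-one and hence not an automorphism. Fortunately you do not need a common $\varphi$ at all. Define the pseudometric on $X\sqcup X_n$ using the $n$-th pair $(a_n,b_n)$ directly,
\[
\hat d(x,y_n):=\inf_{s\in I_n}\bigl(d_X(x,b_n(s))+d_{X_n}(y_n,a_n(s))\bigr)+\epsilon_n ,
\]
and define $\hat d(y_n,y_m)$ for $n\ne m$ by mediating through $X$; the space $X$ itself, not a common parameter, is the reference frame for the cross terms. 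All your verifications go through unchanged, and in the last step one writes $\iota_\#m_X=(\iota\circ b_n)_\#\mathcal L^1$ for each fixed $n$ (every $b_n$ is a parameter of $X$), so the estimate
\[
\Bigl|\int_Y F\,d\iota_\#m_X-\int_Y F\,d(\iota_n)_\#m_{X_n}\Bigr|\le\bigl(\Lip(F)+2\|F\|_\infty\bigr)\epsilon_n
\]
survives verbatim. With the false auxiliary claim deleted and the construction rephrased as above, the proof is complete.
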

Note that the converse statement is clearly true.

For two pm-spaces $X,Y$, we set
\[
d_\conc(X,Y) := \inf_{a,b} d_\Hau^{\,d_{\mathrm{KF}}^{([0,1], \cal L^1)}} (a^\ast \Lip_1(X), b^\ast \Lip_1(Y)),
\]
where $a,b$ stand for parameters of $X,Y$, respectively, and
\[
a^\ast \Lip_1(X) = \{ f \circ a \mid f \in \Lip_1(X)\}.
\]
The topology induced by $d_\conc$ is called the {\it concentration topology} and the convergence with respect to $d_\conc$ is said to {\it concentrate}.

From the definition, we have
\[
\square(X,Y) \ge d_\conc(X,Y).
\]
In particular, the concentration topology is coarser than the box topology.
Furthermore, two topologies are known to be actually different (\cite{F} see also \cite{S}).

\subsection{Lipschitz order and pyramids} \label{subsec:Lip_order_and_pyramid}

For two pm-spaces $X,Y$, we say that $X$ {\it dominates} $Y$ if there exists a $1$-Lipschitz map $f : X \to Y$ such that $m_Y = f_\# m_X$.
Then we denote $X \succ Y$ and $Y \prec X$.
This relation $\prec$ is called the {\it Lipschitz order}.
It is known that $X \prec Y$ and $Y \prec X$ implies $X$ and $Y$ are mm-isomorphic.
So, $\prec$ becomes a partial order relation on $\cal X$, which is denoted by the same symbol $\prec$.

\begin{definition} \label{def:pyramid}
We say that a family $P$ of isomorphism classes of pm-spaces is a {\it pyramid} if
\begin{enumerate}
\item $P$ is directed (in the Lipschitz order), that is, for every $X,Y \in P$, there exists $Z \in P$ such that $X \prec Z$ and $Y \prec Z$;
\item $P$ is downward-closed (in the Lipschitz order), that is, if $X \in P$ and $Y \prec X$, then $Y \in P$ holds;
\item $P$ is non-empty and closed in the box topology.
\end{enumerate}
The set of all pyramids is denoted by $\Pi$.
\end{definition}

Note that Gromov's original definition of pyramids {imposes} the conditions except $P$ being box-closed. Shioya added this property for establishing Theorem \ref{thm:rho} stated later.

\begin{example}
For a pm-space $X$, we set
\[
\cal PX := \left\{ Y \in \cal X \,\middle|\, Y \prec X \right\}.
\]
It is known that $\cal PX$ is a pyramid (\cite{G:green}, see also \cite{S}), called the {\it pyramid with apex} $X$.
It is clear that
\[
X \prec Y \iff \cal P X \subset \cal PY
\]
for $X, Y \in \cal X$.
In particular, this construction gives a map
\[
\iota : \cal X \ni X \mapsto \cal PX \in \Pi.
\]
From the definition, this map is injective.
Due to the map, we are going to regard $\cal X$ as a subset of $\Pi$.
In particular, $\{\ast \} = \cal P\ast$ is the smallest pyramid in the inclusion order.
It is obvious that $\cal X$ is a pyramid, which is called the
{\it greatest pyramid.}

As mentioned in Question \ref{question:main}, we say that a pyramid $P$ is {\it non-trivial} if $P$ is not contained in the image of $\iota$ and is not the greatest pyramid $\cal X$.
\end{example}

For $t > 0$ and two pm-spaces $X$ and $Y$, we have
\begin{equation} \label{eq:tX_box}
\square(tX, tY) \le c(t) \square(X,Y)
\end{equation}
due to \cite{S}, where $c(t)$ is a universal constant depending on $t$.
For a pyramid $P$, we set
\begin{equation} \label{eq:tP}
tP := \{t X \in \cal X \mid X \in P \}
\end{equation}
which is a pyramid, by \eqref{eq:tX_box}.

A convergence notion of pyramids is defined by a more general setting:
\begin{definition}[\cite{G:green}]
Let $F_n, F$ be closed subsets of $\cal X$ with respect to the box topology.
We say that the sequence $\{F_n\}_n$ {\it converges to} $F$ {\it in the weak Hausdorff sense} if the following (1) and (2) hold:
\begin{enumerate}
\item for every $X \in F$, there exists $X_n \in F_n$ such that $\square(X_n,X) \to 0$ as $n \to \infty$;
\item for every $X \not\in F$, $\liminf_{n \to \infty} \inf_{Y\in F_n} \square(X, Y) > 0$.
\end{enumerate}
In this case, we also say that $\{F_n\}$ {\it weakly converges to} $F$.
\end{definition}
Remark that this notion is also said to be {\it Kuratowski convergence}.
{It is known that the set $\Pi$ of all pyramids is closed under the weak convergence (see \cite[Theorem 6.11]{S}).}

We will say that a sequence $\{X_n\}_n$ of pm-spaces {\it weakly converges} (to a pyramid $P$) if $\{\cal PX_n\}_n$ weakly converges to $P$.
This condition is also said that $\{X_n\}$ is asymptotic, in \cite{G:green} and \cite{S}.

Gromov showed
\begin{theorem}[{\cite{G:green}}]
$\Pi$ is sequentially compact in the weak convergence.
Via the natural injective map $\iota : \cal X \ni X \mapsto \cal PX \in \Pi$, the concentration coincides with the weak convergence.
Furthermore, the image of $\iota$ is dense in $\Pi$ in the weak convergence.
\end{theorem}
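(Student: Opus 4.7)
The plan tackles the three assertions in order of increasing technicality: density of $\iota(\cal X)$ in $\Pi$, sequential compactness of $\Pi$, and the coincidence of concentration with weak convergence on $\cal X$.

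For density, fix $P \in \Pi$. Separability of $(\cal X, \square)$ restricts to $P$, so pick a countable box-dense subset $\{Y_k\}_{k \ge 1} \subset P$. Using the directedness axiom inductively, choose $X_n \in P$ with $Y_1, \ldots, Y_n \prec X_n$; then $\cal PX_n \subset P$ by downward-closedness, and $Y_k \in \cal PX_n$ whenever $n \ge k$. Condition~(1) of weak Hausdorff convergence $\cal PX_n \to P$ follows because any $X \in P$ is box-approximated by some $Y_k$, and then by a member of $\cal PX_n$ for $n \ge k$. Condition~(2) is automatic: if $X \notin P$, box-closedness of $P$ gives $\square(X, P) > 0$, and $\square(X, \cal PX_n) \ge \square(X, P) > 0$ uniformly in $n$.

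For sequential compactness, let $\{P_n\}_n \subset \Pi$ and fix a countable box-dense $\{Z_k\}_{k \ge 1} \subset \cal X$. Since $\square \le 1$, a diagonal extraction produces a subsequence (not relabeled) along which $\inf_{Y \in P_n} \square(Z_k, Y)$ converges in $[0,1]$ for every $k$. Define the candidate limit
\[
P := \left\{X \in \cal X \,\middle|\, \lim_{n \to \infty} \inf_{Y \in P_n} \square(X, Y) = 0\right\}.
\]
Then $P$ is box-closed by a standard triangle argument and contains $\ast \in \bigcap_n P_n$, and $P_n \to P$ in the weak Hausdorff sense is immediate from the definition. Downward-closedness and directedness remain: given $X \in P$ with approximants $X_n \in P_n$ satisfying $X_n \to X$ in $\square$, and $Y \prec X$ realized by a $1$-Lipschitz measure-pushforward $f : X \to Y$, apply Lemma~\ref{lem:mG_convergece} to embed $X_n$ and $X$ in a common Polish space, approximate $f$ by $1$-Lipschitz maps on the $X_n$, set $Y_n$ to be their pushforwards (automatically in $P_n$ by downward-closure), and verify $Y_n \to Y$ in $\square$. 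Directedness proceeds analogously: for $X, Y \in P$ with approximants $X_n, Y_n$, directedness of $P_n$ yields $Z_n \in P_n$ with $X_n, Y_n \prec Z_n$, and a further diagonal extraction (enlarging $\{Z_k\}$ if needed) gives a box-subconvergent $Z_n \to Z \in P$ dominating both. The main obstacle here is precisely this Lipschitz-order lifting under box-convergence: the stability of the relation $\prec$ along box-convergent sequences is the delicate technical core, and it relies crucially on realizing all approximants in a joint ambient space and discretizing the quotient map.

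For coincidence of topologies, the forward direction $d_\conc(X_n, X) \to 0 \Rightarrow \cal PX_n \to \cal PX$ weakly uses that concentration controls all $1$-Lipschitz statistics: given $Y \prec X$ via $f$, the concentrating assumption produces $Y_n \prec X_n$ with $Y_n \to Y$ in $\square$, giving condition~(1); condition~(2) uses that $Y \not\prec X$ is preserved under concentration-convergence by a lower-semicontinuity argument for the Lipschitz order. For the converse, suppose $\cal PX_n \to \cal PX$ weakly but $d_\conc(X_{n'}, X) \ge \varepsilon$ along a subsequence. By sequential compactness of $\Pi$ established above, extract a further subsequence with $\cal PX_{n''} \to P'$ weakly; uniqueness of weak limits forces $P' = \cal PX$. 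By the forward direction applied to any $d_\conc$-subconvergent subsequence of $\{X_{n''}\}$ — which exists since concentration is metrizable and any weakly converging sequence of mm-classes stays in a $d_\conc$-precompact set — the $d_\conc$-limit $X'$ must satisfy $\cal PX' = \cal PX$, hence $X' = X$ by injectivity of $\iota$, contradicting $d_\conc(X_{n''}, X) \ge \varepsilon$.
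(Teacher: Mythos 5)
The paper does not prove this theorem: it is quoted from \cite{G:green} (with the metric formulation from \cite{S}), so there is no in-paper proof to compare against and your proposal must stand on its own. Your density argument is the standard one and is correct. The sequential-compactness part, however, has a genuine gap in the directedness step: the dominating elements $Z_n \in P_n$ with $X_n, Y_n \prec Z_n$ supplied by directedness of $P_n$ need not admit any box-convergent subsequence, since $(\cal X, \square)$ is far from compact --- take $P_n = \cal X$ and $Z_n = \mathbb S^n$, which by \cite{F} has no box-convergent subsequence --- and ``enlarging $\{Z_k\}$'' cannot repair this. The standard fix is to replace $Z_n$ by $W_n := (X_n \times Y_n,\, d_{X_n} \times_\infty d_{Y_n},\, (p_n,q_n)_\# m_{Z_n})$, where $p_n, q_n$ realize the two dominations; then $W_n \prec Z_n$ so $W_n \in P_n$, $W_n$ still dominates $X_n$ and $Y_n$, and its measure is a coupling of $m_{X_n}$ and $m_{Y_n}$, whose tightness (inherited from the convergent marginals via Lemma \ref{lem:mG_convergece}) yields a box-convergent subsequence with limit in $P$ dominating both $X$ and $Y$. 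A smaller point in the same part: for condition (2) of weak convergence of your candidate $P$ you need $\lim_n \inf_{Y \in P_n}\square(X,Y)$ to exist for every $X$, not only on the dense set; this does follow because $X \mapsto \inf_{Y\in P_n}\square(X,Y)$ is uniformly $1$-Lipschitz in $\square$, but it is not ``immediate from the definition'' as written.

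The more serious gap is in the converse direction of the topology coincidence. Your contradiction argument requires extracting a $d_\conc$-convergent subsequence from $\{X_{n''}\}$, justified by the claim that a weakly convergent sequence of pm-spaces lies in a $d_\conc$-precompact set. That claim is false in general ($\mathbb S^n(\sqrt n)$ converges weakly to $\Gamma^\infty_1$, which is not concentrated, so the sequence is not $d_\conc$-precompact even in the completion $\overline{\cal X}$), and in the relevant case --- weak limit equal to $\cal PX$ with $X \in \cal X$ --- the asserted precompactness is essentially equivalent to the statement being proved, so the argument is circular. The substantive content of this half of Gromov's theorem is to show directly that $d_\Hau^{\,d_{\mathrm{KF}}}(a_n^\ast \Lip_1(X_n), b^\ast \Lip_1(X)) \to 0$: one inclusion comes from condition (1) of weak convergence (lift $1$-Lipschitz functions on $X$ through box-approximants $Y_n \prec X_n$ of $X$ itself), and the other from condition (2) together with the compactness of $(\cal L_1(X), d_{\mathrm{KF}}^{m_X})$ recalled in Subsection \ref{subsec:basic}. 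This argument is absent from your sketch and cannot be replaced by the compactness of $\Pi$ alone.
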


Furthermore, Shioya defined a metric on $\Pi$ which is compatible with the weak convergence. More precisely, he showed:
\begin{theorem}[\cite{S}] \label{thm:rho}
There exists a distance function $\rho$ on $\Pi$ such that the following (1)--(4) hold.
\begin{enumerate}
\item For a sequence $\{P_n\}_n \subset \Pi$ and $P \in \Pi$, $\rho(P_n,P) \to 0$ as $n \to \infty$ if and only if $\{P_n\}$ weakly converges to $P$ as $n \to \infty$.
\item In particular, $(\Pi, \rho)$ is a compact metric space.
\item The map $\iota : \cal X \ni X \mapsto \cal PX \in \Pi$ is $1$-Lipschitz with respect to $d_\conc$.
\item Moreover, the map $\iota : \cal X \to \Pi$ is homeomorphic to the image with respect to $d_\conc$ and $\rho$.
\end{enumerate}
\end{theorem}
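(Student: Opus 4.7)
The plan is to encode each pyramid $P$ by its distance function to a countable dense test family and define $\rho$ as a weighted comparison of these encodings. Fix a $\square$-dense sequence $\{X_k\}_{k=1}^\infty \subset \cal X$, which exists since finite pm-spaces with rational data are $\square$-dense. For $P \in \Pi$, set
\[
\delta_P(Z) := \inf_{Y \in P} \square(Z,Y) \qquad (Z \in \cal X).
\]
This is $1$-Lipschitz in $\square$, and box-closedness of $P$ yields $\delta_P^{-1}(0)=P$, so $P$ is determined by the values of $\delta_P$, hence by the sequence $(\delta_P(X_k))_{k\ge 1}$. I then define
\[
\rho(P,Q) := \sum_{k=1}^\infty 2^{-k}\min\bigl\{1,\,|\delta_P(X_k) - \delta_Q(X_k)|\bigr\}.
\]
Symmetry, triangle inequality, finiteness, and positive-definiteness follow immediately, making $\rho$ a metric on $\Pi$.

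The core step is showing that $\rho$ metrizes the weak convergence. For the direction ``weakly convergent $\Rightarrow$ $\rho$-convergent'', I verify $\delta_{P_n}(Z) \to \delta_P(Z)$ pointwise for $P_n \to P$ weakly and apply dominated convergence. The bound $\limsup_n \delta_{P_n}(Z) \le \delta_P(Z)$ is immediate from clause~(1) of weak Hausdorff convergence applied to near-optimal $Y \in P$. The reverse bound $\liminf_n \delta_{P_n}(Z) \ge \delta_P(Z)$ is the main technical point: arguing by contradiction, if there exist $Y_j \in P_{n_j}$ with $\square(Z, Y_j) \to r < \delta_P(Z)$, I invoke the sequential weak compactness of $\Pi$ applied to $\{\cal P Y_j\}$, extract a weak limit $Q$, and use clause~(2) applied to $P_n \to P$ to conclude $Q \subset P$; a further subsequence extraction using the Kuratowski description of weak Hausdorff convergence together with $Y_j \in \cal P Y_j$ then produces a box-limit $Y^* \in Q \subset P$ with $\square(Z, Y^*) \le r$, contradicting $r < \delta_P(Z)$. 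The reverse direction ``$\rho$-convergent $\Rightarrow$ weakly convergent'' is a subsubsequence argument: any weak subsequential limit $Q$ of $\{P_n\}$ must satisfy $\delta_Q(X_k) = \delta_P(X_k)$ for all $k$ by the previous direction, hence $\delta_Q = \delta_P$ on $\cal X$ by density and continuity, hence $Q = P$.

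Compactness of $(\Pi,\rho)$ in~(2) is then immediate from the already-known sequential compactness of $\Pi$ in the weak sense. For~(3), I reduce the desired inequality $\rho(\cal PX, \cal PY) \le d_\conc(X,Y)$ to the pointwise bound
\[
|\delta_{\cal PX}(Z) - \delta_{\cal PY}(Z)| \le d_\conc(X,Y) \qquad (Z \in \cal X),
\]
which I establish by lifting a near-optimal $W \prec X$ (so $\square(Z,W)$ is close to $\delta_{\cal PX}(Z)$) to a $W' \prec Y$ whose box distance from $W$ is controlled by $d_\conc(X,Y)$; the required coupling uses the description of $d_\conc$ as a Hausdorff-type distance between $\Lip_1$-classes, with the McShane--Whitney extension of Proposition~\ref{prop:Lip_ext} used to transport the $1$-Lipschitz map $X \to W$ through the near-isometry between $\Lip_1(X)$ and $\Lip_1(Y)$ supplied by $d_\conc$. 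Finally, (4) follows from (3) combined with Gromov's theorem identifying weak convergence of $\cal P X_n \to \cal P X$ with $d_\conc$-convergence $X_n \to X$ on $\iota(\cal X)$, which yields continuity of the inverse.

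The principal obstacle is the liminf estimate in the metrization step: promoting a sequence of approximating elements from $P_n$ to an actual element of $P$ requires a delicate interplay between clauses~(1) and~(2) of weak Hausdorff convergence and the sequential compactness of $\Pi$, and it is precisely here that the box-closedness axiom of pyramids (whose inclusion in the definition is attributed to Shioya in the discussion following Definition~\ref{def:pyramid}) plays an indispensable role. A secondary subtlety is the $1$-Lipschitz bound for $\iota$: since $d_\conc$ and $\square$ induce different topologies, comparing $\delta$-values under a $d_\conc$-perturbation of apex requires an honest coupling between Lipschitz-dominated pm-spaces rather than a naive box-distance manipulation.
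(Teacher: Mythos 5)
Your idea of encoding a pyramid by the distance function $\delta_P(Z)=\inf_{Y\in P}\square(Z,Y)$ evaluated on a countable $\square$-dense family is a genuinely different construction from the one the paper records (Definition~\ref{def:rho}, where $\rho$ is built from Hausdorff distances of the slices $P\cap\cal X(k,k)$), but two of your steps fail as written. First, in the liminf estimate you pass from $Y_j\in P_{n_j}$ with $\square(Z,Y_j)\to r$ to a box-limit $Y^*$ of (a subsequence of) the $Y_j$ by appealing to weak convergence of $\cal PY_j$. That is not available: weak convergence of the pyramids $\cal PY_j$ gives no box-precompactness of their apexes --- e.g.\ $\cal P\Gamma^j\to\Gamma^\infty$ weakly while $\{\Gamma^j\}_j$ has no box-convergent subsequence (otherwise $\Gamma^\infty$ would equal some $\cal PW$) --- and since $\square\le 1$ the hypothesis $\square(Z,Y_j)\to r$ buys no compactness either. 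The inequality $\liminf_n\delta_{P_n}(Z)\ge\delta_P(Z)$ is in fact true, but proving it requires first replacing $Y_j$ by a dominated space $W_j\prec Y_j$ (hence $W_j\in P_{n_j}$), obtained by pushing $m_{Y_j}$ forward under finitely many $1$-Lipschitz functions transported from $Z$ through the box-correspondence, so that $W_j$ lies in a fixed box-compact slice $\cal X(N,R)$ and $\square(Z,W_j)\le\square(Z,Y_j)+\epsilon$; only then can one extract a box-limit and feed it to clause~(2). This compactness device is exactly what the slices $\cal X(k,k)$ in the paper's definition of $\rho$ are for, and omitting it leaves the central step of your metrization unproved.

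Second, the pointwise bound $|\delta_{\cal PX}(Z)-\delta_{\cal PY}(Z)|\le d_\conc(X,Y)$ underlying your proof of (3) is false. Taking $Z=X$ and $Y=\ast$ it would give $\square(X,\ast)=\delta_{\{\ast\}}(X)\le d_\conc(X,\ast)$, which fails for $X=\mathbb S^n$: L\'evy concentration forces $d_\conc(\mathbb S^n,\ast)\to0$, while any subset of $\mathbb S^n$ of uniform measure greater than $1/2$ contains an antipodal pair, so $\square(\mathbb S^n,\ast)\ge1/2$. The honest lifting of a near-optimal $W\prec X$ to some $W'\prec Y$ carries a multiplicative loss equal to the number $N$ of $1$-Lipschitz coordinates needed to realize $W$ inside $(\mathbb R^N,\|\cdot\|_\infty)$ (a union bound converts a Ky Fan error $\epsilon$ per coordinate into $N\epsilon$ for the vector), and $N$ is unbounded over $W\prec X$; this is precisely why the weights $\tfrac{1}{4k\cdot 2^k}$ in Definition~\ref{def:rho} carry the extra factor $k$. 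With your weights $2^{-k}$ the $1$-Lipschitz claim in (3) is not justified and would need either $X_k$-dependent renormalization of the weights or, again, the slice-by-slice Hausdorff comparison. The remaining pieces --- positive-definiteness via density, the $\limsup$ half of the metrization, (2), and the deduction of (4) from Gromov's theorem --- are fine as you argue them.
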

From now on, we often call the topology induced from $\rho$ on $\Pi$ the {\it weak  (Hausdorff) topology}.

\begin{remark}
By the statements (2)--(4) of Theorem \ref{thm:rho}, $(\Pi, \rho)$ is a compactification of $(\cal X, d_\conc)$.
The statement (2) follows from (1) (\cite{S}).
\end{remark}

Let us recall an explicit definition of $\rho$ appeared in Theorem \ref{thm:rho}, which will be used in the proof of Lemma \ref{lem:continuity02}.
For $R>0, N \in \mathbb N$, we set
\[
\cal X(N,R) := \left\{ X \in \cal X \,\middle|\, \begin{aligned}
&(\supp\,m_X,d_X) \text{ is isometrically embedded in}\\
& \left( \{x \in \mathbb R^N \mid \|x\|_\infty \le R \}, \|\cdot\|_\infty \right)
\end{aligned}  \right\}.
\]
\begin{definition}[\cite{S}, \cite{S:sphere}] \label{def:rho}
For two pyramids $P, Q$, we set
\[
\rho (P,Q) = \sum_{k=1}^\infty \frac{1}{4k \cdot 2^k} d_\Hau^{\,\square}(P \cap \cal X(k,k), Q \cap \cal X(k,k)).
\]
\end{definition}

Monotone sequences automatically have the limits:
\begin{proposition}[\cite{S}] \label{prop:monotone_convergence}
Let $\{P_n\}_n$ be a sequence of pyramids. Then, the following {(1) and (2) hold}.
\begin{enumerate}
\item If $\{P_n\}_n$ is monotone nondecreasing in the sense that
$P_n \subset P_{n+1}$ for every $n$, then $\{P_n\}_n$ weakly converges to $\overline{\bigcup_{n} P_n}$.
Here, $\overline{(\,\cdot\,)}$ means the closure in the box topology.
\item If $\{P_n\}_n$ is monotone nonincreasing in the sense that $P_n \supset P_{n+1}$ for every $n$, then $\{P_n\}_n$ weakly converges to $\bigcap_{n} P_n$.
\end{enumerate}
\end{proposition}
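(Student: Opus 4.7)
The overall strategy is to verify directly the two clauses in the definition of weak Hausdorff convergence for each candidate limit, and then to invoke the closedness of $\Pi$ under weak convergence (the theorem of Shioya cited just above the statement) to conclude that the limit is actually a pyramid. This way one avoids having to check the pyramid axioms---in particular directedness of an intersection---by hand.

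For part (1), set $Q := \overline{\bigcup_n P_n}$; this is box-closed by construction and contains $\ast$, so it is eligible to be a weak Hausdorff limit. For the approximation clause, take $X \in Q$; by definition of the closure there exist $X_k \in P_{n_k}$ with $\square(X_k, X) \to 0$, and one may assume $n_k \nearrow \infty$. The monotonicity $P_{n_k} \subset P_n$ for $n \ge n_k$ then permits a standard diagonal extraction producing $X_n \in P_n$ with $\square(X_n, X) \to 0$ (using $X_n = \ast$ for the finitely many indices below $n_1$). For the separation clause, if $X \notin Q$ then closedness of $Q$ gives $\epsilon := \inf_{Y \in Q}\square(X, Y) > 0$, and the inclusion $P_n \subset Q$ yields the same lower bound uniformly in $n$.

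For part (2), set $Q := \bigcap_n P_n$; this is box-closed as an intersection of closed sets and non-empty since $\ast$ lies in every pyramid. The approximation clause is immediate: any $X \in Q$ lies in every $P_n$, so take $X_n = X$. For the separation clause, if $X \notin Q$ pick $n_0$ with $X \notin P_{n_0}$; closedness of $P_{n_0}$ gives $\epsilon := \inf_{Y \in P_{n_0}}\square(X, Y) > 0$, and the monotonicity $P_n \subset P_{n_0}$ for $n \ge n_0$ yields $\inf_{Y \in P_n}\square(X, Y) \ge \epsilon$ for all such $n$, whence $\liminf_n \inf_{Y \in P_n}\square(X, Y) \ge \epsilon$. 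Once weak convergence is established in both cases, weak-closedness of $\Pi$ turns each $Q$ into a pyramid.

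I do not expect a serious obstacle: once the problem is reduced to the two quantitative clauses of weak convergence, both halves are short, and the heavy lifting---that $\Pi$ is weakly closed---has already been cited. The only point demanding a little care is the diagonal extraction in part (1), where one must exploit $P_{n_k} \subset P_n$ to lift approximants from $\bigcup_n P_n$ into the specified $P_n$ for every $n$.
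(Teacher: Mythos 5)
The paper does not prove this proposition; it is quoted from Shioya's book with the citation \cite{S}, so there is no in-paper argument to compare against. Your verification is correct: both clauses of the weak Hausdorff convergence are checked soundly, and delegating the pyramid axioms for the limit set (in particular directedness of $\bigcap_n P_n$, which is the only genuinely non-obvious axiom) to the cited weak-closedness of $\Pi$ is legitimate, since the two clauses determine the closed limit set uniquely. The only point worth making explicit is the step ``one may assume $n_k \nearrow \infty$'' in part (1): if the approximants all lie in a single $P_{n_0}$ this is false as stated for the original indices, but monotonicity lets you replace $n_k$ by $\max\{n_k,k\}$ without leaving the corresponding pyramid, which is exactly the lifting you describe, so the argument goes through.
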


In particular, if a sequence $\{X_n\}$ of pm-spaces is monotone nondecreasing in the Lipschitz order: $X_1 \prec X_2 \prec \cdots$, then it weakly converges to some pyramid.
It is known that every pyramid $P$ admits a monotone nondecreasing sequence of pm-spaces which converges to $P$ weakly.
Such a sequence is called an {\it approximation} of $P$.

\begin{example}[Infinite product] \label{ex:product}
For pm-spaces $X, Y$ and $p \in [1, \infty]$, their $\ell_p$-product is defined in the usual way:
\[
X \times_p Y
:= (X \times Y, d_X \times_{p} d_Y, m_X \otimes m_Y).
\]
Here, $m_X \otimes m_Y$ is the product measure of $m_X$ and $m_Y$, and $d_X \times_p d_Y$ is the standard $\ell_p$-product metric:
\[
(d_X \times_p d_Y) ((x,y), (x',y')) := \left\{
\begin{aligned}
&\left( d_X(x,x')^p + d_Y(y,y')^p \right)^{1/p} && \text{ if } 1 \le p< \infty, \\
& \max\{ d_X(x,x'), d_Y(y,y')\} &&\text{ if } p= \infty,
\end{aligned}
\right.
\]
for $x,x' \in X$ and $y,y' \in Y$.

For $n \in \mathbb N$, we set
\[
X_p^1 := X \text{ and } X_p^n := X_p^{n-1} \times_p X \text{ for $n \ge 2$}.
\]
Then, the standard projection $X_p^{n} \to X_{p}^{n-1}$ is a domination.
So, we have the pyramid:
\[
X_p^\infty := \overline{\bigcup_{n=1}^\infty \cal PX_p^{n}}.
\]
This is called the {\it infinite $\ell_p$-product} of $X$.
Note that {$X_p^\infty$ may be trivial}.
We later show that $X_p^\infty$ is $\cal X$
for $p <2$ and for every $X$ unless $X= \ast$.
Note that
\[
t(X_p^\infty) = (tX)_p^\infty
\]
holds for $t > 0$.
\end{example}

\begin{example} \label{ex:infinite_product_2}
Let $\{X_n\}_{n=1}^\infty$ be an arbitrary sequence of pm-spaces and $p \in [1, \infty]$.
Setting $Y_n = X_1 \times_p X_2 \times_p \cdots \times_p X_n$, we have a monotone nondecreasing sequence $\{Y_n\}_{n=1}^\infty$ in the Lipschitz order.
We denote the weak limit of $\{Y_n\}$ by
\[
\bigotimes_{n=1}^\infty (\{X_n\}; \ell_p)
\]
and call it the {\it infinite $\ell_p$-product of $\{X_n\}$}.

More generally, for sequences $\{X_n\}_n \subset \cal X$ and $\{p_n\}_n \subset [1, \infty]$, we can consider the limit
\[
\left( \left( \left( X_1 \times_{p_1} X_2 \right) \times_{p_2} \cdots \right) \times_{p_{n-1}} X_n \right) \times_{p_n} \cdots
\]
as a pyramid.
Although, in this paper, we do not treat with such a complicated product.
\end{example}

In this paper, we focus on
the following concrete pyramids obtained by the infinite $\ell_2$-product.  
\begin{example} \label{ex:Gaussian}
Recall that, for $\sigma \in (0, \infty)$, the one-dimensional Gaussian measure of variance $\sigma^2$ (and of mean $0$) is defined by
\[
\gamma^1_{\sigma^2} (dt) := \frac{1}{\sqrt{2\pi \sigma^2}} e^{-t^2/2\sigma^2}\,dt.
\]
Then, the triple
\[
\Gamma^1_{\sigma^2} := (\mathbb R, |\cdot|, \gamma^1_{\sigma^2})
\]
is called the one-dimensional Gaussian space of variance $\sigma^2$.
Furthermore, its infinite $\ell_2$-product
\[
\Gamma^\infty_{\sigma^2} := (\Gamma_{\sigma^2}^1)^\infty_2 = \overline{\bigcup_{n=1}^\infty \cal P \Gamma_{\sigma^2}^n}
\]
is called {\it the virtual infinite dimensional Gaussian space of variance $\sigma^2$} in \cite{S}.
Recall that $\overline{(\,\cdot\,)}$ denotes the closure in the box topology.
Here,
\[
\Gamma_{\sigma^2}^n := (\Gamma_{\sigma^2}^1)^n_2 = (\mathbb R^n, \|\cdot\|_2, \gamma^n_{\sigma^2})
\]
is called the $n$-dimensional Gaussian space of variance $\sigma^2$, where
\[
\gamma_{\sigma^2}^n(dx) = (\gamma_{\sigma^2}^1)^{\otimes n}(dx) = \frac{1}{\sqrt{2\pi \sigma^2}^{\,n}} e^{-\|x\|^2_2/2 \sigma^2}\,dx
\]
is the $n$-dimensional Gaussian measure of variance $\sigma^2$ (with mean $0$).
\end{example}

From now on, $\Gamma^n$ denotes $\Gamma^n_1$ for $n=1,2,\dots, \infty$.
By the definition, we have
\[
\sigma \Gamma^n = \Gamma^n_{\sigma^2}
\]
for every $n \in \mathbb N \cup \{\infty\}$.

\begin{example} \label{ex:cube}
Let us consider the unit interval $I = ([0,1], |\cdot|, \cal L^1)$ with standard metric and measure.
Then, we obtain the infinite $\ell_2$-product of $I$, denoted by
\[
I^\infty := I^\infty_2 =  \overline{  \bigcup_{n=1}^\infty \cal P I^n_2}.
\]
\end{example}

\begin{example}[Blow-up limit] \label{ex:blow-up}
Let $X$ be a pm-space.
Then, $\{t X\}_{t > 0}$ is clearly a monotone nondecreasing family.
So, the limit
\[
\infty X := \lim_{t \to \infty} \cal P(t X) = \overline{ \bigcup_{ t> 0} \cal P(tX) }
\]
exists. We call it the {\it metric blow-up} of $X$.

If $M$ is a smooth pm-space with $\dim M \ge 1$,
then $\infty M = \cal X$, due to \cite{S}.
\end{example}

From now on, we use the following convention:
\begin{equation} \label{eq:infty_Gamma=cal_X}
\infty \Gamma^n = \Gamma_{\infty}^n = \cal X \text{ and } \Gamma_0^n = \{\ast \}
\end{equation}
for every $n \in \mathbb N \cup \{\infty\}$.

As mentioned in the introduction, Shioya proved:
\begin{theorem}[\cite{S:sphere}] \label{thm:Shioya_sphere}
Let $r_n \in (0, \infty)$ and $\mathbb S^n(r_n) = \{x \in \mathbb R^{n+1} \mid \|x\|_2 = r_n \}$. Suppose that $\mathbb S^n(r_n)$ has uniform measure and distance function restricted to the Euclidean distance.
Then, $r_n/\sqrt{n}$ converges to $\sigma \in [0, \infty]$ if and only if $\mathcal P \mathbb S^n(r_n)$ weakly converges to $\Gamma_{\sigma^2}^\infty$.
Here, $\Gamma_0^{\infty} = \{\ast\}$ and $\Gamma_\infty^\infty = \cal X$.
\end{theorem}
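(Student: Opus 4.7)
The plan is to split into three cases according to the value of $\sigma \in [0, \infty]$. For the main case $\sigma \in (0, \infty)$ I will prove both inclusions of pyramids separately, using the classical Poincar\'e limit for the easy direction and a radial Gaussian approximation for the hard direction. The boundary cases $\sigma = 0$ and $\sigma = \infty$ reduce to concentration and blow-up arguments, respectively.

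For $\sigma \in (0, \infty)$, to show $\Gamma^\infty_{\sigma^2} \subset \lim \mathcal{P}\mathbb{S}^n(r_n)$ I would use that the orthogonal projection $\pi_k : \mathbb{S}^n(r_n) \to \mathbb{R}^k$ onto the first $k$ coordinates is $1$-Lipschitz, so it realizes a domination, and the Maxwell--Poincar\'e limit theorem gives $(\pi_k)_\# m_n \to \gamma^k_{\sigma^2}$ weakly as $n \to \infty$ (using $r_n/\sqrt{n} \to \sigma$). By Lemma \ref{lem:mG_convergece} this weak convergence lifts to box convergence of pm-spaces, and since pyramids are downward-closed and box-closed, $\Gamma^k_{\sigma^2}$ lies in the weak limit of $\mathcal{P}\mathbb{S}^n(r_n)$ for every $k$. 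Taking the box-closure of the union over $k$ yields $\Gamma^\infty_{\sigma^2} \subset \lim \mathcal{P}\mathbb{S}^n(r_n)$.

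The reverse containment $\lim \mathcal{P}\mathbb{S}^n(r_n) \subset \Gamma^\infty_{\sigma^2}$ is the main obstacle. My strategy is to exploit that the Gaussian $\Gamma^{n+1}_{\sigma^2}$ concentrates on the annulus $A_n := \{c_1 \sigma\sqrt{n} \le \|x\|_2 \le c_2 \sigma\sqrt{n}\}$, on which the radial projection $x \mapsto (r_n/\|x\|_2) x$ onto $\mathbb{S}^n(r_n)$ is Lipschitz with constant tending to $1$. Coupled with the exponential decay of the Gaussian mass of $\mathbb{R}^{n+1} \setminus A_n$, this produces an \emph{approximate} domination by $\Gamma^{n+1}_{r_n^2/(n+1)}$ of $\mathbb{S}^n(r_n)$ with box-error going to $0$. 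Using a truncation and a Lipschitz extension via Proposition \ref{prop:Lip_ext}, I would then show that for any pm-space $Y_n \prec \mathbb{S}^n(r_n)$ there exists $Z_n \prec \Gamma^{n+1}_{r_n^2/(n+1)}$ with $\square(Y_n, Z_n) \to 0$. Since the monotone family $\{\mathcal{P}\Gamma^{n+1}_{r_n^2/(n+1)}\}$ weakly converges to $\Gamma^\infty_{\sigma^2}$ by Proposition \ref{prop:monotone_convergence} (using continuity of the scale change in the weak topology), the reverse inclusion follows.

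For the boundary cases, when $\sigma = 0$ L\'evy's concentration inequality on $\mathbb{S}^n(r_n)$ gives observable diameter of order $r_n/\sqrt{n} \to 0$, so $\mathcal{P}\mathbb{S}^n(r_n) \to \{\ast\} = \Gamma^\infty_0$; when $\sigma = \infty$ the identity $\mathbb{S}^n(r_n) = (r_n/\sqrt{n})\, \mathbb{S}^n(\sqrt{n})$ together with the already-established case $r_n = \sqrt{n}$ and Example \ref{ex:blow-up} gives $\mathcal{P}\mathbb{S}^n(r_n) \to \mathcal{X} = \Gamma^\infty_\infty$. Finally, the \emph{only if} direction follows from a standard compactness argument: any subsequential limit of $\{r_n/\sqrt{n}\}$ in $[0,\infty]$ produces, via the \emph{if} direction, a corresponding subsequential weak limit of $\{\mathcal{P}\mathbb{S}^n(r_n)\}$ in $\Pi$, and since the pyramids $\{\Gamma^\infty_{\sigma^2}\}_{\sigma \in [0,\infty]}$ are pairwise distinct (by \cite{OS:lim}, or as detected by the continuous invariants $V$ and $C_{2,2}$ developed later in this paper), every subsequential limit of $r_n/\sqrt{n}$ must equal the same value $\sigma$.
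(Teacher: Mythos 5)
The paper does not actually prove this statement: Theorem \ref{thm:Shioya_sphere} is imported from Shioya's work \cite{S:sphere} and used as a black box, so there is no internal proof to compare yours against. Judged on its own terms, your outline follows essentially the classical route of \cite{S:sphere}: the Maxwell--Poincar\'e limit for the inclusion $\Gamma^\infty_{\sigma^2}\subset\lim\mathcal P\mathbb S^n(r_n)$, thin-shell concentration of the Gaussian plus the radial projection for the reverse inclusion, L\'evy concentration and blow-up for the boundary cases, and compactness of $[0,\infty]$ together with injectivity of $\sigma\mapsto\Gamma^\infty_{\sigma^2}$ (via the observable-diameter formula of \cite{OS:lim}) for the ``only if'' direction. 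All of this is sound in outline.

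One step as written would fail. You propose to transfer an arbitrary $Y_n\prec\mathbb S^n(r_n)$ to some $Z_n\prec\Gamma^{n+1}_{r_n^2/(n+1)}$ ``using a truncation and a Lipschitz extension via Proposition \ref{prop:Lip_ext}''. McShane--Whitney extension applies only to real-valued functions, whereas the domination $g_n\colon\mathbb S^n(r_n)\to Y_n$ takes values in an arbitrary metric space, so $g_n\circ T$ cannot be extended off the annulus this way; moreover the radial map is only $(1-\epsilon)^{-1}$-Lipschitz on the annulus, not $1$-Lipschitz, so the factor cannot simply be dropped. The repair is to argue at the level of pyramids: conditioning the Gaussian to the annulus and pushing forward by the radial map produces $S_n'\prec(1-\epsilon)^{-1}\Gamma^{n+1}_{r_n^2/(n+1)}$ with $\square\bigl(\mathbb S^n(r_n),S_n'\bigr)\to0$; then Theorem \ref{thm:rho}(3) together with $d_\conc\le\square$ gives $\rho(\mathcal P\mathbb S^n(r_n),\mathcal PS_n')\to0$, so every subsequential weak limit of $\mathcal P\mathbb S^n(r_n)$ is contained in $\Gamma^\infty_{\sigma^2/(1-\epsilon)^2}$, and letting $\epsilon\to0$ via Proposition \ref{prop:monotone_convergence}(2) and the continuity of the scale change yields the desired inclusion. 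With this adjustment (and a justification of the joint continuity of $(t,P)\mapsto tP$ needed to pass from $\mathcal P\Gamma^{n+1}\to\Gamma^\infty$ to $\mathcal P\Gamma^{n+1}_{r_n^2/(n+1)}\to\Gamma^\infty_{\sigma^2}$), your strategy is correct.
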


\begin{remark}
Due to the second author, the statement of Theorem \ref{thm:Shioya_sphere} is also true even if the distance function on $\mathbb S^n(r_n)$ is replaced by the intrinsic distance (\cite{K:mtf}).
Furthermore, by the second author and Shioya, it is known that the Euclidean ball $\mathbb B_{r_n}(0_n) = \{ x \in \mathbb R^n \mid \|x\|_2 \le r_n \}$ of radius $r_n$ with the flat distance and the uniform measure also converges to $\Gamma^\infty_{\sigma^2}$ if and only if $r_n/\sqrt{n} \to \sigma$ (\cite{KS}).

So, $\Gamma_{\sigma^2}^\infty$ is regarded as the infinite dimensional sphere (at the same time, infinite dimensional ball).
\end{remark}

\subsection{Fundamental	construction} 
\label{sec:intrinsic_construction}
Let $\cal D \subset \cal X$ be a directed set, that is, for every $X, X'  \in \cal D$, there exists $X'' \in \cal D$ with $X \prec X''$ and $X' \prec X''$.
If $\cal D$ is non-empty, the set
\begin{equation} \label{eq:intrinsic_construction}
\cal P \cal D := \overline{\bigcup_{X \in \cal D} \cal PX}
\end{equation}
is known to be a pyramid (see \cite[Lemma 6.10]{S}).
Such a $\cal D$ is called a {\it generating set} of $\cal P \cal D$.
Furthermore, if $\cal D$ is downward-closed {in addition}, then it is clear that $\cal P \cal D=\overline{\cal D}$.

Using such a way, we can define the {\it product} of pyramids.
For two pyramids $P,Q \in \Pi$ and $1 \le p \le \infty$, we set
\begin{equation} \label{eq:gen_of_product}
\cal D_{P \otimes_p Q} := \{ X \times_p Y \mid  X \in P, Y \in Q \}.
\end{equation}
It is easily to checked that $\cal D_{P \otimes_p Q}$ is a non-empty
directed set.
So, we obtain a pyramid
\[
P \otimes_p Q:= \cal P \cal D_{P \otimes_p Q}
\]
which is called the {\it $\ell_p$-product} of $P$ and $Q$.
For $n \ge 1$, we denote the $n$-times $\ell_p$-product of $P$ by $P_p^n$ and by $P^{\otimes_p n}$.

In particular, we obtain 
\[
P_p^\infty := \lim_{n \to \infty} P_p^n
\]
and
\[
\infty P := \lim_{ t \to \infty} t P = \overline{\bigcup_{t > 0} tP}
\]
by using Proposition \ref{prop:monotone_convergence}.
We also note that $\infty (P_p^\infty) = (\infty P)_p^\infty$ holds.

\begin{remark}
In stochastic analysis, Wiener measures (e.g., the law of Brownian motion on $\mathbb R^d$) are basic important objects.
We note that any Wiener process give a directed system of pm-spaces in a canonical way.
Hence, we obtain the corresponding pyramid.
Furthermore, this construction seems to be different from the infinite product construction.
A study of pyramids associated with Wiener process will be appeared in our future paper.
\end{remark}

\begin{lemma} \label{lem:approx_product}
Let $\{X_n\}_{n=1}^\infty$ and $\{Y_n\}_{n=1}^\infty$ be approximations of pyramids $P$ and $Q$, respectively.
Then $\{X_n \times_p Y_n\}_{n=1}^\infty$ is an approximation of $P \otimes_p Q$ for each $p \in [1,\infty]$.
\end{lemma}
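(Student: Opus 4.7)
The plan is to verify the two defining properties of an approximation: that $\{X_n \times_p Y_n\}_n$ is monotone nondecreasing in the Lipschitz order, and that its weak limit equals $P \otimes_p Q$.

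For monotonicity, the hypothesis provides $1$-Lipschitz dominations $f_n : X_{n+1} \to X_n$ and $g_n : Y_{n+1} \to Y_n$ with $(f_n)_\# m_{X_{n+1}} = m_{X_n}$ and $(g_n)_\# m_{Y_{n+1}} = m_{Y_n}$. The product map $f_n \times g_n$ is $1$-Lipschitz from $X_{n+1} \times_p Y_{n+1}$ to $X_n \times_p Y_n$ (by the coordinate-wise estimate in the $\ell_p$-norm), and Fubini yields $(f_n \times g_n)_\#(m_{X_{n+1}} \otimes m_{Y_{n+1}}) = m_{X_n} \otimes m_{Y_n}$. Hence $X_n \times_p Y_n \prec X_{n+1} \times_p Y_{n+1}$, and Proposition \ref{prop:monotone_convergence} gives that $\{\mathcal{P}(X_n \times_p Y_n)\}_n$ weakly converges to $R := \overline{\bigcup_{n} \mathcal{P}(X_n \times_p Y_n)}$.

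It remains to identify $R$ with $P \otimes_p Q$. The easy inclusion $R \subset P \otimes_p Q$ follows because $X_n \in P$ and $Y_n \in Q$ put $X_n \times_p Y_n$ in the generating set $\mathcal{D}_{P \otimes_p Q}$, so $\mathcal{P}(X_n \times_p Y_n) \subset P \otimes_p Q$, and taking box-closures preserves this. For the reverse inclusion, since $R$ is a pyramid (hence downward-closed and box-closed), it suffices to verify $X \times_p Y \in R$ for every $X \in P$ and $Y \in Q$; the closure of the union of $\mathcal{P}(X \times_p Y)$ will then be absorbed into $R$. Fix such $X, Y$. Using $P = \overline{\bigcup_n \mathcal{P} X_n}$, pick $\tilde X^{(k)} \in \mathcal{P} X_{n_k}$ with $\square(\tilde X^{(k)}, X) \to 0$, and likewise $\tilde Y^{(k)} \in \mathcal{P} Y_{m_k}$ with $\square(\tilde Y^{(k)}, Y) \to 0$. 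Setting $l_k := \max\{n_k, m_k\}$ and invoking the monotonicity of the approximations, we get $\tilde X^{(k)} \prec X_{l_k}$ and $\tilde Y^{(k)} \prec Y_{l_k}$, whence $\tilde X^{(k)} \times_p \tilde Y^{(k)} \in \mathcal{P}(X_{l_k} \times_p Y_{l_k}) \subset R$.

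The main technical point is then the continuity of the $\ell_p$-product in the box topology: I need $\tilde X^{(k)} \times_p \tilde Y^{(k)}$ to box-converge to $X \times_p Y$, so that box-closedness of $R$ finishes the proof. By Lemma \ref{lem:mG_convergece}, the $\tilde X^{(k)}$ and $X$ embed isometrically in a common complete separable metric space $Z$ with the associated measures converging weakly, and similarly the $\tilde Y^{(k)}, Y$ in some $W$. The products then embed isometrically in $(Z \times_p W, d_Z \times_p d_W)$, and the product measures $(\iota_Z)_\# m_{\tilde X^{(k)}} \otimes (\iota_W)_\# m_{\tilde Y^{(k)}}$ converge weakly to $(\iota_Z)_\# m_X \otimes (\iota_W)_\# m_Y$ (a standard fact for products of weakly convergent probability measures on Polish spaces, verifiable via products of bounded continuous test functions together with a Stone--Weierstrass argument). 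Applying the converse of Lemma \ref{lem:mG_convergece} yields the desired box convergence. This product-continuity in $\square$ is the only nonroutine step; everything else is bookkeeping with the Lipschitz order and the definitions.
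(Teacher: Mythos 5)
Your proof is correct and follows essentially the same route as the paper's: establish monotonicity of $\{X_n \times_p Y_n\}_n$, get the weak limit $R$ with $R \subset P \otimes_p Q$ from membership in the generating set, and then show every $X \times_p Y$ with $X \in P$, $Y \in Q$ lies in $R$ by approximating $X$ and $Y$ from below and using box-continuity of the $\ell_p$-product. The only difference is that the paper cites \cite{K:product} for that continuity, whereas you supply a self-contained (and valid) proof of it via Lemma \ref{lem:mG_convergece} and weak convergence of product measures.
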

\begin{proof}
Let us consider the generating set $\mathcal D_{P \otimes_p Q}$ of the product $P \otimes_p Q$ as in \eqref{eq:gen_of_product}.
Since $\{X_n \times_p Y_n\}_n$ is monotone nondecreasing, we obtain the weak limit
\[
R = \lim_{n \to \infty} \cal P(X_n \times_p Y_n).
\]
By $X_n \times_p Y_n \in \cal D_{P \otimes_p Q}$, we have $R \subset P \otimes_p Q$.

Let us take $X \in P$ and $Y \in Q$.
Then, there exist $X_n' \prec X_n$ and $Y_n' \prec Y_n$ such that $\square(X_n',X) + \square(Y_n',Y) \to 0$ as $n \to \infty$.
By \cite{K:product}, we obtain
\[
\square(X_n' \times_p Y_n', X \times_p Y) \to 0.
\]
Since $X_n \times_p Y_n \succ X_n' \times_p Y_n'$, we have {$X \times_p Y \in R$}.
Therefore, $\cal D_{P \otimes_p Q} \subset R$, 
and hence,
$P \otimes_p Q \subset R$.
This completes the proof.
\end{proof}

\begin{remark}
We shall give a small remark.
In the intrinsic construction \eqref{eq:intrinsic_construction}, the closure operation can not removed, in general.
For instance, $I^\infty$ is strictly bigger than $\bigcup_{n=1}^\infty \mathcal PI^n$, because the later set only consists of pm-spaces of finite diameter nevertheless the former set contains $\Gamma^1_{1/12}$ by Proposition \ref{prop:cube_vs_Gaussian}.
\end{remark}

\subsection{Observable diameter and separation distance}
Gromov introduced several invariants for pm-spaces \cite{G:green}.
We recall two of them, called the observable diameter and separation distance.
In \cite{OS:lim}, they are extended to the space of pyramids.

\begin{definition} \label{def:monotone_and_homogeneous_on_X}
Let $F : \cal X \to [0,\infty]$ be a functional.
We say that
\begin{itemize}
\item $F$ is {\it monotone} if $F(X) \le F(Y)$ for $X,Y \in \cal X$ with $X \prec Y$,
\item $F$ is {\it homogeneous} if there exists $d > 0$ such that $F(tX) = t^d F(X)$ for $X \in \cal X$ and $t > 0$.
\end{itemize}
More precisely, the second condition is said to be $d$-homogeneous.

These terminologies
are used for invariants of pyramids, that is,
a functional $F : \Pi \to [0,\infty]$ is said to be
\begin{itemize}
\item {\it monotone} if $F(P) \le F(Q)$ for $P, Q \in \Pi$ with $P \subset Q$,
\item {\it homogeneous} if there exists $d > 0$ such that $F(tP) = t^d F(P)$ for $P \in \Pi$ and $t > 0$.
\end{itemize}
The last condition is also said to be $d$-homogeneous.
\end{definition}

\begin{definition}[\cite{G:green}]
For $\kappa \in [0,1]$ and $\mu \in \mathscr P(\mathbb R)$, we set
\[
\diam_{\mathbb R} (\mu;1-\kappa) := \inf \left\{ \diam\, A\,\middle|\, A \subset \supp\,\mu \text{ with } \mu(A) \ge 1 -\kappa \right\}
\]
which is called the {\it partial diameter} of $\mu$.
Then, we set
\[
\ObsDiam(X;-\kappa) := \sup \left\{ \diam_{\mathbb R}(f_\# m_X;1-\kappa) \,\middle|\, f \in \Lip_1(X) \right\}
\]
which is called the {\it observable diameter} of $X$.
Note that $\ObsDiam(X;0)= \diam \, X = \diam\, \supp\, m_X$.
\end{definition}

\begin{definition}[\cite{OS:lim}]
The {\it observable diameter} of a pyramid $P$ is defined by
\begin{equation} \label{eq:obsdiam_of_pyramid}
\ObsDiam(P;-\kappa) := \lim_{\epsilon \to 0+} \sup_{X \in P} \ObsDiam(X;-\kappa-\epsilon).
\end{equation}
\end{definition}

\begin{remark} \label{rem:limit_formula}
Due to Ozawa and Shioya (\cite{OS:lim}), it is known that the right hand side of \eqref{eq:obsdiam_of_pyramid} is well-defined.
Furthermore, by the limit formula in \cite{OS:lim} and the monotonicity, it is reformulated as
\[
\ObsDiam(P;-\kappa) = \sup_{X\in P} \ObsDiam(X;-\kappa).
\]
\end{remark}

\begin{example} \label{ex:obsdiam_of_Gauss}
The observable diameter of the Gaussian space has an explicit form.
For $t \in [0, \infty)$, we set
\[
\Psi(t) := \gamma^{1}([0,t]) = \frac{1}{\sqrt{2 \pi}} \int_0^t e^{-s^2/2}\, ds.
\]
It is clearly smooth and invertible.
Then, we have
\[
\ObsDiam(\Gamma_{\sigma^2}^n;-\kappa) = 2 \sigma \Psi^{-1}\left(
\frac{1-\kappa}{2}
\right)
\]
for every $n \in \mathbb N$ and $\sigma > 0$ (see \cite{OS:lim}).

Furthermore, Ozawa and Shioya (\cite{OS:lim}) showed
\begin{equation} \label{eq:obsdiam_Gauss}
\ObsDiam(\Gamma_{\sigma^2}^\infty; - \kappa) = 2 \sigma \Psi^{-1} \left(
\frac{1-\kappa}{2}
\right)
\end{equation}
which is the same as the finite dimensional case.
In particular,
$\Gamma^\infty_{\sigma^2}$ and $\Gamma^\infty_{\tau^2}$ are distinct for each $\sigma \ne \tau > 0$.
\end{example}

\begin{definition}[{{\cite{G:green}}}] \label{def:separation}
Let $\kappa_1, \dots, \kappa_N > 0$. For $X \in \cal X$, the {\it separation distance} $\Sep(X;\kappa)$ of $X$ with respect to $\kappa = (\kappa_1, \dots, \kappa_N)$ is defined by the supremum of $\min_{i \ne j} d_X(A_i, A_j)$, where $\{A_i\}_{i=1}^N$ is a family of closed subsets of $X$ with $m_X(A_i) \ge \kappa_i$ for each $i$.
\end{definition}

The definition of the separation distance will be extended in Subsection \ref{subsec:dissip}.

\begin{definition}[{{\cite{OS:lim}}}] \label{def:sep_for_pyramid}
Let $\kappa = (\kappa_1, \dots, \kappa_N)$ be the same as in Definition \ref{def:separation}.
For $P \in \Pi$, the {\it separation distance} of $P$ with respect to $\kappa$ is defined by
\[
\Sep(P;\kappa) := \lim_{\delta \to 0+} \sup_{X\in P} \Sep(X; \kappa_1-\delta, \dots, \kappa_N-\delta).
\]
\end{definition}

Clearly, the observable diameter and the separation distance are monotone and $1$-homogeneous on $\cal X$ and on $\Pi$.

\section{Idempotent pyramids and scale invariant pyramids}
\label{sec:product_and_scale_change}

As in Subsection \ref{sec:intrinsic_construction}, we can define the product and the scale-change of pyramids.
In this section, we show several properties for these constructions.

\subsection{Idempotent pyramids}

\begin{lemma} \label{lem:product01}
Let $p \in [1,\infty]$.
Let $P$ and $Q$ denote arbitrary pyramids.
Then, the following {(1)--(5) hold}.
\begin{enumerate}
\item $\otimes_p$ is a commutative operator, that is, $P \otimes_p Q= Q \otimes_p P$ holds.
\item If $P \subset Q$, then $P \otimes_p R \subset Q \otimes_p R$ for each $R \in \Pi$.
\item $P \otimes_p Q$ contains both $P$ and $Q$.
\item The smallest pyramid $\{\ast \}$ is the identity element for $\otimes_p$.
\item The greatest pyramid $\cal X$ satisfies that $\cal X \otimes_p P = \cal X$.
\end{enumerate}
\end{lemma}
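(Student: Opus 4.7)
The plan is to work directly from the definition $P \otimes_p Q = \overline{\bigcup_{X\in P, Y\in Q} \mathcal{P}(X \times_p Y)}$ and exploit simple structural facts about the $\ell_p$-product of two pm-spaces, namely the mm-isomorphism $X \times_p Y \cong Y \times_p X$ and the fact that the coordinate projections $X \times_p Y \to X$ and $X \times_p Y \to Y$ are $1$-Lipschitz and push the product measure to the corresponding marginals.

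For (1) I would observe that the swap map induces an mm-isomorphism $X \times_p Y \to Y \times_p X$, so the generating sets $\mathcal{D}_{P \otimes_p Q}$ and $\mathcal{D}_{Q \otimes_p P}$ coincide as subsets of $\mathcal X$, which immediately gives $P \otimes_p Q = Q \otimes_p P$. For (2), if $P \subset Q$ then $\mathcal{D}_{P \otimes_p R} \subset \mathcal{D}_{Q \otimes_p R}$, hence $\bigcup_{Z \in \mathcal{D}_{P \otimes_p R}} \mathcal{P}Z \subset \bigcup_{Z \in \mathcal{D}_{Q \otimes_p R}} \mathcal{P}Z$ and taking box-closures preserves this inclusion.

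For (3), fix $X \in P$; since $Q$ is non-empty and downward-closed, at least $\ast \in Q$, so I may actually use any $Y \in Q$. The projection $\pi_1 : X \times_p Y \to X$ is $1$-Lipschitz with $(\pi_1)_\# (m_X \otimes m_Y) = m_X$, so $X \prec X \times_p Y$, hence $X \in \mathcal{P}(X \times_p Y) \subset P \otimes_p Q$. The argument for $Y \in Q$ is symmetric by (1). For (4), note $X \times_p \ast$ is mm-isomorphic to $X$, so $\mathcal{D}_{P \otimes_p \{\ast\}} = P$; therefore
\[
P \otimes_p \{\ast\} = \overline{\bigcup_{X \in P}\mathcal{P} X} = \overline{P} = P,
\]
the last equality because $P$ is downward-closed and box-closed by Definition \ref{def:pyramid}. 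Finally, (5) follows at once from (3) applied to the pair $(\mathcal X, P)$: we have $\mathcal X \subset \mathcal X \otimes_p P$ by (3), and the reverse inclusion holds because every pyramid is a subset of $\mathcal X$.

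No step looks like it poses a real obstacle; the only point to be careful about is in (4), where one must verify that the closure operation in \eqref{eq:intrinsic_construction} does not enlarge $P$, which is guaranteed precisely by the box-closedness clause built into the definition of a pyramid. All other items reduce to unwinding the definition of $\otimes_p$ and using that $\times_p$ is commutative and that projections out of $\ell_p$-products are dominations.
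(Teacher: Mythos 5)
Your proposal is correct and follows essentially the same route as the paper, which simply unwinds the definition of $\otimes_p$ via its generating set and notes that the remaining items are trivial. The only cosmetic difference is that you prove (3) directly from the coordinate projections being dominations, whereas the paper deduces it from (2) together with $\{\ast\}\subset P$ and the identity property (4); both are immediate.
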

\begin{proof}
Recall that $X,Y \in \cal X$ mean that $X$ and $Y$ are mm-isomorphism classes of pm-spaces. Hence, $X \times_p Y = Y \times_p X$ as classes.
So, we have $P \otimes_p Q = Q \otimes_p P$, that is, (1) holds.
(2) is clearly true.
(3) follows from (2), because $\{\ast\} \subset P$.
(4) and (5) are trivial.
\end{proof}

Due to this lemma, $\Pi$ is a commutative monoid with respect to the $\ell_p$-product, where the identity is $\{\ast\}$.
Its Grothendieck group is trivial, because $P \otimes_p \cal X = \cal X$ for every $P \in \Pi$.

Let us consider a property for pyramids which is regarded as the infinite product of itself.
\begin{definition}\label{def:idempotent}
Let $1 \le p \le \infty$.
We say that a pyramid $P$ is $\ell_p$-{\it idempotent} if $P \otimes_p P = P$ holds.
\end{definition}

\begin{proposition} \label{prop:idempotent}
For a pyramid $P$ and $p \in [1,\infty]$, the following conditions are mutually equivalent.
\begin{enumerate}
\item $P$ is $\ell_p$-idempotent, i.e., $P \otimes_p P = P$.
\item $P \otimes_p P \subset P$.
\item For every $X,Y \in P$, $X \times_p Y \in P$.
\item $P_p^\infty =P$.
\end{enumerate}
\end{proposition}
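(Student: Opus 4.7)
My plan is to prove the equivalences by the cycle $(1) \Rightarrow (2) \Rightarrow (3) \Rightarrow (4) \Rightarrow (1)$, exploiting that $P \subset P \otimes_p P$ and $P \subset P_p^\infty$ always hold by Lemma \ref{lem:product01}(3) and monotonicity, so each equality reduces to one inclusion.

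The implications $(1) \Rightarrow (2)$ and $(2) \Rightarrow (3)$ are essentially immediate from the definition of the $\ell_p$-product. For $(2) \Rightarrow (3)$, given $X, Y \in P$, the pm-space $X \times_p Y$ belongs to the generating set $\cal D_{P \otimes_p P}$ in \eqref{eq:gen_of_product}, hence to $P \otimes_p P$; by hypothesis it then lies in $P$. For $(4) \Rightarrow (1)$, note that $P \otimes_p P = P_p^2 \subset P_p^\infty = P$ since $\{P_p^n\}_n$ is monotone nondecreasing, and combined with $P \subset P \otimes_p P$ we obtain $P \otimes_p P = P$.

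The main content is $(3) \Rightarrow (4)$, which I would establish by induction to show $P_p^n \subset P$ for every $n \geq 1$. The base case is $P_p^1 = P$. Assuming $P_p^{n-1} \subset P$, any element of $P_p^n = P_p^{n-1} \otimes_p P$ lies in the box-closure of $\bigcup_{X \in P_p^{n-1},\,Y \in P} \cal P(X \times_p Y)$. For each such $X, Y$ we have $X, Y \in P$ by the inductive hypothesis, so $X \times_p Y \in P$ by (3); since $P$ is downward-closed in the Lipschitz order, $\cal P(X \times_p Y) \subset P$. Taking the union over all such pairs and then the box-closure, and using that $P$ is box-closed, yields $P_p^n \subset P$. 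Then $P_p^\infty = \overline{\bigcup_n P_p^n} \subset P$, while $P \subset P_p^\infty$ is automatic, giving equality.

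The step I expect to require the most care is this induction: one needs to control both the closure operation in the definition of $\otimes_p$ and the downward-closure of $P$ simultaneously, and to verify that (3), which is stated only for pairs in $P$, really propagates through all the iterated products used to build $P_p^n$. Once the inductive claim $P_p^n \subset P$ is in hand, passing to the limit is straightforward since box-closedness of $P$ is built into Definition \ref{def:pyramid}.
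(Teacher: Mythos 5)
Your proposal is correct and follows essentially the same route as the paper: the implications $(4)\Rightarrow(1)\Rightarrow(2)\Rightarrow(3)$ are treated as immediate, and the substantive step $(3)\Rightarrow(4)$ is proved by showing that iterated products of elements of $P$ remain in $P$ and then invoking downward-closedness and box-closedness of $P$. Your inductive formulation of $P_p^n \subset P$ is just a slightly more explicit packaging of the paper's direct approximation argument.
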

\begin{proof}
It is clear that (4) $\Rightarrow$ (1) $\Rightarrow$ (2) $\Rightarrow$ (3).
Let us show (3) $\Rightarrow$ (4).
We take $X \in P_p^{\otimes \infty}$.
Then, there exists $X_k \in \bigcup_{n=1}^\infty P_p^{\otimes n}$ such that $\square(X, X_k) \to 0$ as $k \to \infty$.
Let $n_k \in \mathbb N$ with $X_k \in P_p^{n_k}$.
Then, there exist $A_1^{(k)}, A_2^{(k)}, \dots, A_{n_k}^{(k)} \in P$ such that
\[
\square(X_k, A_{1}^{(k)} \times_p A_{2}^{(k)} \times_p \cdots \times_p A_{n_k}^{(k)}) \to 0 \text{ as $k \to \infty$.}
\]
By (3), $A_1^{(k)} \times_p \cdots \times_p A_{n_k}^{(k)} \in P$.
Therefore, we have
\begin{align*}
\square(X,P) &\le \square (X, A_1^{(k)} \times_p \cdots \times_p A_{n_k}^{(k)}) \\
&\le \square(X_k, A_1^{(k)} \times_p \cdots \times_p A_{n_k}^{(k)}) + \square(X_k,X)
\end{align*}
which converges to zero.
Hence, $X \in P$.
This implies (4).
\end{proof}

\begin{remark} \label{rem:delta-discrete}
The infinite $\ell_p$-product of any discrete spaces is trivial, due to works of Ozawa and Shioya, if $p< \infty$.
Let $\delta>0$ and $X$ a $\delta$-discrete pm-space, that is, for every two distinct points $x,x' \in X$, we have $d_X(x,x') \ge \delta$.
Suppose that $\# X \ge 2$.
Due to \cite[Theorem 1.2]{OS:est} and \cite[Theorem 1.1]{OS:lim}, we have
\[
\ObsDiam(X_p^\infty;-\kappa) = \infty
\]
for every $\kappa \in (0,1)$ and $1 \le p < \infty$.
Therefore, by {\cite[Lemmas 6.3 and 6.6]{OS:lim}},
we have $X_p^\infty = \cal X$.

We say that a pm-space $X$ is {\it mm-disconnected} if
$X$ admits a decomposition $X = A \cup A'$ into closed subsets with positive measures such that $d_X(A, A') > 0$.
It is clear that $X$ is {mm-disconnected} if and only if $X$ dominates a two-points pm-space.
So, if $X$ is {mm-disconnected}, then $X_p^\infty = \cal X$ for $1 \le p < \infty$.
\end{remark}

\begin{remark}
Note that the infinite $\ell_\infty$-product is not trivial, in general.
For instance, if $X \ne \ast$ has finite diameter, then $\diam\, X_\infty^n = \diam\, X$.
Hence, $\{\ast \} \ne X_\infty^\infty \ne \cal X$.

Furthermore, it is clear that $(\Gamma^1)_{{\infty}}^n$ is dominated by $\Gamma^n = (\Gamma^1)_{{2}}^n$.
So, we have $(\Gamma^1)_{{\infty}}^\infty \subset \Gamma^\infty$.
\end{remark}

\begin{example}
We give a non-trivial construction of $\ell_p$-idempotent pyramid.
Let us fix a pm-space $X$.
Due to \cite{NS:group}, the group $I(X)$ of isometries which preserves $m_X$ is compact with respect to the Ky Fan distance.
Let us fix a closed subgroup $G$ of $I(X)$.
Then, $G$ acts $X^n_p$ diagonally.
It is clear that the sequence $\{X^n_p/G\}_n$ of quotient pm-spaces is monotone nondecreasing in the Lipschitz order.
Hence, we have the limit
\[
X^\infty_p/G := \overline{\bigcup_{n=1}^\infty \cal P(X^n_p/G)}.
\]
Furthermore, we note that for $m,n \ge 1$, a canonical map
\[
X^{n+m}_p/G \to (X^n_p/G) \times_p (X^m_p/G)
\]
is a domination.
Therefore, we know that $X^\infty_p/G$ is $\ell_p$-idempotent.

In particular, $\Gamma^\infty/\mathbb Z_2$ and $\Gamma^\infty/S^1$ obtained in \cite{S:sphere} and \cite{ST} are $\ell_2$-idempotent pyramids.
\end{example}

Here are fundamental properties of idempotent pyramids:
\begin{proposition} \label{prop:idempotent02}
Let $P$ and $Q$ denote pyramids.
\begin{enumerate}
\item If both $P$ and $Q$ are $\ell_p$-idempotent, then so is $P \otimes_p Q$.
\item If both $P$ and $Q$ are $\ell_p$-idempotent, then $P \cap Q$ is an $\ell_p$-idempotent pyramid.
\item If $P$ is $\ell_p$-idempotent and $Q \subset P$, then $P \otimes_p Q=P$.
\item The set of all $\ell_p$-idempotent pyramids is closed in $\Pi$.
\end{enumerate}
\end{proposition}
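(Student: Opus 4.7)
The plan is to prove the four parts in order of increasing subtlety: first (3), then (2), then (1), then (4), leveraging Lemma~\ref{lem:product01} and the characterization of idempotency in Proposition~\ref{prop:idempotent}(3).

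For (3), I would argue directly: Lemma~\ref{lem:product01}(3) gives $P \subset P \otimes_p Q$, while the hypothesis $Q \subset P$ combined with Lemma~\ref{lem:product01}(2) and the idempotency of $P$ gives $P \otimes_p Q \subset P \otimes_p P = P$. For (2), first observe that $P \cap Q$ is non-empty (it contains $\{\ast\}$), downward-closed, and closed in the box topology. To verify directedness (so that $P\cap Q$ is a pyramid) and idempotency simultaneously, I would apply the criterion in Proposition~\ref{prop:idempotent}(3): for any $X,Y \in P\cap Q$, the product $X\times_p Y$ lies in $P$ by $\ell_p$-idempotency of $P$ and in $Q$ by $\ell_p$-idempotency of $Q$, hence in $P\cap Q$. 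Since $X, Y \prec X\times_p Y$ via the coordinate projections, this exhibits directedness and at the same time verifies the idempotency criterion for $P\cap Q$.

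For (1), I would again use Proposition~\ref{prop:idempotent}(3) applied to $P\otimes_p Q$. Given $A, B \in P\otimes_p Q$, the definition \eqref{eq:intrinsic_construction} of the intrinsic product produces $X_n, X_n' \in P$, $Y_n, Y_n' \in Q$, together with $A_n \prec X_n\times_p Y_n$ and $B_n \prec X_n'\times_p Y_n'$ satisfying $\square(A_n, A), \square(B_n, B) \to 0$. The product $A_n \times_p B_n$ is then dominated by $(X_n\times_p Y_n)\times_p (X_n'\times_p Y_n')$, which is mm-isomorphic to $(X_n\times_p X_n')\times_p (Y_n\times_p Y_n')$. By idempotency, $X_n\times_p X_n'\in P$ and $Y_n\times_p Y_n'\in Q$, so $A_n\times_p B_n$ sits in the generating set of $P\otimes_p Q$ and hence in $P\otimes_p Q$ itself. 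Continuity of the $\ell_p$-product with respect to $\square$ (\cite{K:product}) ensures $A_n\times_p B_n \to A\times_p B$ in box, and the box-closedness of $P\otimes_p Q$ yields $A\times_p B\in P\otimes_p Q$.

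For (4), let $\{P_n\}$ weakly converge to $P$ with each $P_n$ $\ell_p$-idempotent. For arbitrary $X, Y \in P$, the first condition of weak convergence supplies $X_n, Y_n \in P_n$ with $\square(X_n, X), \square(Y_n, Y) \to 0$. Idempotency of $P_n$ and Proposition~\ref{prop:idempotent}(3) give $X_n\times_p Y_n \in P_n$, while $\square(X_n\times_p Y_n, X\times_p Y) \to 0$ by continuity of the product. If $X\times_p Y$ lay outside $P$, the second condition of weak convergence would force $\liminf_n \inf_{W\in P_n}\square(X\times_p Y, W) > 0$, contradicting the $X_n\times_p Y_n\in P_n$ just constructed. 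Hence $X\times_p Y \in P$, and one further appeal to Proposition~\ref{prop:idempotent}(3) concludes that $P$ is $\ell_p$-idempotent. The main point requiring care throughout is the approximation step in part (1): one must ensure that an arbitrary element $A \in P\otimes_p Q$ admits approximations of the form $A_n \prec X_n\times_p Y_n$ with $X_n\in P$ and $Y_n\in Q$, which is precisely what the closure in the intrinsic construction \eqref{eq:intrinsic_construction} provides.
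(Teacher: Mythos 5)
Your proposal is correct and follows essentially the same route as the paper: part (3) via $P \subset P\otimes_p Q \subset P\otimes_p P = P$, parts (1), (2) and (4) via the criterion of Proposition~\ref{prop:idempotent}(3) together with the box-continuity of $\times_p$ from \cite{K:product}. The paper merely states (1) as ``clear'' and (2) as following from Proposition~\ref{prop:idempotent}; your write-up supplies the natural details (including the directedness of $P\cap Q$ and the coordinate-permutation isomorphism in (1)) without deviating from the intended argument.
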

\begin{proof}
(1) is clear.
(2) follows from Proposition \ref{prop:idempotent}.
Let us prove (3).
Since $Q \subset P$, we have $P \subset P \otimes_p Q \subset P \otimes_p P = P$.

We prove (4).
Let us take $\ell_p$-idempotent pyramids $P_n$ converging to $P$ in the weak Hausdorff sense.
For $X, Y \in P$, there exist $X_n, Y_n \in P_n$ such that $\square(X_n,X) + \square(Y_n,Y) \to 0$ as $n \to \infty$.
By \cite{K:product}, we have $\square(X_n \times_p Y_n, X \times_p Y) \to 0$.
Therefore, $X \times_p Y \in P$.
So, by Proposition \ref{prop:idempotent}, $P$ is $\ell_p$-idempotent.
\end{proof}

Remark that the intersection of two pyramids is not a pyramid in general.

\begin{example} \label{ex:diam}
Let $\delta \in [0, \infty]$.
Since $\diam : \cal X \to [0,\infty]$ is lower semicontinuous in the box topology, the set
\[
\cal X^{\delta} = \left\{ X \in \cal X \mid \diam\, X \le \delta \right\}
\]
is box-closed.
Since the property (3) in Proposition \ref{prop:idempotent} holds for $\cal X^\delta$, we know that $\cal X^{\delta}$ is an $\ell_\infty$-idempotent pyramid.
Note that $\cal X^{0} = \{\ast \}$ and $\cal X^{\infty} = \cal X$.
Furthermore, $\delta \cal X^{1}= \cal X^{\delta}$ holds and $\{ \cal X^{\delta}\}_{0 \le \delta \le \infty}$ is an injective continuous curve in $\Pi$ by Remark \ref{rem:curve} later.
\end{example}

\subsection{Scale invariant pyramids}
For a pyramid $P$, as {\eqref{eq:tP}}, we can define $t P$ for $t > 0$.
\begin{definition}
We say that $P$ is {\it scale invariant} if $t P = P$ holds for every $t > 0$.
\end{definition}

By using \eqref{eq:tX_box}, we immediately have:
\begin{proposition}
For example, $\infty P$ is scale invariant for each $P \in \Pi$.
Moreover, the following conditions are equivalent:
\begin{itemize}
\item $P$ is scale invariant.
\item There exists $t > 0$ with $t \ne 1$ such that $tP=P$.
\item $P=\infty P$.
\item For every $X \in P$ and $t > 0$, we have $t X \in P$.
\end{itemize}

If $F : \Pi \to [0,\infty]$ is monotone and homogeneous, then clearly $F(P) = \infty$ or $F(P) = 0$ for a scale invariant pyramid $P$.

Clearly, $\{\ast\}$ and $\cal X$ are scale invariant.
In Section \ref{sec:atom}, we will 
construct a family of scale invariant pyramids with infinite topological dimension.
\end{proposition}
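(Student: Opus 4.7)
The plan is to prove the compound proposition in four small pieces and weave them together, relying on two facts already established in the excerpt: scaling is continuous on $\mathcal X$ via \eqref{eq:tX_box}, and the family $\{tX\}_{t>0}$ is monotone nondecreasing in the Lipschitz order (since the identity $tX \to sX$ is a contraction when $s \le t$, so $sX \prec tX$). I would first handle the statement that $\infty P$ is scale invariant. Since $\infty P = \overline{\bigcup_{t>0} tP}$, I would observe that for any $s>0$,
\[
s(\infty P) = s \,\overline{\bigcup_{t>0} tP} = \overline{\bigcup_{t>0} stP} = \overline{\bigcup_{u>0} uP} = \infty P,
\]
where commuting $s(\cdot)$ with the closure is legitimate because \eqref{eq:tX_box} shows scaling by $s$ is a homeomorphism of $\mathcal X$ in the box topology (it maps box-closed sets to box-closed sets).

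For the equivalence of the four conditions, I would prove a cycle (i)$\Rightarrow$(iii)$\Rightarrow$(iv)$\Rightarrow$(i) together with the trivial (i)$\Rightarrow$(ii), and then the only substantive implication (ii)$\Rightarrow$(i). The cycle is routine: (i)$\Rightarrow$(iii) since $\bigcup_{t>0} tP = P$ when $tP=P$ for every $t$, and $P$ is box-closed; (iii)$\Rightarrow$(iv) because $tX \in tP \subset \infty P = P$; and (iv)$\Rightarrow$(i) because $tP \subset P$ combined with $P = t(t^{-1}P) \subset tP$ forces equality. The main obstacle is (ii)$\Rightarrow$(i): if $t_0P=P$ for some $t_0 \ne 1$, I may assume $t_0 > 1$ (else replace by $t_0^{-1}$, noting $t_0^{-1}P = t_0^{-1}(t_0P) = P$). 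Iterating gives $t_0^n P = P$ for all $n \in \mathbb Z$. For arbitrary $s>0$ and $X \in P$, choose $n$ with $t_0^n \ge s$; then $sX \prec t_0^n X$ by monotonicity of $\{tX\}_{t>0}$, and $t_0^n X \in t_0^n P = P$, so downward-closedness of $P$ gives $sX \in P$. Hence $sP \subset P$, and the same argument applied to $s^{-1}$ gives $P = s(s^{-1}P) \subset sP$, so $sP=P$ for every $s>0$.

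Finally, the remaining assertions are short. For a $d$-homogeneous monotone functional $F: \Pi \to [0,\infty]$ and a scale invariant $P$, the identity $F(P) = F(tP) = t^d F(P)$ must hold for every $t>0$, and the only values in $[0,\infty]$ fixed by multiplication by every positive real are $0$ and $\infty$. The scale invariance of $\{\ast\}$ is immediate since $t\ast = \ast$ as an isomorphism class, and $t\mathcal X = \mathcal X$ because $X \mapsto tX$ is a bijection of $\mathcal X$ onto itself. I expect no analytic subtleties beyond the bookkeeping in (ii)$\Rightarrow$(i); the only point requiring care is the use of \eqref{eq:tX_box} to guarantee that scaling preserves the box closure in the argument that $\infty P$ is scale invariant.
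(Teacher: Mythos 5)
Your proof is correct and follows the route the paper itself implicitly takes: the paper offers no written proof, merely prefacing the proposition with ``By using \eqref{eq:tX_box}, we immediately have,'' and your argument fills in exactly those details --- using \eqref{eq:tX_box} to see that scaling is a box-homeomorphism (hence commutes with closures), the monotonicity $sX \prec tX$ for $s \le t$ together with downward-closedness for the implication (ii)$\Rightarrow$(i), and the identity $F(P)=t^dF(P)$ for the dichotomy $F(P)\in\{0,\infty\}$. No gaps.
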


\begin{remark} \label{rem:curve}
Due to \cite{K:mtf}, for each $P \in \Pi$, the map $(0,\infty) \ni t \mapsto t P \in \Pi$ is continuous.
Note that if $P$ is not scale invariant, then $\{t P\}_{0<t \le \infty}$ is an injective continuous curve.
\end{remark}

\begin{remark} \label{rem:inftyI=cal_X}
Recall that $\infty \Gamma^\infty = \cal X$ by the convention \eqref{eq:infty_Gamma=cal_X}.
Moreover, due to \cite{OS:lim}, we obtain that
\[
t \Gamma^\infty = \Gamma_{t^2}^\infty \to \{\ast \} \text{ as } t \to 0+.
\]
So, the injective continuous curve $\{ \Gamma_{\sigma^2}^\infty \}_{0 \le \sigma \le \infty}$ has endpoints $\{\ast \}$ and $\cal X$.
Here, we regard $\Gamma_0^\infty$ and $\Gamma_\infty^\infty$ as $\{\ast\}$ and $\cal X$, respectively.
\end{remark}

\begin{proposition} \label{prop:set_of_scale_invariant_pyramids_is_closed}
If $P$ and $Q$ are scale invariant pyramids, then so is $P \otimes_p Q$ for every $p \in [1, \infty]$.
The set of all scale invariant pyramids is closed in $\Pi$.
\end{proposition}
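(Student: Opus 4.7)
The plan is to derive both assertions from the continuity of the scaling map $P\mapsto tP$ on $\Pi$ (Remark~\ref{rem:curve}) together with the pointwise identity $t(X\times_p Y)=(tX)\times_p(tY)$, which follows immediately from the $1$-homogeneity of the $\ell_p$-combination of distances (for every $p\in[1,\infty]$).

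For the scale invariance of $P\otimes_p Q$, I would fix $t>0$ and choose approximations $\{X_n\}\subset P$ and $\{Y_n\}\subset Q$ of $P$ and $Q$. By Lemma~\ref{lem:approx_product} the sequence $\{X_n\times_p Y_n\}$ is an approximation of $P\otimes_p Q$, so $\mathcal P(X_n\times_p Y_n)\to P\otimes_p Q$ in $\Pi$. Applying the homeomorphism $t\colon\Pi\to\Pi$ and using the identity above, $\mathcal P((tX_n)\times_p(tY_n))=\mathcal P(t(X_n\times_p Y_n))=t\mathcal P(X_n\times_p Y_n)\to t(P\otimes_p Q)$. On the other hand, since scaling preserves the Lipschitz order (any dominating $1$-Lipschitz map remains $1$-Lipschitz after rescaling both sides by $t$) and since $tP=P$, $tQ=Q$, the sequences $\{tX_n\}$ and $\{tY_n\}$ are themselves approximations of $P$ and $Q$. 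Lemma~\ref{lem:approx_product} applied once more yields $\mathcal P((tX_n)\times_p(tY_n))\to P\otimes_p Q$. Uniqueness of limits in the compact metric space $(\Pi,\rho)$ gives $t(P\otimes_p Q)=P\otimes_p Q$.

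For the closedness, suppose $P_n\to P$ weakly with each $P_n$ scale invariant, and fix $t>0$. Continuity of scaling yields $tP_n\to tP$, while $tP_n=P_n$ for every $n$ gives $P_n\to tP$. Uniqueness of weak limits then forces $tP=P$, so $P$ is scale invariant.

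The main obstacle is mild: verifying that $\{tX_n\}$ is still a valid approximation of $P$. This requires (i) monotonicity $tX_n\prec tX_{n+1}$, immediate from the rescaling invariance of the Lipschitz order, and (ii) weak convergence $\mathcal P(tX_n)=t\mathcal P(X_n)\to tP=P$, immediate from continuity of scaling combined with scale invariance. Beyond these routine verifications, no new tools are needed beyond those already established in Sections~\ref{sec:prelim} and~\ref{sec:product_and_scale_change}.
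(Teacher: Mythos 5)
Your argument is correct and follows the same route as the paper, whose entire proof reads: the first statement is clear, and the second follows from \eqref{eq:tX_box}. One small correction: the continuity you need, namely that $P_n \to P$ weakly implies $tP_n \to tP$ for fixed $t>0$, is not what Remark \ref{rem:curve} states (that remark gives continuity of $t \mapsto tP$ for fixed $P$); the statement you actually use follows instead from the estimate $\square(tX,tY) \le c(t)\,\square(X,Y)$ of \eqref{eq:tX_box}, which is precisely what the paper invokes, so the gap is only one of attribution, not of substance.
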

\begin{proof}
The first statement is clear.
The second one follows from \eqref{eq:tX_box}.
\end{proof}

\section{General theory of invariants} \label{sec:invariant}

This section is devoted to express general theory of numerical invariants for pm-spaces and pyramids.
\subsection{General construction}
We consider invariants for pyramids given by the following general way.
If $F : \cal X \to [0,\infty]$ is monotone, then its standard extension
\[
\Pi(F) : \Pi \to [0,\infty]
\]
is defined by
\begin{equation} \label{eq:extension}
\Pi(F)(P) := \sup_{X \in P} F(X).
\end{equation}
Furthermore, we often write $\Pi(F) = F$, when there is no confusion.
Note that $\Pi(F)$ is also monotone on $\Pi$.
Furthermore, we have:
\begin{lemma} \label{lem:general_lemma}
Let $F : \cal X \to [0,\infty]$ be a monotone functional.
Then, the following three conditions are equivalent:
\begin{enumerate}
\item $F$ is lower semicontinuous in the box topology on $\cal X$.
\item $F$ is lower semicontinuous in the concentration topology on $\cal X$.
\item $\Pi(F)$ is lower semicontinuous in the weak Hausdorff topology on $\Pi$.
\end{enumerate}
\end{lemma}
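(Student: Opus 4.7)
The plan is to prove the cyclic chain of implications $(3) \Rightarrow (2) \Rightarrow (1) \Rightarrow (3)$. The key preliminary observation I would isolate first is the identity
\[
\Pi(F)(\cal PX) = F(X) \quad \text{for every } X \in \cal X,
\]
which is an immediate consequence of monotonicity of $F$ (every $Y \prec X$ satisfies $F(Y) \le F(X)$, and $X \in \cal PX$ attains the supremum). Thanks to Theorem \ref{thm:rho}(3), this says that $\Pi(F) \circ \iota = F$ under the canonical embedding $\iota : \cal X \hookrightarrow \Pi$.

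For $(2) \Rightarrow (1)$, I would simply note that the box topology is finer than the concentration topology on $\cal X$, so lower semicontinuity with respect to the coarser topology implies lower semicontinuity with respect to the finer one. For $(3) \Rightarrow (2)$, I take a sequence $X_n \to X$ in $d_\conc$. By Theorem \ref{thm:rho}(3), the sequence $\cal PX_n$ converges to $\cal PX$ in the weak Hausdorff topology; then (3) combined with the identity above yields $\liminf_n F(X_n) = \liminf_n \Pi(F)(\cal PX_n) \ge \Pi(F)(\cal PX) = F(X)$.

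For $(1) \Rightarrow (3)$, which is the essential direction, I would proceed as follows. Let $P_n \to P$ in the weak Hausdorff topology and fix an arbitrary $X \in P$. By condition (1) of the definition of weak Hausdorff convergence, there exist $X_n \in P_n$ with $\square(X_n, X) \to 0$. Applying (1) to this box-convergent sequence gives $\liminf_n F(X_n) \ge F(X)$. Since $X_n \in P_n$ we have $\Pi(F)(P_n) \ge F(X_n)$, hence $\liminf_n \Pi(F)(P_n) \ge F(X)$. Taking the supremum over all $X \in P$ yields $\liminf_n \Pi(F)(P_n) \ge \Pi(F)(P)$, which is the desired lower semicontinuity of $\Pi(F)$.

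None of the three implications presents a real obstacle once the right formulation is in place; the crucial ingredient is the ability, guaranteed by the definition of weak Hausdorff convergence itself, to box-approximate every member of the limit pyramid by members of the approximating pyramids. The role of monotonicity is equally indispensable: it is what prevents the supremum in the definition of $\Pi(F)$ from blowing up and what collapses $\Pi(F)(\cal PX)$ back to $F(X)$, making the three topologies record the same lower semicontinuity information.
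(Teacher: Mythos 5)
Your proposal is correct and follows essentially the same route as the paper: the paper also dismisses $(3)\Rightarrow(2)\Rightarrow(1)$ as clear (for the same reasons you spell out) and proves $(1)\Rightarrow(3)$ by exactly your argument, namely box-approximating an arbitrary $X\in P$ by $X_n\in P_n$ and taking the supremum over $X$. Your explicit observation that $\Pi(F)(\cal PX)=F(X)$ by monotonicity is a useful clarification but not a departure from the paper's approach.
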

\begin{proof}
The implications (3) $\Rightarrow$ (2) $\Rightarrow$ (1) are clear.
We prove that (1) implies (3).
Let us take a sequence $\{P_n\}_n \subset \Pi$ weakly converging to $P$ as $n \to \infty$.
For each $X \in P$, there exists $X_n \in P_n$ with $\square(X_n, X) \to 0$.
By (1), we have
\[
\liminf_{n \to \infty} \Pi(F)(P_n) \ge \liminf_{n \to \infty} F(X_n) \ge F(X).
\]
Since $X$ is an arbitrary element of $P$, we obtain (3).
\end{proof}
From now on, we say that a monotone functional $F : \cal X \to [0,\infty]$ is {\it lower semicontinuous} if $F$ satisfies one of three equivalent conditions in Lemma \ref{lem:general_lemma}.

We are going to see that our extension procedure \eqref{eq:extension} is canonical (Corollary \ref{thm:unique_extension}).
\begin{lemma} \label{lem:lower_approximation}
Let $G : \Pi \to [0,\infty]$ be a monotone functional which is lower semicontinuous with respect to the weak Hausdorff topology.
Let $P \in \Pi$ and $\{X_n\}_n \subset \cal X$ satisfy
that every $X_n$ is contained in $P$ and $\cal P X_n$ weakly converges to $P$ as $n \to \infty$.
Then, we have
\[
\lim_{n \to \infty} G(\cal PX_n) = G(P).
\]
\end{lemma}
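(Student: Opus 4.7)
The plan is a simple two-sided squeeze that invokes each of the two hypotheses on $G$ exactly once. The key observation is that the assumption $X_n \in P$, together with the downward-closedness of the pyramid $P$ in the Lipschitz order (clause (2) of Definition \ref{def:pyramid}), forces the entire principal pyramid $\mathcal{P}X_n = \{Y \in \mathcal X \mid Y \prec X_n\}$ to be contained in $P$. So we get a chain of inclusions $\mathcal{P}X_n \subset P$ for every $n$.

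Applying the monotonicity of $G$ on $\Pi$ to this inclusion yields $G(\mathcal{P}X_n) \le G(P)$ for every $n$, and hence
\[
\limsup_{n \to \infty} G(\mathcal{P}X_n) \le G(P).
\]
For the reverse inequality, I would invoke the lower semicontinuity of $G$ with respect to the weak Hausdorff topology on $\Pi$ directly against the hypothesis that $\mathcal{P}X_n$ weakly converges to $P$. This gives
\[
\liminf_{n \to \infty} G(\mathcal{P}X_n) \ge G(P).
\]
Combining the two inequalities shows that the limit exists and equals $G(P)$.

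There is no genuine obstacle; the lemma is a mechanical consequence of packaging the two defining properties of $G$ together with the structural fact that $X_n \in P$ implies $\mathcal{P}X_n \subset P$. The only subtlety worth flagging is that one must use the $X_n \in P$ hypothesis (not merely the weak convergence $\mathcal{P}X_n \to P$) in order to produce the upper bound via monotonicity — without it, only the lower bound from lower semicontinuity would be available.
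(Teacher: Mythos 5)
Your proof is correct and follows essentially the same two-sided squeeze as the paper: monotonicity (via $X_n \in P \Rightarrow \mathcal{P}X_n \subset P$, which the paper leaves implicit) gives the upper bound on the $\limsup$, and lower semicontinuity against the weak convergence gives the lower bound on the $\liminf$. No differences worth noting.
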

\begin{proof}
Since $G$ is monotone, we have $G(\cal PX_n) \le G(P)$.
So, we obtain $\limsup_{n \to \infty} G(\cal PX_n) \le G(P)$.
Furthermore, since $G$ is lower semicontinuous, $\liminf_{n \to \infty} G(\cal PX_n) \ge G(P)$.
\end{proof}

\begin{corollary} \label{thm:unique_extension}
Let $F : \cal X \to [0,\infty]$ be a monotone functional which is lower semicontinuous.
Let $G : \Pi \to [0,\infty]$ be a monotone functional which is lower semicontinuous in the weak Hausdorff topology.
Suppose that $G$ is an extension of $F$.
Then, $G$ coincides with the standard extension $\Pi(F)$ of $F$ to $\Pi$ as in \eqref{eq:extension}.
\end{corollary}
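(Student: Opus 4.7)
The plan is to show $G(P) = \Pi(F)(P)$ for every $P \in \Pi$ by evaluating both sides along an approximation of $P$ and using Lemma \ref{lem:lower_approximation}.

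First I would fix $P \in \Pi$ and choose an approximation $\{X_n\}_{n \in \mathbb N} \subset \cal X$ of $P$; that is, a monotone nondecreasing sequence in the Lipschitz order with $\cal P X_n \subset P$ for every $n$ and $\cal P X_n \to P$ weakly as $n \to \infty$. The existence of such an approximation is recalled in the paragraph after Proposition \ref{prop:monotone_convergence}. In particular, each $X_n$ belongs to $P$, so the hypothesis of Lemma \ref{lem:lower_approximation} is satisfied for both $G$ and $\Pi(F)$ (the latter is monotone and, by Lemma \ref{lem:general_lemma}, lower semicontinuous on $\Pi$ since $F$ is lower semicontinuous).

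Next I would evaluate both $G$ and $\Pi(F)$ at the pm-space $\cal PX_n$. Since $G$ extends $F$ and $\cal P X_n$ is the pyramid with apex $X_n$, which under the identification $\iota$ corresponds to $X_n \in \cal X$, we have $G(\cal P X_n) = F(X_n)$. On the other hand, directly from the definition \eqref{eq:extension} together with the monotonicity of $F$,
\[
\Pi(F)(\cal P X_n) = \sup_{Y \in \cal P X_n} F(Y) = \sup_{Y \prec X_n} F(Y) = F(X_n).
\]
Thus $G$ and $\Pi(F)$ coincide along the sequence $\{\cal P X_n\}_n$.

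Finally, applying Lemma \ref{lem:lower_approximation} to $G$ gives $G(\cal P X_n) \to G(P)$, and applying it to $\Pi(F)$ gives $\Pi(F)(\cal P X_n) \to \Pi(F)(P)$. Since the two sequences are equal, the limits agree, and $G(P) = \Pi(F)(P)$. As $P$ was arbitrary, this proves $G = \Pi(F)$. There is no real obstacle here; the only subtlety worth being explicit about is the invocation of Lemma \ref{lem:general_lemma} to guarantee that $\Pi(F)$ is itself lower semicontinuous on $\Pi$, which is exactly what lets Lemma \ref{lem:lower_approximation} apply to it.
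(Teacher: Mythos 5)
Your proof is correct and follows essentially the same route as the paper: fix an approximation $\{X_n\}$ of $P$, observe that both $G$ and $\Pi(F)$ satisfy the hypotheses of Lemma \ref{lem:lower_approximation}, and conclude that both equal $\lim_{n\to\infty} F(X_n)$. The extra details you supply (that $\Pi(F)(\cal PX_n)=F(X_n)$ by monotonicity and that Lemma \ref{lem:general_lemma} gives the lower semicontinuity of $\Pi(F)$) are exactly the points the paper leaves implicit.
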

\begin{proof}
Let us fix $P \in \Pi$ and take an approximation $\{X_n\}_{n=1}^\infty$ of $P$.
Then, by Lemma \ref{lem:lower_approximation}, we have
\[
G(P) = \lim_{n \to \infty} F(X_n).
\]
Clearly, $\Pi(F)$ satisfies the same assumption as $G$, we also have
\[
\Pi(F)(P) = \lim_{n \to \infty} F(X_n).
\]
This completes the proof.
\end{proof}

\begin{corollary} \label{cor:limit}
Let $F$ be monotone and lower semicontinuous.
Then, we have
\[
F(X_p^\infty) = \lim_{n \to \infty} F(X_p^n) \text{ and } F(\infty X) = \lim_{t \to \infty} F(tX)
\]
for each $X \in \cal X$ and $p \in [1, \infty]$.
\end{corollary}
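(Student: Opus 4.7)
The plan is to deduce this as a direct consequence of Lemma \ref{lem:lower_approximation} applied to the standard extension $G = \Pi(F)$. By Lemma \ref{lem:general_lemma}, $\Pi(F)$ is monotone and lower semicontinuous on $\Pi$ in the weak Hausdorff topology, so it satisfies the hypotheses of Lemma \ref{lem:lower_approximation}. Since we are identifying $F$ with $\Pi(F)$, it suffices to exhibit, for each of the two pyramids in question, an approximating sequence lying inside it.

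For the first equality, the sequence $\{X_p^n\}_{n=1}^\infty$ is monotone nondecreasing in the Lipschitz order (the standard projection $X_p^{n+1} \to X_p^n$ is a domination), so by construction $\cal P X_p^n \subset \cal P X_p^{n+1} \subset X_p^\infty$, and by Proposition \ref{prop:monotone_convergence} the sequence $\{\cal P X_p^n\}_n$ weakly converges to $X_p^\infty = \overline{\bigcup_n \cal P X_p^n}$. In particular, each $X_p^n$ is an element of the pyramid $X_p^\infty$, so Lemma \ref{lem:lower_approximation} yields
\[
F(X_p^\infty) \;=\; \Pi(F)(X_p^\infty) \;=\; \lim_{n \to \infty} \Pi(F)(\cal P X_p^n) \;=\; \lim_{n \to \infty} F(X_p^n),
\]
where the last equality uses $\Pi(F)(\cal PX) = F(X)$ for $X \in \cal X$, which is immediate from the definition \eqref{eq:extension} together with the monotonicity of $F$.

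For the second equality, the family $\{tX\}_{t > 0}$ is monotone nondecreasing in $t$, so $F(tX)$ is monotone nondecreasing and its limit as $t \to \infty$ exists. Choose any sequence $t_n \to \infty$; then $\{t_n X\}_n$ is monotone nondecreasing in the Lipschitz order, each $t_n X$ lies in the pyramid $\infty X = \overline{\bigcup_{t > 0} \cal P(tX)}$, and by Proposition \ref{prop:monotone_convergence} we have $\cal P(t_n X) \to \infty X$ weakly. Applying Lemma \ref{lem:lower_approximation} again gives $F(\infty X) = \lim_n F(t_n X) = \lim_{t \to \infty} F(tX)$.

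There is essentially no obstacle: the argument is a mechanical application of the approximation lemma, and the only point that requires a small comment is that $\Pi(F)$ agrees with $F$ on $\iota(\cal X)$, which follows from the monotonicity built into the supremum defining $\Pi(F)$.
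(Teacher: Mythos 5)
Your proof is correct and follows exactly the route the paper intends: the corollary is stated as an immediate consequence of Lemma \ref{lem:lower_approximation} applied to $\Pi(F)$ (which is monotone and lower semicontinuous by Lemma \ref{lem:general_lemma}), using that $\{X_p^n\}$ and $\{t_nX\}$ are approximations lying inside $X_p^\infty$ and $\infty X$ respectively. The paper gives no separate proof, and your write-up supplies precisely the missing details, including the identification $\Pi(F)(\cal PX)=F(X)$.
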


\begin{remark}
In \cite{OS:lim}, Ozawa and Shioya proved that $\ObsDiam(\,\cdot\,;-\kappa)$ is lower semicontinuous on $\Pi$ for each $\kappa \in (0,1)$.

On the other hand, for each $\kappa_1, \dots, \kappa_N > 0$ with $\sum_i \kappa_i < 1$, they showed that $\Sep(\,\cdot\,;\kappa_1, \dots, \kappa_N)$ is upper semicontinuous on $(\cal X, \square)$, on $(\cal X, d_\conc)$ and on $(\Pi, \rho)$.
Furthermore, we do not care whether $\Sep(P;\kappa_1, \dots, \kappa_N)$ coincides with $\sup_{X \in P} \Sep(X;\kappa_1, \dots, \kappa_N)$ or not (compare Remark \ref{rem:limit_formula}).
\end{remark}

We now express how to define monotone lower semicontinuous invariants on $\Pi$, in a general framework.

\begin{proposition} \label{prop:general_prop}
Let $\{ A_\lambda \}_{\lambda \in [0, \infty]}$ denote a family of downward-closed and box-closed subsets of $\cal X$.
Suppose that $A_\lambda \subset A_{\lambda'}$ holds for every $\lambda \le \lambda'$.
Then,
\[
F(X) := \inf \left\{ \lambda \in [0, \infty] \,\middle|\, X \in A_\lambda \right\}
\]
for $X \in \cal X$, is a monotone lower semicontinuous invariant on $\cal X$.
Furthermore, the canonical extension of $F$ is represented as
\[
F(P) = \inf \left\{ \lambda \mid P \subset A_{\lambda} \right\}
\]
for $P \in \Pi$.
\end{proposition}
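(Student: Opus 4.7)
The plan is to verify three things in turn: first, that $F$ is monotone on $\cal X$; second, that $F$ is lower semicontinuous in the box topology; and third, that Corollary \ref{thm:unique_extension} combined with an elementary duality between the sup formula $\Pi(F)(P) = \sup_{X\in P} F(X)$ and the stated inf formula closes the argument. Throughout, the key abstract input is that any $\lambda$ with $F(X) < \lambda$ automatically witnesses $X \in A_\lambda$: indeed, by definition of the infimum there exists $\mu < \lambda$ with $X \in A_\mu$, and by the monotonicity of the family $\{A_\lambda\}$ we conclude $X \in A_\lambda$.

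For monotonicity, I would observe that if $Y \prec X$, then downward-closedness of each $A_\lambda$ gives $\{\lambda : X \in A_\lambda\} \subset \{\lambda : Y \in A_\lambda\}$, so $F(Y) \le F(X)$. For lower semicontinuity, let $X_n$ box-converge to $X$ and set $c := \liminf_n F(X_n)$. For any $\lambda > c$, pass to a subsequence with $F(X_{n_k}) < \lambda$; by the observation above each $X_{n_k}$ lies in $A_\lambda$. Since $A_\lambda$ is box-closed, $X \in A_\lambda$, so $F(X) \le \lambda$. Letting $\lambda \downarrow c$ gives $F(X) \le c$, which is exactly lower semicontinuity in the box topology. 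By Lemma \ref{lem:general_lemma}, this propagates to lower semicontinuity of $\Pi(F)$ on $\Pi$.

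Now the canonical extension $\Pi(F)$ is identified by Corollary \ref{thm:unique_extension} as the pointwise supremum $\Pi(F)(P) = \sup_{X\in P} F(X)$, so I only need to show that this supremum equals $G(P) := \inf\{\lambda : P \subset A_\lambda\}$. For $\Pi(F)(P) \le G(P)$: if $P \subset A_\lambda$, then $F(X) \le \lambda$ for every $X \in P$, hence $\sup_{X\in P} F(X) \le \lambda$. For the reverse, fix any $\lambda > \sup_{X\in P} F(X)$; then $F(X) < \lambda$ for every $X \in P$, so by the opening observation $X \in A_\lambda$ for each $X \in P$, whence $P \subset A_\lambda$ and $G(P) \le \lambda$; letting $\lambda$ descend to $\sup_{X\in P}F(X)$ yields $G(P) \le \Pi(F)(P)$. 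There is no genuine obstacle; the only point that requires care is the bookkeeping around the infimum, which is handled uniformly by the monotonicity $A_\mu \subset A_\lambda$ for $\mu < \lambda$.
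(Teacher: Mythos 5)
Your proof is correct and follows essentially the same route as the paper: monotonicity from downward-closedness, lower semicontinuity from box-closedness of the $A_\lambda$ (you argue sequentially where the paper shows the superlevel sets are open, which is the same idea), and the identification of the extension. If anything, your direct duality computation showing $\sup_{X\in P}F(X)=\inf\{\lambda : P\subset A_\lambda\}$ is more explicit than the paper, which simply cites Corollary \ref{thm:unique_extension} for this step.
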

\begin{proof}
The monotonicity of $F$ is trivial.
To prove the lower semicontinuity of $F$, it suffices to show that $O:= \{X \in \cal X \mid F(X) > t \}$ is open in the box topology for each $t \in [0,\infty]$.
Let us take $X \in O$.
Then, there exists $\epsilon > 0$ such that $X \not\in A_{\lambda}$ for each $\lambda < t + \epsilon$.
Let us take $\lambda \in (t+\epsilon/2, t + \epsilon)$ and set $U_\lambda := \{Y \mid Y \not\in A_\lambda\}$.
For each $Y \in U_\lambda$, we have $F(Y) \ge t +\epsilon/2$.
Hence, $U_\lambda \subset O$.
Since $U_\lambda$ is box-open and $X \in U_\lambda$, we know that $O$ is open.
This completes the proof of the first statement.
The last statement follows from Corollary \ref{thm:unique_extension}.
\end{proof}

\subsection{Homogeneous invariants}
We also have a complete characterization of monotone lower semicontinuous homogeneous invariants.
Let us consider the following two sets:
\begin{itemize}
\item the set $\cal I$ of all $1$-homogeneous monotone lower semicontinuous invariants on $\cal X$, and
\item the set $\cal A$ of all downward-closed box-closed subsets in $\cal X$.
\end{itemize}
For $A \subset \cal X$, we are going to denote $t A = \{t X \in \cal X \mid X \in A \}$.
\begin{lemma} \label{lem:tA}
If a box-closed subset $A$ of $\cal X$ satisfies that $t A \subset A$ for every $0<t<1$, then $A = \bigcap_{\lambda > 1} \lambda A$.
\end{lemma}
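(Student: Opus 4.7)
The plan is to prove the two inclusions $A \subset \bigcap_{\lambda > 1} \lambda A$ and $\bigcap_{\lambda > 1} \lambda A \subset A$ separately, using the hypothesis on one side and the box-closedness on the other.

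For the forward inclusion $A \subset \bigcap_{\lambda > 1} \lambda A$, I would fix $\lambda > 1$ and $X \in A$, and set $Y := (1/\lambda) X$. Since $1/\lambda \in (0,1)$, the assumption $tA \subset A$ applied to $t = 1/\lambda$ gives $Y = (1/\lambda) X \in (1/\lambda) A \subset A$, and hence $X = \lambda Y \in \lambda A$. Since $\lambda > 1$ is arbitrary, $X \in \bigcap_{\lambda > 1} \lambda A$.

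For the reverse inclusion, take $X \in \bigcap_{\lambda > 1} \lambda A$. By definition, for each $\lambda > 1$ there exists $Z_\lambda \in A$ with $X = \lambda Z_\lambda$, i.e.\ $Z_\lambda = (1/\lambda) X \in A$. The key point is then that $(1/\lambda) X$ converges to $X$ in the box topology as $\lambda \to 1+$. This is immediate from the scaling estimate \eqref{eq:tX_box}: taking $Y = X$ and the parameter $(1/\lambda)$ versus $1$, one obtains $\square((1/\lambda) X, X) \to 0$ as $\lambda \to 1+$ (alternatively one may invoke Remark \ref{rem:curve}, since the continuity of $t \mapsto tX$ in $\Pi$ together with the homeomorphism $\iota$ yields the same). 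Since $(1/\lambda) X \in A$ for every $\lambda > 1$ and $A$ is box-closed, we conclude $X \in A$.

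This completes both inclusions, so $A = \bigcap_{\lambda > 1} \lambda A$. The only substantive step is the continuity of the scaling operation, which is already available as \eqref{eq:tX_box}; everything else is a direct unwrapping of the definition of $\lambda A$ combined with the hypothesis $tA \subset A$ for $t \in (0,1)$. There is no real obstacle here, but the argument must be presented carefully because the hypothesis is one-sided (it gives shrinking but not expanding), which is precisely why the reverse inclusion needs the closedness of $A$ and cannot be concluded from the hypothesis alone.
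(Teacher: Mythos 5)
Your argument is the same as the paper's: the inclusion $A \subset \bigcap_{\lambda>1}\lambda A$ is immediate from the hypothesis, and the reverse inclusion uses that $\lambda^{-1}X \in A$ for all $\lambda>1$ together with $\lambda^{-1}X \to X$ in the box topology and the box-closedness of $A$. One caveat: your justification of that convergence is off — \eqref{eq:tX_box} bounds $\square(tX,tY)$ by $c(t)\square(X,Y)$, so with $Y=X$ it gives the trivial statement $\square(tX,tX)=0$ and says nothing about $\square(tX,X)$, while Remark \ref{rem:curve} concerns continuity in the weak topology on $\Pi$, not the box topology; the correct reference is the continuity of metric transformation in the box distance from \cite{K:mtf}, which is exactly what the paper cites at this step.
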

\begin{proof}
Let $A' := \bigcap_{\lambda > 1} \lambda A$.
From the assumption, we have $A' \supset A$.
For $X \in A'$, we have $\lambda^{-1}X \in A$ for every $\lambda > 1$.
Then, $\lambda^{-1}X$ box-converges to $X$ as $\lambda \to 1$ (see \cite{K:mtf}).
Therefore, we have $X \in A$.
This completes the proof.
\end{proof}
\begin{theorem} \label{thm:invariant}
For each $A \in \cal A$, we set
\[
F_A(X) = \inf \left\{ t > 0 \,\middle|\, X \in t A \right\}
\]
for $X \in \cal X$.
Then $F_A \in \cal I$.
Moreover, this correspondence is bijective.
\end{theorem}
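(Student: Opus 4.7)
The plan is to exhibit the inverse map $\Phi : \cal I \to \cal A$ by
\[
\Phi(F) := A_F := \{X \in \cal X \mid F(X) \le 1\},
\]
verify that $F_A \in \cal I$ via Proposition \ref{prop:general_prop}, and then check the two identities $F_{A_F} = F$ and $A_{F_A} = A$. The key inputs will be Proposition \ref{prop:general_prop} (for lower semicontinuity of $F_A$) and Lemma \ref{lem:tA} (for the reconstruction $A = A_{F_A}$).

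To place $F_A$ in $\cal I$, I would set $A_\lambda := \lambda A$ for $\lambda \in (0,\infty)$ and $A_\infty := \cal X$. Each $\lambda A$ is box-closed by the estimate $\square(tX,tY) \le c(t)\square(X,Y)$ of \eqref{eq:tX_box}, and is downward-closed since $Y \prec X \in \lambda A$ gives $\lambda^{-1}Y \prec \lambda^{-1}X \in A$, hence $\lambda^{-1}Y \in A$, i.e.\ $Y \in \lambda A$. The family is monotone in $\lambda$: if $\mu = \lambda/\lambda' \le 1$ and $X \in A$, then $\mu X \prec X$, so $\mu X \in A$, giving $\lambda A = \lambda' \mu A \subset \lambda' A$. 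Proposition \ref{prop:general_prop} then yields that $F_A$ is monotone and lower semicontinuous on $\cal X$. The $1$-homogeneity is immediate from the substitution $s = tu$ in the defining infimum: $F_A(tX) = \inf\{tu > 0 \mid X \in uA\} = tF_A(X)$.

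For bijectivity, assume $F \in \cal I$. Then $A_F$ is downward-closed by the monotonicity of $F$ and box-closed by its lower semicontinuity, so $A_F \in \cal A$. The identity $F_{A_F} = F$ follows directly: $X \in tA_F$ is equivalent to $F(X/t) \le 1$, which by $1$-homogeneity of $F$ is equivalent to $t \ge F(X)$, handling the extremal cases $F(X) \in \{0,\infty\}$ by the usual conventions $\inf (0,\infty) = 0$ and $\inf\emptyset = \infty$. The converse identity $A_{F_A} = A$ has the easy inclusion $A \subset A_{F_A}$ from $X \in 1\cdot A$. For the reverse inclusion, given $F_A(X) \le 1$ and $\lambda > 1$, the definition of infimum supplies $t < \lambda$ with $X \in tA$; since $t/\lambda < 1$ and $A$ is downward-closed, $(t/\lambda)A \subset A$, so $tA \subset \lambda A$, hence $X \in \lambda A$. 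Applying Lemma \ref{lem:tA} gives $X \in \bigcap_{\lambda > 1} \lambda A = A$.

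The main delicacy is precisely this last step: one must exclude the pathology that $F_A(X) = 1$ while $X \notin A$, and Lemma \ref{lem:tA} is indispensable here. Its hypothesis $tA \subset A$ for $t < 1$ is automatic from the downward-closedness of $A$ together with the domination $tX \prec X$ for $t \le 1$, so the argument closes without further effort.
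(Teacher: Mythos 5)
Your proposal is correct and follows essentially the same route as the paper: membership $F_A \in \cal I$ via Proposition \ref{prop:general_prop} applied to the monotone family $\{\lambda A\}$, and bijectivity via the two identities $F_{A_F}=F$ and $A_{F_A}=A$, the latter resting on Lemma \ref{lem:tA} exactly as in the paper's injectivity/surjectivity argument. The only cosmetic difference is that you package the argument as an explicit two-sided inverse rather than as separate injectivity and surjectivity claims.
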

\begin{proof}
From the construction, we know that $F_A$ is $1$-homogeneous. 
Hence $F_A \in \cal I$ by Proposition \ref{prop:general_prop}.
We note that, by Lemma \ref{lem:tA}, 
\begin{equation*} 
	F_A(X) \le 1 \iff X \in A
\end{equation*}
holds for all $X \in \cal X$.
This implies that the map $\cal A \ni A \mapsto F_A \in \cal I$ is injective.

Let us prove that the correspondence is surjective.
For $F \in \cal I$, we set $A = \left\{ X \in \cal X \,\middle|\, F(X) \le 1 \right\}$.
Since $F$ is lower semicontinuous, $A$ is box-closed.
By the monotonicity of $F$, $A$ is downward-closed.
Hence, $A \in \cal A$.
Finally, we prove $F = F_A$.
Note that
\begin{equation*}
\left\{ X \in \cal X\,\middle|\, F(X) \le t \right\} = t A
\end{equation*}
for every $t > 0$.
By Lemma \ref{lem:tA}, we have $F_A(X) \le t$ if and only if $F(X) \le t$.
Hence, we obtain $F_A(X) = F(X)$.
This completes the proof.
\end{proof}

\begin{remark}
\begin{enumerate}
\item 
From the proof of Theorem \ref{thm:invariant}, we know that $A$ is box-closed if and only if $A$ is closed in $d_\conc$, for any downward-closed subset $A$ of $\cal X$.

\item Since $\Pi \subset \cal A$ is a proper inclusion, we know that the set $\cal I$ of invariants is ``strictly'' bigger than the set $\Pi$ of pyramids.
\end{enumerate}
\end{remark}

\begin{example}
Let $P \in \Pi$.
If $P$ is scale invariant, then we have
\[
F_P(Q) = \left\{
\begin{aligned}
& 0 && \text{ if } Q \subset P, \\
& \infty && \text{ if } Q \not\subset P.
\end{aligned}
\right.
\]
If $P$ is not scale invariant, then $F_P(P)=1$.
\end{example}

\begin{example}
We can define the {\it diameter} of a pyramid $P$ by
\[
\mathrm{diam}\, P := \sup_{X \in P} \mathrm{diam}\, X.
\]
This is nothing but $F_{\cal X^1}(P)$.
Here, $\cal X^1$ is defined in Example \ref{ex:diam}.

Note that $\mathrm{diam}\, P = \lim_{\kappa \to 0+} \ObsDiam(P;-\kappa)$ holds.
\end{example}

\begin{remark}
If a pyramid $P$ is scale-invariant, then $F(P) \in \{0, \infty\}$
for every monotone homogeneous invariant $F$ on $\Pi$.
By Kazukawa, Nakajima and Shioya (\cite{KNS}), the converse statement is known to be true, that is, if $P$ satisfies $F(P) \in \{0, \infty\}$ for suitable monotone homogeneous invariant $F$, then $P$ is scale-invariant.
\end{remark}

There exist inhomogeneous invariants. For example, so are $\min\{F,1\}$ and $F+1$ for $F \in \mathcal I$.
The following is a more non-trivial one:

\begin{example} \label{ex:L_1(X)}
Let us consider the functional given by
\[
\mathcal X \ni X \mapsto d_\conc(X,\ast) \in [0,1].
\]
It is clearly inhomogeneous.
This is monotone and coincides with $\diam(\mathcal L_1(X), d_\mathrm{KF}^{m_X})$, where the definition of $\mathcal L_1(X)$ was given in Subsection \ref{subsec:basic}.
\end{example}

\begin{example}
For two functionals $F, F' : \cal X \to [0, \infty]$, we write $F \ge F'$ if $F(X) \ge F'(X)$ holds for every $X \in \cal X$.
Suppose that $F$ is monotone $1$-homogeneous.
Then, the lower semicontinuous envelope of $F$ defined by
\[
\underline{F}(X) := \sup \left\{ F'(X) \mid F' \in \cal I \text{ with } F \ge F' \right\}
\]
is a lower semicontinuous monotone $1$-homogeneous functional, which is maximal one among all such $F'$ with $F' \le F$.
Note that the supremum is actually maximum.

We express another formulation.
Let $A = \left\{ X \in \cal X \mid F(X) \le 1\right\}$.
It is downward-closed, since $F$ is monotone.
So, the box-closure $\overline{A}$ is also downward-closed.
Then, we have
\[
\underline{F} = F_{\overline{A}}.
\]
Indeed, from the definition, $\{ \underline{F} \le 1 \}$ coincides with $\overline{A}$.
\end{example}

\section{Examples of invariants}
\label{sec:distinguish}

In this section, we introduce concrete examples of invariants which are monotone, homogeneous and lower semicontinuous in the sense of Section \ref{sec:invariant}.
More precisely, we extend some important and classical invariants for pm-spaces in geometry and measure theory to invariants of pyramids.

\subsection{Variance}
We recall the definition of the variance of pm-spaces following Nakajima-Shioya \cite{NS}.
Let $X$ be a pm-space.
For a measurable function $f : X \to \mathbb R$, the variance of $f$ (over $X$) is defined by
\[
V_X(f) := \frac{1}{2} \int_{X\times X} (f(x)-f(y))^2\,dm_X^{\otimes 2}(x,y).
\]
This is nothing but the usual variance of a random variable $f$.
For a pm-space $X$,
we set
\[
V(X) := \sup_{ f \in \Lip_1(X) } V_X(f)
\]
which is called the {\it variance} of $X$.
Note that $V(X)$ is called the {\it observable variance} in \cite{NS}.

It is clear that the variance $V$ is a $2$-homogeneous monotone invariant on $\cal X$. 

We give a fundamental and optimal estimate between the variance and the diameter:
\begin{proposition} \label{prop:variance_vs_diam}
For a pm-space $X$, we have
\begin{equation*}
V(X) \le \frac{(\diam\, X)^2}{4}.
\end{equation*}
\end{proposition}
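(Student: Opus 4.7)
The plan is to reduce the claim to the classical Popoviciu-type inequality for bounded random variables, after using the $1$-Lipschitz condition to control the range of $f$ by the diameter.

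First I would fix an arbitrary $f\in \Lip_1(X)$ and set $M:=\sup_{\supp m_X}f$ and $m_0:=\inf_{\supp m_X}f$. Since $f$ is $1$-Lipschitz and $\diam X=\diam \supp m_X$, any pair of points witnessing the values $M$ and $m_0$ (approximately) gives $M-m_0\le \diam X$. All integrals against $m_X$ only see $\supp m_X$, so we may treat $f$ as taking values in the interval $[m_0,M]$ $m_X$-almost surely.

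Next I would exploit the standard variational characterization of the variance: for every constant $c\in\mathbb R$,
\[
V_X(f)=\int_X\!\left(f-E_{m_X}(f)\right)^2 dm_X\le \int_X (f-c)^2\,dm_X,
\]
with equality at $c=E_{m_X}(f)$. (This follows immediately from the double-integral definition by expanding and optimizing, or from Jensen's inequality.) Choosing the midpoint $c:=(M+m_0)/2$, the $m_X$-a.s.\ bound $|f(x)-c|\le (M-m_0)/2$ yields
\[
V_X(f)\le \left(\frac{M-m_0}{2}\right)^{\!2}\le \frac{(\diam X)^2}{4}.
\]
Taking the supremum over $f\in \Lip_1(X)$ gives $V(X)\le (\diam X)^2/4$.

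There is no real obstacle here; the only subtle point is the factor $1/4$ rather than the $1/2$ one would naively read off the double-integral formula $V_X(f)=\tfrac12\int(f(x)-f(y))^2\,dm_X^{\otimes 2}$. That factor is precisely what the Popoviciu-style centering at the midpoint of $[m_0,M]$ buys, and noting that $M-m_0\le \diam X$ is where the Lipschitz hypothesis is used.
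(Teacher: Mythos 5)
Your proof is correct and follows essentially the same route as the paper: both reduce the claim to the fact that a random variable whose essential range has length at most $\diam X$ (which the $1$-Lipschitz hypothesis guarantees) has variance at most $(\diam X)^2/4$. The paper obtains this by shifting $f$ by its infimum and completing the square in $E_X(\tilde f)$, while you center at the midpoint of the range; these are interchangeable algebraic forms of the same Popoviciu-type bound.
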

\begin{proof}
We may assume $\delta := \diam\, X < \infty$.
Let us take $f \in \Lip_1(X)$ and set
\[
\tilde f = f - \inf_{X} f.
\]
Then, $0 \le \tilde f \le \delta$.
Setting $\mu = E_X(\tilde f)$, we have
\begin{align*}
V_X(f) &= V_X(\tilde f) = E_X(\tilde f^2) - E_X(\tilde f)^2 \\
&\le \delta E_X(\tilde f) - \mu^2 = \frac{\delta^2}{4} - \left(
\frac{\delta}{2} - \mu
\right)^2 \le \frac{\delta^2}{4}.
\end{align*}
This implies $V(X) \le \delta^2/4$.
\end{proof}
By considering a two-points space, we know that
the inequality of the proposition is optimal.

We will prove the following later.
\begin{proposition} \label{prop:variance}
$V$ is monotone, lower semicontinuous and $2$-homogeneous on $\cal X$.
\end{proposition}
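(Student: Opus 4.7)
The $2$-homogeneity and monotonicity are formal. For $2$-homogeneity, observe that a function $f : X \to \mathbb R$ is $1$-Lipschitz on $(X,td)$ if and only if $f = tg$ for some $g \in \Lip_1(X,d)$, while the functional $V_X(\,\cdot\,)$ depends only on the underlying measure; thus $V_{tX}(tg) = t^2 V_X(g)$ and taking the supremum yields $V(tX) = t^2 V(X)$. For monotonicity, if $\phi : Y \to X$ is a $1$-Lipschitz map with $\phi_\# m_Y = m_X$ (witnessing $X \prec Y$), then for each $f \in \Lip_1(X)$ the composition $f \circ \phi$ lies in $\Lip_1(Y)$ and the change-of-variables formula yields $V_Y(f \circ \phi) = V_X(f)$, whence $V(Y) \ge V(X)$.

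For lower semicontinuity, by Lemma \ref{lem:general_lemma} it suffices to work in the box topology. Given a sequence $X_n \to X$ in $\square$, I apply Lemma \ref{lem:mG_convergece} to obtain isometric embeddings $\iota_n : X_n \hookrightarrow Z$ and $\iota : X \hookrightarrow Z$ into a common complete separable metric space $Z$ with $(\iota_n)_\# m_{X_n} \to \iota_\# m_X$ weakly in $\mathscr P(Z)$. For any $f \in \Lip_1(X)$ and $R > 0$, set $f^R := \max(-R,\min(f,R))$ on $X$; this is bounded and $1$-Lipschitz. Identify $f^R$ with a function on $\iota(X) \subset Z$ and extend by Proposition \ref{prop:Lip_ext} (followed by truncation to $[-R,R]$) to a bounded $1$-Lipschitz $F^R : Z \to [-R,R]$. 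Defining $f_n^R := F^R \circ \iota_n \in \Lip_1(X_n)$, the map $(z,z') \mapsto (F^R(z) - F^R(z'))^2$ is bounded and continuous on $Z \times Z$, so weak convergence of the product measures gives
\[
V_{X_n}(f_n^R) = \tfrac{1}{2}\!\int_{Z \times Z} (F^R(z)-F^R(z'))^2 \,d((\iota_n)_\# m_{X_n})^{\otimes 2} \xrightarrow[n\to\infty]{} V_X(f^R).
\]
Hence $\liminf_n V(X_n) \ge V_X(f^R)$ for every $R > 0$.

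The main obstacle is then the passage from truncated to untruncated test functions, i.e.\ showing $\lim_{R \to \infty} V_X(f^R) = V_X(f)$, allowing the value $+\infty$. When $V_X(f) < \infty$, there exists a constant $c$ with $f - c \in L^2(m_X)$, and the pointwise convergence $f^R \to f$ together with the dominant $|f - c| + |c|$ yields convergence of variances by dominated convergence. When $V_X(f) = \infty$, I use that $(f^R(x) - f^R(y))^2 \to (f(x) - f(y))^2$ pointwise on $X \times X$ and apply Fatou's lemma to conclude $V_X(f^R) \to \infty$. Letting $R \to \infty$ in the inequality of the previous paragraph and then taking the supremum over $f \in \Lip_1(X)$ produces $\liminf_n V(X_n) \ge V(X)$, completing the proof.
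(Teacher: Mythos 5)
Your proof is correct and follows essentially the same route as the paper, which factors the argument into Proposition \ref{prop:finite_variance} (reduction to bounded $1$-Lipschitz test functions by truncation) and Proposition \ref{prop:p-variance} (lower semicontinuity via the common embedding of Lemma \ref{lem:mG_convergece} and a bounded Lipschitz extension). Your handling of the truncation limit by dominated convergence and Fatou, rather than by weak convergence of the push-forward measures, is only a cosmetic difference.
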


We also consider the {\it variance} of pyramids defined by
\begin{equation} \label{eq:variance_for_pyramid}
V(P) := \sup_{X\in P} V(X)
\end{equation}
for $P \in \Pi$.

\subsubsection{Fundamental properties of the variance}

\begin{proposition} \label{prop:finite_variance}
For $X \in \cal X$, we have
\[
V(X) = \sup \left\{V_X(f) \,\middle|\, f \in \Lip_b(X) \cap \Lip_1(X) \right\}.
\]
In particular,
\[
V(X) = \sup \left\{ V_X(f) \,\middle|\, f\in \Lip_1(X) \text{ with } V_X(f) < \infty \right\}.
\]
\end{proposition}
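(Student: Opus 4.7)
The plan is a straightforward truncation argument. One inequality is immediate from the chain of inclusions
\[
\Lip_b(X) \cap \Lip_1(X) \subset \{ f \in \Lip_1(X) \mid V_X(f) < \infty \} \subset \Lip_1(X),
\]
where the first inclusion uses that $m_X$ is a probability measure, so bounded functions are in $L^2(m_X)$ and hence automatically have finite variance. Therefore the restricted suprema are bounded above by $V(X)$, and the real content is to show that an arbitrary $f \in \Lip_1(X)$ can be approximated from below by bounded $1$-Lipschitz functions.

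Given $f \in \Lip_1(X)$, I would set $f_n := T_n \circ f$, where $T_n : \mathbb R \to \mathbb R$ is the $1$-Lipschitz truncation $T_n(t) := \max\{-n, \min\{n,t\}\}$. Since the composition of $1$-Lipschitz maps is $1$-Lipschitz, each $f_n$ lies in $\Lip_b(X) \cap \Lip_1(X)$, and the pointwise estimate
\[
|f_n(x) - f_n(y)| \le |f(x) - f(y)|
\]
holds for all $x,y \in X$, while $f_n(x) \to f(x)$ for every $x$.

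Next I would split into two cases depending on $V_X(f)$. If $V_X(f) < \infty$, then $(f(x)-f(y))^2$ is an integrable dominator on $X \times X$, and dominated convergence applied to $(f_n(x)-f_n(y))^2 \to (f(x)-f(y))^2$ gives $V_X(f_n) \to V_X(f)$. If $V_X(f) = \infty$, then Fatou's lemma applied to the same pointwise convergence yields $\liminf_n V_X(f_n) \ge V_X(f) = \infty$, hence $V_X(f_n) \to \infty$. In both cases $\sup_n V_X(f_n) \ge V_X(f)$, and taking the supremum over $f \in \Lip_1(X)$ establishes the first equality. The second equality then follows from the sandwich above, since each $V_X(f_n)$ is finite.

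There is no serious obstacle: the only point worth checking is that $T_n$ really is $1$-Lipschitz on $\mathbb R$, which is standard, together with measurability of $f_n$, which is automatic since $f$ is continuous.
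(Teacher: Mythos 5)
Your proposal is correct and takes essentially the same route as the paper: the paper's proof also truncates $f$ to $f_r = \max\{\min\{f,r\},-r\}$ and concludes $\liminf_{r\to\infty} V_X(f_r) \ge V_X(f)$, the only difference being that it justifies this limit step via weak convergence of $(f_r)_\#m_X$ to $f_\#m_X$, whereas you apply Fatou's lemma (which alone suffices, making your case split on $V_X(f)<\infty$ unnecessary but harmless) directly to the double integral. Your explicit sandwich argument for the ``in particular'' clause is also exactly what the paper leaves implicit.
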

\begin{proof}
For each $f \in \Lip_1(X)$, we consider the truncation $f_r$ of $f$ defined by
\begin{equation} \label{eq:truncation}
f_r(x) := \max \{ \min \{f(x), r\},-r\}
\end{equation}
for $r > 0$.
Then, $f_r \in \Lip_b(X) \cap \Lip_1(X)$ and
$(f_r)_\# m_X$ weakly converges to $f_\# m_X$ as $r \to \infty$.
Therefore we have
\[
\liminf_{r \to \infty} V_X(f_r) \ge V_X(f).
\]
Hence we obtain the conclusion.
\end{proof}

\begin{corollary}
For a pyramid $P$, we have
\[
V(P) = \sup \left\{ V(X) \mid X \in P \text{ with } V(X) < \infty \right\}.
\]
\end{corollary}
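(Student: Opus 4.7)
The inequality $V(P)\ge \sup\{V(X)\mid X\in P,\ V(X)<\infty\}$ is immediate from the definition \eqref{eq:variance_for_pyramid}, so the whole content is the reverse inequality, and the plan is to show that for every $X\in P$ and every $f\in\Lip_1(X)$ we can produce some $Y\in P$ with $V(Y)<\infty$ and $V(Y)\ge V_X(f_r)$ for the truncations $f_r$ appearing in \eqref{eq:truncation}. Combined with the fact $V_X(f_r)\to V_X(f)$ (noted in the proof of Proposition \ref{prop:finite_variance}; the truncation map is $1$-Lipschitz on $\mathbb R$, so $(f_r(x)-f_r(y))^2\le(f(x)-f(y))^2$ gives $V_X(f_r)\le V_X(f)$, and Proposition \ref{prop:finite_variance} gives the matching $\liminf$), this yields the desired inequality after taking suprema over $f\in\Lip_1(X)$ and $X\in P$.

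The concrete construction of $Y$ is the pushforward pm-space. Given $X\in P$, $f\in\Lip_1(X)$ and $r>0$, set
\[
Y_r := (\mathbb R,|\cdot|,(f_r)_\# m_X).
\]
Since $f_r\in\Lip_1(X)$, the map $f_r\colon X\to Y_r$ is a domination, so $Y_r\prec X$ and hence $Y_r\in P$ because $P$ is downward-closed. Moreover $\supp (f_r)_\# m_X\subset[-r,r]$, so $\diam Y_r\le 2r<\infty$, and then Proposition \ref{prop:variance_vs_diam} gives $V(Y_r)\le r^2<\infty$. On the other hand, the identity $\Id_{\mathbb R}$ lies in $\Lip_1(Y_r)$ and computes
\[
V_{Y_r}(\Id_{\mathbb R}) = \tfrac12\int_{\mathbb R\times\mathbb R}(s-t)^2\,d(f_r)_\# m_X^{\otimes 2}(s,t) = V_X(f_r),
\]
so $V(Y_r)\ge V_X(f_r)$.

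Putting these together, for each $c<V_X(f)$ one picks $r$ large enough that $V_X(f_r)>c$, obtaining $Y_r\in P$ with $V(Y_r)<\infty$ and $V(Y_r)>c$; hence the supremum on the right dominates $V_X(f)$, and then dominates $V(X)$ and finally $V(P)$ after taking suprema. There is no real obstacle here: the only subtle point is ensuring that the approximating space $Y_r$ stays inside $P$ (handled by the downward-closed axiom applied to the $1$-Lipschitz map $f_r$) and that $V(Y_r)$ is finite (handled by the bound $V\le (\diam)^2/4$ from Proposition \ref{prop:variance_vs_diam}).
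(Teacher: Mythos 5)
Your proof is correct. The overall strategy coincides with the paper's: both arguments exhibit, for each $X\in P$, a dominated space of finite diameter lying in $P$ (by downward-closedness) whose variance approximates $V_X(f)$, and both invoke Proposition \ref{prop:variance_vs_diam} to guarantee finiteness of the variance of that witness. The concrete witness differs, though. The paper first uses Proposition \ref{prop:finite_variance} to replace $f$ by a \emph{bounded} $1$-Lipschitz function with $V_X(f)>R$, and then truncates the \emph{metric} on $X$, setting $d_r=\min\{2r,d_X\}$ with $r>\sup_X|f|$, so that the same $f$ remains $1$-Lipschitz on $X_r=(X,d_r,m_X)\prec X$ and $V(X_r)\le r^2$. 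You instead truncate the \emph{function} and push forward to the line, taking $Y_r=(\mathbb R,|\cdot|,(f_r)_\#m_X)\prec X$, recovering $V_X(f_r)$ via the identity map and controlling $\diam Y_r\le 2r$; the limit $V_X(f_r)\to V_X(f)$ is then supplied by the weak-convergence argument inside the proof of Proposition \ref{prop:finite_variance}. Both routes are sound and of comparable length; yours has the minor aesthetic advantage that the witness is a one-dimensional pm-space and that no separate reduction to bounded $f$ is needed, while the paper's has the advantage of keeping the witness on the same underlying set with the same test function.
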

\begin{proof}
When $V(P) < \infty$, there is nothing to prove.
Suppose $V(P) = \infty$.
Let us take $R > 0$ and $X \in P$ with $V(X) > R$.
Then, there exists $f \in \Lip_b(X) \cap \Lip_1(X)$ with $V(X) > V_X(f) > R$.
Let us take $r > \sup_{X} |f|$
and define a new metric $d_r$ on $X$ as
\[
d_r(x,x') = \min \{ 2r, d(x,x')\}
\]
for $x,x' \in X$.
Then, $f$ is also $1$-Lipschitz with respect to $d_r$.
Clearly, $X_r =(X,d_r, m_X) \prec X$.
Since by {Proposition \ref{prop:variance_vs_diam}},
$V(X_r) \ge V_{X_r}(f) = V_X(f)$ and $V(X_r) \le {r^2} < \infty$, we obtain the conclusion.
\end{proof}

For the $\ell_p$-product of pm-spaces, there is a trivial bound:
\begin{proposition}
Let $1 \le p \le \infty$.
For $X, Y \in \cal X$, we have
\[
V(X \times_p Y) \le V(X) + V(Y).
\]
In particular, $V(X_p^n) \le n V(X)$ for every $n \ge 1$.
\end{proposition}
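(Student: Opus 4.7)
The plan is to prove the inequality via the classical law of total variance combined with the fact that the $\ell_p$-product distance dominates each factor distance. Specifically, for any $f \in \Lip_1(X \times_p Y)$, I want to show $V_{X\times_p Y}(f) \le V(X)+V(Y)$ and then take a supremum.

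First I would observe two Lipschitz facts that follow directly from the definition of $d_X \times_p d_Y$. For fixed $y \in Y$, the slice $f(\cdot,y) \colon X \to \mathbb{R}$ is $1$-Lipschitz, since $d_X(x,x') \le (d_X \times_p d_Y)((x,y),(x',y))$; similarly $f(x,\cdot)$ is $1$-Lipschitz in $y$. Second, defining the conditional expectation
\[
g(y) := \int_X f(x,y)\,dm_X(x),
\]
Jensen's inequality together with the Lipschitz property in the second slot yields
\[
|g(y)-g(y')| \le \int_X |f(x,y)-f(x,y')|\,dm_X(x) \le d_Y(y,y'),
\]
so $g \in \Lip_1(Y)$. (To make these integrals meaningful I would first reduce to $f \in \Lip_b(X\times_p Y)\cap \Lip_1(X\times_p Y)$ via Proposition~\ref{prop:finite_variance}, so all expectations and variances below are finite.)

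Next I would invoke the law of total variance for the Fubini decomposition $m_{X\times_p Y} = m_X \otimes m_Y$:
\[
V_{X\times_p Y}(f) \;=\; \int_Y V_X\bigl(f(\cdot,y)\bigr)\,dm_Y(y) \;+\; V_Y(g).
\]
This is a direct calculation expanding $\frac{1}{2}\int (f(x,y)-f(x',y'))^2 \, dm_X(x)dm_X(x')dm_Y(y)dm_Y(y')$ and recognizing the cross term; I would include it briefly. Applying the first Lipschitz observation, $V_X(f(\cdot,y)) \le V(X)$ for $m_Y$-a.e.\ $y$, so the first summand is bounded by $V(X)$; applying the second, $V_Y(g) \le V(Y)$. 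Adding gives $V_{X\times_p Y}(f) \le V(X)+V(Y)$, and taking the supremum over such $f$ (and over truncations via Proposition~\ref{prop:finite_variance}) yields the stated inequality.

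The second assertion $V(X_p^n) \le n V(X)$ follows by immediate induction on $n$ using the base case $V(X_p^1)=V(X)$ and the inequality just proved applied to $X_p^{n-1} \times_p X$. I do not anticipate a serious obstacle: the main point to handle carefully is finiteness (so the law-of-total-variance identity makes sense), which is exactly what the truncation reduction in Proposition~\ref{prop:finite_variance} provides.
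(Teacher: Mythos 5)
Your proposal is correct and follows essentially the same route as the paper: both proofs reduce to $f \in \Lip_b \cap \Lip_1$, form the conditional expectation of $f$ over one factor, and apply the law-of-total-variance decomposition $V_{X\times_p Y}(f) = E\left[V(f\mid \cdot)\right] + V(E[f\mid \cdot])$, bounding each summand by $V(X)$ and $V(Y)$ using the $1$-Lipschitz property of the slices and of the conditional expectation. The only cosmetic difference is that you condition on the $Y$-coordinate while the paper conditions on the $X$-coordinate.
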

\begin{proof}
Let us take $f \in \Lip_b(X \times_p Y) \cap \Lip_1(X \times_p Y)$.
For each $x \in X$, we set
\[
g(x) = E_Y(f(x,\cdot)) = \int_Yf(x,y)\,dm_Y(y).
\]
It is clear that $g \in \Lip_b(X) \cap \Lip_1(X)$.
Then, we have
\begin{align*}
V_{X \times_p Y}(f) &= E_{X \times_p Y}(f^2) - E_{X \times_p Y}(f)^2 \\
&= E_X \left\{ E_Y(f(x,\cdot)^2) - \left\{ E_Y(f(x,\cdot)) \right\}^2 \right\} \\
& \hspace{1.4em} + E_X(g^2) - E_X (g)^2 \\
&\le E_X(V(Y)) + V(X).
\end{align*}
This implies the conclusion.
\end{proof}

\subsubsection{Variants of variance invariants}

For $p \in [1,\infty)$ and a pm-space $X$, the {\it $p$-variance} of a function $f \in L^1(m_X)$ is defined by
\[
V^{(p)}(f) := V_X^{(p)}(f) := \int_{X} \left| f - E_X(f)\right|^p\,dm_X = E_X \left( \left|f- E_X(f) \right|^p \right).
\]
Using this, we define the {\it $p$-variance} of $X$ by
\[
V^{(p)}(X) := \sup_{f \in \Lip_1(X) \cap \Lip_b(X)} V_X^{(p)}(f).
\]
Note that $V(X) = V^{(2)}(X)$ by Proposition \ref{prop:finite_variance}.
\begin{proposition} \label{prop:p-variance}
$V^{(p)}$ is lower semicontinuous in the box topology.
\end{proposition}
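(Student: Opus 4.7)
The plan is to combine the characterization of box convergence via a common ambient space (Lemma \ref{lem:mG_convergece}) with the McShane--Whitney extension (Proposition \ref{prop:Lip_ext}) and a standard weak-convergence argument.

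Concretely, suppose $X_n \to X$ in the box topology. By Lemma \ref{lem:mG_convergece}, fix a complete separable metric space $Y$ with isometric embeddings $\iota_n\colon X_n \to Y$ and $\iota\colon X \to Y$ such that $(\iota_n)_\# m_{X_n} \to \iota_\# m_X$ weakly in $\mathscr P(Y)$. To show $\liminf_n V^{(p)}(X_n) \geq V^{(p)}(X)$, it suffices to fix an arbitrary $f \in \Lip_1(X) \cap \Lip_b(X)$ and construct $f_n \in \Lip_1(X_n) \cap \Lip_b(X_n)$ with $V^{(p)}_{X_n}(f_n) \to V^{(p)}_X(f)$. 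I would transport $f$ to $\iota(\supp m_X) \subset Y$, apply the McShane--Whitney extension followed by truncation at $\sup|f|$ to obtain a bounded $1$-Lipschitz function $\tilde f \colon Y \to \mathbb R$ with $\tilde f \circ \iota = f$ on $\supp m_X$, and then set $f_n := \tilde f \circ \iota_n$.

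Next, I would exploit weak convergence. Since $\tilde f$ and $|\tilde f|^p$ are both bounded continuous on $Y$, the means
\[
c_n := E_{X_n}(f_n) = \int_Y \tilde f \, d(\iota_n)_\# m_{X_n} \longrightarrow \int_Y \tilde f \, d\iota_\# m_X = E_X(f) =: c.
\]
For the $p$-th moment, write
\[
V^{(p)}_{X_n}(f_n) = \int_Y |\tilde f - c_n|^p \, d(\iota_n)_\# m_{X_n}.
\]
Weak convergence gives $\int_Y |\tilde f - c|^p \, d(\iota_n)_\# m_{X_n} \to \int_Y |\tilde f - c|^p \, d\iota_\# m_X = V^{(p)}_X(f)$. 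Setting $M := \|\tilde f\|_\infty$, the elementary inequality $\bigl||a-c_n|^p - |a-c|^p\bigr| \leq p(2M)^{p-1}|c_n - c|$ valid for $|a|,|c_n|,|c| \leq M$ and $p \geq 1$ controls the remainder uniformly in $n$, so $V^{(p)}_{X_n}(f_n) \to V^{(p)}_X(f)$ as desired.

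Taking $\liminf$ and then the supremum over $f \in \Lip_1(X) \cap \Lip_b(X)$ yields $\liminf_n V^{(p)}(X_n) \geq V^{(p)}(X)$ by definition of $V^{(p)}(X)$. The only mildly delicate point is ensuring the Lipschitz extension $\tilde f$ is bounded (hence pulls back to bounded functions on each $X_n$), which is why I truncate after the McShane--Whitney step; apart from that, the argument is routine once Lemma \ref{lem:mG_convergece} is invoked.
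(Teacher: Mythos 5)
Your proposal is correct and follows essentially the same route as the paper's proof: invoke Lemma \ref{lem:mG_convergece} to embed everything in a common space, extend $f$ to a bounded Lipschitz function on the ambient space, and use weak convergence of the pushforward measures to get $V^{(p)}_{X_n}(f) \to V^{(p)}_X(f)$ before taking the supremum. Your extra care with the truncation to keep the extension $1$-Lipschitz and with the varying means $c_n$ fills in details the paper leaves implicit, but the argument is the same.
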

\begin{proof}
Let us take $f \in \Lip_b(X) \cap \Lip_1(X)$.
Let $X_n \in \cal X$ box-converge to $X$ as $n \to \infty$.
Due to Lemma \ref{lem:mG_convergece}, there exists a complete separable metric space $Y$ admitting isometric embeddings $X_n, X \hookrightarrow Y$ such that $m_{X_n}$ converges to $m_X$ weakly in $\mathscr P(Y)$, via the embeddings.
Let $f$ denote a bounded Lipschitz extension of $f$ to $Y$.
Then, we have $E_{X_n}(f) \to E_X(f)$, so that $V_{X_n}^{(p)}(f) \to V_X^{(p)}(f)$.
Hence, we have
\[
\liminf_{n \to \infty} V^{(p)}(X_n) \ge \liminf_{n \to \infty} V_{X_n}^{(p)}(f) = V_X^{(p)}(f).
\]
Since $f$ is arbitrary, we obtain the conclusion.
\end{proof}
\begin{proof}[Proof of Proposition \ref{prop:variance}]
This follows from Propositions {\ref{prop:finite_variance} and \ref{prop:p-variance}}.
\end{proof}

\begin{corollary} \label{cor:infinite_variance}
Let $p,q \in [1, \infty)$.
If a pm-space $X$ is {mm-disconnected}, then we have
\[
\lim_{n \to \infty} V^{(q)}(X_p^n) = \infty.
\]
\end{corollary}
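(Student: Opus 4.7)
The plan is to combine the identification $X_p^\infty = \mathcal X$ from Remark \ref{rem:delta-discrete} with the lower semicontinuous extension machinery of Section \ref{sec:invariant}, applied to $V^{(q)}$.

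First, I would verify that $V^{(q)}$ is a monotone invariant on $\mathcal X$. If $X \prec Y$ via a $1$-Lipschitz map $\pi : Y \to X$ with $\pi_\# m_Y = m_X$, then for every $g \in \Lip_1(X) \cap \Lip_b(X)$ the pull-back $g \circ \pi$ lies in $\Lip_1(Y) \cap \Lip_b(Y)$ and a change of variables yields $V_Y^{(q)}(g \circ \pi) = V_X^{(q)}(g)$, which immediately gives $V^{(q)}(Y) \ge V^{(q)}(X)$. Combined with Proposition \ref{prop:p-variance}, this says $V^{(q)}$ is a monotone lower semicontinuous functional on $\mathcal X$ in the sense of Lemma \ref{lem:general_lemma}.

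Next I would invoke Corollary \ref{cor:limit} with $F = V^{(q)}$ to obtain
\[
\lim_{n \to \infty} V^{(q)}(X_p^n) = V^{(q)}(X_p^\infty),
\]
where the right-hand side means the standard extension $\Pi(V^{(q)})(X_p^\infty) = \sup_{Y \in X_p^\infty} V^{(q)}(Y)$ from \eqref{eq:extension}.

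Finally, since $X$ is mm-disconnected and $p \in [1, \infty)$, Remark \ref{rem:delta-discrete} gives $X_p^\infty = \mathcal X$. Thus
\[
V^{(q)}(X_p^\infty) = \sup_{Y \in \mathcal X} V^{(q)}(Y) = \infty,
\]
the last equality holding because two-point pm-spaces of diameter $R$ already provide $V^{(q)} \ge c_q R^q$ with $c_q > 0$, so $V^{(q)}$ is unbounded over $\mathcal X$. This completes the argument.

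There is no real obstacle here; the statement is essentially a packaging of three tools that are already in place: the identification $X_p^\infty = \mathcal X$ from Remark \ref{rem:delta-discrete}, the lower semicontinuity from Proposition \ref{prop:p-variance}, and the monotone convergence along $\{X_p^n\}$ from Corollary \ref{cor:limit}. The only point worth checking carefully is that $V^{(q)}$ really is monotone under Lipschitz domination for arbitrary $q \in [1,\infty)$, but this is immediate from the change-of-variables formula above, in parallel with the case $q = 2$ handled by Proposition \ref{prop:variance}.
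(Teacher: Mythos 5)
Your proposal is correct and follows essentially the same route as the paper: the authors likewise deduce $X_p^\infty=\cal X$ from Remark \ref{rem:delta-discrete} (via domination of a two-point space) and then apply Corollary \ref{cor:limit} to conclude $\lim_{n\to\infty}V^{(q)}(X_p^n)=V^{(q)}(\cal X)=\infty$. You merely spell out two details the paper leaves implicit, namely the monotonicity of $V^{(q)}$ under Lipschitz domination and the unboundedness of $V^{(q)}$ on $\cal X$, and both verifications are sound.
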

\begin{proof}
By {the last paragraph of Remark \ref{rem:delta-discrete}},
we have $\cal P X_p^n \to \cal X$ weakly as $n \to \infty$.
Then by Corollary \ref{cor:limit}, we have $\lim_{n \to \infty} V^{(q)}(X_p^n) = V^{(q)}(\cal X) = \infty$.
\end{proof}

\begin{example}
There is a pm-space $X$ satisfying that $V(X_p^n) < \infty$ for every $n \ge 1$ and $\lim_{n \to \infty} V(X_p^n) = \infty$.
For instance, {by Corollary \ref{cor:infinite_variance}}, if $\# X = 2$, then $X$ satisfies this property.
\end{example}

Let us set $v^{(p)}(X) := \left( V^{(p)}(X) \right)^{1/p}$, which is called the {\it $p$-deviation} of $X$.
Furthermore, we set
\[
v_X^{(\infty)}(f) := \left\| f - E_X(f) \right\|_{L^\infty(m_X)}
\]
for $f \in L^1(m_X)$, and
\[
v^{(\infty)}(X) := \sup_{f \in \Lip_1(X) \cap \Lip_b(X)} v_X^{(\infty)}(f).
\]
Then, $v^{(p)}$ is lower semicontinuous in the box topology, for every $p\in [1,\infty]$.
\begin{corollary}
For $p \in [1, \infty]$, $v^{(p)}$ is a monotone $1$-homogeneous lower semicontinuous invariant.
\end{corollary}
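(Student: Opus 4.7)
The plan is to verify the three properties -- monotonicity, $1$-homogeneity, and lower semicontinuity -- separately, handling $p<\infty$ and $p=\infty$ in parallel, and to use Proposition~\ref{prop:p-variance} for the hard part when $p<\infty$ while giving a direct argument when $p=\infty$.

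First, for monotonicity, suppose $X \prec Y$ via a $1$-Lipschitz map $\pi\colon Y\to X$ with $\pi_\#m_Y=m_X$. For any $f\in\Lip_1(X)\cap\Lip_b(X)$, the function $f\circ\pi$ lies in $\Lip_1(Y)\cap\Lip_b(Y)$ and has the same distribution under $m_Y$ as $f$ under $m_X$. Hence $E_Y(f\circ\pi)=E_X(f)$ and, for every $p\in[1,\infty]$, the centred deviations agree; taking suprema gives $v^{(p)}(X)\le v^{(p)}(Y)$. For $1$-homogeneity, the map $f\mapsto tf$ is a bijection between $\Lip_1(X)\cap\Lip_b(X)$ and $\Lip_1(tX)\cap\Lip_b(tX)$ since $f$ is $1$-Lipschitz w.r.t.\ $d_X$ iff $tf$ is $1$-Lipschitz w.r.t.\ $td_X$. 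Centring commutes with scalar multiplication, so $V^{(p)}_{tX}(tf)=t^pV^{(p)}_X(f)$ for $p<\infty$ and $v^{(\infty)}_{tX}(tf)=t\,v^{(\infty)}_X(f)$, whence $v^{(p)}(tX)=t\,v^{(p)}(X)$ in both cases.

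It remains to check lower semicontinuity. For $p<\infty$, $v^{(p)}=(V^{(p)})^{1/p}$ is the composition of the lower semicontinuous functional $V^{(p)}$ of Proposition~\ref{prop:p-variance} with the continuous nondecreasing map $t\mapsto t^{1/p}$, and the claim is immediate.

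The main obstacle is the case $p=\infty$: the essential supremum is not continuous under weak convergence of measures in general. My approach is as follows. Given $X_n\to X$ in the box topology, apply Lemma~\ref{lem:mG_convergece} to embed $X_n$ and $X$ isometrically into a common complete separable metric space $Y$ so that $m_{X_n}\to m_X$ weakly in $\mathscr P(Y)$. Fix $f\in\Lip_1(X)\cap\Lip_b(X)$ and use the McShane--Whitney extension (Proposition~\ref{prop:Lip_ext}), followed by a truncation, to produce a bounded $1$-Lipschitz $\tilde f$ on $Y$ restricting to $f$; weak convergence then yields $E_{X_n}(\tilde f)\to E_X(f)$. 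For any $\epsilon>0$, choose $x_0\in\supp m_X$ with $|\tilde f(x_0)-E_X(f)|>v^{(\infty)}_X(f)-\epsilon$, and pick an open neighbourhood $U\ni x_0$ on which the same strict inequality persists (by continuity of $\tilde f$). Since $x_0\in\supp m_X$ we have $m_X(U)>0$, and the Portmanteau theorem gives $\liminf_n m_{X_n}(U)\ge m_X(U)>0$; thus for $n$ large $m_{X_n}(U)>0$, which forces $\|\tilde f-E_{X_n}(\tilde f)\|_{L^\infty(m_{X_n})}\ge v^{(\infty)}_X(f)-2\epsilon$. Letting $\epsilon\to 0$ and taking the supremum over $f$ yields $\liminf_n v^{(\infty)}(X_n)\ge v^{(\infty)}(X)$, completing the proof.
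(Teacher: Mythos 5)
Your proof is correct. The paper states this corollary without proof, merely asserting beforehand that ``$v^{(p)}$ is lower semicontinuous in the box topology, for every $p\in[1,\infty]$''; for $p<\infty$ your route (compose the lower semicontinuous $V^{(p)}$ from Proposition \ref{prop:p-variance} with the continuous nondecreasing map $t\mapsto t^{1/p}$, plus the routine checks of monotonicity and $1$-homogeneity) is exactly the intended one. For $p=\infty$, where the paper is silent and the weak-convergence argument of Proposition \ref{prop:p-variance} does not directly apply, your supplement is sound: since $\tilde f$ is continuous and bounded, $\|\tilde f-E_X(f)\|_{L^\infty(m_X)}$ equals the supremum of $|\tilde f-E_X(f)|$ over $\supp m_X$, and the Portmanteau bound $\liminf_n m_{X_n}(U)\ge m_X(U)>0$ on the open set $U$ correctly forces the essential supremum with respect to $m_{X_n}$ to be at least $v^{(\infty)}_X(f)-2\epsilon$ for large $n$. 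This is a genuine (and needed) completion of the case the paper leaves implicit.
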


\subsection{Poincar\'e constants}

In this subsection, we give an invariant of pm-spaces, called Poincar\'e constant.
A main result states that the Poincar\'e constant is lower semicontinuous in the box topology (Theorem \ref{thm:(p,q)-P_is_lsc}).

\begin{definition} \label{def:2,2-PI}
Let $C \in [0,\infty]$ and $X$ a pm-space.
We say that $X$ satisfies {\it $(2,2)$-Poincar\'e inequality} for $C$ if
\begin{equation} \label{eq:(2,2)-PI}
V_X(f) \le C^2 \int_X \lip_a(f)^2\,dm_X
\end{equation}
holds for every $f \in \Lip_b(X)$, where $\lip_a(f)$ is defined in \eqref{def:asymp-Lip}.
The infimum of such a $C$ is denoted by $C_{2,2}(X)$ and {is called} the {\it $(2,2)$-Poincar\'e constant} of $X$.
Here, we use the convention that $\inf \emptyset = \infty$.
\end{definition}

	From the definition, we have
	\begin{equation} \label{eq:V_vs_PI}
		\sqrt{V(X)} \le C_{2,2}(X)
	\end{equation}
	for every pm-space $X$.

Note that, from the definition, $C_{2,2}$ is $1$-homogeneous and monotone.

\subsubsection{Variants of Poincar\'e inequality}

Let us give a slight generalization of Definition \ref{def:2,2-PI}.

\begin{definition} \label{def:(p,q)-PI}
Let $p,q \in [1,\infty)$.
For $C \in [0,\infty]$ and a pm-space $X$, we say that $X$ satisfies {\it $(p,q)$-Poincar\'e inequality} if
\begin{equation} \label{eq:(p,q)-PI}
V^{(p)}_X(f)^{1/p} \le C \left( \int_X \lip_a(f)^q\,dm_X \right)^{1/q}
\end{equation}
holds for every $f \in \Lip_b(X)$.
Furthermore, we denote by $C_{p,q}(X)$ the infimum of $C$ satisfying \eqref{eq:(p,q)-PI}, which is called the $(p,q)$-{\it Poincar\'e constant} of $X$.
\end{definition}

Note that $C_{p,q}$ is monotone.
By the definition, we have
\[
C_{p,q}(X) \ge C_{r,s}(X)
\]
for every $r \le p$ and $q \le s$.

One of main results of our paper is:
\begin{theorem} \label{thm:(p,q)-P_is_lsc}
When {$1 < p \le q$ or $p=1<q$}, $C_{p,q} : \cal X \to [0,\infty]$ is a monotone $1$-homogeneous lower semicontinuous invariant.
\end{theorem}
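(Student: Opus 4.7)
Monotonicity and $1$-homogeneity are mostly bookkeeping. If $X\prec Y$ via a measure-preserving $1$-Lipschitz map $\pi\colon Y\to X$, then for each $f\in\Lip_b(X)$ the pullback $f\circ\pi$ satisfies $V^{(p)}_Y(f\circ\pi)=V^{(p)}_X(f)$ by change of variables and the pointwise bound $\lip_a^Y(f\circ\pi)(y)\le \lip_a^X(f)(\pi(y))$ since $\pi$ is $1$-Lipschitz. Applying the $(p,q)$-Poincar\'e inequality on $Y$ to $f\circ\pi$ and pushing back the integral against $m_Y$ to $m_X$ gives the inequality for $f$ on $X$ with the same constant, hence $C_{p,q}(X)\le C_{p,q}(Y)$. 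For $1$-homogeneity, rescaling $X\mapsto tX$ leaves $V^{(p)}$ unchanged but multiplies $\lip_a$ by $1/t$, forcing $C_{p,q}(tX)=t\,C_{p,q}(X)$.

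The heart of the theorem is lower semicontinuity. Assume $X_n\to X$ in the box topology with $C:=\liminf_n C_{p,q}(X_n)<\infty$; the goal is to show every $f\in\Lip_b(X)$ satisfies
\[
V^{(p)}_X(f)^{1/p}\le C\Bigl(\int_X\lip_a(f)^q\,dm_X\Bigr)^{1/q}.
\]
Using Lemma \ref{lem:mG_convergece}, we embed every $X_n$ and $X$ isometrically in a common Polish space $(Y,d_Y)$ so that $m_{X_n}\to m_X$ weakly in $\mathscr P(Y)$. Fix $f\in\Lip_b(X)$. The plan is to construct test functions $\tilde f_n\in\Lip_b(X_n)$ such that
\[
\lim_{n\to\infty}V^{(p)}_{X_n}(\tilde f_n)=V^{(p)}_X(f),\qquad \limsup_{n\to\infty}\int_{X_n}\lip_a^{X_n}(\tilde f_n)^q\,dm_{X_n}\le \int_X\lip_a^X(f)^q\,dm_X.
\]
Once such $\tilde f_n$ are in place, applying the Poincar\'e inequality on each $X_n$ with constant $C_{p,q}(X_n)$ and taking the $\liminf$ yields the desired inequality on $X$.

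The main obstacle is the construction of the $\tilde f_n$: a naive McShane-Whitney extension $\tilde f$ of $f$ to $Y$ only controls $\Lip(\tilde f)=\Lip(f)$, so the restriction $\tilde f|_{X_n}$ gives only the coarse bound $\lip_a^{X_n}(\tilde f|_{X_n})\le\Lip(f)$, which is much weaker than the slope profile of $f$ on $X$. To sharpen this we use a locally adapted extension: since $g:=\lip_a^X(f)$ is bounded and upper semicontinuous, for any $\varepsilon>0$ it can be dominated from above by a bounded continuous $\phi$ on $X$ with $\int_X\phi^q\,dm_X\le\int_X g^q\,dm_X+\varepsilon$. Cover $X$ by balls $B_{r(x)}(x)$ so small that $f$ is $(\phi(x)+\varepsilon)$-Lipschitz on each (possible by the very definition of $\lip_a$), extend $\phi$ continuously to $Y$ by Tietze, and glue local McShane extensions of $f$ via a Lipschitz partition of unity on $Y$ to obtain a bounded Lipschitz $\tilde f$ whose asymptotic slope at every $x\in X$ is controlled by $\phi(x)+O(\varepsilon)$. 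Setting $\tilde f_n:=\tilde f|_{X_n}$, the inequality $\lip_a^{X_n}(\tilde f_n)\le \lip_a^Y(\tilde f)|_{X_n}$ together with weak convergence $m_{X_n}\to m_X$ applied to a bounded continuous majorant of $\lip_a^Y(\tilde f)^q$ produces the slope bound; convergence of variances follows from weak convergence applied to the bounded continuous functions $\tilde f$ and $\tilde f^2$ on $Y$, followed by sending $\varepsilon\to 0$.

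The restriction $1<p\le q$ or $p=1<q$ will enter at the final passage of $L^p$-deviations to the limit and when regularizing test functions by truncation (as in the proof of Proposition \ref{prop:finite_variance}): these conditions provide the integrability margin needed to absorb the errors $\varepsilon$ and $O(\varepsilon)$, whereas in the excluded case $p=q=1$ the inequality degenerates to a mean-value estimate and the stability argument above breaks down.
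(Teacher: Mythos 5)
Your treatment of monotonicity and homogeneity is fine and matches the paper. The skeleton of your lower semicontinuity argument is also the paper's: embed everything in a common Polish space $Y$ via Lemma \ref{lem:mG_convergece}, manufacture test functions on $Y$ whose asymptotic slopes are controlled in $L^q(m_X)$ by the slope data of $f$, and pass to the limit using that $\lip_a$ of a bounded Lipschitz function on $Y$ is bounded and upper semicontinuous (so its integral is upper semicontinuous under weak convergence of $m_{X_n}$). Where you genuinely diverge is in how the test functions are produced. The paper reformulates the $(p,q)$-Poincar\'e inequality in terms of the relaxed energy $\mathsf E^{(q)}_{\ast,a}$ (Lemma \ref{prop:equiv_PI}) and then invokes the approximation theorem for the minimal asymptotic relaxed slope from \cite{G:on_the}, \cite{GP}: this is precisely the black box that hands you globally Lipschitz $g_k$ on $Y$ with $\lip_a(g_k)\to|D\tilde f|_{\ast,a}\le\lip_a(\tilde f)$ in $L^q(m_X)$, after which truncation and the u.s.c.\ Portmanteau step finish the proof. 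You propose instead to build such functions by hand, gluing local McShane extensions with a Lipschitz partition of unity. If carried out, this would give a more self-contained proof that avoids the Sobolev machinery entirely.

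The gap is that the gluing is asserted rather than proved, and its hardest point is exactly the one you skip. Writing $\tilde f=\sum_i\chi_iu_i$, the increment splits as $\sum_i\chi_i(y)\bigl(u_i(y)-u_i(y')\bigr)+\sum_i\bigl(\chi_i(y)-\chi_i(y')\bigr)\bigl(u_i(y')-\tilde f(y')\bigr)$; the second (cross) term contributes roughly $\bigl(\sum_i\Lip(\chi_i)\bigr)\cdot\sup_i|u_i-\tilde f|$ to the local Lipschitz constant, and you must show this vanishes as $r\to0$ at each $x\in X$. This works only because the active $u_i$ all agree with $f$ on $X$ near $x$, hence differ by $O(d(\cdot,X))$, and only if the partition is locally finite with locally bounded $\sum_i\Lip(\chi_i)$ (the individual $\Lip(\chi_i)\sim r_i^{-1}$ blow up as the cover is refined). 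You must also arrange that $\tilde f$ is \emph{globally} bounded Lipschitz on $Y$, not merely well-behaved near $X$: the measures $m_{X_n}$ charge $X_n$, which need not lie close to $X$ pointwise, and the u.s.c.\ Portmanteau inequality requires a uniform upper bound on $\lip_a(\tilde f)$ over all of $Y$; the naive sum $\sum_i\chi_iu_i$ has $\Lip(\chi_iu_i)$ unbounded without further normalization and cutoff. None of this is fatal, but it is the actual content of the step, and the paper imports it wholesale from the relaxed-slope approximation theorem. Finally, your account of where the hypothesis on $(p,q)$ enters is not right: in the paper, $q>1$ is needed because the Sobolev space $W^{1,q}_{\ast,a}$ and its approximation theorem are developed for $q>1$, and $p\le q$ is used to upgrade $L^q$-convergence of the approximants $g_k\to\tilde f$ to convergence of the $p$-variances $V^{(p)}_X(g_k)\to V^{(p)}_X(\tilde f)$ on a probability space. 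Your construction never visibly uses either condition, which you should treat as a warning that something is unaccounted for rather than as evidence the hypothesis is decorative.
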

To prove this theorem, we give another formulation of Poincar\'e inequality, {using Sobolev functions on singular spaces}.

\subsubsection{Sobolev spaces}
Let $X$ be a pm-space and {$1 < q < \infty$}.
Following \cite{G:on_the} and \cite{GP}, we {define} the Sobolev space $W^{1,q}_{\ast,a}(X,d_X,m_X)$ on $X$ {as follows}.

\begin{definition}[\cite{G:on_the}, \cite{GP}]
For $f \in L^q(m_X)$, we define the $q$-{\it energy} of $f$ by
\[
\mathsf{E}_{\ast,a}^{(q)}(f) := \frac{1}{q} \inf \left\{
\liminf_{n \to \infty} \int_X \lip_a(f_n)^q\,dm_X \,\middle|\,
\begin{aligned}
&\{f_n\}_n \subset \Lip(X) \text{ such that } \\
& f_n \to f \text{ strongly in } L^q(m_X)
\end{aligned}
\right\}.
\]
Then we set
\[
W^{1,q}_{\ast,a}(X) := \left\{ f \in L^q(m_X) \mid \mathsf E_{\ast,a}^{(q)}(f) < \infty \right\}
\]
which is called the {\it $(1,q)$-Sobolev space} of $X$.
\end{definition}
It is known that $W_{\ast,a}^{1,q}(X)$ is a Banach space in the norm \[
\|f\|_{1,q}= \left( \|f\|_q^q + q \,\mathsf E_{\ast,a}^{(q)}(f) \right)^{1/q}.
\]
Furthermore, a density function of the energy exists:
\begin{proposition}[\cite{G:on_the}, \cite{GP}]
For each $f \in W^{1,q}_{\ast,a}(X)$, there exists a function $|Df|_{\ast,a} \in L^q(m_X)$ such that
\[
\mathsf E_{\ast,a}^{(q)}(f) = \frac{1}{q} \int_X |Df|_{\ast,a}^q\,dm_X
\]
holds.
\end{proposition}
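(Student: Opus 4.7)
The plan is to construct $|Df|_{\ast,a}$ via a weak compactness and Mazur-type convex combination argument, which is a standard relaxation technique on $L^q$. First I would select a minimizing sequence $\{f_n\} \subset \Lip(X)$ with $f_n \to f$ strongly in $L^q(m_X)$ and $\int_X \lip_a(f_n)^q\,dm_X \to q\,\mathsf{E}_{\ast,a}^{(q)}(f)$. Then $\{\lip_a(f_n)\}$ is bounded in $L^q(m_X)$, and since $1 < q < \infty$, reflexivity yields, after extracting a subsequence, a nonnegative $G \in L^q(m_X)$ with $\lip_a(f_n) \rightharpoonup G$ weakly. Weak lower semicontinuity of the $L^q$-norm gives $\int_X G^q\,dm_X \le q\,\mathsf{E}_{\ast,a}^{(q)}(f)$.

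Next I would apply Mazur's lemma to obtain finite convex combinations $\tilde{g}_n = \sum_{k=n}^{N_n} \lambda_k^{(n)} \lip_a(f_k)$ converging to $G$ strongly in $L^q(m_X)$. Setting $\tilde{f}_n := \sum_{k=n}^{N_n} \lambda_k^{(n)} f_k \in \Lip(X)$, convexity of the $L^q$-norm gives $\tilde{f}_n \to f$ strongly in $L^q(m_X)$. The decisive step is the pointwise sublinearity of $\lip_a$: on any fixed ball $B_r(x)$ the ordinary Lipschitz constant is subadditive and positively homogeneous, so $\Lip(\tilde{f}_n|_{B_r(x)}) \le \sum_k \lambda_k^{(n)} \Lip(f_k|_{B_r(x)})$; letting $r \to 0$, the right-hand side decreases to $\tilde{g}_n(x)$ by monotonicity in $r$ and finiteness of the sum, yielding $\lip_a(\tilde{f}_n) \le \tilde{g}_n$ pointwise.

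Integrating and passing to the limit gives $\liminf_n \int_X \lip_a(\tilde{f}_n)^q\,dm_X \le \int_X G^q\,dm_X$, and since $\{\tilde{f}_n\}$ is a competitor in the infimum defining $\mathsf{E}_{\ast,a}^{(q)}(f)$, we deduce $q\,\mathsf{E}_{\ast,a}^{(q)}(f) \le \int_X G^q\,dm_X$. Combined with the reverse inequality, this forces equality, and I would define $|Df|_{\ast,a} := G$. A canonical choice is obtained by selecting among all admissible $G$'s the one with minimal $L^q$-norm, which is unique by strict convexity of $L^q(m_X)$ for $q > 1$.

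The main obstacle I anticipate is justifying the sublinearity $\lip_a(\tilde{f}_n) \le \tilde{g}_n$. While subadditivity and positive homogeneity of ordinary Lipschitz constants on a fixed ball are immediate, passing to the asymptotic Lipschitz constant requires interchanging $\inf_{r>0}$ with a finite convex sum; this is legitimate precisely because $r \mapsto \Lip(f_k|_{B_r(x)})$ is nondecreasing, so the infimum is a decreasing limit as $r \to 0$ and commutes with finite sums. Once this is secured, the rest is a routine combination of reflexivity, Mazur's lemma, and lower semicontinuity of norms under weak convergence.
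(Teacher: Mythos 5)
The paper does not actually prove this proposition: it is imported verbatim from \cite{G:on_the} and \cite{GP}, and your argument is precisely the standard relaxation construction used there (reflexivity of $L^q$ for $1<q<\infty$, weak lower semicontinuity of the norm, Mazur's lemma applied to tails, and the sublinearity $\lip_a\bigl(\sum_k\lambda_k f_k\bigr)\le\sum_k\lambda_k\lip_a(f_k)$, which you correctly justify via monotonicity of $r\mapsto\Lip(f|_{B_r(x)})$). The proof is correct as written, including the final remark that the minimal-norm choice of $G$ gives the canonical $|Df|_{\ast,a}$.
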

More precisely, such a $|Df|_{\ast,a}$ is unique, in a certain class of functions called asymptotic relaxed slopes of $f$ (see \cite{G:on_the}, \cite{GP}).
So, the density function $|Df|_{\ast,a}$ is called the {\it minimal asymptotic relaxed slope} of $f$.
In particular,
\begin{equation} \label{eq:|Df|_le_lip_a(f)}
|Df|_{\ast,a} \le \lip_a(f) \hspace{1em} m_X\text{-a.e. on } X.
\end{equation}
It is known that $|Df|_{\ast,a}$ has a nice approximation:
\begin{theorem}[\cite{G:on_the}, \cite{GP}]
For each $f \in W^{1,q}_{\ast,a}(X)$, there exists a sequence $\{f_n\}_n \subset \Lip(X) \cap L^q(m_X)$ such that
\begin{equation} \label{eq:approximation}
\|f_n-f\|_q + \|\lip_a(f_n) - |Df|_{\ast,a} \|_{q} \to 0
\end{equation}
as $n \to \infty$.
\end{theorem}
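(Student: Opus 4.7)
The approach is to boost the weak, minimizing-sequence approximation built into the definition of $\mathsf E^{(q)}_{\ast,a}$ into strong $L^q$-convergence of the asymptotic Lipschitz constants, via Mazur's lemma plus uniform convexity of $L^q$ (using that $1 < q < \infty$).

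First, by the characterization of $|Df|_{\ast,a}$ as the density of $\mathsf E^{(q)}_{\ast,a}(f)$, I would choose an energy-minimizing sequence $\{g_n\} \subset \Lip(X) \cap L^q(m_X)$ with $g_n \to f$ in $L^q(m_X)$ and $\|\lip_a(g_n)\|_q \to \||Df|_{\ast,a}\|_q$. Since $L^q$ is reflexive, after passing to a subsequence $\lip_a(g_n) \rightharpoonup h$ weakly in $L^q$ with $\|h\|_q \leq \||Df|_{\ast,a}\|_q$. Mazur's lemma produces convex combinations $\tilde G_n = \sum_{k=n}^{N(n)} \lambda_k^{(n)} \lip_a(g_k)$ converging to $h$ strongly in $L^q$, while the matching $\tilde f_n = \sum_{k=n}^{N(n)} \lambda_k^{(n)} g_k$ lie in $\Lip(X) \cap L^q(m_X)$, satisfy $\lip_a(\tilde f_n) \leq \tilde G_n$ by subadditivity and positive homogeneity of $\lip_a$, and converge to $f$ in $L^q$. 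Thus $h$ is realized as an asymptotic relaxed slope of $f$, so the essential-infimum characterization of $|Df|_{\ast,a}$ gives $h \geq |Df|_{\ast,a}$ a.e.; combined with the norm bound, $h = |Df|_{\ast,a}$ a.e.

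Now set $f_n := \tilde f_n$. Then $f_n \to f$ in $L^q$, and $0 \leq \lip_a(f_n) \leq \tilde G_n$ with $\tilde G_n \to |Df|_{\ast,a}$ strongly in $L^q$, so in particular $\limsup_n \|\lip_a(f_n)\|_q \leq \||Df|_{\ast,a}\|_q$. Along any weakly convergent subsequence $\lip_a(f_n) \rightharpoonup H$, the same Mazur argument applied to $\{f_n\}$ realizes $H$ as a relaxed slope, so $H \geq |Df|_{\ast,a}$ a.e., while passing to the weak limit in the pointwise inequality $\tilde G_n - \lip_a(f_n) \geq 0$ yields $H \leq |Df|_{\ast,a}$ a.e. Hence $H = |Df|_{\ast,a}$ a.e.; uniqueness of the cluster point promotes the weak convergence to the full sequence, and weak lower semicontinuity of the norm forces $\|\lip_a(f_n)\|_q \to \||Df|_{\ast,a}\|_q$. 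The uniform convexity of $L^q$ for $1 < q < \infty$ then upgrades weak convergence plus norm convergence to strong $L^q$-convergence, which is the required conclusion.

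The main obstacle I anticipate is identifying the weak limit $h$ with $|Df|_{\ast,a}$ itself, not just a dominating relaxed slope: the inequality $\|h\|_q \le \||Df|_{\ast,a}\|_q$ is easy, but the pointwise lower bound $h \ge |Df|_{\ast,a}$ rests on the essential-infimum characterization of the minimal relaxed slope, equivalently on the structural lattice property of the class of asymptotic relaxed slopes together with its closure under $L^q$-limits. Assuming this foundational structure is available from the framework of \cite{G:on_the, GP}, the remainder of the argument is routine functional analysis.
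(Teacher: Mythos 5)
The paper does not prove this theorem; it is quoted from \cite{G:on_the} and \cite{GP}, and your argument is precisely the standard proof given there: pass to a weak $L^q$-limit of the slopes of an optimal sequence, upgrade to strong convergence of convex combinations via Mazur, identify the limit with $|Df|_{\ast,a}$ using the pointwise minimality of the minimal relaxed slope together with the norm bound, and then convert weak convergence plus norm convergence into strong convergence by uniform convexity of $L^q$ for $1<q<\infty$. The one nontrivial input you rely on --- that the class of asymptotic relaxed slopes is a closed convex lattice admitting a pointwise minimal element, so that $h\ge |Df|_{\ast,a}$ a.e. --- is indeed established in those references, and you have correctly flagged it as the load-bearing step; the rest is sound.
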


Using this theorem, we can give another formulation of Poincar\'e inequalities:
\begin{lemma} \label{prop:equiv_PI}
Let $C\in [0,\infty]$.
Then the following are equivalent.
\begin{enumerate}
\item $X$ satisfies $(p,q)$-Poincar\'e inequality for $C$.
\item For every $f \in \Lip_b(X)$, we have
\[
V_X^{(p)}(f)^{1/p} \le C (q \, \mathsf E_{\ast,a}^{(q)}(f))^{1/q}.
\]
\item For every $f \in W_{\ast,a}^{1,q}(X)$, we have
\[
V_X^{(p)}(f)^{1/p} \le C (q \, \mathsf E_{\ast,a}^{(q)}(f))^{1/q}.
\]
\end{enumerate}
\end{lemma}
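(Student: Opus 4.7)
The plan is to prove the cycle $(3)\Rightarrow(2)\Rightarrow(1)\Rightarrow(3)$. Two of these are essentially immediate. For $(2)\Rightarrow(1)$: for any $f\in\Lip_b(X)$, the constant sequence $f_n\equiv f$ is admissible in the infimum defining $\mathsf{E}^{(q)}_{\ast,a}(f)$, so that $q\,\mathsf{E}^{(q)}_{\ast,a}(f)\le\int_X\lip_a(f)^q\,dm_X$ (finite since $\lip_a(f)\le\Lip(f)<\infty$ and $m_X$ is a probability measure). Inserting this bound on the right-hand side of (2) yields (1). For $(3)\Rightarrow(2)$: the same estimate shows $\Lip_b(X)\subset W^{1,q}_{\ast,a}(X)$, so (3) specializes directly to (2).

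The content is in $(1)\Rightarrow(3)$. Given $f\in W^{1,q}_{\ast,a}(X)$, invoke the approximation result \eqref{eq:approximation} to get $\{f_n\}\subset\Lip(X)\cap L^q(m_X)$ with
\[
\|f_n-f\|_q+\|\lip_a(f_n)-|Df|_{\ast,a}\|_q\to 0.
\]
Since (1) applies only to bounded Lipschitz functions, truncate: for $r>0$ set $f_{n,r}:=T_r\circ f_n$ with $T_r(t)=\max\{\min\{t,r\},-r\}$ as in \eqref{eq:truncation}, so that $f_{n,r}\in\Lip_b(X)$. Because $T_r$ is $1$-Lipschitz on $\mathbb R$, for every $x\in X$ and every $\rho>0$ we have $\Lip(f_{n,r}|_{B_\rho(x)})\le\Lip(f_n|_{B_\rho(x)})$, and taking $\rho\to 0$ gives the pointwise bound $\lip_a(f_{n,r})\le\lip_a(f_n)$. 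Applying (1) to $f_{n,r}$ therefore yields
\[
V_X^{(p)}(f_{n,r})^{1/p}\le C\Bigl(\int_X\lip_a(f_n)^q\,dm_X\Bigr)^{1/q}.
\]

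It remains to pass to the limit first in $r$ and then in $n$. In the regime in which the lemma will actually be used (namely $p\le q$, as in Theorem \ref{thm:(p,q)-P_is_lsc}), the probability measure $m_X$ gives $L^q(m_X)\subset L^p(m_X)$. So dominated convergence yields $f_{n,r}\to f_n$ in $L^p$ as $r\to\infty$; combined with the continuity of $g\mapsto V_X^{(p)}(g)=\|g-E_X(g)\|_p^p$ on $L^p(m_X)$, this gives $V_X^{(p)}(f_{n,r})\to V_X^{(p)}(f_n)$, hence $V_X^{(p)}(f_n)^{1/p}\le C(\int_X\lip_a(f_n)^q\,dm_X)^{1/q}$. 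Sending $n\to\infty$, $L^q$-convergence of $\lip_a(f_n)$ to $|Df|_{\ast,a}$ implies convergence of the $q$-th moments to $\int_X|Df|_{\ast,a}^q\,dm_X=q\,\mathsf{E}^{(q)}_{\ast,a}(f)$, while $f_n\to f$ in $L^p$ gives $V_X^{(p)}(f_n)\to V_X^{(p)}(f)$. This establishes (3).

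The main obstacle I expect is the truncation step: one must check that replacing $f_n$ by $T_r\circ f_n$ neither enlarges the asymptotic Lipschitz constant nor loses the $V_X^{(p)}$ term in the limit $r\to\infty$. Both points hinge on the fact that $T_r$ is a contraction and on the integrability afforded by the probability measure combined with $p\le q$; once these are in place the argument is a standard diagonal limit.
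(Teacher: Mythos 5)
Your proof is correct and follows essentially the same route as the paper: the easy implications come from the bound $q\,\mathsf E_{\ast,a}^{(q)}(f)\le\int_X\lip_a(f)^q\,dm_X$ (equivalently $|Df|_{\ast,a}\le\lip_a(f)$) together with $\Lip_b(X)\subset W^{1,q}_{\ast,a}(X)$, and (1) $\Rightarrow$ (3) uses the Lipschitz approximation \eqref{eq:approximation} followed by truncation. Your two-parameter limit (first $r\to\infty$, exploiting that the $1$-Lipschitz truncation does not increase $\lip_a$, then $n\to\infty$) is in fact a slightly cleaner execution of the truncation step than the paper's reduction ``we may assume $f$ is bounded'' plus Mazur's lemma, and your observation that $p\le q$ is what makes the $L^p$-convergence step work correctly pinpoints the regime in which the lemma is actually applied in Theorem \ref{thm:(p,q)-P_is_lsc}.
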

\begin{proof}
By \eqref{eq:|Df|_le_lip_a(f)}, the implications (3) $\Rightarrow$ (2) $\Rightarrow$ (1) are clear.
We are going to prove (1) $\Rightarrow$ (3).
Suppose that (1) holds.
For each $f \in W_{\ast,a}^{1,q}(X)$, we obtain a sequence $\{f_n\}_n \subset \Lip(X) \cap L^q(m_X)$ satisfying \eqref{eq:approximation}.
We may assume that $f$ is bounded.
By considering the truncation 
\[
\max \left\{ \min \left\{f_n, \sup_X f \right\}, \inf_X f\right\}
\]
and using Mazur's lemma, we may assume that $f_n \in \Lip_b(X)$.
From the assumption, we have
\[
V_X^{(p)}(f_n)^{1/p} \le C \left( \int_X \lip_a (f_n)^q\,dm_X\right)^{1/q}.
\]
Taking the limit as $n \to \infty$ and using \eqref{eq:approximation}, we obtain
\[
V_X^{(p)}(f)^{1/p} \le C (q \, \mathsf E_{\ast,a}^{(q)}(f))^{1/q}.
\]
So, $X$ satisfies (3).
This completes the proof.
\end{proof}

\begin{proof}[Proof of Theorem \ref{thm:(p,q)-P_is_lsc}]
The monotonicity and the homogeneity of $C_{p,q}$ are clear from the definition.
We prove that $C_{p,q}$ is lower semicontinuous in the box topology.
To prove this, it is enough to show that the set of all $X \in \cal X$ with $C_{p,q}(X) \le C$ is closed in the box-topology for each $C \in[0, \infty)$.
Let us take a sequence $\{X_n\} \subset \cal X$ satisfying $(p,q)$-Poincar\'e inequality for a common constant $C$.
Suppose $\{X_n\}$ box converges to $X$.
By Lemma \ref{lem:mG_convergece},
we may assume that $X_n$ and $X$ are isometrically embedded in a complete separable metric space $Y$ such that $m_{X_n}$ weakly converges to $m_X$ in $\mathscr P(Y)$.
Let us fix $f \in \Lip_b(X)$.
By Proposition \ref{prop:Lip_ext}, there exists a bounded Lipschitz extension $\tilde f : Y \to \mathbb R$ of $f$.
Then, there exists a sequence $f_k \in \Lip(Y) \cap L^q(m_X)$, $k \in \mathbb N$, such that
\[
\| f_k -\tilde f \|_{L^q(m_X)} + \left\| \lip_a(f_k) - | D \tilde f |_{\ast,a} \right\|_{L^q(m_X)} \to 0
\]
as $k \to \infty$.
Let $g_k \in \Lip_b(Y)$ be given by the truncation
\[
g_k =
\max \left\{ \min \left\{ f_k, \sup_Y \tilde f \right\} , \inf_Y \tilde f\right\}.
\]
Then, we have
\[
|g_k - \tilde f| \le |f_k - \tilde f| \text{ on } Y.
\]
Hence,
\[
\| g_k - \tilde f\|_{L^r(m_X)} \to 0
\]
holds, for every $1 \le r \le q$.
In particular, we have
\[
\lim_{k \to \infty} V_{X}^{(p)}(g_k) = V_X^{(p)}(\tilde f).
\]
Since $g_k$ is the truncation of $f_k$, we have
\[
\lip_a (g_k) \le \lip_a (f_k) \text{ on } Y.
\]
Since each $X_n$ satisfies $(p,q)$-Poincar\'e inequality,
\[
V_{X_n}^{(p)}(g_k)^{1/p} \le C \left( \int_Y \lip_a(g_k)^q\,dm_{X_n} \right)^{1/q}
\]
holds.
Since $\lip_a(g_k)$ is upper semicontinuous and has an upper bound, {as $n \to \infty$},
we obtain
\[
V_X^{(p)}(g_k)^{1/p} \le C \left( \int_Y\lip_a(g_k)^q\,dm_X \right)^{1/q} \le C \left( \int_Y \lip_a(f_k)^q\,dm_X \right)^{1/q}.
\]
Taking $k \to \infty$, we have
\begin{align*}
V_X^{(p)}(\tilde f)^{1/p} &\le C \left( \int_Y |D \tilde f|_{\ast,a}^q\,dm_X \right)^{1/q} \\
&= C \left( \int_X|D \tilde f|_{\ast,a}^q\,dm_X \right)^{1/q} = C \left( q \,\mathsf E_{\ast,a}( \tilde f ) \right)^{1/q}.
\end{align*}
Since $\tilde f = f$ on $X$, we have
\[
\mathsf E_{\ast,a}(f) = \mathsf E_{\ast,a}( \tilde f ).
\]
Therefore, we know that $X$ satisfies $(p,q)$-Poincar\'e inequality for $C$ by Lemma \ref{prop:equiv_PI}.
This completes the proof.
\end{proof}

\begin{proposition} \label{prop:smooth_PI}
Let $X$ be a smooth pm-space. Then we have $C_{2,2}(X_2^\infty) = C_{2,2}(X)$.
\end{proposition}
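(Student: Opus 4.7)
The plan is to combine lower semicontinuity of $C_{2,2}$ on the pyramid limit with tensorization of the $(2,2)$-Poincar\'e inequality for smooth Riemannian pm-spaces. By Theorem \ref{thm:(p,q)-P_is_lsc}, $C_{2,2}$ is monotone, $1$-homogeneous and lower semicontinuous, so Corollary \ref{cor:limit} yields
\[
C_{2,2}(X_2^\infty) = \lim_{n \to \infty} C_{2,2}(X_2^n).
\]
The coordinate projection $X_2^n \to X$ is a $1$-Lipschitz measure-preserving surjection, hence $X \prec X_2^n$ and monotonicity gives $C_{2,2}(X_2^n) \ge C_{2,2}(X)$ for every $n$. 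It therefore suffices to prove the reverse inequality $C_{2,2}(X_2^n) \le C_{2,2}(X)$ for each $n$, and for this I would argue by induction from the two-factor bound $C_{2,2}(X \times_2 Y) \le \max\{C_{2,2}(X),C_{2,2}(Y)\}$ applied to smooth $X$ and $Y$.

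For the two-factor tensorization, fix $f \in \Lip_b(X \times_2 Y)$ and let $g(x) := E_Y(f(x,\cdot))$. The conditional variance decomposition reads
\[
V_{X \times_2 Y}(f) = \int_X V_Y(f(x,\cdot))\, dm_X(x) + V_X(g).
\]
Fiberwise Poincar\'e applied to $f(x,\cdot)$ bounds the first summand by $C_{2,2}(Y)^2 \int_{X \times Y} \lip_a(f(x,\cdot))(y)^2\, d(m_X \otimes m_Y)$. For the second, differentiation under the integral together with Cauchy--Schwarz yields $\lip_a(g)(x)^2 \le \int_Y \lip_a(f(\cdot,y))(x)^2\, dm_Y(y)$, so Poincar\'e on $X$ bounds $V_X(g)$ by $C_{2,2}(X)^2$ times the same integral with the other slot. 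Summing and invoking the $\ell_2$-product identity
\[
\lip_a(f(x,\cdot))(y)^2 + \lip_a(f(\cdot,y))(x)^2 \le \lip_a(f)(x,y)^2
\]
gives $V_{X \times_2 Y}(f) \le \max\{C_{2,2}(X), C_{2,2}(Y)\}^2 \int \lip_a(f)^2\, dm$, as required.

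The main obstacle I anticipate is rigorously justifying the two pointwise inequalities involving $\lip_a$ for functions in $\Lip_b$ that are not a priori smooth on the Riemannian product. In the smooth category these are classical: by Rademacher's theorem on Riemannian manifolds, $f$ is differentiable $(m_X \otimes m_Y)$-a.e.\ with $\lip_a(f) = |\nabla f|$, and the product Riemannian metric on $X \times_2 Y$ gives $|\nabla f|^2 = |\nabla_x f|^2 + |\nabla_y f|^2$, so the $\ell_2$-tensorization of $\lip_a$ holds almost everywhere; the Cauchy--Schwarz step for $\lip_a(g)$ is then routine. The approximation theorem cited above Lemma \ref{prop:equiv_PI}, together with the identification $|Df|_{\ast,a} = \lip_a(f)$ a.e.\ on smooth spaces, lets us move between the $\lip_a$-formulation of Definition \ref{def:2,2-PI} and the Sobolev formulation freely, so the integrated inequalities can be passed to the class where they are needed. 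Induction on $n$ then closes the argument.
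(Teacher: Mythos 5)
Your overall architecture is the same as the paper's: establish $C_{2,2}(X_2^n)=C_{2,2}(X)$ for all finite $n$ by tensorization, then sandwich $C_{2,2}(X_2^\infty)$ between $\sup_n C_{2,2}(X_2^n)$ (monotonicity) and $\liminf_n C_{2,2}(X_2^n)$ (lower semicontinuity, via Corollary \ref{cor:limit}). The paper simply cites the tensorization property from [BGL]; you attempt to prove it, and that is where a genuine gap appears. The pointwise inequality $\lip_a(f(x,\cdot))(y)^2+\lip_a(f(\cdot,y))(x)^2\le \lip_a(f)(x,y)^2$ is false for general Lipschitz $f$: for $f(x,y)=\max(x,y)$ on $\mathbb{R}^2$ with the $\ell_2$ metric, at a diagonal point both partial asymptotic Lipschitz constants equal $1$ while $\lip_a(f)=\Lip(f)=1$, so the left side is $2$ and the right side is $1$. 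Worse, your proposed repair --- ``by Rademacher, $\lip_a(f)=|\nabla f|$ a.e.'' --- is also false: take $E\subset[0,1]$ measurable with $0<\mathcal L^1(E\cap I)<\mathcal L^1(I)$ for every interval $I$ and $F(t)=\int_0^t \mathbf{1}_E\,ds$; then $F'=\mathbf{1}_E$ a.e.\ but $\Lip(F|_{B_r(t)})=1$ for every $t$ and $r$, so $\lip_a(F)\equiv 1\neq |F'|$ on the positive-measure set where $\mathbf{1}_E=0$. The same example shows $|DF|_{\ast,a}=\mathbf{1}_E\neq\lip_a(F)$ a.e., so the claimed identification $|Df|_{\ast,a}=\lip_a(f)$ a.e.\ for Lipschitz $f$ on smooth spaces does not hold either. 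The definition of $C_{2,2}$ quantifies over all of $\Lip_b$, so one cannot restrict attention to points of differentiability or to smooth $f$ without an argument.

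The fix is available inside the paper's own toolkit, and it is essentially what the citation to [BGL] packages. By Lemma \ref{prop:equiv_PI}, the $(2,2)$-Poincar\'e inequality with $\lip_a$ over $\Lip_b$ is equivalent to the inequality with the minimal asymptotic relaxed slope $|Df|_{\ast,a}$ over $W^{1,2}_{\ast,a}$; on a smooth weighted Riemannian manifold $|Df|_{\ast,a}=|\nabla f|$ a.e.\ (consistency of metric and distributional Sobolev spaces), and for the product manifold one has the genuine a.e.\ identity $|\nabla f|^2=|\nabla_xf|^2+|\nabla_yf|^2$. Running your conditional-variance decomposition at the level of $|\nabla f|$ rather than $\lip_a(f)$, and then using $|\nabla f|=|Df|_{\ast,a}\le\lip_a(f)$ to return to Definition \ref{def:2,2-PI}, closes the argument; alternatively one simply quotes the spectral-gap tensorization of [BGL] as the paper does. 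Your remaining steps (law of total variance, fiberwise Poincar\'e, the Cauchy--Schwarz bound $\lip_a(g)(x)^2\le\int_Y\lip_a(f(\cdot,y))(x)^2\,dm_Y$, monotonicity from the projection $X_2^n\to X$, and the passage to the limit) are all sound.
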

\begin{proof}
The tensorization property {of $C_{2,2}$} for smooth spaces (\cite{BGL})
implies $C_{2,2}(X_2^n) = C_{2,2}(X)$ for all $n \ge 1$.
So, by the lower semicontinuity of $C_{2,2}$, we have
\[
C_{2,2}(X) \le \sup_n C_{2,2}(X_2^n) \le C_{2,2}(X_2^\infty) \le \liminf_{n \to \infty} C_{2,2}(X_2^n) = C_{2,2}(X).
\]
This completes the proof.
\end{proof}

\begin{proposition}
\label{prop:Poincare=infty}
If a pm-space $Y$ is {mm-disconnected} (see Remark \ref{rem:delta-discrete}), then
$C_{p,q}(Y) = \infty$ for every $p,q \in (1, \infty)$.
\end{proposition}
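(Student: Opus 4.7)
The plan is to exhibit a single test function $f \in \Lip_b(Y)$ for which the right-hand side of the $(p,q)$-Poincar\'e inequality \eqref{eq:(p,q)-PI} vanishes while the left-hand side is strictly positive; the conclusion $C_{p,q}(Y) = \infty$ then follows immediately from Definition \ref{def:(p,q)-PI}.

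First I would exploit the hypothesis of mm-disconnectedness. By Remark \ref{rem:delta-discrete}, we have a decomposition $Y = A \cup A'$ into closed sets with $m_Y(A), m_Y(A') > 0$ and $\delta := d_Y(A, A') > 0$. Since $d_Y(A, A') > 0$, the sets $A$ and $A'$ are disjoint, so $A = Y \setminus A'$ and $A' = Y \setminus A$; hence both $A$ and $A'$ are simultaneously open and closed in $Y$. In particular, every point of $Y$ has a small ball entirely contained in one of the two clopen pieces.

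Next I would take the indicator test function $f := \mathbf{1}_{A'}$. It is clearly bounded and, since any pair $(x,y)$ with $f(x) \neq f(y)$ satisfies $d_Y(x,y) \geq \delta$, it lies in $\Lip_b(Y)$ with $\Lip(f) \leq 1/\delta$. The key point is that $f$ is locally constant: for every $x \in Y$ there exists $r > 0$ such that $B_r(x)$ is contained in whichever of the clopen sets $A, A'$ contains $x$, so $\Lip(f|_{B_r(x)}) = 0$ and therefore $\lip_a(f)(x) = 0$. Consequently
\begin{equation*}
\int_Y \lip_a(f)^q\, dm_Y = 0.
\end{equation*}
On the other hand, writing $\alpha := m_Y(A) \in (0,1)$ and $\beta := m_Y(A') = 1 - \alpha \in (0,1)$, we have $E_Y(f) = \beta$, and a direct computation gives
\begin{equation*}
V_Y^{(p)}(f) = \alpha \beta^p + \beta \alpha^p = \alpha\beta(\alpha^{p-1} + \beta^{p-1}) > 0.
\end{equation*}

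Finally, plugging this $f$ into \eqref{eq:(p,q)-PI} would force any admissible constant $C$ to satisfy $0 < V_Y^{(p)}(f)^{1/p} \leq C \cdot 0 = 0$, which is impossible. Therefore no finite $C$ makes $(p,q)$-Poincar\'e inequality hold on $Y$, and $C_{p,q}(Y) = \infty$. No step of this plan presents a real obstacle; the only subtlety is the elementary observation that in a mm-disconnected pm-space the two pieces are automatically clopen, which is what makes the indicator function work as a test function.
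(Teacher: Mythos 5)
Your proposal is correct and uses essentially the same mechanism as the paper: a non-constant, locally constant test function has $\lip_a \equiv 0$ but positive $p$-variance, so no finite constant can work in \eqref{eq:(p,q)-PI}. The only (immaterial) difference is that the paper first reduces to a two-point space via the monotonicity of $C_{p,q}$ and tests with the distance function there, whereas you test directly on $Y$ with the indicator $\mathbf{1}_{A'}$.
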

\begin{proof}
{We may assume that $Y$ is a two-points pm-space $Y=(\{0,1\}, d, m)$.}
Let us consider the distance function $d_0 : \{0,1\} \to \mathbb R$ from $0$.
Since
$Y$
is discrete, we have
\[
\lip_a (d_0) \equiv 0.
\]
On the other hand, we obtain
\begin{align*}
V_Y(d_0) &= E_Y(d_0^2) - E_Y(d_0)^2 \\
&= m_X(\{1\}) (\diam\, Y)^2 - \left( m_Y(\{1\}) \diam\,Y \right)^2 \\
&= (\diam\, Y)^2 m_Y(\{1\}) m_Y(\{0\}) > 0.
\end{align*}
Therefore, $C_{p,q}(Y) = \infty$.
This completes the proof.
\end{proof}

\subsection{Logarithmic Sobolev inequality}
Let us introduce one of the most famous invariants on $\cal X$ and extend it to $\Pi$.

\begin{definition} \label{def:LSI}
Let $X$ be a pm-space and $C \in [0,\infty]$.
We say that $X$ satisfies {\it logarithmic Sobolev inequality for} $C$ if
\[
\mathrm{Ent}_{m_X}(f^2) \le 2 C^2 \int_X \lip_a(f)^2\,dm_X
\]
for all $f \in \Lip_b(X)$.
Here, we use the convention $0 \log 0 = 0$.
The infimum of all such $C$'s is denoted by $C_\mathrm{LS}(X)$ and it is called the {\it logarithmic Sobolev constant} of $X$.
\end{definition}

By a direct calculation, $C_\mathrm{LS}$ is monotone and $1$-homogeneous.
Results similar to Lemma \ref{prop:equiv_PI} and Theorem \ref{thm:(p,q)-P_is_lsc} also hold for $C_\mathrm{LS}$.
\begin{proposition}
Let $X$ be a pm-space and $C \in [0,\infty]$.
The following properties are equivalent:
\begin{enumerate}
\item $X$ satisfies logarithmic Sobolev inequality for $C$;
\item for every $f \in W_{\ast,a}^{1,2}(X)$,
we have
\[
\mathrm{Ent}_{m_X}(f^2)
\le 2C^2 \int_X |Df|_{\ast,a}^2\,dm_X.
\]
\end{enumerate}
\end{proposition}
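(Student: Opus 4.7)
The plan is to mimic the proof of Lemma \ref{prop:equiv_PI} almost verbatim, with the role of the variance $V_X^{(p)}$ played by the entropy $\mathrm{Ent}_{m_X}(f^2)$. The direction (2) $\Rightarrow$ (1) is immediate: any $f \in \Lip_b(X)$ lies in $W^{1,2}_{\ast,a}(X)$ (take the constant sequence $f_n \equiv f$ in the definition of $\mathsf E^{(2)}_{\ast,a}$; this works because $m_X$ is a probability measure and $\lip_a(f)$ is bounded), and then the logarithmic Sobolev inequality for $C$ follows from $|Df|_{\ast,a} \le \lip_a(f)$ $m_X$-a.e., which is \eqref{eq:|Df|_le_lip_a(f)}.

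For the converse direction (1) $\Rightarrow$ (2), I would first reduce to bounded Sobolev functions. Given $f \in W^{1,2}_{\ast,a}(X)$, set $f_R := \max\{\min\{f,R\},-R\}$. A standard truncation property of minimal asymptotic relaxed slopes, obtained by applying the $1$-Lipschitz retraction $t \mapsto \max\{\min\{t,R\},-R\}$ to any recovery sequence in the definition of the asymptotic $2$-energy and invoking the locality of $|\cdot|_{\ast,a}$, yields $|Df_R|_{\ast,a} \le |Df|_{\ast,a}$ $m_X$-a.e. Hence $f_R \in W^{1,2}_{\ast,a}(X) \cap L^\infty(m_X)$, and
\[
\lim_{R \to \infty} \int_X |Df_R|_{\ast,a}^2 \, dm_X = \int_X |Df|_{\ast,a}^2 \, dm_X
\]
by monotone convergence, while $\mathrm{Ent}_{m_X}(f_R^2) \to \mathrm{Ent}_{m_X}(f^2)$ by monotone convergence applied to $f^2 \log(1+f^2)$ together with the elementary identity defining the entropy. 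Thus it suffices to prove (2) for bounded $f$.

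Now fix bounded $f \in W^{1,2}_{\ast,a}(X)$ and take a sequence $\{f_n\}_n \subset \Lip(X) \cap L^2(m_X)$ satisfying \eqref{eq:approximation}. Truncating each $f_n$ at the level $\|f\|_\infty$ (which decreases $\lip_a(f_n)$ pointwise and does not increase $\|f_n - f\|_2$) and then passing to suitable convex combinations via Mazur's lemma, exactly as in the proof of Lemma \ref{prop:equiv_PI}, I may assume $f_n \in \Lip_b(X)$, $\sup_n \|f_n\|_\infty \le \|f\|_\infty$, $f_n \to f$ in $L^2(m_X)$, and $\lip_a(f_n) \to |Df|_{\ast,a}$ in $L^2(m_X)$. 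Applying hypothesis (1) to each $f_n$ gives
\[
\mathrm{Ent}_{m_X}(f_n^2) \le 2 C^2 \int_X \lip_a(f_n)^2 \, dm_X,
\]
whose right-hand side converges to $2 C^2 \int_X |Df|_{\ast,a}^2 \, dm_X$ as $n \to \infty$.

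The main obstacle, and indeed the only step requiring care, is the passage to the limit on the left-hand side. Here the uniform $L^\infty$ bound $\sup_n \|f_n\|_\infty \le \|f\|_\infty$ is decisive: since $f_n^2 \to f^2$ in $L^1(m_X)$ with the uniform bound $f_n^2 \le \|f\|_\infty^2$, and since $t \mapsto t \log t$ (with $0 \log 0 := 0$) is continuous and bounded on $[0, \|f\|_\infty^2]$, dominated convergence yields $\int_X f_n^2 \log f_n^2 \, dm_X \to \int_X f^2 \log f^2 \, dm_X$; combined with $\int_X f_n^2 \, dm_X \to \int_X f^2 \, dm_X$ this gives $\mathrm{Ent}_{m_X}(f_n^2) \to \mathrm{Ent}_{m_X}(f^2)$, completing the proof. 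The only non-routine ingredient is the truncation inequality for $|Df|_{\ast,a}$, which I would either cite from \cite{G:on_the}, \cite{GP} or derive directly as indicated above.
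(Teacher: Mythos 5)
Your proof is correct and follows exactly the route the paper intends: the paper omits this proof, stating only that it is ``similar to Lemma \ref{prop:equiv_PI}'', and your argument is that lemma's proof adapted to the entropy, with the two genuinely new points (the reduction to bounded $f$ via the truncation inequality for minimal relaxed slopes, and the convergence $\mathrm{Ent}_{m_X}(f_n^2)\to\mathrm{Ent}_{m_X}(f^2)$ from the uniform $L^\infty$ bound) both identified and handled correctly. As a minor remark, for the limit on the left-hand side lower semicontinuity of the entropy under $L^1$-convergence of $f_n^2$ (via Fatou applied to $t\log t+e^{-1}\ge 0$) would already suffice, so the full dominated-convergence argument, while valid, is slightly more than is needed.
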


\begin{theorem} \label{thm:LS_is_lsc}
$C_\mathrm{LS}$ is lower semicontinuous on $\cal X$.
\end{theorem}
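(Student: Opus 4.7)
The plan is to mimic the proof of Theorem \ref{thm:(p,q)-P_is_lsc} almost line by line, with the entropy functional $\mathrm{Ent}_{m}(f^2)$ playing the role of the $p$-variance $V_X^{(p)}(f)$. Since $C_\mathrm{LS}$ is manifestly monotone and $1$-homogeneous, I would first reduce lower semicontinuity to showing that, for each fixed $C \in [0,\infty)$, the sublevel set
\[
\mathcal A_C = \{ X \in \mathcal X \mid C_\mathrm{LS}(X) \le C \}
\]
is closed in the box topology. I would also record the Sobolev reformulation: $X \in \mathcal A_C$ iff $\mathrm{Ent}_{m_X}(f^2) \le 2C^2 \int_X |Df|_{\ast,a}^2\,dm_X$ for every $f \in W_{\ast,a}^{1,2}(X)$, proved exactly as Lemma \ref{prop:equiv_PI} but with entropy in place of $V_X^{(p)}$ (truncation and Mazur's lemma reduce the case of general $f \in W_{\ast,a}^{1,2}$ to the bounded Lipschitz case).

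Next I would take a box-convergent sequence $X_n \to X$ with every $X_n \in \mathcal A_C$. Using Lemma \ref{lem:mG_convergece}, I realize the embeddings $X_n,X \hookrightarrow Y$ in a common Polish space $Y$ with $m_{X_n} \to m_X$ weakly. Given $f \in \Lip_b(X)$, I extend it to $\tilde f \in \Lip_b(Y)$ by the McShane–Whitney procedure of Proposition \ref{prop:Lip_ext} (truncated to be bounded), and choose an approximating sequence $f_k \in \Lip(Y)\cap L^2(m_X)$ with
\[
\|f_k - \tilde f\|_{L^2(m_X)} + \bigl\|\lip_a(f_k) - |D\tilde f|_{\ast,a}\bigr\|_{L^2(m_X)} \to 0.
\]
Truncating each $f_k$ between $\inf_Y \tilde f$ and $\sup_Y \tilde f$ produces $g_k \in \Lip_b(Y)$ which is uniformly bounded, satisfies $\lip_a(g_k) \le \lip_a(f_k)$ pointwise, and converges to $\tilde f$ in $L^r(m_X)$ for every $r \in [1,\infty)$.

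Applying logarithmic Sobolev inequality on $X_n$ (valid since $g_k \in \Lip_b(X_n)$) gives
\[
\mathrm{Ent}_{m_{X_n}}(g_k^2) \le 2C^2 \int_Y \lip_a(g_k)^2\,dm_{X_n}.
\]
Passing $n \to \infty$ is the step I expect to be the main technical obstacle. On the right-hand side, $\lip_a(g_k)^2$ is bounded and upper semicontinuous on $Y$, so weak convergence yields $\limsup_n \int \lip_a(g_k)^2\,dm_{X_n} \le \int \lip_a(g_k)^2\,dm_X$ (exactly as in the Poincaré proof). On the left-hand side, because $g_k$ is bounded and continuous on $Y$, the map $g_k^2 \log g_k^2$ (with $0\log 0 := 0$) is bounded continuous, so $\int g_k^2 \log g_k^2\,dm_{X_n} \to \int g_k^2 \log g_k^2\,dm_X$, and likewise $\int g_k^2\,dm_{X_n} \to \int g_k^2\,dm_X$; continuity of $t \mapsto t\log t$ on $[0,M]$ then produces $\mathrm{Ent}_{m_{X_n}}(g_k^2) \to \mathrm{Ent}_{m_X}(g_k^2)$. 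Combining these two passages to the limit yields
\[
\mathrm{Ent}_{m_X}(g_k^2) \le 2C^2 \int_Y \lip_a(f_k)^2\,dm_X.
\]

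Finally I would send $k \to \infty$. The right-hand side tends to $2C^2 \int_Y |D\tilde f|_{\ast,a}^2\,dm_X$ by the defining property of the approximants. For the left-hand side, uniform boundedness of $g_k$ and convergence $g_k \to \tilde f$ in $L^2(m_X)$ imply, via dominated convergence (since $t \mapsto t\log t$ is bounded on $[0,(\sup|\tilde f|)^2]$), that $\mathrm{Ent}_{m_X}(g_k^2) \to \mathrm{Ent}_{m_X}(\tilde f^2) = \mathrm{Ent}_{m_X}(f^2)$ (the last equality because $m_X$ is supported on $X$ and $\tilde f|_X = f$). Since the same identification shows $\int_Y |D\tilde f|_{\ast,a}^2\,dm_X$ equals $2\,\mathsf{E}_{\ast,a}^{(2)}(f)$, the Sobolev-level log-Sobolev inequality with constant $C$ holds for $X$, so $C_\mathrm{LS}(X) \le C$ and $X \in \mathcal A_C$. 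This closes $\mathcal A_C$ and concludes lower semicontinuity.
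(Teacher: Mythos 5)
Your proposal is correct and follows exactly the route the paper intends: the paper in fact omits the proof of Theorem \ref{thm:LS_is_lsc}, stating only that it is analogous to Lemma \ref{prop:equiv_PI} and Theorem \ref{thm:(p,q)-P_is_lsc}, and your argument carries out precisely that analogy, with the only genuinely new ingredient being the observation that for the uniformly bounded truncations $g_k$ the map $g_k^2\log g_k^2$ is bounded continuous, so the entropy passes to the limit under weak convergence and under $L^2$-convergence just as the $p$-variance does. No gaps.
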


We omit proofs of the above two properties.
Furthermore, by considering a function $1+\epsilon f$ and taking $\epsilon \to 0$, we have
\begin{equation} \label{eq:Poincare_vs_LS}
C_{2,2} \le C_\mathrm{LS}.
\end{equation}
Then, we obtain:

\begin{proposition} \label{prop:LS_tensor}
If $X$ is a smooth pm-space, then $C_{\mathrm{LS}}(X_2^\infty) = C_{\mathrm{LS}}(X)$.
\end{proposition}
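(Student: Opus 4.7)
The plan is to mimic the proof of Proposition \ref{prop:smooth_PI} almost verbatim, replacing the role of the $(2,2)$-Poincar\'e constant with the logarithmic Sobolev constant. The three inputs we need are (i) the classical tensorization property for smooth spaces, (ii) the lower semicontinuity of $C_{\mathrm{LS}}$ on $\cal X$ (Theorem \ref{thm:LS_is_lsc}), and (iii) the monotonicity of $C_{\mathrm{LS}}$ on $\Pi$ under Lipschitz domination, which is built into the definition.

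First I would invoke the tensorization property in \cite{BGL}: if $X$ is smooth, then for all $n \ge 1$ one has $C_{\mathrm{LS}}(X_2^n) = C_{\mathrm{LS}}(X)$. Next, since $\{X_2^n\}_n$ is monotone nondecreasing in the Lipschitz order and converges weakly to $X_2^\infty$, the monotonicity of $C_{\mathrm{LS}}$ on $\Pi$ (as the canonical extension $\Pi(C_{\mathrm{LS}})$ from Section \ref{sec:invariant}) gives
\[
C_{\mathrm{LS}}(X) = C_{\mathrm{LS}}(X_2^n) \le C_{\mathrm{LS}}(X_2^\infty)
\]
for every $n$. For the reverse inequality, I would apply Corollary \ref{cor:limit}, which uses the lower semicontinuity of $C_{\mathrm{LS}}$ (Theorem \ref{thm:LS_is_lsc}) together with monotonicity to conclude
\[
C_{\mathrm{LS}}(X_2^\infty) = \lim_{n \to \infty} C_{\mathrm{LS}}(X_2^n) = C_{\mathrm{LS}}(X).
\]
Combining these two inequalities yields the desired equality.

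The only nontrivial ingredient is the tensorization statement $C_{\mathrm{LS}}(X_2^n) = C_{\mathrm{LS}}(X)$, which is the main obstacle if one wanted a self-contained argument; however, for smooth weighted Riemannian manifolds this is the standard tensorization of the log-Sobolev inequality under $\ell_2$-products, coming from the additivity of the Dirichlet energy on product manifolds together with the subadditivity of entropy on product measures, and is precisely the fact cited from \cite{BGL}. The remainder of the argument is a routine sandwich using the invariant-theoretic framework developed in Section \ref{sec:invariant}, exactly parallel to the proof already given for $C_{2,2}$ in Proposition \ref{prop:smooth_PI}.
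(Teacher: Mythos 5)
Your proposal is correct and follows essentially the same route as the paper: the paper's proof cites the tensorization property $C_{\mathrm{LS}}(X_2^n)=C_{\mathrm{LS}}(X)$ from \cite[Proposition 5.2.7]{BGL} and then concludes via the lower semicontinuity of $C_{\mathrm{LS}}$ (Theorem \ref{thm:LS_is_lsc}) exactly as in the sandwich argument of Proposition \ref{prop:smooth_PI}. Your explicit use of monotonicity and Corollary \ref{cor:limit} just spells out what the paper leaves implicit.
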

\begin{proof}
Since $X$ is smooth, we have $C_\mathrm{LS}(X^n_2) = C_\mathrm{LS}(X)$ by \cite[Proposition 5.2.7]{BGL}.
So, by Theorem \ref{thm:LS_is_lsc},
we obtain the conclusion.
\end{proof}

\subsection{Relation between invariants}

In this subsection, we discuss relations between our invariants and prove Theorem \ref{thm:invariant_of_smooth_space} and \ref{thm:CD_implies_FI}. 

\begin{remark} \label{rem:FI_relation}
We summarize relations among functional inequalities.
Let $X$ be a pm-space.
Then, we have
\[
V^{(p)}(X)^{1/p} \le C_{p,q}(X)
\]
for every $p,q \in [1, \infty)$, 
and
\[
C_{p,q}(X) \ge C_{r,s}(X)
\]
for every $r \le p $ and $q \le s$.
Furthermore, we recall the relation:
\[
C_{2,2}(X) \le C_\mathrm{LS}(X).
\]

Since the above invariants are extended to the space of pyramids, the corresponding relations also hold for pyramids.
\end{remark}

\begin{proof}[Proof of Theorem \ref{thm:invariant_of_smooth_space}]
This is a direct consequence of Propositions \ref{prop:smooth_PI} and \ref{prop:LS_tensor}, Remark \ref{rem:FI_relation} and the monotonicity of $V$.
\end{proof}

\begin{proof}[Proof of Theorem \ref{thm:CD_implies_FI}]
Let $\{X_n\}$ be a sequence of pm-spaces and $X_n$ is dominated by a CD$(K,\infty)$-space $Y_n$ for each $n$.
Suppose that $\{X_n\}$ weakly converges to a pyramid $P$.
Then, due to \cite[Equation (6.10)]{AGS}, 
$Y_n$ satisfies $C_\mathrm{LS}(Y_n) \le 1/K$ for every $n$.
Hence, we obtain $C_\mathrm{LS}(X_n) \le 1/K$.
By Theorem \ref{thm:LS_is_lsc}, we have $C_\mathrm{LS}(P) \le 1/K$.
Therefore, by Remark \ref{rem:FI_relation} for pyramids, we obtain
\[
\sqrt{V(P)} \le C_{2,2}(P)
\le C_\mathrm{LS}(P) \le 1/K.
\]
This completes the proof.
\end{proof}

\section{Applications}
\label{sec:appl}

In this section, we calculate the values of our invariants introduced in Section \ref{sec:distinguish} for concrete pyramids.
As an application, we know that $\Gamma^\infty$ and $I^\infty$ are distinct, 
even if these are scaled (Theorem \ref{thm:main}).
For other applications, see Section \ref{subsec:regular_space_is_not_disjoint}.

\subsection{Idempotent pyramids and Gaussian}
The infinite product of spaces is intuitively related to Gaussian measures due to the central limit theorem.
We discuss this rigorously in our framework.

\begin{proposition}[Central limit theorem]
\label{prop:CLT}
Let $P$ be an $\ell_p$-idempotent pyramid for $p \in [1,2]$.
If $p=2$, then $P$ contains $\Gamma_{V(P)}^\infty$.
If $1 \le p<2$, then $P$ is $\cal X$ unless $P = \{\ast\}$.
\end{proposition}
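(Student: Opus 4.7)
The approach in both cases is a central limit theorem argument inside iterated $\ell_p$-products of spaces in $P$: $\ell_p$-idempotency (Proposition \ref{prop:idempotent}) keeps $X_p^n$ inside $P$ for any $X \in P$, a suitably normalized sum of $1$-Lipschitz functions pushes forward to a measure on $\mathbb{R}$ with a Gaussian limit, and downward closure plus box-closedness of $P$ let us extract the resulting one-dimensional pm-space as a member of $P$. The bridge from weak convergence of measures on $(\mathbb{R},|\cdot|)$ to box convergence of the associated pm-spaces is supplied by Lemma \ref{lem:mG_convergece}.

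For $p = 2$, fix $X \in P$ and $f \in \Lip_b(X) \cap \Lip_1(X)$, so $V_X(f) < \infty$. By $\ell_2$-idempotency, $X_2^n \in P$, and
\[
f^{(n)}(x_1, \dots, x_n) := \frac{1}{\sqrt{n}} \sum_{i=1}^n f(x_i)
\]
is $1$-Lipschitz on $X_2^n$ by Cauchy--Schwarz, so $Y_n := (\mathbb{R}, |\cdot|, f^{(n)}_\# m_X^{\otimes n}) \in P$ by downward closure. The classical CLT (modulo centering, which does not affect the mm-isomorphism class of $(\mathbb{R}, |\cdot|, \mu)$) together with Lemma \ref{lem:mG_convergece} gives $Y_n \to \Gamma^1_{V_X(f)}$ in box, hence $\Gamma^1_{V_X(f)} \in P$. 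A second application of $\ell_2$-idempotency yields $\Gamma^n_{V_X(f)} \in P$ for all $n$, whence $\Gamma^\infty_{V_X(f)} \subset P$. Letting $V_X(f) \to V(P)$ via Proposition \ref{prop:finite_variance} and using continuity of $\sigma \mapsto \sigma \Gamma^\infty = \Gamma^\infty_{\sigma^2}$ (Remark \ref{rem:curve}) together with box-closedness of $P$, we conclude $\Gamma^\infty_{V(P)} \subset P$; the convention \eqref{eq:infty_Gamma=cal_X} and Remark \ref{rem:inftyI=cal_X} handle the case $V(P) = \infty$.

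For $1 \le p < 2$, suppose $P \ne \{\ast\}$ and pick $X \in P$ with $\#X \ge 2$, so that some $f \in \Lip_1(X)$ has $\sigma^2 := V_X(f) > 0$. The natural CLT normalization on $X_p^n$ is
\[
h^{(n)}(x_1, \dots, x_n) := n^{1/p - 1} \sum_{i=1}^n f(x_i),
\]
which is $1$-Lipschitz by H\"older's inequality $\sum d(x_i, y_i) \le n^{1 - 1/p}\, d_{\ell_p}(x,y)$, but has divergent variance $\tau_n^2 := n^{2/p - 1}\sigma^2 \to \infty$. As in the $p=2$ case, $(\mathbb{R}, |\cdot|, h^{(n)}_\# m_X^{\otimes n}) \in P$; the isometry $y \mapsto \tau_n y$ identifies this pm-space with $\tau_n \tilde{Y}_n$, where $\tilde{Y}_n := (\mathbb{R}, |\cdot|, (h^{(n)}/\tau_n)_\# m_X^{\otimes n}) \to \Gamma^1$ in box by CLT. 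For any fixed $\tau > 0$ and $n$ large enough that $\tau \le \tau_n$, we have $\tau \tilde{Y}_n \prec \tau_n \tilde{Y}_n \in P$, so $\tau \tilde{Y}_n \in P$; letting $n \to \infty$ and invoking \eqref{eq:tX_box} and box-closedness gives $\tau \Gamma^1 \in P$ for every $\tau > 0$. Since $\Gamma^1$ is a one-dimensional smooth pm-space, Example \ref{ex:blow-up} yields $\infty \Gamma^1 = \cal{X}$, so $P \supset \infty \Gamma^1 = \cal{X}$. The main obstacle in both cases is ensuring that CLT weak convergence on $\mathbb{R}$ and the continuous variation of Gaussian (resp.\ scaled) pyramids translate into actual subset containment in $P$; this is resolved by Lemma \ref{lem:mG_convergece}, Remark \ref{rem:curve}, and box-closedness of $P$.
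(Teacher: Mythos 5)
Your proof is correct and follows essentially the same route as the paper's: push forward normalized sums of a $1$-Lipschitz function on $X_p^n$ (which remain in $P$ by idempotency and downward closure), apply the classical CLT, and use box-closedness of $P$ to capture the Gaussian limit, with the H\"older scaling $n^{1/p-1}$ producing divergent variance when $p<2$. The only differences are cosmetic --- you pass to the limit $V_X(f)\to V(P)$ at the level of the infinite-dimensional pyramids $\Gamma^\infty_{V_X(f)}$ rather than of the one-dimensional spaces $\Gamma^1_{V_X(f)}$, and you parametrize the subcritical case by truncating $\tau\le\tau_n$ instead of fixing $\lambda$ and taking $n$ large as in the paper.
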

\begin{proof}
Let us assume $p = 2$ and $\sigma^2 = V(P)$.
If $\sigma=0$, then $P= \{\ast \}$.
So, there is nothing to prove.
We assume that $\sigma > 0$.
We first consider the case of $\sigma<\infty$.
Let us take arbitrary $\sigma' < \sigma$.
Then, there exist
a pm-space $X \in P$ with $V(X) > \sigma'$ and a function $f \in \Lip_1(X)$ with $V_X(f) > \sigma'$ and $E_X(f) = 0$.
Then, the function $f_n : X_2^n \to \mathbb R$ defined by
\[
f_n(x) = \frac{1}{\sqrt{n}} \sum_{i=1}^n f(x_i)
\]
for $x=(x_i)_{i=1}^n \in X_2^n$, is $1$-Lipschitz.
By the central limit theorem, $(f_n)_\# m_X^{\otimes n}$ converges to the Gaussian measure $\gamma_{V_X(f)}^1$ weakly, as $n \to \infty$.
Therefore,
\[
\Gamma_{(\sigma')^2}^1 \prec \Gamma_{V_X(f)}^1 \in X_2^\infty \subset P.
\]
Letting $\sigma' \to \sigma-0$, we obtain $\Gamma_{\sigma^2}^1 \in P$.
Since $P$ is $\ell_2$-idempotent, we have
$P \supset (\Gamma_{\sigma^2}^1)_{{2}}^{\infty} = \Gamma_{V(P)}^\infty$.
This is the desired conclusion.

We assume $\sigma = \infty$ and $p=2$.
By the above argument, $P$ contains $\Gamma_{\lambda^2}^1$ for all $\lambda > 0$.
Since $\lim_{\lambda \to \infty} \cal P \Gamma_{\lambda^2}^1 = \cal X$, we have $P=\cal X$.

We consider the case of $1 \le p<2$.
We may assume that $P \ne \{\ast \}$.
Let $\lambda > 0$ be an arbitrary number.
Since $P \ne \{\ast\}$, there exists a pm-space $X \in P$ with $0< V(X)< \infty$.
Let us take $f \in \Lip_1(X)$ such that $V_X(f)$ is close to $V(X)$.
Then, $f_n^\lambda : X_p^n \to \mathbb R$ defined by
\[
f_n^\lambda(x) = \frac{\lambda}{\sqrt{n}} \sum_{i=1}^n f(x_i)
\]
is $1$-Lipschitz, if $n^{\frac{2-p}{2p}} > \lambda$.
By the central limit theorem and an argument as in the first paragraph, we have $
P \ni \Gamma_{\lambda^2 V(X)}^1$.
Since $\lambda$ is arbitrary, $P=\cal X$.
\end{proof}
Due to this proposition, we know that the infinite $\ell_p$-product of pm-spaces and pyramids is not continuous with respect to $p$.

\begin{corollary} \label{cor:ell_2-product_of_discrete_space}
Let $P$ be an $\ell_p$-idempotent pyramid with $2 \le p < \infty$.
Suppose that $P \ne \cal X$.
Then, $P$ never contain {mm-disconnected} pm-spaces.
\end{corollary}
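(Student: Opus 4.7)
My plan is to argue by contradiction: suppose $P$ contains an mm-disconnected pm-space $Y$, and derive $P = \cal X$, which contradicts the hypothesis.

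First, I would unfold what $\ell_p$-idempotence gives us. By Proposition \ref{prop:idempotent}, the condition $P \otimes_p P = P$ is equivalent to $X \times_p Y \in P$ for every $X, Y \in P$. Applying this inductively starting from $Y \in P$, we obtain $Y_p^n \in P$ for every $n \ge 1$. Since a pyramid is downward-closed in the Lipschitz order, this upgrades to $\cal PY_p^n \subset P$ for all $n$.

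Next, since $P$ is box-closed (by the definition of a pyramid), we have
\[
Y_p^\infty = \overline{\bigcup_{n=1}^\infty \cal PY_p^n} \subset P.
\]
Now I invoke Remark \ref{rem:delta-discrete}: for any mm-disconnected pm-space $Y$ and any $1 \le p < \infty$, one has $Y_p^\infty = \cal X$. Hence $\cal X \subset P$, which forces $P = \cal X$, contradicting the hypothesis $P \ne \cal X$.

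There is really no obstacle here, since all the heavy lifting has already been done: the equivalence in Proposition \ref{prop:idempotent} allows us to climb the product ladder inside $P$, and Remark \ref{rem:delta-discrete} (which itself rests on the observable-diameter estimates of Ozawa--Shioya) provides the key fact that $Y_p^\infty$ is trivial for mm-disconnected $Y$ when $p$ is finite. The restriction $p \ge 2$ in the corollary plays no role beyond matching the ambient setting; the argument works for all $p \in [1, \infty)$.
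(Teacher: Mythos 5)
Your argument is correct and is exactly the intended one: the paper states this as an immediate corollary (with no written proof), and the natural justification is precisely your combination of Proposition \ref{prop:idempotent} (to place $Y_p^n$, hence $Y_p^\infty$, inside $P$) with Remark \ref{rem:delta-discrete} (to conclude $Y_p^\infty=\cal X$). Your closing observation that the hypothesis $2\le p$ is not really needed is also accurate, since for $p<2$ Proposition \ref{prop:CLT} already forces any $\ell_p$-idempotent $P\ne\cal X$ to be $\{\ast\}$.
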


From the definition of $F_{\Gamma^\infty}$ obtained in Theorem \ref{thm:invariant}, we have:
\begin{corollary} \label{cor:Gauss}
Let $P$ be an $\ell_2$-idempotent pyramid.
Then, $\sqrt{V(P)} = F_{\Gamma^\infty}(P)$ if and only if $P$ coincides with $\Gamma_{V(P)}^\infty$.
\end{corollary}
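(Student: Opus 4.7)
The plan is to prove the two implications separately, using the central limit theorem (Proposition \ref{prop:CLT}) and the intersection Lemma \ref{lem:tA} as the two main ingredients, together with two numerical identities for the Gaussian pyramid.

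For the $(\Leftarrow)$ direction, suppose $P = \Gamma^\infty_{V(P)}$ and write $\sigma = \sqrt{V(P)}$. I would first establish the two identities $V(\Gamma^\infty_{\sigma^2}) = \sigma^2$ and $F_{\Gamma^\infty}(\Gamma^\infty_{\sigma^2}) = \sigma$. The first is a sandwich: testing with $f(x)=x$ on the marginal $\Gamma^1_{\sigma^2}$ together with monotonicity of $V$ gives $V(\Gamma^\infty_{\sigma^2}) \ge V(\Gamma^1_{\sigma^2}) = \sigma^2$, while Theorem \ref{thm:invariant_of_smooth_space} applied to the smooth space $\Gamma^1_{\sigma^2}$ gives the matching upper bound $\sqrt{V(\Gamma^\infty_{\sigma^2})} \le C_{2,2}(\Gamma^\infty_{\sigma^2}) = C_{2,2}(\Gamma^1_{\sigma^2}) = \sigma$ via Gaussian Poincar\'e. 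The second identity follows from the explicit formula in Proposition \ref{prop:general_prop}, namely $F_{\Gamma^\infty}(\Gamma^\infty_{\sigma^2}) = \inf\{t>0 : \Gamma^\infty_{\sigma^2} \subset \Gamma^\infty_{t^2}\}$, combined with the strict monotonicity of the curve $\tau \mapsto \Gamma^\infty_{\tau^2}$ read off from the observable diameter formula \eqref{eq:obsdiam_Gauss}. Combining yields $\sqrt{V(P)} = \sigma = F_{\Gamma^\infty}(P)$.

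For the $(\Rightarrow)$ direction, suppose $P$ is $\ell_2$-idempotent and $\sqrt{V(P)} = F_{\Gamma^\infty}(P) =: \sigma$. I would show both inclusions $\Gamma^\infty_{\sigma^2} \subset P$ and $P \subset \Gamma^\infty_{\sigma^2}$. The former is a direct application of Proposition \ref{prop:CLT}, which guarantees that any $\ell_2$-idempotent pyramid contains $\Gamma^\infty_{V(P)}$. For the latter, the hypothesis $F_{\Gamma^\infty}(P) = \sigma$ together with Proposition \ref{prop:general_prop} gives $P \subset t\Gamma^\infty = \Gamma^\infty_{t^2}$ for every $t > \sigma$; since $A := \sigma \Gamma^\infty = \Gamma^\infty_{\sigma^2}$ is box-closed with $tA \subset A$ for $0 < t < 1$, Lemma \ref{lem:tA} yields $\bigcap_{t>\sigma} \Gamma^\infty_{t^2} = \sigma \Gamma^\infty = \Gamma^\infty_{\sigma^2}$, so $P \subset \Gamma^\infty_{\sigma^2}$. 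The nontrivial content is concentrated in $(\Rightarrow)$: the central limit theorem supplies the inclusion from below, Lemma \ref{lem:tA} supplies the inclusion from above, and the two inclusions precisely squeeze $P$ to $\Gamma^\infty_{\sigma^2}$. No deeper obstacle arises beyond recognizing this sandwich structure.
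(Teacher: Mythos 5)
Your proposal is correct and follows essentially the same route as the paper: the paper presents this corollary as an immediate consequence of Proposition \ref{prop:CLT} (which gives $\Gamma^\infty_{V(P)}\subset P$ for $\ell_2$-idempotent $P$) together with the definition of $F_{\Gamma^\infty}$ from Theorem \ref{thm:invariant} (which, exactly as in your Lemma \ref{lem:tA} argument, forces $P\subset\bigcap_{t>\sigma}t\Gamma^\infty=\Gamma^\infty_{\sigma^2}$ when $F_{\Gamma^\infty}(P)=\sigma$). Your additional verification of the $(\Leftarrow)$ direction via $V(\Gamma^\infty_{\sigma^2})=\sigma^2$ and the observable diameter formula matches the computations the paper carries out in Proposition \ref{prop:V(Gamma)=1} and Lemma \ref{lem:obsdiam_comparison}.
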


\subsection{Calculating invariants of $I^{\infty}$ and $\Gamma^{\infty}$}

\begin{proposition} \label{prop:V(Gamma)=1}
	$V(\Gamma^\infty)=1$.
\end{proposition}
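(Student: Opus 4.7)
The plan is to sandwich $V(\Gamma^\infty)$ between $1$ and $1$, using the Gaussian Poincaré inequality for the upper bound and an explicit 1-Lipschitz test function for the lower bound.

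For the lower bound, I would observe that $\Gamma^1 \in \Gamma^\infty$ (by definition, $\Gamma^\infty \supset \mathcal{P}\Gamma^1$), and test $V(\Gamma^1)$ against the 1-Lipschitz function $f(x) = x$. A direct computation gives
\[
V_{\Gamma^1}(f) = \int_{\mathbb R} x^2 \, \gamma_1^1(dx) - \Bigl(\int_{\mathbb R} x\, \gamma_1^1(dx)\Bigr)^2 = 1,
\]
so $V(\Gamma^\infty) \geq V(\Gamma^1) \geq 1$ by the definition \eqref{eq:variance_for_pyramid}.

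For the upper bound, I would appeal to Theorem \ref{thm:invariant_of_smooth_space}, which applies because $\Gamma^1$ is a smooth pm-space in the sense of the paper (taking $M = \mathbb R$, $g$ the standard metric, and $f(x) = x^2/2 + \frac{1}{2}\log(2\pi)$). The classical Gaussian Poincaré inequality $\mathrm{Var}_{\gamma_1^1}(\varphi) \leq \int |\varphi'|^2 \, d\gamma_1^1$, together with the fact that equality holds for $\varphi(x) = x$, gives $C_{2,2}(\Gamma^1) = 1$. Theorem \ref{thm:invariant_of_smooth_space} then yields $C_{2,2}(\Gamma^\infty) = 1$, and finally Remark \ref{rem:FI_relation} for pyramids provides
\[
\sqrt{V(\Gamma^\infty)} \le C_{2,2}(\Gamma^\infty) = 1.
\]

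Combining the two bounds concludes $V(\Gamma^\infty) = 1$. No step is really an obstacle here: the argument is essentially assembled from the general theorems already proved (invariance of $C_{2,2}$ under $\ell_2$-tensorization on smooth spaces, the inequality $\sqrt{V} \le C_{2,2}$, monotonicity of $V$) together with the textbook identity $C_{2,2}(\Gamma^1) = 1$. The only thing one has to be careful about is to verify that $\Gamma^1$ satisfies the paper's definition of a smooth pm-space so that Theorem \ref{thm:invariant_of_smooth_space} applies verbatim.
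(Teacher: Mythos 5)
Your proof is correct and follows essentially the same route as the paper: the lower bound via the test function $f(x)=x$ on $\Gamma^1$ together with monotonicity, and the upper bound via the Gaussian Poincar\'e inequality and $\sqrt{V}\le C_{2,2}$. The only cosmetic difference is that you transport the upper bound to $\Gamma^\infty$ through Theorem \ref{thm:invariant_of_smooth_space} (i.e.\ Proposition \ref{prop:smooth_PI}), while the paper bounds $V(\Gamma^n)$ at each finite level and invokes Corollary \ref{cor:limit}; these rest on the same tensorization and lower-semicontinuity facts.
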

\begin{proof}
	Due to Gaussian Poincar\'e inequality (see e.g. \cite[Proposition 4.1.1]{BGL}) and \eqref{eq:V_vs_PI}, we have
	\[
	\sqrt{V(\Gamma^n)} \le C_{2,2}(\Gamma^n) = 1.
	\]
	Since $V$ is monotone, we have $V(\Gamma^1) \le V(\Gamma^n)$.
	By considering a concrete function $f(t) = t$ on $\Gamma^1$, we have $V(\Gamma^1) = 1$.
	Therefore, by Corollary \ref{cor:limit}, we obtain the conclusion.
\end{proof}

\begin{remark}
	Due to Proposition \ref{prop:V(Gamma)=1}, we have $V(\Gamma_{\sigma^2}^\infty)=\sigma^2$.
	Recall that the name of the pyramid $\Gamma_{\sigma^2}^\infty$ is the virtual infinite dimension Gaussian space of variance $\sigma^2$,
	because it is the limit of finite dimensional Gaussian spaces of variance $\sigma^2$.
	Our result says that the variance of the virtual infinite dimensional Gaussian space of variance $\sigma^2$ is actually $\sigma^2$.
\end{remark}

\begin{corollary}\label{cor:Poincare_constant}
$C_{2,2}(\Gamma^\infty) = 1$ and $C_{2,2}(I^\infty) = 1/\pi$.
\end{corollary}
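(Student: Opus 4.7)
The plan is to reduce both identities to classical one-dimensional Poincar\'e constants via the tensorization statement already established in Proposition \ref{prop:smooth_PI}. Note that both $\Gamma^1 = (\mathbb R, |\cdot|, \gamma^1)$ and the unit interval $I = ([0,1], |\cdot|, \mathcal L^1|_{[0,1]})$ are smooth pm-spaces in the sense of the paper, since the definition explicitly permits manifolds with boundary.

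For the Gaussian case, the classical Gaussian Poincar\'e inequality (see e.g.\ Proposition~4.1.1 of \cite{BGL}) gives $C_{2,2}(\Gamma^1) \le 1$, and the matching lower bound comes from testing with the identity function $f(t)=t \in \Lip_1(\Gamma^1)$, for which $V_{\Gamma^1}(f) = 1 = \int \lip_a(f)^2\, d\gamma^1$. Hence $C_{2,2}(\Gamma^1) = 1$, and Proposition \ref{prop:smooth_PI} upgrades this to $C_{2,2}(\Gamma^\infty) = C_{2,2}(\Gamma^1) = 1$. (Alternatively one may cite the proof of Proposition \ref{prop:V(Gamma)=1}, which already invokes $C_{2,2}(\Gamma^n) = 1$, and then pass to the infinite product via Corollary \ref{cor:limit} combined with lower semicontinuity.)

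For the cube, the classical Wirtinger/Neumann--Poincar\'e inequality on $[0,1]$ states
\begin{equation*}
\int_0^1 \left| f - \textstyle\int_0^1 f\,dx \right|^2 dx \;\le\; \frac{1}{\pi^2} \int_0^1 |f'(x)|^2\, dx
\end{equation*}
for all smooth $f$ on $[0,1]$, with equality attained by $f(x) = \cos(\pi x)$. Since $\lip_a(f)(x) = |f'(x)|$ at every interior point of a smooth Lipschitz function on a Riemannian manifold, this yields $C_{2,2}(I) = 1/\pi$. Applying Proposition \ref{prop:smooth_PI} then gives $C_{2,2}(I^\infty) = C_{2,2}(I) = 1/\pi$.

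The one subtle point is that the interval has boundary, so one must verify that Proposition \ref{prop:smooth_PI} genuinely applies: the tensorization of Poincar\'e constants for smooth weighted Riemannian spaces from \cite{BGL} is typically stated for manifolds without boundary, but in the Neumann setting on a product of intervals it is equally classical (the eigenfunctions of the Neumann Laplacian on $I^n$ are products of cosines and the spectral gap tensorizes). This is the only step where one should be slightly careful; everything else is a direct appeal to the machinery already developed in Section \ref{sec:distinguish}.
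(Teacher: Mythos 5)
Your proposal is correct and follows essentially the same route as the paper: the paper likewise records the classical one-dimensional constants $C_{2,2}(\Gamma^1)=1$ and $C_{2,2}(I)=1/\pi$ (citing \cite{BGL}) and then invokes Proposition \ref{prop:smooth_PI} to pass to the infinite products. Your extra care about the boundary of the interval and the Neumann tensorization is a reasonable point the paper glosses over, but it does not change the argument.
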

\begin{proof}
We can easily check $C_{2,2}(\Gamma^1) = 1$ and $C_{2,2}(I)=1/\pi$ (see for instance \cite{BGL}).
Therefore, the conclusion follows from Proposition \ref{prop:smooth_PI}.
\end{proof}

For $I^\infty$, we only have an estimate of its variance: 
\begin{corollary}
	\label{cor:V(I^infty)_estimate}
	For $r > 0$, we have
	\[
	\frac{r}{2 \sqrt{3}} \le \sqrt {V(rI^\infty)} \le \frac{r}{\pi}.
	\]
\end{corollary}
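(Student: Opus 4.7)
The plan is to obtain the upper bound from the $(2,2)$-Poincar\'e inequality and the lower bound from monotonicity of the variance together with a concrete test function on the interval $rI$.

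For the upper bound, I invoke the general relation $\sqrt{V(P)} \le C_{2,2}(P)$ from Remark~\ref{rem:FI_relation} applied to the pyramid $P=rI^\infty$. Since $C_{2,2}$ is $1$-homogeneous (Theorem~\ref{thm:(p,q)-P_is_lsc}) and $rI^\infty = r(I^\infty)$ by the compatibility of scaling and infinite $\ell_2$-product noted in Example~\ref{ex:product}, Corollary~\ref{cor:Poincare_constant} gives
\[
C_{2,2}(rI^\infty) = r\, C_{2,2}(I^\infty) = \frac{r}{\pi},
\]
hence $\sqrt{V(rI^\infty)} \le r/\pi$.

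For the lower bound, I use that the variance is monotone on $\Pi$ and that $rI \in rI^\infty$ (since $rI = (rI)^1_2$ is the first stage of the monotone family defining $rI^\infty$). Therefore $V(rI^\infty) \ge V(rI)$ by \eqref{eq:variance_for_pyramid}. To estimate $V(rI)$ from below, test the variance against the $1$-Lipschitz function $f(x)=x$ on $[0,r]$: a direct calculation with the normalized Lebesgue measure $r^{-1}\mathcal L^1\lfloor_{[0,r]}$ yields
\[
V_{rI}(f) = E_{rI}(f^2) - E_{rI}(f)^2 = \frac{r^2}{3} - \frac{r^2}{4} = \frac{r^2}{12},
\]
so $\sqrt{V(rI^\infty)} \ge \sqrt{V(rI)} \ge r/(2\sqrt 3)$.

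There is no substantive obstacle here; the corollary is a direct application of the homogeneity and monotonicity of the invariants $V$ and $C_{2,2}$ together with the already computed value of $C_{2,2}(I^\infty)$ in Corollary~\ref{cor:Poincare_constant}.
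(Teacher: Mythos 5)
Your proof is correct and follows essentially the same route as the paper: the upper bound via $\sqrt{V}\le C_{2,2}$ together with $C_{2,2}(I^\infty)=1/\pi$ from Corollary~\ref{cor:Poincare_constant}, and the lower bound via monotonicity of $V$ and the computation $V(rI)=r^2/12$ with the identity test function. The only cosmetic difference is that the paper runs the inequalities at the finite level $I^n$ and lets $n\to\infty$ (Corollary~\ref{cor:limit}), whereas you apply the already-extended pyramid-level invariants directly; these are the same ingredients.
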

\begin{proof}
	By Corollary \ref{cor:Poincare_constant}, 
	we have
	\[
	\frac{1}{12} = V(I) \le V(I^n) \le C_{2,2}(I^n)^2 \le \frac{1}{\pi^2}.
	\]
	Letting $n \to \infty$, we obtain the conclusion.
\end{proof}

\begin{corollary} \label{cor:LS_constant}
$C_\mathrm{LS}(\Gamma^\infty)=1$ and $C_\mathrm{LS}(I^\infty) = 1/\pi$.
\end{corollary}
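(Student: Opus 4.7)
The plan is to mirror the argument used in Corollary \ref{cor:Poincare_constant}, substituting the logarithmic Sobolev tensorization (Proposition \ref{prop:LS_tensor}) for the Poincaré tensorization (Proposition \ref{prop:smooth_PI}). That is, I would reduce the computation of $C_\mathrm{LS}$ on the infinite $\ell_2$-product pyramid to the corresponding one-dimensional value, and then invoke classical sharp logarithmic Sobolev inequalities on $\Gamma^1$ and on $I$.

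First, I would verify that both $\Gamma^1 = (\mathbb{R}, |\cdot|, \gamma^1)$ and $I = ([0,1], |\cdot|, \mathcal{L}^1)$ qualify as smooth pm-spaces in the sense of the paper: $\Gamma^1$ is $\mathbb{R}$ with the flat metric and the weight $e^{-t^2/2}/\sqrt{2\pi}$, and $I$ is the compact interval viewed as a smooth $1$-manifold with boundary, with the flat metric and constant density. Hence Proposition \ref{prop:LS_tensor} applies to both, yielding
\[
C_\mathrm{LS}(\Gamma^\infty) = C_\mathrm{LS}\bigl((\Gamma^1)_2^\infty\bigr) = C_\mathrm{LS}(\Gamma^1),
\qquad
C_\mathrm{LS}(I^\infty) = C_\mathrm{LS}(I_2^\infty) = C_\mathrm{LS}(I).
\]

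Second, I would reduce to citing the sharp one-dimensional values. The identity $C_\mathrm{LS}(\Gamma^1) = 1$ is exactly Gross's logarithmic Sobolev inequality under the normalization adopted in Definition \ref{def:LSI} (see \cite{BGL}, Proposition 5.5.1 and its surrounding discussion). The identity $C_\mathrm{LS}(I) = 1/\pi$ follows from the sharp logarithmic Sobolev inequality for the uniform measure on the interval (also treated in \cite{BGL}); note that the lower bound $C_\mathrm{LS}(I) \ge C_{2,2}(I) = 1/\pi$ is automatic from \eqref{eq:Poincare_vs_LS} together with the computation made in the proof of Corollary \ref{cor:Poincare_constant}, so only the matching upper bound must be justified.

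The main obstacle I anticipate is the upper bound $C_\mathrm{LS}(I) \le 1/\pi$, since unlike the Gaussian case this does not follow from a curvature–dimension argument in the Bakry–\'Emery sense (the unweighted interval has $K = 0$, so CD$(K,\infty)$ with $K>0$ is not available). One needs either a direct one-dimensional argument along the lines of Muckenhoupt/Bobkov–G\"otze on weighted intervals, or to quote the sharp form stated in \cite{BGL}. Once that value is in hand, the corollary follows in one line from Proposition \ref{prop:LS_tensor}.
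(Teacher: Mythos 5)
Your proposal is correct and follows essentially the same route as the paper: the paper likewise combines the tensorization result (Proposition \ref{prop:LS_tensor}) with the sharp one-dimensional constants from \cite{BGL} (Propositions 5.5.1 and 5.7.5), and obtains the matching lower bounds from $C_{2,2} \le C_\mathrm{LS}$ together with Corollary \ref{cor:Poincare_constant}, exactly as you do. The obstacle you flag for the interval is handled in the paper precisely by quoting the sharp form in \cite{BGL}.
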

\begin{proof}
Due to the Gaussian logarithmic Sobolev inequality (see for instance \cite[Proposition 5.5.1]{BGL}), we have $C_\mathrm{LS}(\Gamma^n) \le 1$ for all $n \in \mathbb N$.
By \eqref{eq:Poincare_vs_LS} and Corollary \ref{cor:Poincare_constant}, we have $C_\mathrm{LS}(\Gamma^n)=1$.
Therefore, by Proposition \ref{prop:LS_tensor},
we obtain $C_\mathrm{LS}(\Gamma^\infty)=1$.

By \cite[Proposition 5.7.5]{BGL}, \eqref{eq:Poincare_vs_LS} and Corollary \ref{cor:Poincare_constant}, we know $C_\mathrm{LS}(I)=1/\pi$.
Therefore, by Proposition \ref{prop:LS_tensor}, we obtain $C_\mathrm{LS}(I^\infty)=1/\pi$.
\end{proof}

Using the above quantities we have the following proposition. 

\begin{proposition} \label{prop:cube_vs_Gaussian}
We have
\[
\Gamma_{\frac{r^2}{12}}^\infty \subset rI^\infty \subset \Gamma_{\frac{r^2}{2\pi}}^\infty.
\]
\end{proposition}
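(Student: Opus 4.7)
The plan is to prove the two inclusions separately, each reducing to a one-dimensional fact.

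\emph{Left inclusion.} First I would observe that $rI^\infty$ is $\ell_2$-idempotent. Since $\{(rI)_2^n\}_n$ is an approximation of $rI^\infty$, Lemma \ref{lem:approx_product} says that $\{(rI)_2^n\times_2(rI)_2^n\}=\{(rI)_2^{2n}\}$ is an approximation of $rI^\infty\otimes_2 rI^\infty$; but this sequence is also an approximation of $rI^\infty$ itself, so the two pyramids coincide. Hence Proposition \ref{prop:CLT} yields $\Gamma_{V(rI^\infty)}^\infty\subset rI^\infty$. Combining this with the variance bound $V(rI^\infty)\ge r^2/12$ from Corollary \ref{cor:V(I^infty)_estimate} and the obvious monotonicity $\Gamma_{\sigma^2}^1\prec\Gamma_{(\sigma')^2}^1$ for $\sigma\le\sigma'$ (via the $1$-Lipschitz pushforward $t\mapsto(\sigma/\sigma')t$), which passes to infinite $\ell_2$-products, gives
\[
\Gamma_{r^2/12}^\infty\subset\Gamma_{V(rI^\infty)}^\infty\subset rI^\infty.
\]

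\emph{Right inclusion.} It suffices to exhibit a one-dimensional domination $rI\prec\Gamma_{r^2/(2\pi)}^1$: from this $(rI)_2^n\prec(\Gamma_{r^2/(2\pi)}^1)_2^n$ for every $n$, and passing to weak limits gives $rI^\infty\subset\Gamma_{r^2/(2\pi)}^\infty$. Set $\sigma^2=r^2/(2\pi)$ and define $f:\mathbb R\to[0,r]$ by $f(t)=r\Phi_{\sigma^2}(t)$, where $\Phi_{\sigma^2}$ is the cumulative distribution function of $\gamma^1_{\sigma^2}$. Its derivative $f'(t)=(r/\sqrt{2\pi\sigma^2})\,e^{-t^2/(2\sigma^2)}$ attains its maximum $r/\sqrt{2\pi\sigma^2}=1$ at $t=0$ precisely because of the choice of $\sigma^2$, so $f$ is $1$-Lipschitz. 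By construction $f_\#\gamma^1_{\sigma^2}=r^{-1}\mathcal L^1\lfloor_{[0,r]}$, giving the required domination.

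No step presents a serious obstacle: both inclusions are squeezed between two classical one-dimensional facts. The only delicate quantitative observation is that the Gaussian density at its mean equals the uniform density $1/r$ on $[0,r]$ exactly when $\sigma^2=r^2/(2\pi)$; this is what pins down the right-hand variance. The left-hand variance $r^2/12$ is just $V(rI)$ and could conceivably be improved with a better lower bound for $V(rI^\infty)$.
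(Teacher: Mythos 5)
Your proof is correct and follows essentially the same route as the paper: the left inclusion via the central limit theorem for $\ell_2$-idempotent pyramids combined with $V(rI)=r^2/12$, and the right inclusion via the $1$-Lipschitz cumulative distribution function of $\gamma^1_{r^2/(2\pi)}$ pushing the Gaussian onto the uniform measure. Your explicit verification that $rI^\infty$ is $\ell_2$-idempotent (via Lemma \ref{lem:approx_product}) is a detail the paper leaves implicit when invoking Proposition \ref{prop:CLT}, but it is the same argument.
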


\begin{proof}
	Due to a simple calculation, we have $V(I)=1/12$.
	Hence, $V(I^\infty) \ge 1/12$.
	By Proposition \ref{prop:CLT}, we have
	\[
	I^\infty \supset \Gamma^\infty_{1/12}.
	\]
	Let $\sigma > 0$ be an arbitrary number.
	We consider the function defined by
	\[
	f_\sigma(t) := \gamma_{\sigma^2}^1((-\infty ,t]) = \frac{1}{\sqrt{2\pi \sigma^2}} \int_{-\infty}^t \exp\left( {\frac{-u^2}{2\sigma^2}} \right)\, du.
	\]
	Then, we have $(f_{\sigma})_\# \gamma_{\sigma^2}^1 = \cal L^1 \lfloor_{[0,1]}$.
	Clearly, $f_\sigma$ is $1$-Lipschitz if and only if $\sigma \ge \frac{1}{\sqrt{2 \pi}}$.
	Therefore, we have
	\[
	I \prec \Gamma_{1/{2\pi}}^1.
	\]
	By taking the infinite $\ell_2$-product, we obtain
	\[
	I^\infty \subset \Gamma_{1/{2\pi}}^\infty.
	\]
	This completes the proof.
\end{proof}

\subsection{Distinguishing $I^\infty$ with $\Gamma^\infty$}

To prove Theorem \ref{thm:main}, we show 
the following lemma. 

\begin{lemma} \label{lem:obsdiam_comparison}
	For every $\sigma<1/\sqrt{2 \pi}$, there exists $\kappa \in (0,1)$ such that
	\begin{equation*}
		\ObsDiam(\Gamma_{\sigma^2}^\infty;-\kappa) < \ObsDiam(I^\infty;-\kappa).
	\end{equation*}
	In particular, we have
	\[
	I^\infty \not\subset \Gamma_{\sigma^2}^\infty
	\]
	for every $\sigma < 1/\sqrt{2\pi}$.
\end{lemma}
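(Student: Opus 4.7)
The plan is to bound $\ObsDiam(I^\infty;-\kappa)$ from below using a single coordinate and to compare it with the explicit Gaussian formula from Example \ref{ex:obsdiam_of_Gauss} in the limit $\kappa \to 1-$.

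First I would note that $I \in I^\infty$, so by the monotonicity of the observable diameter (or directly from the reformulation in Remark \ref{rem:limit_formula}),
\[
\ObsDiam(I^\infty;-\kappa) \ge \ObsDiam(I;-\kappa).
\]
Next, applying the identity function $f(t)=t \in \Lip_1(I)$ yields $f_\# \cal L^1 = \cal L^1\lfloor_{[0,1]}$, whose partial diameter of mass $1-\kappa$ equals $1-\kappa$ (attained by $[0,1-\kappa]$). Hence $\ObsDiam(I;-\kappa) \ge 1-\kappa$.

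On the Gaussian side, Example \ref{ex:obsdiam_of_Gauss} gives the exact value
\[
\ObsDiam(\Gamma_{\sigma^2}^\infty;-\kappa) = 2\sigma\,\Psi^{-1}\!\left(\tfrac{1-\kappa}{2}\right).
\]
Setting $s=(1-\kappa)/2 \in (0,1/2)$, it therefore suffices to find $s$ small enough so that $2s > 2\sigma\,\Psi^{-1}(s)$, equivalently $\Psi^{-1}(s)/s < 1/\sigma$. Since $\Psi(t)=\frac{1}{\sqrt{2\pi}}\int_0^t e^{-u^2/2}\,du$ is smooth with $\Psi'(0)=1/\sqrt{2\pi}$, the Taylor expansion at $0$ gives $\Psi^{-1}(s)/s \to \sqrt{2\pi}$ as $s \to 0+$. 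The assumption $\sigma < 1/\sqrt{2\pi}$ is exactly $\sqrt{2\pi} < 1/\sigma$, so by continuity the strict inequality $\Psi^{-1}(s)/s < 1/\sigma$ holds for all sufficiently small $s$, which produces the desired $\kappa$.

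For the ``in particular'' clause, I would argue by contradiction: if $I^\infty \subset \Gamma_{\sigma^2}^\infty$, the monotonicity of $\ObsDiam$ on $\Pi$ would force $\ObsDiam(I^\infty;-\kappa) \le \ObsDiam(\Gamma_{\sigma^2}^\infty;-\kappa)$ for every $\kappa \in (0,1)$, contradicting the first part of the lemma. The only delicate step is the asymptotic comparison of $1-\kappa$ with $2\sigma\Psi^{-1}((1-\kappa)/2)$ as $\kappa \to 1-$; this is a routine Taylor computation once the critical threshold $\sigma = 1/\sqrt{2\pi}$ is identified, and the constant $1/\sqrt{2\pi}$ matches precisely the slope that was used in the proof of Proposition \ref{prop:cube_vs_Gaussian} to realize $I$ as the pushforward of $\Gamma^1_{1/(2\pi)}$ under a $1$-Lipschitz map.
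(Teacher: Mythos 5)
Your proposal is correct and follows essentially the same route as the paper: both reduce to the lower bound $\ObsDiam(I^\infty;-\kappa)\ge 1-\kappa$ coming from the single coordinate of $I^\infty$, invoke the exact Gaussian formula $2\sigma\Psi^{-1}((1-\kappa)/2)$, and detect the threshold $1/\sqrt{2\pi}$ via the derivative $\Psi'(0)=1/\sqrt{2\pi}$ as $\kappa\to 1-$ (the paper phrases this through the auxiliary function $f(s)=\Psi(\tfrac{1-s}{2\sigma})-\tfrac{1-s}{2}$ with $f(1)=0$, $f'(1)<0$, while you phrase it through the limit $\Psi^{-1}(s)/s\to\sqrt{2\pi}$; these are the same computation). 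The deduction of the ``in particular'' clause from monotonicity of the observable diameter is also exactly as intended.
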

\begin{proof}
	Let $\Psi(s) := \gamma^1([0,s]) = \int_0^s \frac{1}{\sqrt{2\pi}} e^{-t^2/2}\,dt$.
	As in Remark \ref{rem:limit_formula},
	we have
	\[
	\ObsDiam(\Gamma^{\infty}_{\sigma^2}; - \kappa) = 2\sigma \Psi^{-1} \left( \frac{1-\kappa}{2} \right).
		\]
	Since the observable diameter is monotone, we obtain
	\[
	\ObsDiam(I^\infty;-\kappa) \ge \ObsDiam(I;-\kappa) = 1 - \kappa.
	\]

	Let us define a smooth function $f : [0,1] \to \mathbb R$ by
	\[
	f(s) = \Psi \left(
	\frac{1-s}{2\sigma} \right)- \frac{1-s}{2}.
	\]
	Since
	\begin{align*}
		f'(1)
		&= \frac{1}{2\sigma} \left(
		\sigma- \frac{1}{\sqrt{2 \pi}}
		\right),
	\end{align*}
	we have $f'(1) < 0$ if $\sigma < 1/\sqrt{2\pi}$.
	Hence, if $\sigma < 1/\sqrt{2 \pi}$, then
	there exists $\kappa \in (0,1)$ close to $1$ such that $f(\kappa) > 0$.
	Therefore, we obtain the conclusion.
\end{proof}

We say that two pyramids $P$ and $Q$ are {\it similar} if there exists $t > 0$ such that $t P=Q$.
In this terminology, Theorem \ref{thm:main} is shortly stated that $\Gamma^\infty$ and $I^\infty$ are not similar.

\begin{proof}[Proof of Theorem \ref{thm:main}]
	By Proposition \ref{prop:cube_vs_Gaussian} and Lemma \ref{lem:obsdiam_comparison}, we have
	\[
	F_{\Gamma^\infty}(I^\infty) = \frac{1}{\sqrt{2\pi}}.
	\]
	On the other hand, by \eqref{eq:V_vs_PI}, and by Proposition \ref{prop:variance}, we have
	\[
	\sqrt{V(I^\infty)} \le C_{2,2}(I) = \frac{1}{\pi}.
	\]
	Therefore, $I^\infty$ is not similar to $\Gamma^\infty$, because
	$\sqrt{V(\Gamma^\infty)} ={1}= F_{\Gamma^\infty}(\Gamma^\infty)$.
\end{proof}

\begin{proof}[Proof of Theorem \ref{thm:F_Gamma}]
	This follows from Corollary \ref{cor:Gauss} and Lemma \ref{lem:obsdiam_comparison}.
\end{proof}

\subsection{Other applications} \label{subsec:regular_space_is_not_disjoint}

Due to Proposition \ref{prop:cube_vs_Gaussian}, we know that, for any $N \ge 1$,
there exists $\{X_n\}_n$ box-convergent to $I^N$ with $X_n \prec \Gamma_{1/2\pi}^n$.
However, using Lemma \ref{lem:obsdiam_comparison}, we immediately obtain:
\begin{proposition}
	For every $\epsilon > 0$, there exists $N \ge 1$ such that, for every $N' \ge N$, if a sequence $\{X_n\}_n$ of pm-spaces satisfying one of $X_n \prec \Gamma_{(1-\epsilon)/2\pi}^n$, $X_n \prec \mathbb S^n(\sqrt{(1-\epsilon)n/{2\pi}})$ and $X_n \prec \mathbb B^{n+1}(\sqrt{(1-\epsilon)n/{2\pi}})$ for each $n$, then $\{X_n\}_n$ 
does not concentrate to $I^{N'}$ as $n \to \infty$.
\end{proposition}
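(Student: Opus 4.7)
The plan is to reduce the statement to Lemma~\ref{lem:obsdiam_comparison} by exploiting the fact that, in all three prescribed cases, the dominating sequence $\{Y_n\}$ weakly converges as a pyramid to the same Gaussian pyramid $\Gamma^\infty_{\sigma^2}$ with $\sigma:=\sqrt{(1-\epsilon)/(2\pi)}<1/\sqrt{2\pi}$, and then to compare the observable diameters of $I^{N'}$ and $\Gamma^\infty_{\sigma^2}$ at a carefully chosen threshold $\kappa$.

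First, apply Lemma~\ref{lem:obsdiam_comparison} to the above $\sigma$ to pick $\kappa\in(0,1)$ with
\[
\ObsDiam(\Gamma^\infty_{\sigma^2};-\kappa)<\ObsDiam(I^\infty;-\kappa).
\]
Since $\ObsDiam(\,\cdot\,;-\kappa)$ is monotone and lower semicontinuous (both on $\cal X$ and on $\Pi$, by Lemma~\ref{lem:general_lemma} together with the Ozawa--Shioya lower semicontinuity cited in the paper), Corollary~\ref{cor:limit} gives $\ObsDiam(I^{N'};-\kappa)\to \ObsDiam(I^\infty;-\kappa)$ as $N'\to\infty$. Choose $N$ with $\ObsDiam(I^N;-\kappa)>\ObsDiam(\Gamma^\infty_{\sigma^2};-\kappa)$; then by monotonicity along $I^N\prec I^{N+1}\prec\cdots$ the same strict inequality persists for every $N'\ge N$.

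Next, argue by contradiction: suppose $X_n\prec Y_n$, with $\{Y_n\}$ one of the three prescribed families, and $\cal PX_n$ weakly converges to $\cal PI^{N'}$ for some $N'\ge N$. By the very definition of $\Gamma^\infty_{\sigma^2}$ (for $Y_n=\Gamma^n_{\sigma^2}$), by Theorem~\ref{thm:Shioya_sphere} (for the sphere case, since $r_n/\sqrt n=\sigma$), and by the Kazukawa--Shioya theorem for balls noted in the remark following Theorem~\ref{thm:Shioya_sphere}, in all three cases $\cal PY_n$ weakly converges to $\Gamma^\infty_{\sigma^2}$. The inclusion $\cal PX_n\subset\cal PY_n$ now forces $\cal PI^{N'}\subset\Gamma^\infty_{\sigma^2}$: indeed, for every $Z\in\cal PI^{N'}$ there exists $Z_n\in\cal PX_n\subset\cal PY_n$ with $\square(Z_n,Z)\to 0$, and condition (2) in the definition of weak Hausdorff convergence, applied to $\cal PY_n\to\Gamma^\infty_{\sigma^2}$, forces $Z\in\Gamma^\infty_{\sigma^2}$. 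In particular $I^{N'}\in\Gamma^\infty_{\sigma^2}$, so by monotonicity of $\ObsDiam(\,\cdot\,;-\kappa)$ on pyramids (Remark~\ref{rem:limit_formula}),
\[
\ObsDiam(I^{N'};-\kappa)\le \ObsDiam(\Gamma^\infty_{\sigma^2};-\kappa),
\]
contradicting the choice of $N$.

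No step is genuinely hard: the substantive inputs — Lemma~\ref{lem:obsdiam_comparison}, the identification of the weak limits of the three families with $\Gamma^\infty_{\sigma^2}$, and Corollary~\ref{cor:limit} — are already in place. The only point requiring some care is the Kuratowski-type deduction $\cal PI^{N'}\subset\Gamma^\infty_{\sigma^2}$ from the two weak convergences, which is routine once one unpacks the definition and uses box-closedness of $\Gamma^\infty_{\sigma^2}$.
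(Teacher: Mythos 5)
Your proposal is correct and follows essentially the argument the paper intends (the paper leaves the proof implicit as an immediate consequence of Lemma \ref{lem:obsdiam_comparison}): identify the weak limits of all three dominating families with $\Gamma^\infty_{(1-\epsilon)/2\pi}$, pass the domination to the limit to get $\cal PI^{N'}\subset\Gamma^\infty_{(1-\epsilon)/2\pi}$, and contradict the observable-diameter comparison. Your use of Corollary \ref{cor:limit} to pick $N$ is a valid way to work only from the statement of the lemma; one could even take $N=1$ by noting that the lemma's proof really shows $\ObsDiam(\Gamma^\infty_{\sigma^2};-\kappa)<1-\kappa=\ObsDiam(I;-\kappa)\le\ObsDiam(I^{N'};-\kappa)$ for all $N'\ge 1$.
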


For each $t\in [0,\infty)$, let us consider a subset of $\cal X$ defined by
\[
A(t) = \left\{ X \in \cal X \,\middle|\, C_{2,2}(X) > t \right\}.
\]
Furthermore, we set
\[
A(\infty) = \bigcap_{t >0}A(t) = \left\{X \in \cal X \,\middle|\, C_{2,2}(X) = \infty \right\}.
\]
Clearly, if a pyramid $P$ satisfies $C_{2,2}(P) < \infty$, then $P \cap A(\infty) = \emptyset$.
For more precisely, we have
\begin{proposition} \label{prop:Poincare_vs_discrete}
If a pyramid $P$ is the weak limit of a sequence $\{X_n\}_n$ of pm-spaces satisfying $t:=\sup_{n} C_{2,2}(X_n) < \infty$, then we have $P \cap A(t) = \emptyset$.
In particular, such a $P$ never contain {mm-disconnected} pm-spaces.

If a pyramid $Q$ is the weak limit of pm-spaces $\{Y_n\}$ so that $Y_n$ is dominated by a CD($K,\infty$)-space for each $n$, where $K \in (0,\infty)$ is a constant, then we have $Q \cap A(1/K) = {\emptyset}$.
\end{proposition}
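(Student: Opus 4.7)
The plan is to transport a uniform upper bound on the $(2,2)$-Poincar\'e constants of the approximating pm-spaces to the limit pyramid, using the fact that $C_{2,2}$, being lower semicontinuous and monotone on $\mathcal X$, extends canonically to a lower semicontinuous monotone invariant on $\Pi$ with $C_{2,2}(P)=\sup_{X\in P}C_{2,2}(X)$.

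For the first statement, I would proceed as follows. By hypothesis $\mathcal PX_n \to P$ weakly and $C_{2,2}(X_n)\le t$ for all $n$. By Theorem \ref{thm:(p,q)-P_is_lsc} (applied with $p=q=2$) together with Lemma \ref{lem:general_lemma}, the extension $C_{2,2}\colon \Pi\to[0,\infty]$ is lower semicontinuous in the weak topology, so
\[
C_{2,2}(P)\le \liminf_{n\to\infty}C_{2,2}(\mathcal PX_n)=\liminf_{n\to\infty}C_{2,2}(X_n)\le t.
\]
By definition of the standard extension, $C_{2,2}(X)\le C_{2,2}(P)\le t$ for every $X\in P$, which gives $P\cap A(t)=\emptyset$. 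The second assertion is then immediate from Proposition \ref{prop:Poincare=infty}: if $Y\in P$ were mm-disconnected, then $C_{2,2}(Y)=\infty>t$, contradicting $P\cap A(t)=\emptyset$.

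For the third statement, I would reduce the CD$(K,\infty)$ hypothesis to a uniform bound on $C_{2,2}(Y_n)$ and then invoke the first part. Concretely, if $Y_n$ is dominated by a CD$(K,\infty)$-space $Z_n$, then by \cite[(6.10)]{AGS} one has $C_{\mathrm{LS}}(Z_n)\le 1/K$, and the monotonicity of $C_{\mathrm{LS}}$ under the Lipschitz order yields $C_{\mathrm{LS}}(Y_n)\le 1/K$. Combining with \eqref{eq:Poincare_vs_LS} gives $C_{2,2}(Y_n)\le C_{\mathrm{LS}}(Y_n)\le 1/K$, so $\sup_n C_{2,2}(Y_n)\le 1/K$, and the first part applied with $t=1/K$ delivers $Q\cap A(1/K)=\emptyset$. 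There is no genuine obstacle here: the whole argument is essentially bookkeeping once the lower semicontinuity of $C_{2,2}$ on $\Pi$ (Theorem \ref{thm:(p,q)-P_is_lsc}) and the implication CD$(K,\infty)\Rightarrow C_{\mathrm{LS}}\le 1/K$ are in hand; the only point requiring a touch of care is to make sure the direction of the lower-semicontinuity inequality is used correctly, which is why the argument is phrased via the extension $C_{2,2}(P)=\sup_{X\in P}C_{2,2}(X)$ rather than by testing individual $X\in P$ directly.
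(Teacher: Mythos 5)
Your proof is correct and follows essentially the same route as the paper: the first claim via the lower semicontinuity and monotonicity of the canonical extension $C_{2,2}(P)=\sup_{X\in P}C_{2,2}(X)$, the second via Proposition \ref{prop:Poincare=infty}, and the third via the chain CD$(K,\infty)\Rightarrow C_{\mathrm{LS}}\le 1/K\Rightarrow C_{2,2}\le 1/K$, which is exactly the content of Theorem \ref{thm:CD_implies_FI} that the paper cites directly. The only difference is that you unpack the details the paper leaves implicit.
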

\begin{proof}
The first statement clearly holds from the definition.
The second statement follows from Proposition \ref{prop:Poincare=infty}.
The last statement follows from Theorem \ref{thm:CD_implies_FI}.
\end{proof}

\subsubsection{Concentrated pyramids} \label{subsec:conc_pyramid}
Since the embedding $\iota : \cal X \to \Pi$ is $1$-Lipschitz in $d_\conc$,
this map is canonically extended to a $1$-Lipschitz map $\overline{\cal X} \to \Pi$, where $\overline{\cal X}$ is the completion of $(\cal X, d_\conc)$.
It is known that the map $\overline{\cal X} \to \Pi$ is also topological embedding (\cite[Theorem 7.27]{S}).

We say that a pyramid $P$ is {\it concentrated} if $\{\cal L_1(X)\}_{X \in P}$ is precompact in the Gromov-Hausdorff topology. 
Here, $\cal L_1(X)$ was defined in Subsection \ref{subsec:basic}.
From the definition, for two pyramids $P$ and $Q$, if $P$ is concentrated and $P \supset Q$, then $Q$ is concentrated.
Due to \cite[Theorem 7.25]{S}, $P$ is concentrated if and only if $P$ is in the image of $\overline{\cal X} \to \Pi$.

For example, due to \cite[Corollary 7.35]{S}, if a sequence $\{X_n\}$ consisting of smooth compact Riemannian manifold with usual distance function and with normalized volume measure {satisfies} $\lim_{n \to \infty} C_{2,2}(X_{n}) = 0$, then $\{Y_n\}$ is $d_\conc$-Cauchy, where
\[
Y_n := X_1 \times_2 X_2 \times_2 \cdots \times_2 X_n.
\]
Hence, the weak limit
\[
\bigotimes_{n=1}^\infty (\{X_n\}; \ell_2) = \lim_{n \to \infty} \cal PY_n
\]
is a concentrated pyramid.

From now on, we use the following symbol
\[
\bigotimes_{n=1}^\infty Z_n := \bigotimes_{n=1}^\infty (\{Z_n\}; \ell_2)
\]
for pm-spaces $Z_n$.
We focus on the following particular concentrated pyramid
\[
\bigotimes_{n=1}^\infty \mathbb S^n
= \lim_{n \to \infty} \cal P(\mathbb S^1 \times_2 \mathbb S^2 \times_2 \mathbb S^3 \times_2 \cdots \times_2 \mathbb S^n).
\]
Here, $\mathbb S^n$ denotes the standard unit $n$-sphere with intrinsic metric and uniform measure.
Due to \cite{F}, we know that the family $\{\mathbb S^n\}_{n=1}^\infty$ of unit spheres has no convergent subsequence in the box topology.
Since $\bigotimes_{n = 1}^\infty \mathbb S^n$ contains $\{\mathbb S^n\}_n$, it can not represented by pm-spaces {(see \cite{KY})}.

\begin{theorem} \label{thm:non-idempotent}
For each distinct $k, \ell \in \mathbb N$, the pyramids 
$\left( \bigotimes_{n=1}^\infty \mathbb S^n \right)^{\otimes_2 k}$ and $\left( \bigotimes_{n=1}^\infty \mathbb S^n \right)^{\otimes_2 \ell}$
are different.
Furthermore, 
$\left( \bigotimes_{n=1}^\infty \mathbb S^n \right)^{\otimes_2 k}$ is concentrated and {non-trivial.}
\end{theorem}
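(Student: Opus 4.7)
My plan splits the theorem into three claims about $P^{\otimes_2 k}$, where $P=\bigotimes_{n=1}^\infty\mathbb S^n$: first, that $P^{\otimes_2 k}$ is concentrated; second, that it is non-trivial; and third, that for $k\neq\ell$ the pyramids $P^{\otimes_2 k}$ and $P^{\otimes_2 \ell}$ are distinct. The first two are direct; the third is the real content.

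For concentration, iterating Lemma~\ref{lem:approx_product} shows that $\{Y_n^{\times_2 k}\}_n$ is an approximation of $P^{\otimes_2 k}$. Each $(\mathbb S^n)^{\times_2 k}$ is a smooth compact Riemannian manifold, and the tensorization of the $(2,2)$-Poincar\'e constant from Proposition~\ref{prop:smooth_PI} gives $C_{2,2}((\mathbb S^n)^{\times_2 k}) = C_{2,2}(\mathbb S^n) = 1/\sqrt n \to 0$. The criterion recalled in Subsection~\ref{subsec:conc_pyramid} then yields that $\{Y_n^{\times_2 k}\}$ is $d_\conc$-Cauchy, so $P^{\otimes_2 k}$ is concentrated. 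For non-triviality, $\{\mathbb S^n\}_n\subset P\subset P^{\otimes_2 k}$ and $\{\mathbb S^n\}$ has no box-convergent subsequence by~\cite{F}; this forbids $P^{\otimes_2 k}=\mathcal PX$ (pyramids with apex being box-precompact), and concentration forbids $P^{\otimes_2 k}=\mathcal X$.

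For distinctness, the inclusion $P^{\otimes_2 k}\subset P^{\otimes_2 \ell}$ when $k\le\ell$ is automatic, so I need a monotone lower semicontinuous invariant on $\Pi$ taking strictly larger values on $P^{\otimes_2 \ell}$ than on $P^{\otimes_2 k}$. I propose the spectral counting invariant
\[
N(X) := \sup\Bigl\{\dim V : V\subset W^{1,2}_{\ast,a}(X),\ \int_X |Df|_{\ast,a}^2\,dm_X \le \int_X f^2\,dm_X \text{ for all } f\in V\Bigr\},
\]
that is, the number of Cheeger--Laplacian eigenvalues of $X$ in $[0,1]$ counted with multiplicity. Monotonicity $X\prec Y\Rightarrow N(X)\le N(Y)$ follows because pulling back an admissible $V$ along a $1$-Lipschitz measure-preserving $\phi\colon Y\to X$ is an $L^2$-isometry that does not increase the Cheeger energy. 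Once I establish lower semicontinuity of $N$ in the box topology on $\mathcal X$, Lemma~\ref{lem:general_lemma} and the construction~\eqref{eq:extension} extend $N$ canonically to a monotone lower semicontinuous invariant on $\Pi$. This lower-semicontinuity check is the main technical obstacle: it requires Mosco-type stability of Cheeger energies and of their eigenvalue counting functions under the common-ambient-space description of box convergence supplied by Lemma~\ref{lem:mG_convergece}, drawing on the convergence theory of Cheeger energies on metric measure spaces.

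Granted this, the conclusion is a direct computation. By tensorization of Laplace spectra on smooth Riemannian products, the eigenvalues of $Y_m^{\times_2 k}$ are sums of eigenvalues drawn from its $mk$ spherical factors; since $\mathbb S^n$ with $n\ge 2$ has smallest positive eigenvalue $n\ge 2$ while $\mathbb S^1$ has eigenvalue $1$ of multiplicity~$2$, the eigenvalues of $Y_m^{\times_2 k}$ in $[0,1]$ reduce to the zero eigenvalue (multiplicity $1$) plus those in which exactly one of the $k$ copies of $\mathbb S^1$ contributes eigenvalue $1$ (multiplicity $2$ each), giving $N(Y_m^{\times_2 k})=1+2k$. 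Identically $N((\mathbb S^1)^{\times_2 \ell})=1+2\ell$. Monotonicity and lower semicontinuity then imply $N(P^{\otimes_2 k})=1+2k$, whereas $(\mathbb S^1)^{\times_2 \ell}\in P^{\otimes_2 \ell}$ forces $N(P^{\otimes_2 \ell})\ge 1+2\ell>1+2k$, so $P^{\otimes_2 k}\neq P^{\otimes_2 \ell}$ whenever $k\neq\ell$.
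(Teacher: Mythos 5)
Your handling of concentration and non-triviality essentially reproduces the paper's argument (tensorization of $C_{2,2}$ on the smooth factors, the criterion of Subsection~\ref{subsec:conc_pyramid}, and the fact that the pyramid contains the box-divergent family $\{\mathbb S^n\}_n$), and those parts are fine. The problem is the distinctness step, and it sits exactly where you flag the ``main technical obstacle''. The invariant $N$ as you define it, the number of Cheeger eigenvalues in the \emph{closed} interval $[0,1]$, is not lower semicontinuous in the box topology: $t\,\mathbb S^1$ box-converges to $\mathbb S^1$ as $t\to 1^-$, but the first nonzero eigenvalue of $t\,\mathbb S^1$ is $t^{-2}>1$, so $N(t\,\mathbb S^1)=1$ for all $t<1$ while $N(\mathbb S^1)=3$, whence $\liminf_{t\to 1^-}N(t\,\mathbb S^1)=1<N(\mathbb S^1)$. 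This is not a removable technicality for your computation, because the eigenvalue of $\mathbb S^1$ that you are counting sits exactly at the threshold $1$; any stability statement one could realistically prove under bare box convergence only controls eigenvalue counts for an enlarged window, so the sup over the box closure of $\{\,Y\prec Y_m^{\times_2 k}\,\}$ is not pinned down to $1+2k$. Replacing the threshold by an open condition $<c$ with $1<c<2$ would keep the value $1+2k$ on the generating spaces, but then lower semicontinuity requires the inequality $\limsup_n\lambda_j(X_n)\le\lambda_j(X)$ for \emph{all} min-max eigenvalues along box convergence of arbitrary pm-spaces; this is a substantial unproven claim, well beyond the first-eigenvalue case established in Theorem~\ref{thm:(p,q)-P_is_lsc}, and you give no argument for it. As it stands, the upper bound $N(P^{\otimes_2 k})\le 1+2k$ is not justified.

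For comparison, the paper's proof of distinctness avoids spectral analysis entirely and is very short: if $P^{\otimes_2 k}=P^{\otimes_2\ell}$ with $k<\ell$, then $P^{\otimes_2 k}$ is $\ell_2$-idempotent by Lemma~\ref{lem:idempotent}, hence contains $\Gamma^\infty_{V(\mathbb S^1)}$ by the central limit theorem (Proposition~\ref{prop:CLT}); since a sub-pyramid of a concentrated pyramid is concentrated, this would make $\Gamma^\infty_{V(\mathbb S^1)}$ concentrated, contradicting the fact that no infinite-dimensional Gaussian pyramid is concentrated. You already proved the concentration of $P^{\otimes_2 k}$, so you have all the ingredients for this route; I would recommend adopting it, or else redesigning your invariant so that the spectral data of $\mathbb S^1$ does not lie on the boundary of the defining condition and the eigenvalue stability you need is actually established.
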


Note that due to Lemma \ref{lem:approx_product}, we have
\begin{equation} \label{eq:product_of_spheres}
\left( \bigotimes_{n=1}^\infty \mathbb S^n \right)^{\otimes_2 k}
= \bigotimes_{n=1}^\infty (\mathbb S^n)^{\otimes_2 k}.
\end{equation}

To prove Theorem \ref{thm:non-idempotent}, we prepare the following lemma.
\begin{lemma} \label{lem:idempotent}
Let $P$ be a pyramid. Suppose that
\[
P^{\otimes_p k} = P^{\otimes_p \ell}
\]
holds for some $k, \ell \in \mathbb N$ with $k < \ell$.
Then, $P^{\otimes_p k}$ is $\ell_p$-idempotent.
\end{lemma}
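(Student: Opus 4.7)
The plan is semigroup-theoretic: in any commutative monoid (here $(\Pi, \otimes_p)$), the equality $a^k = a^\ell$ with $k<\ell$ forces $a^k$ to be idempotent. Set $Q := P^{\otimes_p k}$ and $m := \ell - k \geq 1$. Rewriting the hypothesis via the commutativity and associativity of $\otimes_p$ (Lemma \ref{lem:product01}(1), together with the evident associativity inherited from $\times_p$ on pm-spaces), the assumption becomes
\[
Q = Q \otimes_p P^{\otimes_p m}.
\]

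Next, I would iterate this identity. By a straightforward induction on $j$, one obtains
\[
Q = Q \otimes_p P^{\otimes_p jm} = P^{\otimes_p (k + jm)} \qquad \text{for every } j \geq 0,
\]
where the inductive step uses the hypothesis once and associativity/commutativity to regroup. In particular, choosing any $j \geq k/m$ (for instance $j = k$) ensures $k + jm \geq 2k$.

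For such $j$, combining monotonicity of $\otimes_p$ (Lemma \ref{lem:product01}(2)) with the inclusion $P^{\otimes_p a} \subset P^{\otimes_p b}$ for $a \leq b$ (which follows inductively from Lemma \ref{lem:product01}(3) applied to the factor $P^{\otimes_p (b-a)}$), I get
\[
Q \otimes_p Q = P^{\otimes_p 2k} \;\subset\; P^{\otimes_p (k + jm)} \;=\; Q.
\]
The reverse inclusion $Q \subset Q \otimes_p Q$ is immediate from Lemma \ref{lem:product01}(3). Hence $Q \otimes_p Q = Q$, i.e., $Q = P^{\otimes_p k}$ is $\ell_p$-idempotent.

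There is no serious obstacle here; the only thing to be mildly careful about is the bookkeeping of the iteration (verifying $P^{\otimes_p a} \otimes_p P^{\otimes_p b} = P^{\otimes_p (a+b)}$, which is implicit in the definition of iterated products), and choosing $j$ large enough so that the exponent $k + jm$ dominates $2k$. Everything else is a direct appeal to Lemma \ref{lem:product01} and the equivalence in Proposition \ref{prop:idempotent}.
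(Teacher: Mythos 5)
Your proof is correct and follows essentially the same route as the paper's: both iterate the hypothesis using commutativity/associativity of $\otimes_p$ and invoke the inclusion $P^{\otimes_p a} \subset P^{\otimes_p b}$ for $a \le b$ (from Lemma \ref{lem:product01}) to conclude $P^{\otimes_p k} \otimes_p P^{\otimes_p k} \subset P^{\otimes_p k}$. The only cosmetic difference is that the paper first collapses the hypothesis to $P^{\otimes_p k} = P^{\otimes_p (k+1)}$ via the sandwich $P^{\otimes_p k} \subset P^{\otimes_p (k+1)} \subset P^{\otimes_p \ell} = P^{\otimes_p k}$ and then climbs one step at a time up to $P^{\otimes_p 2k}$, whereas you iterate in steps of $m = \ell - k$ and then compare the exponent with $2k$ by monotonicity.
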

\begin{proof}
From the assumption, we have
\[
P^{\otimes_p k} \subset P^{\otimes_p (k+1)} \subset P^{\otimes_p \ell} = P^{\otimes_p k}.
\]
Hence, $P^{\otimes_p k} = P^{\otimes_p (k+1)}$.
Therefore,
\[
P^{\otimes_p k} = P^{\otimes_p (k+1)} = P^{\otimes_p k} \otimes_p P = P^{\otimes_p (k+1)} \otimes_p P = P^{\otimes_p (k+2)} = \cdots = P^{\otimes_p 2k}.
\]
This completes the proof.
\end{proof}

\begin{proof}[Proof of Theorem \ref{thm:non-idempotent}]
Let $P := \bigotimes_{n=1}^{\infty} \mathbb S^n$.
The equation \eqref{eq:product_of_spheres} and 
\begin{equation*} 
	C_{2,2}((\mathbb S^n)^{\otimes_2 k}) = C_{2,2}(\mathbb S^n) \to 0 \text{ as } n \to \infty
\end{equation*}
imply that $P^{\otimes_2 k}$ is concentrated. 
Moreover, since $P^{\otimes_2 k}$ contains $\{ \mathbb S^n\}_n$, $P^{\otimes_2 k}$ is not represented by pm-spaces.
These prove the second statement. 

Let us prove the first statement. 
Suppose that $P^{\otimes_2 k} = P^{\otimes_2 \ell}$ for some $k<\ell$.  
Then $P^{\otimes_2 k}$ is $\ell_2$-idempotent by Lemma \ref{lem:idempotent}, and hence, $P^{\otimes_2 k}$ contains $\Gamma_{V(\mathbb S^1)}^\infty$ by Proposition \ref{prop:CLT}.
This implies that $\Gamma^\infty_{V(\mathbb S^1)}$ is concentrated. 
However, this contradicts to that $\Gamma_{V(\mathbb S^1)}^\infty$ is not concentrated (\cite[Proposition 7.37]{S}).
Therefore, we obtain the first statement. 
\end{proof}

The following is obtained immediately.
\begin{proposition}
	\label{lem:concentrated}
	For a concentrated pyramid $Q$ with $V(Q) > 0$, we have
	\[
	Q \ne 
	Q^{\otimes_2 2}.
	\]
\end{proposition}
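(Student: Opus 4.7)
The plan is to argue by contradiction, assuming $Q = Q^{\otimes_2 2}$ and derive that some non-concentrated pyramid must sit inside the concentrated $Q$. The ingredients are already in place: Lemma \ref{lem:idempotent} (applied with $p=2$, $k=1$, $\ell=2$), Proposition \ref{prop:CLT}, and the fact (recalled from \cite[Proposition 7.37]{S} in Subsection \ref{subsec:conc_pyramid}) that $\Gamma_{\sigma^2}^\infty$ is not concentrated for any $\sigma>0$, together with the downward-closedness of concentratedness under pyramid inclusion stated in Subsection \ref{subsec:conc_pyramid}.

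First I would suppose for contradiction that $Q = Q^{\otimes_2 2}$. By Lemma \ref{lem:idempotent} with $p = 2$, $k = 1$, $\ell = 2$, this equality immediately upgrades to $\ell_2$-idempotency of $Q$. Next I would split into two cases according to the value of $V(Q)$. If $0 < V(Q) < \infty$, Proposition \ref{prop:CLT} yields $\Gamma_{V(Q)}^\infty \subset Q$. Since $Q$ is concentrated and concentratedness descends to subpyramids (Subsection \ref{subsec:conc_pyramid}), $\Gamma_{V(Q)}^\infty$ would have to be concentrated, contradicting \cite[Proposition 7.37]{S}. If instead $V(Q) = \infty$, then the ``$\sigma = \infty$'' part of the proof of Proposition \ref{prop:CLT} shows $Q = \mathcal{X}$; but $\mathcal{X}$ contains pm-spaces of arbitrarily large diameter, hence is plainly not concentrated (for example it contains the whole non-precompact family $\{\mathbb{S}^n\}$), again a contradiction.

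There is no real obstacle here; the statement is essentially a clean corollary of the machinery built in the section. The only thing requiring any care is the case $V(Q) = \infty$, where one must remember that Proposition \ref{prop:CLT} forces $Q = \mathcal{X}$ under $\ell_2$-idempotency, and then invoke that $\mathcal{X}$ is not concentrated (equivalently, it is not in the image of the completion $\overline{\mathcal{X}} \to \Pi$). In summary, the whole proof collapses to a three-line chain: $Q = Q^{\otimes_2 2} \Rightarrow Q$ is $\ell_2$-idempotent $\Rightarrow Q \supset \Gamma_{V(Q)}^\infty$ (or $Q = \mathcal{X}$) $\Rightarrow Q$ is not concentrated.
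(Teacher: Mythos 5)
Your argument is correct and is exactly the route the paper intends (the paper states this proposition without proof, but your chain --- $Q=Q^{\otimes_2 2}$ gives $\ell_2$-idempotency via Lemma \ref{lem:idempotent}, Proposition \ref{prop:CLT} then forces $\Gamma_{V(Q)}^\infty\subset Q$ or $Q=\cal X$, contradicting concentratedness since $\Gamma_{\sigma^2}^\infty$ is not concentrated by \cite[Proposition 7.37]{S} and concentratedness passes to sub-pyramids --- is precisely the mechanism already used at the end of the proof of Theorem \ref{thm:non-idempotent}). One small correction in the case $V(Q)=\infty$: the parenthetical justification that $\cal X$ is not concentrated because it contains the box-non-precompact family $\{\mathbb S^n\}$ does not work, since concentratedness concerns precompactness of $\{\cal L_1(X)\}_{X\in P}$ and in fact $\cal L_1(\mathbb S^n)$ Gromov--Hausdorff converges to a point; the correct one-line fix is that $\cal X\supset\Gamma^\infty$ and $\Gamma^\infty$ is not concentrated, so $\cal X$ cannot be.
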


\section{The space of pyramids {generated by} atoms and dissipation property} \label{sec:atom}

In this section, we introduce generalized notion of dissipation and separation distance. 
After that, we prove Theorems \ref{thm:mugen} and \ref{thm:algebra}.

\subsection{Moduli space of atoms}

Let us define
\[
\widetilde V := \left\{
\alpha = (\alpha_i)_{i=1}^\infty \in [0, \infty)^{\mathbb N}\,\middle|\, \|\alpha\|_1 \le 1
\right\}.
\]
Here, $\|x\|_1 := \sum_{i=1}^\infty |x_i|$ for arbitrary $x =(x_i) \in \mathbb R^{\mathbb N}$.
Let us consider an equivalent relation on $\widetilde V$ generated by
\[
\alpha \sim \beta \iff
\left\{
\begin{aligned}
& \text{there exists an injective map $\sigma : \mathbb N \to \mathbb N$} \\
& \text{such that $\alpha_i = \beta_{\sigma(i)}$ for all $i \ge 1$}.
\end{aligned}
\right.
\]
Then, the quotient set $\widetilde V/\!\!\sim$ is identified with
\[
V := \left\{ \alpha \in \widetilde V \,\middle|\, \alpha_i \ge \alpha_{i+1} \text{ for all } i \ge 1
\right\}.
\]
Let us give a topology $V$ induced by the $\ell_\infty$-norm.
Note that the $\ell_\infty$-topology is different from the $\ell_1$-topology.

\begin{lemma} \label{lem:uniform_vs_pointwise}
For a sequence $\alpha_n = (\alpha_{ni})_{i=1}^\infty \in V$ and $\alpha = (\alpha_i)_{i=1}^\infty \in V$, the following statements are equivalent:
\begin{enumerate}
\item $\alpha_n$ converges to $\alpha$ in the $\ell_\infty$-norm as $n \to \infty$.
\item $\alpha_n$ converges to $\alpha$ pointwisely, that is, $\lim_{n \to \infty} \alpha_{ni} = \alpha_i$ holds for each $i \ge 1$.
\end{enumerate}
In particular, $(V, \|\cdot\|_\infty)$ is compact.
\end{lemma}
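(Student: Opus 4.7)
The implication (1) $\Rightarrow$ (2) is immediate, since pointwise convergence is always weaker than uniform convergence. The crux is the converse (2) $\Rightarrow$ (1), and here the key observation is the following tail estimate, which is built into the definition of $V$: for any $\beta = (\beta_i) \in V$, the monotonicity $\beta_1 \ge \beta_2 \ge \cdots$ together with $\sum_{i=1}^{\infty} \beta_i \le 1$ forces
\[
i\,\beta_i \le \sum_{j=1}^{i} \beta_j \le 1, \qquad \text{so} \qquad \beta_i \le \frac{1}{i}
\]
for every $i \ge 1$. In particular this bound holds uniformly for all $\alpha_n \in V$ and for $\alpha$.

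Given this, the plan for (2) $\Rightarrow$ (1) is a standard head-tail split. Fix $\epsilon > 0$, choose $N$ with $1/N < \epsilon$, and bound the tail $i \ge N$ by $|\alpha_{ni} - \alpha_i| \le \alpha_{ni} + \alpha_i \le 2/N < 2\epsilon$, uniformly in $n$. The head $i < N$ involves only finitely many coordinates, each of which converges by hypothesis, so we may choose $n_0$ with $|\alpha_{ni} - \alpha_i| < \epsilon$ for all $n \ge n_0$ and $1 \le i < N$. Combining these bounds yields $\|\alpha_n - \alpha\|_\infty < 2\epsilon$ for $n \ge n_0$.

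For compactness, I will verify sequential compactness of $(V, \|\cdot\|_\infty)$. Given any sequence $\{\alpha_n\} \subset V$, the bound $\alpha_{ni} \in [0, 1/i]$ lets me apply a standard diagonal extraction: pass to a subsequence (still denoted $\alpha_n$) such that $\alpha_{ni} \to \alpha_i$ for every fixed $i$. The limit $\alpha = (\alpha_i)_{i\ge 1}$ lies in $V$ because the monotonicity $\alpha_i \ge \alpha_{i+1} \ge 0$ is preserved under pointwise limits, and $\sum_{i=1}^{\infty} \alpha_i \le 1$ follows from Fatou's lemma applied to counting measure on $\mathbb{N}$ (or, equivalently, by taking $n \to \infty$ in $\sum_{i=1}^{k} \alpha_{ni} \le 1$ for each finite $k$ and then letting $k \to \infty$). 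By the equivalence just established, this pointwise convergence upgrades automatically to $\ell_\infty$-convergence, proving compactness.

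There is no real obstacle here; the entire argument hinges on the a priori uniform tail estimate $\alpha_{ni} \le 1/i$ that the defining conditions of $V$ (monotonicity plus $\ell_1$-bound) supply for free. The only small care needed is to note that the estimate applies uniformly to the whole sequence, which is what allows the tail to be controlled independently of $n$.
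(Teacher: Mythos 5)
Your proof is correct and follows essentially the same route as the paper: both arguments rest on the uniform tail control that monotonicity plus the $\ell_1$-bound provide, the only difference being that you extract the explicit estimate $\beta_i \le 1/i$ and run a direct head--tail split, whereas the paper proves $\lim_{i\to\infty}\sup_n \alpha_{ni}=0$ by contradiction and then concludes by a second contradiction argument. You also spell out the diagonal-extraction proof of compactness, which the paper leaves implicit; that part is likewise correct.
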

\begin{proof}
(1) $\implies$ (2) is trivial.
So, we only prove the converse.
Let us assume (2).
First, we are going to show:
\begin{equation} \label{eq:unform_convergence001}
\lim_{i \to \infty} \sup_{n} \alpha_{ni}  = 0.
\end{equation}
Note that \eqref{eq:unform_convergence001} is equivalent to the statement ``for any $\epsilon > 0$, there exists $N \in \mathbb N$ such that if $n,i \ge N$ then $\alpha_{ni} \le \epsilon$''.
So, if \eqref{eq:unform_convergence001} does not hold, then there exist $n_j \to \infty$ and $i_j \to \infty$ such that
\[
c:= \inf_{j \ge 1} \alpha_{n_j i_j} > 0.
\]
Hence, we have
\[
\sum_{i=1}^\infty \alpha_{n_j\,i} \ge \sum_{i=1}^{i_j} \alpha_{n_j\,i} \ge i_j c
\]
since $\alpha_n$ is monotone nonincreasing.
This implies
\[
1 \ge \sup_n \|\alpha_n \|_1 \ge i_j c.
\]
This is a contradiction.
So, \eqref{eq:unform_convergence001} holds.

We next show that $(1)$ holds.
If (1) does not hold, then there 
exist
$\epsilon > 0$, $n_j \to \infty$ and $i_j \to \infty$ such that
\begin{equation} \label{eq:uniform_convergence001-1}
|\alpha_{n_j i_j} - \alpha_{i_j} | \ge \epsilon
\end{equation}
for every $j \ge 1$.
By $\alpha_{n_j i_j} \le \sup_n \alpha_{ni_j}$ and \eqref{eq:unform_convergence001}, we have $\lim_{j \to \infty} \alpha_{n_j i_j} = 0$.
Then we obtain $\lim_{j \to \infty} \alpha_{i_j} \ge \epsilon$ by \eqref{eq:uniform_convergence001-1}.
On the other hand, by $\alpha \in V$, we have $\lim_{j \to \infty} \alpha_{i_j} = 0$.
This is a contradiction.
So, (1) holds.
\end{proof}

For each $\alpha \in {\mathbb R^{\mathbb N}}$, let us set
\[
N(\alpha) = \# \{ i \ge 1 \mid \alpha_i > 0\} \in \mathbb Z_{\ge 0} \cup \{\infty\}.
\]

\subsection{Pyramids {generated by} atoms}

For each $\alpha \in \widetilde V$,
we consider $\cal X_\alpha \subset \cal X$ defined by
\[
\cal X_\alpha = \left\{ X \in \cal X \,\middle|\,
\begin{aligned}
& \text{there exists a map $f : \mathbb N \to X$ such that} \\
& \text{$m_X - \sum_{i=1}^\infty \alpha_i \delta_{f(i)}$ is a nonnegative measure}
\end{aligned}
\right\}.
\]
We will often write $f_\# \alpha = \sum_{i=1}^\infty \alpha_i \delta_{f(i)}$.
Note that
\[
\cal X_0 = \cal X \text{ and } \cal X_1 = \{\ast\}.
\]
Here, $0=(0,0,0,\dots)$ and $1=(1,0,0,\dots)$ as elements of $\widetilde V$.
Furthermore, if $\alpha \sim \beta$, then $\cal X_\alpha = \cal X_\beta$.

\begin{theorem} \label{thm:X_alpha_is_pyramid}
For each $\alpha \in \widetilde V$, $\cal X_\alpha$ is a pyramid.
\end{theorem}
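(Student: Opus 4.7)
The plan is to verify the three conditions in the definition of pyramid. Non-emptiness and downward-closure are immediate. The one-point space $\ast$ lies in $\mathcal{X}_\alpha$ via $f(i) \equiv \ast$: since $\|\alpha\|_1 \le 1$, we have $\sum_i \alpha_i \delta_\ast = \|\alpha\|_1 \delta_\ast \le \delta_\ast = m_\ast$. For downward-closure, if $\phi : X \to Y$ realizes $Y \prec X$ and $f$ witnesses $X \in \mathcal{X}_\alpha$, then $\phi \circ f$ witnesses $Y \in \mathcal{X}_\alpha$ because pushing $m_X \ge \sum_i \alpha_i \delta_{f(i)}$ through $\phi$ preserves the inequality.

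For box-closedness, suppose $\{X_n\} \subset \mathcal{X}_\alpha$ box-converges to $X$ with witnesses $f_n : \mathbb{N} \to X_n$. By Lemma \ref{lem:mG_convergece} we isometrically embed all $X_n$ and $X$ in a common complete separable metric space $Y$ so that $m_{X_n} \to m_X$ weakly in $\mathscr{P}(Y)$. The limit sequence is tight; since $m_{X_n}(\{f_n(i)\}) \ge \alpha_i > 0$, for each fixed $i$ the sequence $\{f_n(i)\}_n$ must sit in a fixed compact subset of $Y$, so by diagonal extraction we can pass to a subsequence along which $f_n(i) \to f(i)$ for every $i$ simultaneously. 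To derive $m_X \ge \sum_i \alpha_i \delta_{f(i)}$, fix a finite $F \subset \mathbb{N}$, partition it by the relation $i \sim j \iff f(i)=f(j)$ into blocks landing on distinct points $y_1,\dots,y_k$, and choose pairwise disjoint closed balls $\overline{B}_\varepsilon(y_j)$. For large $n$ the atoms $f_n(i)$ with $i$ in the $j$-th block lie in $\overline{B}_\varepsilon(y_j)$, so $m_{X_n}(\overline{B}_\varepsilon(y_j)) \ge \sum_{i \in \text{block}_j} \alpha_i$. The Portmanteau inequality $\limsup_n m_{X_n}(C) \le m_X(C)$ on closed sets then yields the same bound for $m_X$; sending $\varepsilon \to 0$ and exhausting $F \uparrow \mathbb{N}$ produces the required inequality of measures.

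Directedness is the main obstacle, because the naive candidates fail: in a product $X \times_p Y$ the atomic mass at $(f_X(i),f_Y(i))$ is $m_X(\{f_X(i)\})\,m_Y(\{f_Y(i)\})$, which is typically smaller than $\alpha_i$; disjoint-union constructions get the total mass wrong after renormalization. The plan is instead to amalgamate $X$ and $Y$ through a common parameter. Fix pairwise disjoint intervals $I_i \subset [0,1]$ with $\mathcal{L}^1(I_i)=\alpha_i$, and let $I_\ast := [0,1]\setminus \bigsqcup_i I_i$. Construct Borel parameters $\psi_X,\psi_Y : [0,1] \to X, Y$ with $\psi_X|_{I_i}\equiv f_X(i)$, $\psi_Y|_{I_i}\equiv f_Y(i)$, and with $\psi_X|_{I_\ast}$, $\psi_Y|_{I_\ast}$ pushing $\mathcal{L}^1|_{I_\ast}$ onto the residual measures $m_X-\sum_i\alpha_i\delta_{f_X(i)}$ and $m_Y-\sum_i\alpha_i\delta_{f_Y(i)}$; this is possible because these residuals are positive with total mass $1-\|\alpha\|_1 = \mathcal{L}^1(I_\ast)$ and every probability measure on a Polish space admits a Lebesgue parameter. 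Define a pseudo-distance on $[0,1]$ by
\[
d_Z(s,t) := \max\{d_X(\psi_X(s),\psi_X(t)),\; d_Y(\psi_Y(s),\psi_Y(t))\},
\]
quotient by the null-distance relation, and take the metric completion to obtain a pm-space $Z$ equipped with the pushforward of $\mathcal{L}^1$. By construction $\psi_X$ and $\psi_Y$ are $1$-Lipschitz with respect to $d_Z$, so they descend and extend to measure-preserving $1$-Lipschitz maps $Z \to X$ and $Z \to Y$, proving $X,Y \prec Z$. Finally each $I_i$ collapses to a single point $z_i \in Z$ (both $\psi_X$ and $\psi_Y$ are constant on $I_i$) of mass $m_Z(\{z_i\}) \ge \mathcal{L}^1(I_i) = \alpha_i$; any coincidence $z_i=z_j$ happens precisely when $(f_X(i),f_Y(i))=(f_X(j),f_Y(j))$, in which case the atomic mass accumulates correctly, so $i\mapsto z_i$ witnesses $Z\in\mathcal{X}_\alpha$. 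The hard part is exactly this synchronization: the parametric join is the device that forces the atoms of two otherwise unrelated spaces into registration within a common dominator.
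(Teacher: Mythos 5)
Your proof is correct and follows essentially the same route as the paper: for directedness you build a coupling on the $\ell_\infty$-product that puts mass $\alpha_i$ at the matched pair of $i$-th atoms and couples the residual measures arbitrarily, which is exactly the paper's construction (your parametric join is just this coupling written as the pushforward of a common Lebesgue parameter), and for box-closedness you use the common-embedding lemma, tightness to trap the atoms in compacta, and a limit of atom locations. The only organizational difference is that the paper first proves closedness for finitely supported $\alpha$ and then invokes $\mathcal{X}_\alpha = \bigcap_{N} \mathcal{X}_{\alpha[N]}$, whereas you treat infinitely many atoms at once by diagonal extraction (restricting, as you should, to indices with $\alpha_i>0$); both versions work.
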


After proving this, we call $\cal X_\alpha$ the {\it pyramid {generated by} atoms} $\alpha$.
To prove Theorem \ref{thm:X_alpha_is_pyramid}, we show the following statements.

\begin{lemma} \label{lem:X_alpha_is_directed}
$\cal X_\alpha$ is directed and downward-closed.
\end{lemma}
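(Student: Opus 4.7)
The plan is to prove the two parts separately; the downward-closed part is essentially routine push-forward, while the directed part requires constructing an explicit common upper bound by an atom-respecting coupling on the product space.

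For the downward-closed part, suppose $X \in \mathcal{X}_\alpha$ with witnessing map $f : \mathbb{N} \to X$, i.e. $m_X = \sum_i \alpha_i \delta_{f(i)} + \mu$ for some nonnegative Borel measure $\mu$ on $X$, and suppose $Y \prec X$ via a $1$-Lipschitz map $\phi : X \to Y$ with $\phi_\# m_X = m_Y$. I would set $g := \phi \circ f : \mathbb{N} \to Y$ and observe
\[
m_Y = \phi_\# m_X = \sum_i \alpha_i \delta_{g(i)} + \phi_\# \mu,
\]
where $\phi_\# \mu$ is nonnegative, so $g$ witnesses $Y \in \mathcal{X}_\alpha$.

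For directedness, let $X, Y \in \mathcal{X}_\alpha$ with witnesses $f, g$ and residuals $\mu_X := m_X - f_\# \alpha$, $\mu_Y := m_Y - g_\# \alpha$, both nonnegative Borel measures of total mass $1-\|\alpha\|_1$. I would construct $Z$ as the set $X \times Y$ equipped with the $\ell_\infty$-product metric and the coupling measure
\[
\pi := \sum_{i=1}^\infty \alpha_i \delta_{(f(i),g(i))} + \pi',
\]
where $\pi' := (1-\|\alpha\|_1)^{-1} \mu_X \otimes \mu_Y$ if $\|\alpha\|_1 < 1$, and $\pi' := 0$ if $\|\alpha\|_1 = 1$. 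A direct check shows that the marginals of $\pi$ are $m_X$ and $m_Y$. Since the coordinate projections on the $\ell_\infty$-product are $1$-Lipschitz, $Z := (X \times Y, d_\infty, \pi)$ satisfies $X \prec Z$ and $Y \prec Z$. Finally, the map $h : \mathbb{N} \to Z$ given by $h(i) := (f(i), g(i))$ witnesses $Z \in \mathcal{X}_\alpha$, because $\pi - h_\#\alpha = \pi' \ge 0$.

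I do not expect a serious obstacle here. The only delicate points are handling possible repetitions in $f$ or $g$ (the definition of $f_\#\alpha$ as a sum of Dirac measures is unambiguous, and the condition $m_X \ge f_\#\alpha$ absorbs any collisions automatically) and the degenerate case $\|\alpha\|_1 = 1$ (which forces $\mu_X = \mu_Y = 0$ and $\pi$ to be supported on the diagonal pairing $\{(f(i),g(i))\}$, with correct marginals because $m_X = f_\#\alpha$ and $m_Y = g_\#\alpha$). The separability and completeness of $Z$ follow from those of $X$ and $Y$, so $Z \in \mathcal{X}$.
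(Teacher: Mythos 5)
Your proof is correct and follows essentially the same route as the paper: the paper also dismisses downward-closedness as clear and proves directedness by placing the measure $\sum_i \alpha_i \delta_{(f(i),g(i))} + (1-\|\alpha\|_1)^{-1}\nu_X \otimes \nu_Y$ (with the obvious degenerate case when $\|\alpha\|_1 = 1$) on the $\ell_\infty$-product $X \times Y$. Your explicit verification of the marginals, the witness map $h(i)=(f(i),g(i))$, and the remarks on collisions are just slightly more detailed versions of what the paper leaves implicit.
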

\begin{proof}
Since it is clear that $\cal X_\alpha$ is downward-closed, we only show that $\cal X_\alpha$ is directed.
Let us take $X, Y \in \cal X_\alpha$.
Then, there exist maps $f : \mathbb N \to X$ and $g : \mathbb N \to Y$ such that
\[
\nu_X :=m_X- \sum_{i=1}^\infty \alpha_i \delta_{f(i)} \text{ and } \nu_Y:= m_Y - \sum_{i=1}^\infty \alpha_i \delta_{g(i)}
\]
are nonnegative Borel measures on $X$ and $Y$, respectively.
Let us consider the measure $m$ on $X \times Y$ defined by
\[
m =
\left\{
\begin{aligned}
& \sum_{i=1}^\infty \alpha_i \delta_{(f(i), g(i))} + (1 - \|\alpha \|_1) \frac{\nu_X}{1-\| \alpha \|_1} \otimes \frac{\nu_Y}{1-\|\alpha\|_1} &&\text{ if } \|\alpha \|_1 < 1, \\
& \sum_{i=1}^\infty \alpha_i \delta_{(f(i), g(i))} &&\text{ if } \| \alpha \|_1 = 1.
\end{aligned}
\right.
\]
Then, $Z:=(X \times Y, d_X \times_\infty d_Y, m) \in \cal X_\alpha$.
Clearly, $Z$ dominates $X$ and $Y$.
Therefore, $\cal X_\alpha$ is directed.
\end{proof}

\begin{proposition} \label{prop:N<infty_then_X_alpha_is_pyramid}
If $N(\alpha) < \infty$, $\cal X_\alpha$ is a pyramid.
\end{proposition}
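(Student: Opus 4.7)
By Lemma \ref{lem:X_alpha_is_directed}, $\cal X_\alpha$ is directed and downward-closed, and non-emptiness is witnessed by the finite discrete pm-space with point masses $\alpha_1,\ldots,\alpha_N,1-\|\alpha\|_1$ (dropping the last atom when $\|\alpha\|_1=1$), where $N:=N(\alpha)$. So the entire content of the statement is box-closedness of $\cal X_\alpha$; note $\alpha_N>0$ because $N<\infty$.

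Let $X_n\in\cal X_\alpha$ box-converge to $X$, and fix witnesses $f_n:\mathbb N\to X_n$ with $m_{X_n}\ge\sum_{i=1}^N\alpha_i\delta_{f_n(i)}$. By Lemma \ref{lem:mG_convergece}, I may regard all these spaces as isometrically embedded in a single complete separable metric space $Y$ so that $m_{X_n}\to m_X$ weakly in $\mathscr P(Y)$. Prokhorov's theorem supplies a compact $K\subset Y$ with $m_{X_n}(Y\setminus K)<\alpha_N$ for every $n$; since $m_{X_n}(\{f_n(i)\})\ge\alpha_i\ge\alpha_N$ for each $i\le N$, every atom $f_n(i)$ must lie in $K$. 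Because $N<\infty$ and $K$ is compact, a diagonal extraction yields a subsequence along which $f_n(i)\to y_i\in K$ for each $i\le N$; I relabel this subsequence as $\{X_n\}$.

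It remains to show $m_X\ge\sum_{i=1}^N\alpha_i\delta_{y_i}$. Let $\{p_1,\ldots,p_k\}$ be the distinct values among the $y_i$'s, set $I(p):=\{i\le N:y_i=p\}$, and pick $r>0$ smaller than half the minimum pairwise distance between the $p_j$'s. For all sufficiently large $n$, $f_n(i)\in B_r(p_j)$ holds precisely when $i\in I(p_j)$, whence
\begin{equation*}
\sum_{i\in I(p_j)}\alpha_i\;\le\;\sum_{i=1}^N\alpha_i\delta_{f_n(i)}(B_r(p_j))\;\le\;m_{X_n}(B_r(p_j)).
\end{equation*}
Applying the Portmanteau theorem to the closed ball $B_r(p_j)$, one gets $m_X(B_r(p_j))\ge\limsup_n m_{X_n}(B_r(p_j))\ge\sum_{i\in I(p_j)}\alpha_i$. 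Letting $r\to 0{+}$ and using continuity of the finite measure $m_X$ from above gives $m_X(\{p_j\})\ge\sum_{i\in I(p_j)}\alpha_i$; summing over $j$ gives the desired measure inequality. Each $y_i$ then has positive $m_X$-mass, so it lies in $\supp m_X\subset X$, and one recovers $f:\mathbb N\to X$ witnessing $X\in\cal X_\alpha$. The main obstacle is the possibility that two or more $f_n(i)$'s collide in the limit, forcing their masses to pool into a single atom of $m_X$; grouping indices by $I(p_j)$ before invoking Portmanteau handles this transparently, and finiteness of $N$ is used crucially both to guarantee the uniform tightness threshold $\alpha_N>0$ and to perform the simultaneous diagonal extraction.
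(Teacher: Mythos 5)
Your proof is correct and follows essentially the same route as the paper's: reduce to box-closedness via Lemma \ref{lem:X_alpha_is_directed}, embed everything into a common space by Lemma \ref{lem:mG_convergece}, use Prokhorov tightness with threshold below $\min_{i\le N}\alpha_i$ to trap all atoms $f_n(i)$ in a compact set, and pass to a limit of the atomic parts (the paper extracts a weak limit $\xi$ of $(f_n)_\#\alpha$ where you extract limits of the points themselves and verify $m_X\ge\sum_i\alpha_i\delta_{y_i}$ by Portmanteau on small closed balls, which is the same argument with the atom-collision bookkeeping made explicit). The only cosmetic point is that writing $\alpha_N$ for the minimal positive entry presumes the positive entries of $\alpha\in\widetilde V$ have been relabeled to be the first $N$, a harmless normalization the paper also makes implicitly.
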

\begin{proof}
Let us set $N = N(\alpha) < \infty$ and assume $N > 0$.
By Lemma \ref{lem:X_alpha_is_directed}, it is sufficient to show that $\cal X_\alpha$ is box-closed.
Let us take a sequence $\{X_n\}_n \subset \cal X_\alpha$ box-convergent to $X \in \cal X$, as $n \to \infty$.
From the definition, there exists a map $f_n : \{1, \dots, N \} \to X_n$ such that
\[
\nu_n := m_{X_n} - (f_n)_\# \alpha
\]
is a nonnegative measure on $X_n$.
By Lemma \ref{lem:mG_convergece}, we may assume that $X_n$ and $X$ are isometrically contained in a common complete separable metric space $Y$ such that $m_{X_n}$ weakly converges to $m_X$.
Let us take $\epsilon \in (0, \min_i \alpha_i)$.
Due to Prokhorov's theorem, there exists a compact set $K \subset Y$ such that
$m_{X_n}(K) > 1 - \epsilon$.
Since $\epsilon < \min_i \alpha_i$, we have
\[
\supp\, (f_n)_\# \alpha \subset K
\]
for every $n$.
Therefore, there is a nonnegative measure $\xi$ on $K$ such that a subsequence of $\{ (f_n)_\# \alpha \}_n$ converges to $\xi$.
Then, $\xi(K) = \|\alpha \|_1$ and $\# \supp\, \xi \le N$.
Moreover, $\xi$ has the form of
\[
\xi = \sum_{i=1}^N \alpha_i \delta_{x_i}
\]
for some $x_i \in K$ ($1 \le i \le N$).
Therefore, $X \in \cal X_\alpha$.
This completes the proof.
\end{proof}

For $\alpha \in \widetilde V$ and $N \ge 1$, we set
\[
\alpha[N]=(\alpha_1, \alpha_2, \dots, \alpha_N, 0,0,\dots).
\]
Remark that $\cal X_{\alpha[N]} \supset \cal X_{\alpha[N+1]} \supset \cal X_\alpha$ holds.

\begin{proposition} \label{prop:intersection}
$\cal X_\alpha = \bigcap_{N =1}^\infty \cal X_{\alpha[N]}$ holds.
\end{proposition}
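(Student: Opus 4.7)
The plan is to prove the two inclusions separately. The inclusion $\cal X_\alpha \subset \bigcap_N \cal X_{\alpha[N]}$ is immediate: if $f : \mathbb N \to X$ witnesses $X \in \cal X_\alpha$, then the same $f$ (whose tail is irrelevant once coordinates are zeroed out) witnesses $X \in \cal X_{\alpha[N]}$ for every $N$, since truncating only decreases the subtracted measure.

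For the reverse inclusion, first dispatch the easy case $N(\alpha) < \infty$: then $\alpha = \alpha[N(\alpha)]$, so $\cal X_\alpha = \cal X_{\alpha[N(\alpha)]}$, which already lies in the intersection and contains it. So assume $N(\alpha) = \infty$, i.e. $\alpha_i > 0$ for all $i$. Take $X \in \bigcap_N \cal X_{\alpha[N]}$. For each $N$, choose a map $f_N : \mathbb N \to X$ witnessing $X \in \cal X_{\alpha[N]}$. The key observation is that, for each $i \ge 1$ and each $N \ge i$, the pointwise mass $m_X(\{f_N(i)\})$ must be at least $\alpha_i$ (even allowing for collisions among the $f_N(j)$'s, the individual atom has to carry at least $\alpha_i$). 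Consequently, $f_N(i)$ belongs to the finite set
\[
S_i := \left\{ x \in X \,\middle|\, m_X(\{x\}) \ge \alpha_i \right\},
\]
whose cardinality is at most $\lfloor 1/\alpha_i \rfloor$ because $m_X$ is a probability measure.

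Since each $S_i$ is finite, a standard diagonal argument produces a subsequence $\{N_j\}_{j=1}^\infty$ along which, for every $i$, the value $f_{N_j}(i)$ is eventually constant; denote this common eventual value by $f(i) \in S_i$. This defines a map $f : \mathbb N \to X$. It remains to verify that $m_X - \sum_{i=1}^\infty \alpha_i \delta_{f(i)}$ is a nonnegative Borel measure. Fix any finite $F \subset \mathbb N$ and pick $j$ large enough that $N_j \ge \max F$ and $f_{N_j}(i) = f(i)$ for all $i \in F$. Then
\[
\sum_{i \in F} \alpha_i \delta_{f(i)} = \sum_{i \in F} \alpha_i \delta_{f_{N_j}(i)} \le \sum_{i=1}^{N_j} \alpha_i \delta_{f_{N_j}(i)} \le m_X,
\]
the last inequality being the defining property of $f_{N_j}$ as a witness for $\cal X_{\alpha[N_j]}$. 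Letting $F$ exhaust $\mathbb N$ and passing to the monotone limit of these nonnegative measures yields $\sum_{i=1}^\infty \alpha_i \delta_{f(i)} \le m_X$, so $X \in \cal X_\alpha$.

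The main subtlety, which must be stated cleanly, is the finiteness of $S_i$; without the hypothesis $\alpha_i > 0$ one would lose the compactness needed for the diagonal extraction, which is precisely why one first reduces to the case $N(\alpha) = \infty$. The rest is bookkeeping — the diagonal selection and the monotone-limit verification of the measure inequality are routine.
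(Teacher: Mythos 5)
Your proof is correct in substance and follows the same overall strategy as the paper's: extract, by a diagonal argument, a single limiting map $f$ from the witnesses $f_N$, and check that it certifies $X \in \cal X_\alpha$. The execution differs in a way worth noting. The paper extracts a \emph{convergent} subsequence of $\{f_N(i)\}_N$ in the metric space $X$ (ruling out the alternative via a uniformly discrete subsequence whose points would each carry mass at least $\alpha_i$), and then verifies nonnegativity of $m_X - \sum_i \alpha_i \delta_{f(i)}$ by identifying it as a weak limit of the nonnegative measures $\nu_N$. Your observation that $f_N(i)$ is forced into the finite set $S_i$ of atoms of mass at least $\alpha_i$ is sharper: it upgrades ``convergent subsequence'' to ``eventually constant subsequence,'' which lets you replace the weak-limit step by the elementary monotone inequality over finite index sets $F$. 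This is arguably cleaner, since the paper's weak-limit assertion itself tacitly needs the tail estimate $\sum_{i>M}\alpha_i \to 0$ to be airtight.

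One small repair is needed in your case analysis. The proposition is invoked for arbitrary $\alpha \in \widetilde V$ (see the proof of Theorem \ref{thm:X_alpha_is_pyramid}), where the entries need not be sorted; then $N(\alpha)=\infty$ does \emph{not} imply $\alpha_i>0$ for all $i$ (take $\alpha = (1/2,0,1/4,0,1/8,0,\dots)$), and for an index with $\alpha_i=0$ your set $S_i$ is all of $X$, so the pigeonhole fails there. Likewise, when $N(\alpha)<\infty$ one has $\alpha=\alpha[N]$ for $N$ the largest index carrying positive mass, not necessarily for $N=N(\alpha)$. Both points are harmless and disappear for $\alpha \in V$; in general they are fixed by one sentence: indices with $\alpha_i=0$ contribute nothing to $f_\#\alpha$, so define $f$ arbitrarily there and run the diagonal argument only over $\{i \mid \alpha_i>0\}$.
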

\begin{proof}
$\cal X_\alpha \subset \bigcap_{N=1}^\infty \cal X_{\alpha[N]}$ is obvious.
We check the converse inclusion. Take any $X \in \bigcap_{N=1}^\infty \cal X_{\alpha[N]}$.
For each $N \in \mathbb N$, there exists map $f_N : \{1,\ldots,N\} \to X$ such that
\[
\nu_N := m_X - \sum_{i=1}^N \alpha_i\delta_{f_N(i)}
\]
is a nonnegative measure.
Then, for any $i=1,2,\ldots$, the sequence $\{f_N(i)\}_{N\ge i}$ has a convergent subsequence. Indeed, if not, then there exists a subsequence $\{N_k\}_{k}$ such that $\{f_{N_k}(i)\}_{k}$ is uniformly discrete, {that is, $\inf_{k \ne \ell} d_X(f_{N_k}(i), f_{N_\ell}(i)) > 0$.}
Thus
\[
1\ge m_X(\{f_{N_k}(i) \, | \, k=1,2,\ldots\}) = \sum_{k=1}^\infty m_X(\{f_{N_k}(i)\}) \ge \sum_{k=1}^\infty \alpha_i =\infty,
\]
which is a contradiction.
Let $x_i$ be the limit of a convergent subsequence of $\{f_N(i)\}_{N\ge i}$ and define a map $f : \mathbb N \to X$ by $f(i) := x_i$ for each $i$.
Then the measure
\[
m_X - \sum_{i=1}^\infty \alpha_i\delta_{f(i)}
\]
is the weak limit of $\{\nu_N\}_{N}$ and then it is nonnegative. Therefore we obtain $X \in \cal X_\alpha$.
\end{proof}

\begin{proof}[Proof of Theorem \ref{thm:X_alpha_is_pyramid}]
Let us take $\alpha \in \widetilde V$.
By Propositions \ref{prop:N<infty_then_X_alpha_is_pyramid} and \ref{prop:intersection}, $\cal X_\alpha$ is closed.
Combining this and Lemma \ref{lem:X_alpha_is_directed} implies that $\cal X_\alpha$ is a pyramid.
\end{proof}

\begin{remark} \label{rem:X_alpha_is_not_I_and_Gamma}
Clearly, $\cal X_\alpha$ is scale-invariant.
Furthermore, $F(\cal X_\alpha) = \infty$ if $\alpha \ne 1$,
for $F=V$, $C_{p,q}$ and $C_\mathrm{LS}$.
Thus, both $\Gamma^\infty$ and $I^\infty$ does not contain $\cal X_\alpha$ with $\alpha \ne 1$ (compare Proposition \ref{prop:Poincare_vs_atoms}).
In particular, $\cal X_\alpha$ coincides with neither $\Gamma_{\sigma^2}^\infty$ nor $rI^\infty$ for $\sigma, r > 0$ and $\alpha \in V$ with $\alpha \ne 1$.

A recent work by Kazukawa-Nakajima-Shioya (\cite{KNS}) shows that every scale-invariant pyramid is $\cal X_\alpha$ for some $\alpha$.
\end{remark}

\begin{remark} \label{rem:X^delta_alpha}
For $\delta \in [0, \infty]$, recall that
\[
\cal X^\delta = \left\{ X \in \cal X \mid \diam \, X \le \delta \right\}
\]
is a pyramid.
Furthermore, we set
\[
\cal X_\alpha^\delta := \cal X_\alpha \cap \cal X^\delta,
\]
which is clearly box-closed.
By the proof of Lemma \ref{lem:X_alpha_is_directed}, $\cal X_\alpha^\delta$ is known to be directed and downward-closed.
Thus $\cal X_\alpha^\delta$ is a pyramid.
By the definition, $\cal X_\alpha^\infty = \cal X_\alpha$ and $\cal X_\alpha^0 = \{\ast\}$.
\end{remark}

\subsection{Injectivity}

Let $\alpha, \beta \in \widetilde V$.
We say that $\beta$ is a {\it contraction} of $\alpha$ if there exists a map $\pi : \mathbb N \to \mathbb N$ such that
\[
\beta_j = \sum_{i \in \pi^{-1}(j)} \alpha_i
\]
holds for every $j \ge 1$.

Note that if $\beta$ is a contraction of $\alpha$, then
\[
\cal X_\beta \subset \cal X_\alpha.
\]

\begin{proposition} \label{prop:bi-contraction}
For $\alpha, \beta \in V$, if $\alpha$ is a contraction of $\beta$ and if $\beta$ is also a contraction of $\alpha$, then $\alpha=\beta$.
\end{proposition}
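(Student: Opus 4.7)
The plan is to deduce the proposition from a single majorization-type lemma, applied in both directions supplied by the hypothesis. Specifically, I would prove: \emph{if $\beta \in V$ is a contraction of $\alpha \in V$, then the partial sums satisfy $B_n := \sum_{j=1}^n \beta_j \geq A_n := \sum_{i=1}^n \alpha_i$ for every $n \geq 1$.} Granted this lemma, the hypothesis immediately gives $B_n \geq A_n$ and $A_n \geq B_n$, hence $A_n = B_n$; then telescoping yields $\alpha_n = A_n - A_{n-1} = B_n - B_{n-1} = \beta_n$ for every $n \ge 1$, which is exactly $\alpha = \beta$.

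To prove the lemma, I would exploit that $\beta$ is weakly decreasing, so
\[
B_n \;=\; \max_{S \subseteq \mathbb{N},\, |S| = n} \sum_{j \in S} \beta_j,
\]
and exhibit one convenient set $S$ of size $n$ whose partial $\beta$-sum is already at least $A_n$. Let $\pi : \mathbb{N} \to \mathbb{N}$ realize the contraction, so $\beta_j = \sum_{i \in \pi^{-1}(j)} \alpha_i$. Put $S_0 := \pi(\{1, \dots, n\})$, which satisfies $|S_0| \leq n$, and enlarge $S_0$ to some $S \supseteq S_0$ with $|S| = n$ using that $\mathbb{N}$ is infinite. By construction $\pi^{-1}(S) \supseteq \{1, \dots, n\}$, so by nonnegativity of $\alpha$,
\[
\sum_{j \in S} \beta_j \;=\; \sum_{j \in S} \sum_{i \in \pi^{-1}(j)} \alpha_i \;=\; \sum_{i \in \pi^{-1}(S)} \alpha_i \;\geq\; \sum_{i=1}^{n} \alpha_i \;=\; A_n.
\]
Combining with $B_n \geq \sum_{j \in S} \beta_j$ yields the lemma.

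The main subtlety is arranging that the $\beta$-sum over some size-$n$ set dominates $A_n$: because $\pi$ need not be injective nor order-preserving, the natural set $\pi(\{1,\dots,n\})$ may have fewer than $n$ elements and be scattered anywhere in $\mathbb{N}$, which is precisely why one augments it to a size-$n$ set $S$ rather than taking $S = \{1,\dots,n\}$. Beyond that the argument is symmetric in $\alpha$ and $\beta$. All partial sums are finite thanks to $\|\alpha\|_1, \|\beta\|_1 \leq 1$, which legitimizes the final telescoping step.
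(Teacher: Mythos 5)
Your proof is correct, and it takes a genuinely different route from the paper's. The paper composes the two witnessing maps $\pi = \pi_2 \circ \pi_1$, uses the monotonicity of $\alpha$ to force $\alpha_{\pi(1)} = \sum_{i \in \pi^{-1}(\pi(1))}\alpha_i \ge \alpha_1 \ge \alpha_{\pi(1)}$, and then iterates (with relabeling) to conclude that $\pi_1$ and $\pi_2$ are bijections equal to the identity, whence $\alpha = \beta$. You instead prove a one-directional majorization lemma --- a contraction can only increase every partial sum $\sum_{j=1}^n \beta_j \ge \sum_{i=1}^n \alpha_i$ of the decreasingly ordered sequences --- and apply it symmetrically; the key steps (that $\sum_{j\in S}\beta_j \le B_n$ for any $n$-element $S$ by monotonicity, that $S$ can be chosen to contain $\pi(\{1,\dots,n\})$, and that the preimages $\pi^{-1}(j)$ are disjoint so the double sum collapses to $\sum_{i \in \pi^{-1}(S)}\alpha_i \ge A_n$) are all sound, and the telescoping is legitimate since all partial sums are finite. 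Your route is arguably cleaner: it sidesteps the relabeling and the handling of zero entries that the paper's induction has to finesse, and it yields a slightly stronger quantitative fact (contractions majorize) as a byproduct. The paper's argument, in exchange, extracts structural information about the maps themselves (that they must be bijections and, up to relabeling, the identity), which your approach does not provide but which is not needed for the statement.
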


\begin{proof}
Assume that $\alpha$ is a contraction of $\beta$ and $\beta$ is also a contraction of $\alpha$. There exist maps $\pi_1, \pi_2 : \mathbb N \to \mathbb N$ such that for any $j \in \mathbb N$,
\[
\beta_j = \sum_{i \in \pi_1^{-1}(j)}\alpha_i \quad\text{ and }\quad \alpha_j = \sum_{i \in \pi_2^{-1}(j)} \beta_i.
\]
Letting $\pi := \pi_2 \circ \pi_1$, for any $j \in \mathbb N$,
\[
\alpha_j = \sum_{i \in \pi_2^{-1}(j)} \beta_i = \sum_{i \in \pi_2^{-1}(j)} \sum_{k \in \pi_1^{-1}(i)}\alpha_k = \sum_{k \in \pi^{-1}(j)} \alpha_k
\]
holds. Since
\[
\alpha_{\pi(1)} = \sum_{i \in \pi^{-1}(\pi(1))}\alpha_i \geq \alpha_1 \geq \alpha_{\pi(1)},
\]
we have $\pi^{-1}(\pi(1)) = \{1\}$.
Then $\pi(1) = 1$ can be assumed by relabeling.
By iteration, we obtain $\pi = \pi_2\circ\pi_1 = \mathrm{id}_\mathbb N$.
Similarly, $\pi_1 \circ \pi_2 = \mathrm{id}_\mathbb N$ is also obtained, so that $\pi_1, \pi_2$ are bijections.
Moreover, since
\[
\alpha_1 = \beta_{\pi_1(1)} \leq \beta_1 = \alpha_{\pi_2(1)} \leq \alpha_1,
\]
we can assume that $\pi_1(1) = \pi_2(1) = 1$ by relabeling.
By iteration, we obtain $\pi_1 = \pi_2 = \mathrm{id}_\mathbb N$, so that $\alpha=\beta$.
\end{proof}

\begin{proposition} \label{prop:injective}
The correspondence
$V \ni \alpha \mapsto \cal X_\alpha \in \Pi$ is injective.
\end{proposition}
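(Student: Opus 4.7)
The plan is to reduce the statement to Proposition \ref{prop:bi-contraction}. More precisely, I shall show that if $\cal X_\alpha = \cal X_\beta$ then $\alpha$ is a contraction of $\beta$ and $\beta$ is a contraction of $\alpha$; the conclusion $\alpha = \beta$ is then immediate from Proposition \ref{prop:bi-contraction}. The main idea is to build, for each $\alpha \in V$, a canonical test pm-space $X_\alpha \in \cal X_\alpha$ whose atoms realize $\alpha$ exactly and whose remaining mass (when $\|\alpha\|_1 < 1$) lies in a diffuse, atomless part that is disjoint from the atoms. Then the very definition of $X_\alpha \in \cal X_\beta$ can be unpacked into an explicit numerical inequality between $\alpha$ and $\beta$.

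Concretely, let $X_\alpha$ be the disjoint union of the countable set $\{x_1, x_2, \dots\}$, equipped with unit pairwise distances, carrying the measure $\sum_i \alpha_i \delta_{x_i}$, together with (if $\|\alpha\|_1 < 1$) a disjoint copy of $[0,1]$ placed at distance one from every $x_i$ and carrying $(1 - \|\alpha\|_1)$ times Lebesgue measure; this is a pm-space and obviously $X_\alpha \in \cal X_\alpha$ via $f(i) = x_i$. Now suppose $X_\alpha \in \cal X_\beta$, so that there is a map $g : \mathbb N \to X_\alpha$ with $m_{X_\alpha} - \sum_j \beta_j \delta_{g(j)} \ge 0$. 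For every point $y$ in the diffuse part, $m_{X_\alpha}(\{y\}) = 0$, so $\beta_j = 0$ whenever $g(j) = y$; the same conclusion holds when $g(j) = x_i$ with $\alpha_i = 0$. Hence the assignment $\pi(j) := i$ where $g(j) = x_i$ is well-defined on $\{j : \beta_j > 0\}$, and extending $\pi$ arbitrarily to $\mathbb N$ yields a map $\pi : \mathbb N \to \mathbb N$ satisfying
\[
\sum_{j \in \pi^{-1}(i)} \beta_j \;\le\; \alpha_i \qquad \text{for every } i \ge 1.
\]

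Applying the same argument to the test space $X_\beta$ and the assumption $X_\beta \in \cal X_\alpha$ yields a map $\sigma : \mathbb N \to \mathbb N$ with $\sum_{i \in \sigma^{-1}(j)} \alpha_i \le \beta_j$ for every $j \ge 1$. Summing these inequalities over all $i$ (respectively all $j$) gives $\|\beta\|_1 \le \|\alpha\|_1$ and $\|\alpha\|_1 \le \|\beta\|_1$, so $\|\alpha\|_1 = \|\beta\|_1$. Substituting back forces every termwise inequality to be an equality, so $\alpha_i = \sum_{j \in \pi^{-1}(i)} \beta_j$ and $\beta_j = \sum_{i \in \sigma^{-1}(j)} \alpha_i$. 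This is precisely the statement that $\alpha$ is a contraction of $\beta$ and $\beta$ is a contraction of $\alpha$, so Proposition \ref{prop:bi-contraction} yields $\alpha = \beta$. The only subtlety I expect is the bookkeeping around zero coordinates and the edge case $\|\alpha\|_1 = 1$ (in which no diffuse part is needed), but both are handled transparently by the construction.
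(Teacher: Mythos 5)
Your proof is correct and follows essentially the same route as the paper: both construct a canonical test space $X_\alpha$ whose atoms realize $\alpha$ exactly alongside a disjoint atomless part, use $X_\alpha \in \cal X_\beta$ and $X_\beta \in \cal X_\alpha$ to force $\|\alpha\|_1 = \|\beta\|_1$ and hence mutual contraction, and then invoke Proposition~\ref{prop:bi-contraction}. The extra bookkeeping you supply around zero coordinates and the extension of $\pi$ is sound but does not change the argument.
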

\begin{proof}
For each $\alpha \in V$, we define $X_\alpha \in \cal X$ by
\begin{equation} \label{eq:X_alpha}
X_\alpha := ([-1, 0]\cup \mathbb N, |\cdot|, m_{X_\alpha}),
\end{equation}
where
\[
m_{X_\alpha} := (1-\|\alpha\|_1)\cal L \lfloor_{[-1,0]} + \sum_{i=1}^\infty \alpha_i \delta_i.
\]
Let $\alpha, \beta \in V$ with $\cal X_\alpha = \cal X_\beta$.
Since $X_\alpha \in \cal X_\alpha = \cal X_\beta$, there exists a map $f : \mathbb N\to X_\alpha$ such that $m_{X_\alpha}-f_\# \beta$ is a nonnegative measure.
Then it must hold that
\[
\sum_{i=1}^\infty\beta_i\delta_{f(i)} \le \sum_{i=1}^\infty\alpha_i\delta_{i},
\]
which implies $\|\alpha\|_1 \ge \|\beta\|_1$ particularly.
By symmetry, since $X_\beta \in \cal X_\alpha$, we obtain $\|\alpha\|_1 = \|\beta\|_1$.
This implies
\[
\sum_{i=1}^\infty\beta_i\delta_{f(i)} = \sum_{i=1}^\infty\alpha_i\delta_{i}
\]
and then $\alpha$ is a contraction of $\beta$.
By a parallel discussion, $\beta$ is a contraction of $\alpha$.
Therefore Proposition \ref{prop:bi-contraction} implies $\alpha=\beta$.
\end{proof}

For $\alpha \in \widetilde V$ and $N \ge 1$, we set
\[
\alpha_c[N] = \left( \alpha_1, \alpha_2, \dots, \alpha_{N-1}, \sum_{{i=N}}^\infty \alpha_i, 0,0,\dots \right)
\]
which is a contraction of $\alpha$.
Remark that $\cal X_{\alpha_c[N]} \subset \cal X_{\alpha_c[N+1]} \subset \cal X_\alpha$ holds.
\begin{proposition} \label{prop:lower_approximation_of_pyramid_with_atoms}
$\cal X_\alpha = \overline{\bigcup_{{N=1}}^\infty \cal X_{\alpha_c[N]}}$ holds.
\end{proposition}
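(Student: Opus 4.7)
The plan is to prove both inclusions, the first being immediate and the second requiring an explicit approximation.

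For $\overline{\bigcup_{N=1}^\infty \cal X_{\alpha_c[N]}} \subset \cal X_\alpha$: since $\alpha_c[N]$ is by definition a contraction of $\alpha$ (via the map $\pi$ sending $1,\dots,N-1$ to themselves and $i\ge N$ to $N$), we have $\cal X_{\alpha_c[N]} \subset \cal X_\alpha$; the inclusion then follows because $\cal X_\alpha$ is box-closed by Theorem \ref{thm:X_alpha_is_pyramid}.

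For the reverse inclusion, fix $X \in \cal X_\alpha$ with a witnessing map $f : \mathbb N \to X$ such that $\nu := m_X - \sum_{i=1}^\infty \alpha_i \delta_{f(i)}$ is a nonnegative Borel measure. Put $s_N := \sum_{i=N}^\infty \alpha_i$ and, on the same underlying metric space $(X,d_X)$, define the probability measure
\[
m_N := \nu + \sum_{i=1}^{N-1} \alpha_i \delta_{f(i)} + s_N \delta_{f(N)}.
\]
One checks $m_N(X) = (1-\|\alpha\|_1) + \sum_{i=1}^{N-1}\alpha_i + s_N = 1$. Letting $X_N := (X, d_X, m_N)$, the map $g : \mathbb N \to X$ defined by $g(i)=f(i)$ for $i \le N$ (and arbitrary otherwise) witnesses $X_N \in \cal X_{\alpha_c[N]}$, since $m_{X_N} - \sum_{i=1}^\infty (\alpha_c[N])_i \delta_{g(i)} = \nu \ge 0$.

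It remains to show $\square(X_N, X) \to 0$. The signed measure $m_X - m_N = \sum_{i=N+1}^\infty \alpha_i(\delta_{f(i)} - \delta_{f(N)})$ has total variation at most $2\sum_{i=N+1}^\infty \alpha_i$, which tends to $0$ since $\|\alpha\|_1 \le 1$. Hence $m_N \to m_X$ in total variation, a fortiori weakly. Viewing both $X_N$ and $X$ as isometrically embedded in the common Polish space $(X,d_X)$ via the identity, the (trivial) converse of Lemma \ref{lem:mG_convergece} yields $\square(X_N, X) \to 0$, so $X \in \overline{\bigcup_N \cal X_{\alpha_c[N]}}$. There is no substantive obstacle here; the only thing to get right is the choice of $X_N$—keeping the ambient metric space intact and relocating the tail mass $s_N - \alpha_N$ onto the single point $f(N)$ simultaneously ensures membership in $\cal X_{\alpha_c[N]}$ and a total variation bound that vanishes.
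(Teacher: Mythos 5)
Your proof is correct and follows essentially the same route as the paper: the modified measure $m_N = \nu + \sum_{i=1}^{N-1}\alpha_i\delta_{f(i)} + s_N\delta_{f(N)}$ is exactly the paper's $m_{X_N} = m_X - f_\#\alpha + (f_N)_\#\alpha$, and the total variation bound on the tail is the same estimate. The only cosmetic difference is that the paper passes from total variation to the box distance via the Prokhorov distance rather than via weak convergence and the converse of Lemma \ref{lem:mG_convergece}; both are valid.
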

\begin{proof}
$\cal X_\alpha \supset \overline{\bigcup_{{N=1}}^\infty \cal X_{\alpha_c[N]}}$ is obvious.
We check the converse inclusion.
Take any $X \in \cal X_\alpha$.
There exists a map $f : \mathbb N \to X$ such that $m_X - f_\# \alpha$ is nonnegative.
For every ${N \ge 1}$, we define a map $f_N : \mathbb N \to X$ by
\[
f_N(i) := \begin{cases}
f(i) & \text{if } i < N, \\
f({N}) & \text{if } i \ge N,
\end{cases}
\]
and define a Borel probability measure $m_{X_N}$ on $X$ by
\[
m_{X_N} := m_X - f_\# \alpha + {f_N}_\# \alpha.
\]
Note that $X_N := (X, d_X, m_{X_N}) \in \cal X_{\alpha_c[N]}$.
Then we have
\begin{align*}
\square(X, X_N) &\le 2d_{\Prkh}(m_X, m_{X_N}) \\
&\le 2\|m_X-m_{X_N}\|_{\mathrm{TV}} \le 4\sum_{{i=N}}^\infty \alpha_i \to 0
\end{align*}
as $N \to \infty$, which implies $X \in \overline{\bigcup_{{N=1}}^\infty \cal X_{\alpha_c[N]}}$.
\end{proof}

\subsection{Dissipation with atoms} \label{subsec:dissip}

In this subsection, we extend the separation distance 
	in {Definitions \ref{def:separation} and \ref{def:sep_for_pyramid}} 
	and introduce a generalized notion of dissipation.
Results appeared in this subsection are generalizations of \cite{OS:lim}.

\begin{definition} \label{def:Sep_new}
Let $\kappa = (\kappa_i)_{i=1}^\infty \in \mathbb R^\mathbb N$.
For $X \in \cal X$, the {\it separation distance} $\Sep(X;\kappa)$ of $X$ with respect to $\kappa $ is defined by the supremum of $\inf_{i\neq j} d_X(A_i,A_j)$,
where $\{A_i\}_{i=1}^\infty$ is a family of (possibly empty) closed subsets of $X$ with $m_X(A_i) \ge \kappa_i$ for each $i$.
\end{definition}

We note several properties of our separation distance.
If $\sigma : \mathbb N \to \mathbb N$ is a bijection, then
\begin{equation} \label{eq:shuffle}
\Sep(X; \kappa) = \Sep(X; \kappa\circ \sigma)
\end{equation}
holds.
Under the convention
\[
d(A, \emptyset) = \infty,
\]
we have
\begin{equation} \label{eq:negative_part_is_omitted}
\Sep(X;\kappa) = \Sep(X;\kappa_+)
\end{equation}
for any $\kappa\in \mathbb R^\mathbb N$, where $\kappa_+ := (\max\{\kappa_i,0\})_{i=1}^\infty$.
In particular, Definition \ref{def:Sep_new} is an extension of Definition \ref{def:separation} in the sense that
\[
\Sep(X;(\kappa_1, \ldots, \kappa_N, 0, 0, \ldots)) = \Sep(X;\kappa_1, \ldots, \kappa_N)
\]
holds, for any $\kappa_1, \ldots, \kappa_N > 0$.
Note that
\begin{equation} \label{eq:sep_monotone_in_kappa}
\Sep(X;\kappa') \le \Sep(X;\kappa)
\end{equation}
for $\kappa, \kappa' \in \mathbb R^\mathbb N$ with $\kappa_i \le \kappa'_i$ for every $i$ and that
\begin{equation} \label{eq:sep_sum_formula}
\Sep(X;\kappa, \lambda) \le \Sep(X;\kappa+\lambda)
\end{equation}
for $\kappa, \lambda \in \mathbb R^\mathbb N$, 
where $(\kappa, \lambda)$ is the sequence of real numbers given by
\[
(\kappa, \lambda)(i) = \left\{
\begin{aligned}
& \kappa((i+1)/2) && \text{ if $i$ is odd}, \\
& \lambda(i/2) && \text{ if $i$ is even}.
\end{aligned}
\right.
\]
Note that the above definition of $(\kappa, \lambda)$ is not important due to \eqref{eq:shuffle}.

To give the following definition, we set
\[
\overrightarrow{\theta} := (\theta, \theta, \ldots)
\]
for a real number $\theta$.

\begin{definition}
For $P \in \Pi$, the {\it separation distance} $\Sep(P;\kappa)$ of $P$ with respect to $\kappa $ is defined to be
\[
\Sep(P; \kappa) := \lim_{\delta\to 0+} \sup_{X \in P} \Sep(X; \kappa -\overrightarrow{\delta}).
\]
\end{definition}

The above formula is an analogy of \cite{OS:lim}.
By the definition, $\Sep(P;\kappa)$ also satisfies \eqref{eq:shuffle}, \eqref{eq:sep_monotone_in_kappa} and \eqref{eq:sep_sum_formula}, and it satisfies
\begin{equation}\label{eq:sep_left_conti_in_kappa}
\lim_{\delta\to 0+}\Sep(P; \kappa-\overrightarrow{\delta}) = \Sep(P; \kappa).
\end{equation}

\begin{lemma}
For any $X \in \cal X$ and $\kappa \in \mathbb R^{\mathbb N}$, we have
\[
\lim_{\delta\to 0+}\Sep(X; \kappa-\overrightarrow{\delta}) = \Sep(X; \kappa).
\]
In particular,
\[
\Sep(\cal PX; \kappa) = \Sep(X; \kappa).
\]
\end{lemma}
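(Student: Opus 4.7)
The first inequality $L := \lim_{\delta \to 0+} \Sep(X; \kappa - \overrightarrow{\delta}) \ge \Sep(X; \kappa)$ is free from \eqref{eq:sep_monotone_in_kappa}, which also ensures that $\delta \mapsto \Sep(X; \kappa - \overrightarrow{\delta})$ is monotone nondecreasing so the limit exists. The substance is the reverse, and my plan is to produce, for each $\epsilon \in (0, L)$, a family witnessing $\Sep(X; \kappa) \ge L - \epsilon$. I would start by choosing, for every $n \in \mathbb N$, closed subsets $\{A_i^{(n)}\}_{i=1}^\infty$ of $X$ with $m_X(A_i^{(n)}) \ge \kappa_i - 1/n$ and $\inf_{i \ne j} d_X(A_i^{(n)}, A_j^{(n)}) \ge L - \epsilon$, allowing $A_i^{(n)} = \emptyset$ whenever $\kappa_i - 1/n \le 0$ (harmless by \eqref{eq:negative_part_is_omitted}).

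The limit family will be extracted via tightness of restricted measures rather than Hausdorff compactness on closed subsets, the latter failing on non-compact $X$. Set $m_i^{(n)} := m_X \lfloor_{A_i^{(n)}}$. Radonness of $m_X$ on the Polish space $X$ makes $\{m_i^{(n)}\}_{n, i}$ uniformly tight, so Prokhorov combined with a diagonal extraction across $i$ yields a subsequence along which $m_i^{(n)}$ converges narrowly to a finite Borel measure $\mu_i$ for every $i$. Mass preservation under narrow limits of tight sequences gives $\mu_i(X) = \lim_n m_X(A_i^{(n)}) \ge \kappa_i$; the Portmanteau inequality on open sets together with $m_i^{(n)} \le m_X$ and outer regularity yields $\mu_i \le m_X$ as measures. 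Defining $A_i := \supp \mu_i$, the first bound becomes $m_X(A_i) \ge \mu_i(A_i) = \mu_i(X) \ge \kappa_i$.

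For the separation, I would fix $i \ne j$, $x \in A_i$, $y \in A_j$ and use the support property: every open ball $U_{1/k}(x)$ has $\mu_i(U_{1/k}(x)) > 0$, whence Portmanteau gives $\liminf_n m_i^{(n)}(U_{1/k}(x)) > 0$; a standard diagonal procedure then produces $x_n \in A_i^{(n)}$ with $x_n \to x$, and analogously $y_n \in A_j^{(n)}$ with $y_n \to y$ along a common subsequence. Thus $d_X(x, y) = \lim_n d_X(x_n, y_n) \ge L - \epsilon$, so $d_X(A_i, A_j) \ge L - \epsilon$ and $\Sep(X; \kappa) \ge L - \epsilon$; sending $\epsilon \to 0+$ completes the first equality. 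For the ``in particular'' part I would note that $\Sep(\,\cdot\,; \kappa')$ is monotone in the Lipschitz order (pulling back closed sets by a $1$-Lipschitz measure-preserving map preserves measures and cannot decrease pairwise distances), so $\sup_{Y \in \cal PX} \Sep(Y; \kappa - \overrightarrow{\delta}) = \Sep(X; \kappa - \overrightarrow{\delta})$, and the first equality then gives $\Sep(\cal PX; \kappa) = \Sep(X; \kappa)$.

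The main obstacle is avoiding a spurious compactness argument on closed subsets of the possibly non-compact Polish space $X$; my substitute is to transfer everything to the measure side, use Radonness of $m_X$ for tightness, and recover the limiting closed sets as supports of the narrow limits. The delicate verification is that these supports still realise the separation $L - \epsilon$, which is precisely where the open-set Portmanteau inequality is needed to produce the approximating points $x_n \in A_i^{(n)}, y_n \in A_j^{(n)}$.
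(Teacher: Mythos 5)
Your proof is correct, and its overall skeleton matches the paper's: both arguments take witnessing families $\{A_i^{(n)}\}_i$ for $\kappa-\overrightarrow{\delta_n}$, pass to a single subsequence by a diagonal argument, produce limit closed sets $A_i$ with $m_X(A_i)\ge\kappa_i$, and recover the separation bound by approximating points of $A_i$ and $A_j$ from within $A_i^{(n)}$ and $A_j^{(n)}$. The difference lies in the compactness device. The paper extracts Kuratowski (``weak Hausdorff'') limits of the closed sets themselves --- which is sequentially compact for closed subsets of any separable metric space, with no compactness of $X$ required --- and quotes an external lemma (\cite[Lemma 6.4]{NS:group}) for the upper semicontinuity $m_X(A_i)\ge\limsup_k m_X(A_i^{n_k})$; in fact this inequality also follows directly from reverse Fatou applied to $\bigcap_N\bigcup_{n\ge N}A_i^{n}\subset A_i$. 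You instead transfer the problem to the measures $m_X\lfloor_{A_i^{(n)}}$, use tightness of $m_X$ plus Prokhorov to get narrow limits $\mu_i\le m_X$, and take $A_i=\supp\mu_i$. Your route is self-contained (no appeal to the cited semicontinuity lemma) at the cost of the extra bookkeeping with $\mu_i\le m_X$ and outer regularity; the paper's route is shorter but leans on the quoted lemma and on knowing that Kuratowski convergence is sequentially compact without any compactness of $X$ --- the very point you (over)cautiously avoided. Your treatment of the ``in particular'' statement via monotonicity of $\Sep(\,\cdot\,;\kappa')$ under the Lipschitz order is the intended one.
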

\begin{proof}
Take any sequence $\{\delta_n\}_{n=1}^\infty$ of positive numbers with $\delta_n \to 0$ and put
\[
r:= \lim_{n\to \infty} \Sep(X; \kappa - \overrightarrow{\delta_n}).
\]
By \eqref{eq:sep_monotone_in_kappa}, we have $\Sep(X; \kappa) \le r$. Let us prove the converse inequality. By the definition of $r$, there {exist} sequences $\{A_i^n\}_{i=1}^\infty$, $n \in \mathbb N$, of closed sets in $X$ such that
\[
m_X(A_i^n) \ge \kappa_i-\delta_n \text{ for any $i$ and $n$} \quad \text{and} \quad \lim_{n \to \infty} \inf_{i\neq j} d_X(A_i^n, A_j^n) = r.
\]
By the diagonal argument, there exists a subsequence $\{n_k\}$ of $n$ such that $\{A_i^{n_k}\}_{k=1}^\infty$ converges weakly (in the weak Hausdorff sense) for every $i$. Let $A_i$ be the (weak Hausdorff) limit of $\{A_i^{n_k}\}_{k=1}^\infty$. Then we have
\[
m_X(A_i) \ge {\limsup_{k\to\infty}}\, m_X(A_i^{n_k}) \ge \kappa_i
\]
by \cite[Lemma 6.4]{NS:group}. Moreover, we have
\[
\inf_{i\neq j} d_X(A_i, A_j) \ge r.
\]
Indeed, if $x \in A_i$ and $y \in A_j$, then there exist sequences $\{x_k\}_{k=1}^\infty$ and $\{y_k\}_{k=1}^\infty$ in $X$ convergent to $x$ and $y$ as $k\to\infty$, respectively, such that $x_k \in A_i^{n_k}$ and $y_k \in A_j^{n_k}$ for every $k$. Thus we have
\[
d_X(x, y) = \lim_{k\to\infty}d_X(x_k,y_k) \ge \lim_{n \to \infty} \inf_{i\neq j} d_X(A_i^n, A_j^n) = r,
\]
which implies $d_X(A_i, A_j) \ge r$. If $A_i = \emptyset$ or  $A_j = \emptyset$, then $d_X(A_i, A_j) \ge r$ holds trivially.

Combining these implies
\[
r \le \inf_{i\neq j} d_X(A_i, A_j) \le \Sep(X; \kappa).
\]
This completes the proof.
\end{proof}

\begin{proposition}[limit formula]\label{prop:limit_formula}
Let $\kappa \in \mathbb R^{\mathbb N}$.
If a sequence $\{P_n\}_{n=1}^\infty$ of pyramids converges weakly to a pyramid $P$, then we have
\begin{align*}
\Sep(P; \kappa) &= \lim_{\epsilon \to 0+} \liminf_{n\to\infty} \Sep(P_n; \kappa-\overrightarrow{\epsilon}) \\
&= \lim_{\epsilon \to 0+} \limsup_{n\to\infty} \Sep(P_n; \kappa-\overrightarrow{\epsilon})
\end{align*}
for any $\kappa \in {\mathbb R}^{\mathbb N}$.
\end{proposition}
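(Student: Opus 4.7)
My plan is to prove the two inequalities
\[
\Sep(P;\kappa) \le \lim_{\epsilon\to 0+} \liminf_{n\to\infty} \Sep(P_n;\kappa - \overrightarrow{\epsilon})
\]
and
\[
\lim_{\epsilon\to 0+} \limsup_{n\to\infty} \Sep(P_n;\kappa - \overrightarrow{\epsilon}) \le \Sep(P;\kappa),
\]
which, sandwiched with $\liminf\le\limsup$, yield the claimed equalities. The nontrivial case is $\kappa_i\to 0$, since otherwise $\sum_i(\kappa_i-\epsilon)_+=\infty$ for small $\epsilon>0$, no valid family of disjoint closed sets exists, and every quantity in the statement vanishes. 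I therefore assume $\kappa_i\to 0$ throughout.

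For the lower inequality, the engine is a box-lower-semicontinuity sub-lemma: if $X_n\to X$ in the box topology then $\liminf_{n\to\infty}\Sep(X_n;\kappa-\overrightarrow{\epsilon})\ge\Sep(X;\kappa)$ for every $\epsilon>0$. I would prove it by embedding $X_n,X$ isometrically in a common complete separable metric space $Y$ with $m_{X_n}\to m_X$ weakly via Lemma \ref{lem:mG_convergece}; given a near-optimal family $\{A_i\}_i$ of closed subsets of $X$ realizing $\Sep(X;\kappa)$, the inflations $B_i^n:=\overline{U_\delta(A_i)}\cap X_n$ satisfy $d(B_i^n,B_j^n)\ge d(A_i,A_j)-2\delta$, and the Portmanteau inequality applied to $U_\delta(A_i)$ gives $\liminf_n m_{X_n}(B_i^n)\ge m_X(U_\delta(A_i))\ge\kappa_i$. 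Since only finitely many $\kappa_i$ exceed $\epsilon$, uniform control in $i$ is automatic (the remaining indices contribute trivially with $B_i^n=\emptyset$), and $\delta\to 0+$ closes the sub-lemma. Applied with $\kappa-\overrightarrow{\epsilon/2}$ in place of $\kappa$ and shift $\epsilon/2$ to $X\in P$ and to $X_n\in P_n$ with $X_n\to X$ in box (supplied by weak convergence $P_n\to P$), and chained with $\Sep(P_n;\cdot)\ge\Sep(X_n;\cdot)$, this yields
\[
\liminf_{n\to\infty}\Sep(P_n;\kappa-\overrightarrow{\epsilon})\ge\sup_{X\in P}\Sep(X;\kappa-\overrightarrow{\epsilon/2})
\]
for every $\epsilon>0$; letting $\epsilon\to 0+$ recovers $\Sep(P;\kappa)$ on the right-hand side via its defining limit.

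For the upper inequality, I would reduce to the finite-parameter case by truncation. Set $\kappa^{(N)}:=(\kappa_1,\ldots,\kappa_N,0,0,\ldots)$; since $\kappa^{(N)}\le\kappa$ componentwise, \eqref{eq:sep_monotone_in_kappa} gives $\Sep(P_n;\kappa)\le\Sep(P_n;\kappa^{(N)})$, and the finite-parameter upper semicontinuity of $\Sep(\,\cdot\,;\kappa_1,\ldots,\kappa_N)$ on $(\Pi,\rho)$ from \cite{OS:lim} gives $\limsup_n\Sep(P_n;\kappa^{(N)})\le\Sep(P;\kappa^{(N)})$. For any $\delta>0$, the set $\{i:\kappa_i>\delta\}$ is finite (by $\kappa_i\to 0$), so I may pick $N$ with $\kappa^{(N)}\ge\kappa-\overrightarrow{\delta}$ componentwise on the positive part, whence $\Sep(P;\kappa^{(N)})\le\Sep(P;\kappa-\overrightarrow{\delta})$ by monotonicity again. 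Substituting $\kappa-\overrightarrow{\epsilon}$ for $\kappa$ throughout and invoking the left-continuity \eqref{eq:sep_left_conti_in_kappa} to send $\delta\to 0+$ yields $\limsup_n\Sep(P_n;\kappa-\overrightarrow{\epsilon})\le\Sep(P;\kappa-\overrightarrow{\epsilon})$; a final $\epsilon\to 0+$, again via \eqref{eq:sep_left_conti_in_kappa}, gives $\le\Sep(P;\kappa)$.

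The main obstacle I anticipate is the lower-semicontinuity sub-lemma in the infinite-$\kappa$ setting, where one must inflate countably many closed sets simultaneously while controlling their measures uniformly in $i$ and maintaining their pairwise separation. The decay $\kappa_i\to 0$ is precisely what reduces the uniformity requirement to finitely many indices, and choosing $\delta$ smaller than half the realized separation preserves the positivity of $d(B_i^n,B_j^n)$. Modulo this sub-lemma, both directions are orchestrations of the definitions together with the monotonicity \eqref{eq:sep_monotone_in_kappa} and left-continuity \eqref{eq:sep_left_conti_in_kappa} of $\Sep$, plus the finite-case upper semicontinuity from \cite{OS:lim}.
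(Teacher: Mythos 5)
Your proposal is correct, but it takes a genuinely different route from the paper's. The paper's proof is a two-line reduction: after discarding the trivial cases it observes that for any $\eta>0$ the shifted sequence $\kappa-\overrightarrow{\eta}$ has only finitely many positive entries, so the full Ozawa--Shioya limit formula (Remark \ref{rem:OS_limit_formula}) applies verbatim to $\kappa-\overrightarrow{\eta}$; letting $\eta\to 0+$ and invoking the left-continuity \eqref{eq:sep_left_conti_in_kappa} on both sides finishes the argument, with no direction of the inequality proved by hand. You instead split the statement into two inequalities: for the upper one you reduce to finitely many indices by truncation $\kappa^{(N)}$ rather than by the uniform shift, and you use only the upper-semicontinuity half of the cited finite-parameter result; for the lower one you re-prove lower semicontinuity from scratch via Lemma \ref{lem:mG_convergece}, inflation of a near-optimal family of closed sets, and the Portmanteau inequality, correctly observing that only the finitely many indices with $\kappa_i>\epsilon$ require uniform control while the rest are handled by empty sets. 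Your argument is sound (the minor imprecisions --- the disjointness remark in the trivial case, and the positivity/summability hypotheses needed to quote the finite-parameter result from \cite{OS:lim} --- are harmless and are glossed over in the paper as well), and it has the advantage of being self-contained for the lower bound instead of relying on the finite-case limit formula for both directions; the cost is length, since the paper's shift trick makes the separate lower-bound analysis unnecessary.
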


\begin{remark} \label{rem:OS_limit_formula}
Proposition \ref{prop:limit_formula} is obtained by Ozawa-Shioya (\cite{OS:lim}) in the case of $2\le N(\kappa) <\infty$.
\end{remark}

\begin{proof}[Proof of Proposition \ref{prop:limit_formula}]
By \eqref{eq:negative_part_is_omitted}, we may assume that $\kappa= \kappa_+$.
When $\|\kappa\|_1 > 1$, the statement trivially holds.
Hence, we may assume that $\|\kappa\|_1 \le 1$.
If $N(\kappa) < 2$, then all sides of the desired equality are infinite.
By Remark \ref{rem:OS_limit_formula},
$\kappa$ can be assumed $N(\kappa) = \infty$.
Take any $\eta > 0$.
Taking $N(\kappa - \overrightarrow{\eta}) < \infty$ into account, we have
\begin{align*}
\Sep(P; \kappa- \overrightarrow{\eta}) &= \lim_{\epsilon \to 0+} \liminf_{n\to\infty} \Sep(P_n; \kappa- \overrightarrow{\eta}-\overrightarrow{\epsilon}) \\
&= \lim_{\epsilon \to 0+} \limsup_{n\to\infty} \Sep(P_n; \kappa-\overrightarrow{\eta}-\overrightarrow{\epsilon}),
\end{align*}
{by Remark \ref{rem:OS_limit_formula}.}
As $\eta \to 0$, we obtain the conclusion by \eqref{eq:sep_left_conti_in_kappa}.
\end{proof}

\begin{definition}\label{dfn:pyramid_dissip}
Let $\{P_n\}_{n=1}^\infty$ be a sequence of pyramids and let $\delta > 0$ and $\alpha\in V$ with $\alpha \neq 1$.
We say that $\{P_n\}_{n=1}^\infty$ {\it $\delta$-dissipates with atoms} $\alpha$ if for any $M \in \mathbb N$ and {any} sequence $\kappa = (\kappa_1,\ldots,\kappa_M)$ of nonnegative numbers with $\|\kappa\|_1 + \|\alpha\|_1 \le 1$, we have
\[
\lim_{\epsilon \to 0+}\liminf_{n\to\infty} \Sep(P_n; (\alpha, \kappa) - \overrightarrow{\epsilon}) \ge \delta.
\]
Moreover, we say that $\{P_n\}_{n=1}^\infty$ {\it infinite dissipates with atoms} $\alpha$ if it $\delta$-dissipates with atoms $\alpha$ for any $\delta > 0$.

For a sequence $\{X_n\}_{n=1}^\infty$ of pm-spaces, we say that $\{X_n\}$ {\it  $\delta$-dissipates with atoms} $\alpha$ if the sequence $\{\cal PX_n\}_{n=1}^\infty$ $\delta$-dissipates with atoms $\alpha$, for $\delta \in (0,\infty]$ and $\alpha \in V \setminus \{1\}$.
\end{definition}

\begin{remark}
In the case of $\alpha = 1$, then the separation distance
\[
\Sep(P; (1-\epsilon, -\epsilon, -\epsilon, \ldots)) = \infty, \quad P \in \Pi
\]
and then any sequence of pyramids always infinite dissipates with atom $\alpha = 1$. On the other hand, in the case of $\alpha = 0$, our definition is equivalent to the usual dissipation (see \cite[Section 8]{S} {and \cite[Definition 6.1]{OS:lim}}).
Indeed, we have
\begin{align*}
\Sep(X; \kappa_1, \ldots, \kappa_M) &\ge \Sep(X; \kappa_1-\epsilon, \ldots, \kappa_M-\epsilon, \epsilon, \ldots, \epsilon) \\
&\ge \Sep(X; \kappa_1-\epsilon, \ldots, \kappa_M-\epsilon, \lambda-\epsilon, \ldots, \lambda-\epsilon)
\end{align*}
for any $\kappa_1, \ldots, \kappa_M > 0$ with $\|\kappa\|_1 < 1$ and any $\lambda \ge 2\epsilon$, which implies that our definition leads to the usual dissipation. The converse implication is trivial.
\end{remark}

\begin{proposition}\label{prop:dissip_sep}
Let $\{P_n\}_{n=1}^\infty$ be a sequence of pyramids and let $\delta > 0$ and $\alpha \in V$ with $\alpha\neq 1$. Then, the sequence $\{P_n\}_{n=1}^\infty$ $\delta$-dissipates with atoms $\alpha$ if and only if the weak limit $P$ of any weakly convergent subsequence of $\{P_n\}_{n=1}^\infty$ satisfies
\[
\Sep(P; \alpha, \kappa) \ge \delta
\]
for any finite nonnegative sequence $\kappa = (\kappa_1, \ldots, \kappa_M)$ with $\|\kappa\|_1+\|\alpha\|_1 \le 1$.
\end{proposition}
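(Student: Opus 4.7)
The plan is to deduce both directions from Proposition \ref{prop:limit_formula} (the limit formula for $\Sep$), combined with the compactness of $(\Pi, \rho)$ and the coordinate-monotonicity \eqref{eq:sep_monotone_in_kappa} of $\Sep$.

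For the forward direction, assume $\{P_n\}$ $\delta$-dissipates with atoms $\alpha$ and let $\{P_{n_k}\}$ be a subsequence converging weakly to some $P \in \Pi$. For any finite tuple $\kappa = (\kappa_1, \ldots, \kappa_M)$ with $\|\kappa\|_1 + \|\alpha\|_1 \le 1$, applying the limit formula to this subsequence gives
\[
\Sep(P; \alpha, \kappa) = \lim_{\epsilon \to 0+} \liminf_{k \to \infty} \Sep(P_{n_k}; (\alpha, \kappa) - \overrightarrow{\epsilon}).
\]
Since the $\liminf$ along a subsequence dominates the $\liminf$ along the full sequence, the right-hand side is at least $\lim_{\epsilon \to 0+} \liminf_n \Sep(P_n; (\alpha, \kappa) - \overrightarrow{\epsilon})$, which by hypothesis is $\ge \delta$.

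For the converse, I argue by contradiction. If $\{P_n\}$ fails to $\delta$-dissipate with atoms $\alpha$, there exist $M$ and $\kappa = (\kappa_1, \ldots, \kappa_M)$ with $\|\kappa\|_1 + \|\alpha\|_1 \le 1$ such that $\lim_{\epsilon \to 0+} \liminf_n \Sep(P_n; (\alpha, \kappa) - \overrightarrow{\epsilon}) < \delta$. By \eqref{eq:sep_monotone_in_kappa}, the inner quantity is nonincreasing as $\epsilon \to 0+$, so this limit is in fact an infimum over $\epsilon > 0$; hence I can choose $\delta' < \delta$ and $\epsilon_0 > 0$ such that $\liminf_n \Sep(P_n; (\alpha, \kappa) - \overrightarrow{\epsilon_0}) < \delta'$. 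Extract a subsequence $\{P_{n_k}\}$ along which $\Sep(P_{n_k}; (\alpha, \kappa) - \overrightarrow{\epsilon_0}) < \delta'$ for every $k$, and by compactness of $(\Pi, \rho)$ (Theorem \ref{thm:rho}) pass to a further sub-subsequence $\{P_{n_{k_j}}\}$ converging weakly to some $P$. Applying the limit formula to this sub-subsequence, and using once more the monotonicity of $\Sep$ in its coordinates to bound the $\epsilon$-limit by its value at $\epsilon = \epsilon_0$,
\[
\Sep(P; \alpha, \kappa) = \lim_{\epsilon \to 0+} \liminf_j \Sep(P_{n_{k_j}}; (\alpha, \kappa) - \overrightarrow{\epsilon}) \le \liminf_j \Sep(P_{n_{k_j}}; (\alpha, \kappa) - \overrightarrow{\epsilon_0}) \le \delta',
\]
contradicting the assumed weak-limit separation bound.

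There is no deep analytic obstacle: the content is bookkeeping between $\lim_{\epsilon \to 0+}$, $\liminf$, and passage to subsequences, all of which is made routine by the coordinate-monotonicity of $\Sep$ and by the limit formula. The one point to verify with care is that Proposition \ref{prop:limit_formula} legitimately applies to the concatenation $(\alpha, \kappa) \in \mathbb{R}^{\mathbb{N}}$ despite $\alpha$ having infinite support; this is precisely the strengthening of the Ozawa--Shioya limit formula recorded in Proposition \ref{prop:limit_formula}, so invoking it here is the intended use.
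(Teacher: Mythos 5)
Your proof is correct and follows essentially the same route as the paper's: both directions rest on Proposition \ref{prop:limit_formula}, the compactness of $(\Pi,\rho)$, and the coordinate-monotonicity \eqref{eq:sep_monotone_in_kappa}. The only cosmetic difference is that you phrase the converse as a proof by contradiction, whereas the paper argues directly by extracting a subsequence realizing $\liminf_{n}\Sep(P_n;(\alpha,\kappa)-\overrightarrow{\epsilon})$ and passing to a weakly convergent sub-subsequence; the underlying mechanics are identical.
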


\begin{proof}
Assume $\{P_n\}_{n=1}^\infty$ $\delta$-dissipates with atoms $\alpha$ and a subsequence $\{P_{n_k}\}_{k=1}^\infty$ of $\{P_n\}_{n=1}^\infty$ converges weakly to $P$. Then Proposition \ref{prop:limit_formula} implies
\begin{align*}
\Sep(P; \alpha, \kappa) &= \lim_{\epsilon\to 0+} \liminf_{k\to\infty} \Sep(P_{n_k}; (\alpha,\kappa)-\overrightarrow{\epsilon}) \\
&\ge \lim_{\epsilon\to 0+} \liminf_{n\to\infty} \Sep(P_n; (\alpha,\kappa)-\overrightarrow{\epsilon}) \ge \delta
\end{align*}
for any finite sequence $\kappa = (\kappa_1, \ldots, \kappa_M)$ with $\|\kappa\|_1+\|\alpha\|_1 \le 1$.

Conversely, we assume the latter condition and prove $\{P_n\}_{n=1}^\infty$ $\delta$-dissipates with atoms $\alpha$. Take any $\epsilon > 0$ and any sequence $\kappa = (\kappa_1, \ldots, \kappa_M)$ with $\|\kappa\|_1+\|\alpha\|_1 \le 1$.
Then there exists a subsequence $\{P_{n_k}\}_{k=1}^\infty$ of $\{P_n\}_{n=1}^\infty$ such that
\[
\lim_{k\to\infty} \Sep(P_{n_k}; (\alpha,\kappa)-\overrightarrow{\epsilon}) = \liminf_{n\to\infty} \Sep(P_n; (\alpha,\kappa)-\overrightarrow{\epsilon}).
\]
By the compactness of $\Pi$, we can assume that $\{P_{n_k}\}_{k=1}^\infty$ converges weakly to a pyramid $P$. Thus, by the assumption and Proposition \ref{prop:limit_formula} and \eqref{eq:sep_monotone_in_kappa}, we have
\begin{align*}
\delta & \le \Sep(P; \alpha, \kappa) \\
& = \lim_{\eta\to 0+} \liminf_{k\to\infty} \Sep(P_{n_k}; (\alpha,\kappa)-\overrightarrow{\eta}) \\
& \le \lim_{k\to\infty} \Sep(P_{n_k}; (\alpha,\kappa)-\overrightarrow{\epsilon}) \\
& = \liminf_{n\to\infty} \Sep(P_n; (\alpha,\kappa)-\overrightarrow{\epsilon}),
\end{align*}
which implies that $\{P_n\}_{n=1}^\infty$ $\delta$-dissipates with atoms $\alpha$.
\end{proof}

\begin{lemma}\label{lem:dissip_contraction}
Let $\{{P_n}\}_{n=1}^\infty$ be a sequence of pyramids and let $\delta > 0$ and $\alpha \in V$ with $\alpha\neq 1$. If the sequence $\{{P_n}\}_{n=1}^\infty$ $\delta$-dissipates with atoms $\alpha$, then it $\delta$-dissipates with atoms $\beta$ for any contraction $\beta$ of $\alpha$.
\end{lemma}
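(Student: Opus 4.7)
The plan is to reduce, via Proposition \ref{prop:dissip_sep}, to the following purely combinatorial statement about a single pyramid $P$: whenever $\Sep(P; \alpha, \kappa) \geq \delta$ for every finite nonnegative sequence $\kappa = (\kappa_1, \dots, \kappa_M)$ with $\|\kappa\|_1 + \|\alpha\|_1 \leq 1$, the same inequality holds with $\alpha$ replaced by $\beta$. Since $\beta$ is a contraction of $\alpha$, we have $\|\beta\|_1 = \|\alpha\|_1$, so the range of admissible $\kappa$ is the same in both statements, and Proposition \ref{prop:dissip_sep} applies in both directions.

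Fix a map $\pi \colon \mathbb N \to \mathbb N$ realizing the contraction, so $\beta_j = \sum_{i \in \pi^{-1}(j)} \alpha_i$. The natural construction is to bundle witness sets for $\alpha$ into witness sets for $\beta$: given $\eta'' > 0$ and some $X \in P$ equipped with closed sets $\{A_i\}_{i=1}^\infty$ and $\{C_m\}_{m=1}^M$ satisfying $m_X(A_i) \geq \alpha_i - \eta''$, $m_X(C_m) \geq \kappa_m - \eta''$, and all pairwise distances at least $\delta - \rho$, I would set $B_j := \bigcup_{i \in \pi^{-1}(j)} A_i$. The $A_i$'s are mutually disjoint by the separation, so $m_X(B_j) = \sum_{i \in \pi^{-1}(j)} m_X(A_i)$, and the composite family $\{B_j\}_j \cup \{C_m\}_m$ inherits the pairwise $(\delta - \rho)$-separation directly.

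The step I expect to be the main obstacle is that $\pi^{-1}(j)$ may be infinite, in which case $B_j$ need not be closed and the per-atom additive error $\eta''$ summed over $\pi^{-1}(j)$ would be uncontrolled. The remedy is a two-scale truncation: for fixed $\eta > 0$, first choose $N$ so large that $\sum_{i > N} \alpha_i < \eta/2$, which is possible because $\alpha \in \widetilde V$; then choose $\eta'' < \eta/(2N)$. Replacing $B_j$ by the finite union $B_j^{(N)} := \bigcup_{i \in \pi^{-1}(j),\, i \le N} A_i$ yields a closed set with
\[
m_X(B_j^{(N)}) \geq \sum_{i \in \pi^{-1}(j),\, i \le N}(\alpha_i - \eta'') \geq \beta_j - \sum_{i > N}\alpha_i - N \eta'' > \beta_j - \eta,
\]
while pairwise $(\delta - \rho)$-separation is preserved. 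This gives $\Sep(X; (\beta, \kappa) - \overrightarrow{\eta}) \geq \delta - \rho$ for some $X \in P$. Taking supremum over $X \in P$, sending $\rho \to 0$, and then $\eta \to 0$ yields $\Sep(P; \beta, \kappa) \geq \delta$, and invoking Proposition \ref{prop:dissip_sep} in the converse direction concludes that $\{P_n\}$ $\delta$-dissipates with atoms $\beta$.
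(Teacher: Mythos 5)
Your proof is correct and follows essentially the same route as the paper: the key device in both is the two-scale truncation (choose $N$ with $\sum_{i>N}\alpha_i$ small, then distribute an error of order $\epsilon/N$ per index) combined with bundling the witness sets along the fibers of $\pi$. The only cosmetic difference is that the paper encodes the bundling step via the pre-established inequalities \eqref{eq:sep_monotone_in_kappa} and \eqref{eq:sep_sum_formula} applied directly to $\Sep(P_n;\cdot)$ and its $\liminf$, whereas you unwind explicit witness sets inside a subsequential weak limit after invoking Proposition \ref{prop:dissip_sep}.
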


\begin{proof}
Assume that $\beta$ is a contraction of $\alpha$. There exists a map $\pi : \mathbb N \to \mathbb N$ such that
\[
\beta_i = \sum_{j \in \pi^{-1}(i)} \alpha_j.
\]
Let $\epsilon > 0$. Since $\alpha \in V$, there exists a number $N \in \mathbb N$ such that $\sum_{i=N}^\infty \alpha_i < \epsilon$ and $N \to \infty$ as $\epsilon \to 0$. Then, we have
\[
\beta_i -2\epsilon \le \sum_{j \in \pi^{-1}(i)\cap\{1, \ldots, N\}} \alpha_j -\epsilon \le \sum_{j \in \pi^{-1}(i)\cap\{1, \ldots, N\}} (\alpha_j - N^{-1}\epsilon)
\]
for any $i$.
By using \eqref{eq:sep_monotone_in_kappa} and \eqref{eq:sep_sum_formula}, we have
\begin{align*}
&\Sep({P_n}; (\beta, \kappa) - \overrightarrow{2\epsilon}) \\
&\ge \Sep({P_n}; \alpha_1- N^{-1}\epsilon, \ldots, \alpha_N- N^{-1}\epsilon, \kappa_1-2\epsilon, \ldots, \kappa_M-2\epsilon) \\
&\ge \Sep({P_n}; (\alpha, \kappa)- \overrightarrow{N^{-1}\epsilon})
\end{align*}
for any $\kappa_1, \ldots, \kappa_M \ge 0$ with $\sum_{i=1}^{M} \kappa_i + \|\alpha\|_1 \le 1$. Therefore, by the assumption, we obtain
\[
\lim_{\epsilon \to 0+}\liminf_{n\to\infty}\Sep({P_n}; (\beta, \kappa) - \overrightarrow{\epsilon}) \ge \delta.
\]
We finish the proof.
\end{proof}

The following proposition gives an intuitive comprehension. We omit the proof because we do not use it later.
\begin{proposition}\label{prop:dissip}
Let $\{X_n\}_{n=1}^\infty$ be a sequence of pm-spaces and let $\delta>0$ and $\alpha \in V$ with $\alpha \neq 1$.
The sequence $\{X_n\}_{n=1}^\infty$ $\delta$-dissipates with atoms $\alpha$ if and only if for each $n$ there exist finite families $\{A_{ni}\}^{k_n}_{i=1}$, $\{B_{ni}\}^{k_n}_{i=1}$ of Borel subsets of $X_n$ such that
\begin{enumerate}
\item $\lim_{n \to \infty} k_n = \infty$,
\item $\lim_{n\to\infty} m_{X_n}\left(\bigcup_{i=1}^{k_n} A_{ni} \cup B_{ni}\right) = 1$,
\item $\liminf_{n\to\infty} \min_{C\neq D} d_{X_n}(C,D) \ge \delta$, where both $C$ and $D$ run over $\left\{A_{ni}, B_{ni} \mid 1\le i\le k_n\right\}$,
\item $\lim_{n\to\infty} \sum_{i=1}^{k_n} |m_{X_n}(A_{ni}) - \alpha_i| = 0$, and
\item $\lim_{n\to\infty} \sup_i m_{X_n}(B_{ni}) = 0$.
\end{enumerate}
\end{proposition}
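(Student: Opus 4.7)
\emph{Plan.} The proof splits into two implications. Throughout, I use that $\Sep$ is permutation-invariant \eqref{eq:shuffle} and ignores nonpositive entries \eqref{eq:negative_part_is_omitted}, so that once $\epsilon > 0$ is fixed, $\Sep(X_n;(\alpha,\kappa)-\overrightarrow{\epsilon})$ effectively only sees finitely many index targets, namely those $i$ with $\alpha_i > \epsilon$ together with the $M$ entries from $\kappa$.

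\emph{$(\Rightarrow)$ Forward direction via diagonalisation.} Assume $\{X_n\}$ $\delta$-dissipates with atoms $\alpha$. Pick $N_k, M_k \uparrow \infty$ and $\epsilon_k \downarrow 0$ satisfying $\alpha_{N_k+1} \le \epsilon_k$ (when $N(\alpha)=\infty$) together with the smallness condition $\epsilon_k \ll 1/(M_k(N_k+M_k))$. Set $\lambda_k := (1-\sum_{i=1}^{N_k}\alpha_i)/M_k$, so $\lambda_k \to 0$, and apply the definition of $\delta$-dissipation with $\kappa = (\lambda_k,\ldots,\lambda_k)$ of length $M_k$. For every $n$ beyond some $n_k$ this supplies closed sets $A_{n,i}$ ($i \le N_k$) and $B_{n,j}$ ($j \le M_k$) with $m(A_{n,i}) \ge \alpha_i - \epsilon_k$, $m(B_{n,j}) \ge \lambda_k - \epsilon_k$, and pairwise distance at least $\delta - 1/k$. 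Since these sets are disjoint (positive distance) and the total measure is $1$, matching upper bounds $m(A_{n,i}) \le \alpha_i + (N_k+M_k)\epsilon_k$ and $m(B_{n,j}) \le \lambda_k + (N_k+M_k)\epsilon_k$ hold automatically. A diagonal choice $k(n)\to\infty$, padding the shorter family by $\emptyset$ so that both share a common length $k_n := \max(N_{k(n)}, M_{k(n)})$, produces families satisfying (1)--(5).

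\emph{$(\Leftarrow)$ Converse via greedy packing.} Fix $M \in \mathbb N$, a nonnegative sequence $\kappa = (\kappa_1,\ldots,\kappa_M)$ with $\|\kappa\|_1+\|\alpha\|_1 \le 1$, and $\epsilon > 0$. Choose $N$ so that $\alpha_{N+1} < \epsilon/(8M)$. For all sufficiently large $n$, (1)--(5) imply $k_n > N$, $\sup_i m(B_{n,i}) < \epsilon/(8M)$, $\sum_i |m(A_{n,i}) - \alpha_i| < \epsilon/(8M)$, $m(\bigcup_i A_{n,i} \cup B_{n,i}) > 1 - \epsilon/4$, and the pairwise distance in (3) exceeds $\delta - 1/n$. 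Use the first $N$ atoms $A_{n,1},\ldots,A_{n,N}$ directly for the $\alpha$-part. The ``leftover'' collection $\{A_{n,i}\}_{i>N}\cup\{B_{n,i}\}_i$ then has individual masses below $\epsilon/(4M)$ and total mass at least $\|\kappa\|_1 - \epsilon/2$, so a straightforward greedy packing (processing the $M$ targets in turn, stopping as soon as the running mass hits $\kappa_j$) assigns the pieces to groups $D_1,\ldots,D_M$ with $m(D_j) \ge \kappa_j - \epsilon$; here the per-piece smallness controls the overshoot. Condition (3) keeps the $N+M$ sets $A_{n,1},\ldots,A_{n,N},D_1,\ldots,D_M$ at pairwise distance $\ge \delta - 1/n$, so $\Sep(X_n;(\alpha,\kappa)-\overrightarrow{\epsilon}) \ge \delta - 1/n$. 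Taking $\liminf_n$ then $\epsilon \to 0^+$ yields the dissipation inequality.

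\emph{Main obstacle.} The delicate point is the forward-direction bookkeeping: conditions (4) and (5) both descend from controlling the error term $(N_k+M_k)\epsilon_k$, and the diagonal construction must be tuned so this vanishes while $N_k, M_k \to \infty$ and the separation margin $\delta - 1/k$ survives. The converse is technically simpler but requires clean handling of the infinite tail of $\alpha$, done here by the preliminary truncation at $N$ before the greedy packing begins.
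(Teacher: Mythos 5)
The paper does not actually supply a proof of this proposition --- it states ``We omit the proof because we do not use it later'' --- so there is nothing to compare your argument against; it has to stand on its own. Your overall strategy (extract sets from near-optimal $\Sep$-families and diagonalise for the forward direction; truncate $\alpha$, then greedily repack the small leftover pieces for the converse) is the natural one, and your converse direction is essentially correct: the bookkeeping with $\epsilon/(8M)$, the per-piece mass bound $\epsilon/(4M)$, and the greedy filling all check out, modulo the harmless replacement of the Borel sets by their closures so as to match Definition \ref{def:Sep_new}.

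The forward direction, however, has a genuine flaw in the choice of test sequence. You set $\lambda_k = (1-\sum_{i=1}^{N_k}\alpha_i)/M_k$ and apply the dissipation hypothesis with $\kappa=(\lambda_k,\dots,\lambda_k)$; but then $\|\kappa\|_1+\|\alpha\|_1 = 1+\sum_{i>N_k}\alpha_i$, which exceeds $1$ whenever $N(\alpha)=\infty$, so this $\kappa$ is not admissible in Definition \ref{dfn:pyramid_dissip} and the hypothesis cannot be invoked with it. The repair is to take $\lambda_k=(1-\|\alpha\|_1)/M_k$; the price is that the sum $S$ of the prescribed lower bounds is then $1-\sum_{i>N_k}\alpha_i$ rather than $1$, and the deficit $1-S$ must be carried through your ``automatic'' upper bounds. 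Relatedly, your verification of condition (4) by summing the individual upper bounds $\alpha_i+(N_k+M_k)\epsilon_k$ over $N_k$ indices produces an error of order $N_k\bigl((1-S)+(N_k+M_k)\epsilon_k\bigr)$; the first term $N_k\sum_{i>N_k}\alpha_i$ need not tend to $0$ for a general $\alpha\in V$, and the second is only controlled by your stated condition $\epsilon_k\ll 1/(M_k(N_k+M_k))$ when $N_k\le M_k$. Both problems disappear if you instead bound the total deviation globally: writing $u_i=m(A_{n,i})-(\alpha_i-\epsilon_k)\ge 0$ and similarly for the $B$'s, disjointness gives $\sum_i u_i\le (1-S)+(N_k+M_k)\epsilon_k$, whence $\sum_{i\le N_k}|m(A_{n,i})-\alpha_i|\le \sum_{i>N_k}\alpha_i+(2N_k+M_k)\epsilon_k\to 0$. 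With these two corrections the forward direction goes through.
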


\begin{lemma}
\label{lem:fin_is_dense}
For each $\alpha \in V$ and $\delta \in [0, \infty]$, the set
\[
\left\{ X \in \cal X_\alpha^\delta \,\middle|\, \# X < \infty
\text{ and } \mathrm{diam}\, X < \delta
\right\}
\]
is dense in $\cal X_\alpha^\delta$ in the box topology.
\end{lemma}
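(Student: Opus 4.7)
The plan is to approximate $X \in \cal X_\alpha^\delta$ (assume $\delta>0$; the case $\delta=0$ is degenerate since then $\cal X_\alpha^0 = \{\ast\}$) by a finite pm-space in $\cal X_\alpha^\delta$ with strict diameter bound, via three successive approximations: reduce to finitely many prescribed atoms, discretize the remaining measure to finite support, and rescale slightly to shrink the diameter strictly below $\delta$.

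Let $f:\mathbb N \to X$ witness $X \in \cal X_\alpha$, i.e., $m_X - f_\#\alpha \ge 0$. Following the proof of Proposition \ref{prop:lower_approximation_of_pyramid_with_atoms}, for each $N\ge 1$ set
\[
m_{X_N} := m_X - \sum_{i\ge N}\alpha_i\,\delta_{f(i)} + \Bigl(\sum_{i\ge N}\alpha_i\Bigr)\delta_{f(N)}, \qquad X_N := (X, d_X, m_{X_N}).
\]
Only the measure changes, so $\supp m_{X_N} \subset \supp m_X$ gives $X_N \in \cal X_{\alpha_c[N]}^\delta \subset \cal X_\alpha^\delta$, and the same computation as in that proof yields $\square(X, X_N) \to 0$.

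Next, fix $N$ and write $p_j := f(j)$ for $j<N$, $p_N := f(N)$, with required atom masses $\beta_j = \alpha_j$ for $j<N$ and $\beta_N = \sum_{i\ge N}\alpha_i$. For $\epsilon > 0$, use tightness of $m_{X_N}$ to pick a compact $K \subset X$ with $\{p_1,\dots,p_N\}\subset K$ and $m_{X_N}(K) > 1-\epsilon$. Take a finite $\epsilon$-net $\{c_1,\dots,c_M\}$ of $K$ chosen so that $c_j = p_j$ for $j\le N$, and a Borel partition $X = A_1 \sqcup \cdots \sqcup A_M$ such that $p_j \in A_j$ for $j\le N$, $A_i \cap K \subset B(c_i, \epsilon)$ for all $i$, and $X \setminus K \subset A_1$. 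Letting $\mu := \sum_i m_{X_N}(A_i)\,\delta_{c_i}$, define the finite pm-space $Y_{N,\epsilon} := (\{c_1,\dots,c_M\}, d_X|_{\{c_i\}}, \mu)$. By construction $\mu(\{p_j\}) \ge m_{X_N}(\{p_j\}) \ge \beta_j$, so $Y_{N,\epsilon} \in \cal X_{\alpha_c[N]} \subset \cal X_\alpha$; moreover $\# Y_{N,\epsilon} \le M < \infty$ and $\diam Y_{N,\epsilon} \le \diam \supp m_{X_N} \le \delta$. Embedding both $X_N$ and $Y_{N,\epsilon}$ isometrically into $X$, the collapse map $x\in A_i \mapsto c_i$ is $\epsilon$-close to the identity on $K$, yielding $d_\Prkh^{d_X}(m_{X_N}, \mu) \le 2\epsilon$ and hence $\square(X_N, Y_{N,\epsilon}) \to 0$ as $\epsilon \to 0+$.

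Finally, for $\eta \in (0,1)$ the scaled pm-space $(1-\eta)Y_{N,\epsilon}$ has the same measure (so it remains in $\cal X_\alpha$), is still finite, and has diameter $(1-\eta)\diam Y_{N,\epsilon} < \delta$; by \eqref{eq:tX_box}, $\square\bigl((1-\eta)Y_{N,\epsilon}, Y_{N,\epsilon}\bigr) \to 0$ as $\eta \to 0+$. Chaining these three approximations produces a finite pm-space in $\cal X_\alpha^\delta$ of diameter strictly less than $\delta$, arbitrarily $\square$-close to $X$. The main obstacle is the discretization step: the approximating finite measure must retain the prescribed atom masses at the correct points, which forces us to anchor each $p_j$ as a cover center and route all non-tight mass into $A_1$ rather than creating spurious atoms that would fail the lower bound $\mu(\{p_j\}) \ge \beta_j$.
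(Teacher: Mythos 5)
Your proof is correct, but it is more elaborate than the paper's. The paper's argument is two lines: by \cite[Lemmas 4.18 and 4.19]{S} there are a finite subset $Y \subset X$ and a Borel map $f : X \to Y$ with $d_{\Prkh}(f_\# m_X, m_X) \le \epsilon$, hence $\square(X,(Y,d_X,f_\#m_X)) \le 2\epsilon$; since $Y \subset X$ one gets $\diam Y \le \delta$, and since any Borel pushforward of a measure dominating $g_\#\alpha$ dominates $(f\circ g)_\#\alpha$, the finite space is automatically in $\cal X_\alpha$; the strict diameter bound then comes from the same scaling trick you use at the end. The main difference is that you rebuild the discretization by hand (compact exhaustion, $\epsilon$-net, collapse map) instead of citing Shioya, and you add a preliminary truncation to $\alpha_c[N]$ plus careful anchoring of the atom points as net centers. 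That extra care is actually unnecessary: the definition of $\cal X_\alpha$ only asks for \emph{some} map $h:\mathbb N \to Y$ with $f_\#m_X - h_\#\alpha \ge 0$, and it need not be injective, so collisions under the collapse map can only merge atoms and increase point masses --- the ``spurious atoms'' you guard against cannot violate the lower bound. Your route is therefore correct but buys nothing beyond self-containedness; the paper's route is shorter at the cost of an external citation. (Both proofs silently ignore the degenerate case $\delta=0$, where the set in the statement is empty while $\cal X_\alpha^0=\{\ast\}$; this is an issue with the statement rather than with either argument.)
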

\begin{proof}
Let us take $X \in \cal X_\alpha^\delta$ and $\epsilon > 0$.
Due to \cite[Lemmas 4.18 and 4.19]{S}, there exist a finite subset $Y$ of $X$ and a Borel map $f : X \to Y$ such that $d_{\mathrm{P}}^{m_X}(f_\# m_X, m_X) \le \epsilon$.
This implies
\[
\square(X, (Y,d_X, f_\# m_X)) \le 2\epsilon.
\]
Since $Y \subset X$, we have $\mathrm{diam}\,Y \le \delta$.
{From the definition, $Y \in \cal X_\alpha$.}
Since the metric scale-change $(0,\infty) \ni t \mapsto t Z \in \cal X$ is continuous in the box topology for each $Z$, we obtain the conclusion.
\end{proof}

The following is a key to the proof of Theorem \ref{thm:mugen}.
\begin{proposition} \label{prop:dissip_pyramid}
Let $\{X_n\}_{n=1}^\infty$ be a sequence of pm-spaces and let $\delta > 0$ and $\alpha \in V$ with $\alpha\neq 1$. Then, the sequence $\{X_n\}_{n=1}^\infty$ $\delta$-dissipates with atoms $\alpha$ if and only if the weak limit of any weakly convergent subsequence of $\{X_n\}_{n=1}^\infty$ contains the pyramid $\cal X_{\alpha}^\delta$.
\end{proposition}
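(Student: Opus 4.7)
The plan is to combine Proposition~\ref{prop:dissip_sep} with a characterization of $\cal X_\alpha^\delta$ via the separation distance. By that proposition, $\{X_n\}$ $\delta$-dissipates with atoms $\alpha$ iff every weak subsequential limit $P$ satisfies $\Sep(P;\alpha,\kappa)\ge\delta$ for every finite nonnegative $\kappa$ with $\|\alpha\|_1+\|\kappa\|_1\le 1$. So the proposition reduces to the equivalence
\[
P\supset \cal X_\alpha^\delta \iff \Sep(P;\alpha,\kappa)\ge \delta \text{ for every such } \kappa,
\]
which I will prove in both directions.

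For the forward direction, I would exhibit explicit witnesses inside $\cal X_\alpha^\delta$. Given admissible $\kappa=(\kappa_1,\dots,\kappa_M)$ and $\epsilon>0$, choose $N$ so that $\alpha_i<\epsilon$ for all $i>N$, and let $X_\epsilon$ be the finite pm-space on $\{p_1,\dots,p_N,q_1,\dots,q_M,r\}$ with all distinct pairs at distance $\delta$ and masses $\alpha_i$, $\kappa_j$, and $1-\sum_{i\le N}\alpha_i-\sum_j\kappa_j\ge 0$, respectively. Using the map $i\mapsto p_i$ for $i\le N$ and $i\mapsto r$ for $i>N$ one checks $X_\epsilon\in \cal X_\alpha^\delta\subset P$, and the singletons $\{p_i\},\{q_j\}$ (with $\emptyset$ for $i>N$) witness $\Sep(X_\epsilon;(\alpha,\kappa)-\overrightarrow\epsilon)\ge\delta$. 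Sending $\epsilon\to 0$ yields $\Sep(P;\alpha,\kappa)\ge\delta$.

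For the reverse direction, Lemma~\ref{lem:fin_is_dense} combined with the box-closedness of $P$ reduces the task to showing $X\in P$ for every finite $X\in \cal X_\alpha^\delta$ with $\mathrm{diam}\,X=\delta'<\delta$. Write $X=\{x_1,\dots,x_K\}$ with $\mu_k=m_X(x_k)$; the witness $f:\mathbb N\to X$ of $X\in\cal X_\alpha$ produces residuals $\lambda_k:=\mu_k-\sum_{i:f(i)=x_k}\alpha_i\ge 0$ summing to $1-\|\alpha\|_1$. Setting $\kappa:=(\lambda_1,\dots,\lambda_K)$ (so $\|\alpha\|_1+\|\kappa\|_1=1$), the separation hypothesis supplies, for each $\eta\in(0,\delta-\delta')$, a pm-space $Z=Z_\eta\in P$ with pairwise $(\delta-\eta)$-separated closed sets $A_i\ (i\in\mathbb N)$, $B_k\ (k\le K)$ of masses $\ge\alpha_i-\eta$ and $\ge\lambda_k-\eta$. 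Fix $N$ large so that $\sum_{i>N}\alpha_i$ is small, trim $A_i\ (i\le N)$ and $B_k$ to exact masses $\alpha_i-\eta$ and $\lambda_k-\eta$ by Borel regularity, and set $T:=\bigcup_{i\le N}A_i\cup\bigcup_k B_k$. Define $g:T\to X$ by $g(A_i)=f(i)$, $g(B_k)=x_k$; on $T$ the bound $d_Z\ge\delta-\eta>\delta'\ge d_X(g(\cdot),g(\cdot))$ makes $g$ $1$-Lipschitz. Now compose with the Kuratowski embedding $\iota:X\to(\mathbb R^K,\|\cdot\|_\infty)$, $\iota(x):=(d_X(x,x_k))_{k=1}^K$, and apply Proposition~\ref{prop:Lip_ext} to extend each coordinate of $\iota\circ g:T\to\mathbb R$ to a $1$-Lipschitz function on $Z$; the assembled map $\tilde g:Z\to(\mathbb R^K,\|\cdot\|_\infty)$ is $1$-Lipschitz because $\|\cdot\|_\infty$ is coordinatewise. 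Hence the pm-space $Y_{\eta,N}:=(\mathbb R^K,\|\cdot\|_\infty,\tilde g_\# m_Z)$ is dominated by $Z$ and so lies in $P$. An explicit computation gives
\[
\|\tilde g_\# m_Z-\iota_\# m_X\|_{\mathrm{TV}} \le 2\Bigl(\sum_{i>N}\alpha_i+(N+K)\eta\Bigr),
\]
so $\square(Y_{\eta,N},X)\to 0$ when $\eta\to 0$ and $N\to\infty$ with, say, $N\eta\to 0$, and box-closedness of $P$ yields $X\in P$.

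The main obstacle is producing a genuinely $1$-Lipschitz extension of $g$ to all of $Z_\eta$, since points of the leftover $Z_\eta\setminus T$ can lie arbitrarily close to the separated sets; a naive nearest-set extension fails to be Lipschitz without a multiplicative loss that prevents box-convergence to $X$. My fix is the Kuratowski embedding of the finite target $X$ into $(\mathbb R^K,\|\cdot\|_\infty)$ followed by coordinatewise McShane--Whitney extension: the linear/$\ell^\infty$ structure of the target makes the $1$-Lipschitz extension automatic, and the remaining work is only the bookkeeping of the total-variation error produced by trimming and the small leftover mass.
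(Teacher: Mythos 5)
Your proof is correct, but it is organized differently from the paper's. The paper proves the equivalence directly at the level of the sequence $\{X_n\}$: for ``dissipation $\Rightarrow$ containment'' it extracts separated sets from each $X_n$, maps them onto a finite $Y\in\cal X_\alpha^\delta$ via the Kuratowski embedding plus coordinatewise McShane--Whitney extension (Proposition~\ref{prop:Lip_ext}), and shows the pushforwards box-converge to $Y$ (after first replacing $\alpha$ by a finite contraction $\beta$ matched to the atoms of $Y$, using Lemma~\ref{lem:dissip_contraction}); for the converse it places a discrete witness $Y\in\cal X_\alpha^\delta\subset P$, pulls back a box-convergent sequence $Y_n\prec X_n$, and reads off the separation from small balls around the atoms of $Y$. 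You instead factor everything through Proposition~\ref{prop:dissip_sep}, reducing the statement to the single-pyramid equivalence $P\supset\cal X_\alpha^\delta\iff\Sep(P;\alpha,\kappa)\ge\delta$ for all admissible $\kappa$ --- which is exactly the content of (1)$\Leftrightarrow$(4) in the paper's Corollary~\ref{cor:dissip_characterization}, there derived as a consequence rather than used as the engine. This is legitimate and non-circular, since Proposition~\ref{prop:dissip_sep} rests only on the limit formula. What your route buys: the ``containment $\Rightarrow$ separation'' direction becomes a one-line evaluation of $\Sep$ on an explicit finite element of $P$, with no need for the approximating sequence $Y_n$ or the ball argument; and you avoid Lemma~\ref{lem:dissip_contraction} by truncating $\alpha$ at level $N$ and absorbing the tail $\sum_{i>N}\alpha_i$ into the total-variation error. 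The hard direction is, in substance, the same construction as the paper's (Kuratowski embedding, coordinatewise $1$-Lipschitz extension, one-sided TV estimate, density of finite spaces via Lemma~\ref{lem:fin_is_dense}), just performed on a near-optimal $Z_\eta\in P$ rather than on the spaces $X_n$ themselves.

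One small blemish: the step ``trim $A_i$ and $B_k$ to exact masses by Borel regularity'' is not always possible (a set consisting of a single atom of mass exceeding $\alpha_i-\eta$ cannot be trimmed to exact mass), but it is also unnecessary --- your total-variation bound only requires the \emph{lower} bounds $m_Z(A_i)\ge\alpha_i-\eta$ and $m_Z(B_k)\ge\lambda_k-\eta$, since for the atomic target measure $\iota_\#m_X$ one has
\[
\|\tilde g_\#m_Z-\iota_\#m_X\|_{\mathrm{TV}}=2\sup_A\left(\iota_\#m_X(A)-\tilde g_\#m_Z(A)\right)\le 2\sum_{k}\bigl(\mu_k-\tilde g_\#m_Z(\{\iota(x_k)\})\bigr)_+,
\]
which is controlled by the stated deficits alone. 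Deleting the trimming sentence leaves the argument intact.
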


\begin{proof}
Assume that $\{X_n\}_{n=1}^\infty$ $\delta$-dissipates with atoms $\alpha$ and assume that $\{X_n\}_{n=1}^\infty$ converges weakly to a pyramid $P$ by subtracting a subsequence.
Then it is sufficient to prove that any finite pm-space $Y \in \cal X^\delta_\alpha$ with $\mathrm{diam}\,Y < \delta$ is included in $P$, by Lemma \ref{lem:fin_is_dense}.
Take any such space $Y$ and let $\{y_1, \ldots, y_M\} = Y$.
There exists a contraction $\beta$ of $\alpha$ such that $N = N(\beta) \le M$ and
\[
m_Y(\{y_i\}) \ge \beta_i
\]
for any $i =1,\ldots,N$, by relabeling $y_i$ if necessary.
We define real numbers $\kappa_1, \ldots, \kappa_M \ge 0$ by
\[
\kappa_i := \begin{cases}
m_Y(\{y_i\})-\beta_i & \text{if } i \le N, \\
m_Y(\{y_i\}) & \text{if } i > N.
\end{cases}
\]
Note that $\sum_{i=1}^M \kappa_i + \|\beta\|_1 = 1$.
Then, by Lemma \ref{lem:dissip_contraction} and by \eqref{eq:sep_monotone_in_kappa}, we have
\[
\liminf_{n\to\infty} \Sep(X_n; \beta_1-\epsilon,\ldots,\beta_N-\epsilon, \kappa_1-\epsilon, \ldots, \kappa_M-\epsilon) \ge \delta > \mathrm{diam}\, Y.
\]
for any $\epsilon > 0$.
Hence, for any $\epsilon > 0$,
there exists a number $n_0$ such that
\[
\Sep(X_n; \beta_1-\epsilon,\ldots,\beta_N-\epsilon, \kappa_1-\epsilon, \ldots, \kappa_M-\epsilon) > \mathrm{diam}\, Y
\]
for any $n \ge n_0$.
We find (possibly empty) closed subsets $A_{n1}, \ldots, A_{nN}$ and
$B_{n1}, \ldots, B_{nM}$ of $X_n$ such that
\begin{align}
&m_{X_n}(A_{ni}) \ge \beta_i - \epsilon, \quad m_{X_n}(B_{n{j}}) \ge \kappa_{{j}}-\epsilon, \text{ and } \label{eq:dissip_pyramid1} \\
& \min_{C\neq D} d_{X_n}(C, D) > \diam{Y}, \label{eq:dissip_pyramid2}
\end{align}
where {$1 \le i \le N$, $1 \le j \le M$ and}
both $C$ and $D$ run over $\{A_{ni}, B_{nj} \mid 1\le i \le N, 1 \le {j} \le M\}$.
Here, we can assume that $Y$ is a subset of $\mathbb R^M$ since $Y$ embeds into $(\mathbb R^M, \|\cdot\|_\infty)$ isometrically by the Kuratowski embedding.
We define a map $f_n : \bigcup_{i=1}^N A_{ni} \cup \bigcup_{{j}=1}^M B_{n{j}} \to \mathbb R^M$ by
\[
f_n(x) := \begin{cases}
y_i & \text{if } x \in A_{ni} \cup B_{ni} \text{ and } i \le N, \\
y_j & \text{if } x \in B_{n{j}} \text{ and } {j} > N.
\end{cases}
\]
By \eqref{eq:dissip_pyramid2}, $f_n$ is a 1-Lipschitz map. Hence $f_n$ can be extended to a 1-Lipschitz map on the whole $X_n$, which is also written as $f_n$,
by Proposition \ref{prop:Lip_ext}.
Then we have
\[
\|{f_n}_\# m_{X_n} - m_Y\|_{\mathrm{TV}} \le (N+M)\epsilon.
\]
Indeed, by \eqref{eq:dissip_pyramid1}, we have
\[
{f_n}_\# m_{X_n}(\{y_i\}) \ge m_{X_n}(A_{ni} \cup B_{ni}) \ge m_Y(\{y_i\}) + 2\epsilon
\]
for any $i \le N$, which implies
\[
|{f_n}_\# m_{X_n}(\{y_i\}) - m_Y(\{y_i\})| \le {f_n}_\# m_{X_n}(\{y_i\}) - m_Y(\{y_i\}) + 4\epsilon.
\]
Similarly, for any $i>N$,
\[
|{f_n}_\# m_{X_n}(\{y_i\}) - m_Y(\{y_i\})| \le {f_n}_\# m_{X_n}(\{y_i\}) - m_Y(\{y_i\}) + 2\epsilon
\]
holds. Thus we have
\begin{align*}
&\|{f_n}_\# m_{X_n} - m_Y\|_{\mathrm{TV}} \\
&= \frac{1}{2} \sum_{i=1}^M |{f_n}_\# m_{X_n}(\{y_i\}) - m_Y(\{y_i\})| \\
&\le \frac{1}{2} \sum_{i=1}^M \left( {f_n}_\# m_{X_n}(\{y_i\}) - m_Y(\{y_i\})\right) + (N+M)\epsilon \\
&= (N+M)\epsilon.
\end{align*}
Therefore the pm-space $(\mathbb R^M, \|\cdot\|_\infty, {f_n}_\# m_{X_n})$ box-converges to $Y$ as ${n} \to \infty$.
Since $(\mathbb R^M, \|\cdot\|_\infty, {f_n}_\# m_{X_n})$ is dominated by $X_n$, we obtain $Y \in P$.

Conversely, assume that $\{X_n\}_{n=1}^\infty$ converges weakly to a pyramid $P$ containing $\cal X^\delta_\alpha$.
We take any $\kappa_1, \ldots, \kappa_M \ge 0$ with $\sum_{i=1}^M \kappa_i + \|\alpha\|_1 \le 1$ and let
\[
\epsilon_0 := \frac{1}{M}\left(1-\|\alpha\|_1-\sum_{i=1}^M \kappa_i \right) \ge 0.
\]
We define a pm-space $Y$ as
\[
{Y = \{y_1, \ldots, y_M\} \sqcup \{x_j\}_{j=1}^{N(\alpha)}}
\]
with metric
\[
d_Y(y,y') = \delta \text{ if } y\neq y'
\]
and measure
\[
m_Y = \sum_{j=1}^{N(\alpha)}\alpha_j\delta_{x_j} + \sum_{i=1}^{M}(\kappa_i+\epsilon_0)\delta_{y_i}.
\]
Since $Y \in \cal X^\delta_\alpha$ and $\cal X^\delta_\alpha \subset P$, we have $Y \in P$. There exists a sequence $\{Y_n\}_{n=1}^\infty$ of pm-spaces box-converging to $Y$ with $Y_n \prec X_n$ for every $n$.
By Lemma \ref{lem:mG_convergece}, we may assume that $Y_n$ and $Y$ are isometrically embedded in a complete separable metric space $Z$ such that $m_{Y_n}$ weakly converges to $m_Y$ over $Z$. Take any $0<\epsilon<\delta/2$.
Since $m_Y(B_\epsilon(y)) = m_Y(\{y\})$ for any $y \in Y$, we have
\[
\lim_{n \to \infty} m_{Y_n}(B_\epsilon(y)) = \begin{cases}
\alpha_j & \text{if } y=x_j, \\
\kappa_i + \epsilon_0 & \text{if } y=y_i.
\end{cases}
\]
Thus, putting $A_{n,y} := B_\epsilon(y) \cap \supp\,m_{Y_n}$ for $y \in Y$, we have
\[
m_{Y_n}(A_{n,y}) \ge \begin{cases}
\alpha_j-\epsilon & \text{if } y=x_j, \\
\kappa_i-\epsilon & \text{if } y=y_i,
\end{cases}
\quad \text{and} \quad \min_{y\neq y'}d_{Y_n}(A_{n,y},A_{n,y'}) \ge \delta-2\epsilon
\]
for any sufficiently large $n$. These imply
\[
\liminf_{n\to \infty}\Sep(Y_n; (\alpha, \kappa)-\overrightarrow{\epsilon}) \ge \delta-2\epsilon.
\]
Therefore, since $X_n \succ Y_n$, we have
\[
\liminf_{n \to \infty}\Sep(X_n; (\alpha, \kappa)-\overrightarrow{\epsilon}) \ge \delta-2\epsilon
\]
Taking $\epsilon \to 0$, the sequence $\{X_n\}_{n=1}^\infty$ $\delta$-dissipates with atoms $\alpha$.
The proof is completed.
\end{proof}

\begin{corollary}\label{cor:dissip_pyramid_01}
Let $\{X_n\}_{n=1}^\infty$ be a sequence of pm-spaces and let $\delta>0$ and $\alpha \in V$ with $\alpha \neq 1$.
The sequence $\{X_n\}_{n=1}^\infty$ $\delta$-dissipates with atoms $\alpha$ if and only if for any $N \in \mathbb N$, the sequence $\{X_n\}_{n=1}^\infty$ $\delta$-dissipates with atoms $\alpha_c[N]$.
\end{corollary}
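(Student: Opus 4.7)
The plan is to deduce this corollary as a fairly direct consequence of the characterization of dissipation in Proposition \ref{prop:dissip_pyramid}, combined with the box-approximation of $\cal X_\alpha$ by the $\cal X_{\alpha_c[N]}$'s given in Proposition \ref{prop:lower_approximation_of_pyramid_with_atoms}. The forward implication is essentially free: since $\alpha_c[N]$ is by construction a contraction of $\alpha$, Lemma \ref{lem:dissip_contraction} immediately gives that $\delta$-dissipation with atoms $\alpha$ implies $\delta$-dissipation with atoms $\alpha_c[N]$ for every $N$. (Note also that since $\alpha \neq 1$ forces $\alpha_1 < 1$, one has $\alpha_c[N] \neq 1$, so the definition is applicable.)

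For the reverse direction, I would argue at the level of weak limits of subsequences. Suppose $\{X_n\}$ $\delta$-dissipates with atoms $\alpha_c[N]$ for every $N$, and let $P$ be the weak limit of an arbitrary weakly convergent subsequence of $\{X_n\}$. Applying Proposition \ref{prop:dissip_pyramid} to each $\alpha_c[N]$ yields $\cal X_{\alpha_c[N]}^\delta \subseteq P$. The goal is then to upgrade this to $\cal X_\alpha^\delta \subseteq P$, after which one more appeal to Proposition \ref{prop:dissip_pyramid}, now for $\alpha$ itself, closes the argument.

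The upgrading step is where I expect the only real content to lie, and it is a diameter-controlled refinement of Proposition \ref{prop:lower_approximation_of_pyramid_with_atoms}. Given any $X \in \cal X_\alpha^\delta$ with a representing map $f : \mathbb N \to X$, I will use the very same construction: set $f_N(i) = f(i)$ for $i < N$ and $f_N(i) = f(N)$ for $i \geq N$, and define $X_N := (X, d_X, m_X - f_\#\alpha + (f_N)_\#\alpha)$. Then $X_N \in \cal X_{\alpha_c[N]}$ and $\square(X, X_N) \to 0$ exactly as in the cited proof. The key extra observation is that $\supp m_{X_N} \subseteq \supp m_X$, so $\diam X_N \leq \diam X \leq \delta$, and therefore $X_N \in \cal X_{\alpha_c[N]}^\delta \subseteq P$. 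Since $P$ is box-closed, the box-limit $X$ lies in $P$, yielding $\cal X_\alpha^\delta \subseteq P$ as required.

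The main obstacle, such as it is, is simply to ensure that the approximation inherits the diameter bound; once the observation about supports is made, everything else assembles mechanically from Propositions \ref{prop:dissip_pyramid} and \ref{prop:lower_approximation_of_pyramid_with_atoms} together with Lemma \ref{lem:dissip_contraction}.
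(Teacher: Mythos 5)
Your proof is correct and follows essentially the same route as the paper, whose entire proof is the single sentence that Propositions \ref{prop:lower_approximation_of_pyramid_with_atoms} and \ref{prop:dissip_pyramid} together imply the corollary. Your observation that the approximating spaces $X_N$ satisfy $\supp\, m_{X_N} \subseteq \supp\, m_X$, so that the approximation of $\cal X_\alpha$ by the $\cal X_{\alpha_c[N]}$ respects the diameter bound $\delta$, fills in a detail the paper leaves implicit, and your use of Lemma \ref{lem:dissip_contraction} for the forward direction is an equally valid alternative to simply noting the inclusion $\cal X_{\alpha_c[N]}^\delta \subseteq \cal X_\alpha^\delta$.
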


\begin{proof}
Propositions \ref{prop:lower_approximation_of_pyramid_with_atoms} and \ref{prop:dissip_pyramid} together imply this corollary.
\end{proof}

\begin{corollary} \label{cor:dissp_pyramid_02}
Let $\{X_n\}_{n=1}^\infty$ be a sequence of pm-spaces and let $\alpha \in V$ with $\alpha\neq 1$.
The sequence $\{X_n\}_{n=1}^\infty$ infinite dissipates with atoms $\alpha$ if and only if the weak limit of any weakly convergent subsequence of $\{X_n\}_{n=1}^\infty$ contains the pyramid $\cal X_\alpha$.
\end{corollary}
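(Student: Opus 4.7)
The plan is to reduce this to Proposition \ref{prop:dissip_pyramid} by showing that $\cal X_\alpha$ is the box-closure of the increasing union $\bigcup_{\delta > 0} \cal X_\alpha^\delta$. Once this approximation is in hand, both directions follow formally, since a pyramid is by definition box-closed and infinite dissipation just means $\delta$-dissipation for every $\delta > 0$.

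First I would prove the approximation $\cal X_\alpha = \overline{\bigcup_{\delta>0}\cal X_\alpha^\delta}$. The inclusion $\supset$ is obvious from Remark \ref{rem:X^delta_alpha} and the fact that $\cal X_\alpha$ is box-closed. For the nontrivial inclusion, given $X=(X,d_X,m_X)\in\cal X_\alpha$ with associated $f:\mathbb N\to X$ witnessing the atoms, I would define the truncated pm-space
\[
X_R := (X,\,\min\{d_X,R\},\,m_X)
\]
for each $R>0$. Since the measure is unchanged, $f$ still witnesses that $X_R\in\cal X_\alpha$, and by construction $\diam\,X_R\le R$, so $X_R\in\cal X_\alpha^R$. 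A short computation using a common parameter $a:[0,1]\to X$ and the Borel set $I_N:=a^{-1}(B_N(x_0))$ for a fixed base point $x_0$ shows that $\square(X_R,X)\to 0$ as $R\to\infty$ (using $\mathcal L^1(I_N)\to 1$ and that $|d_X-\min\{d_X,R\}|$ vanishes on pairs from $I_N$ once $R\ge 2N$).

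Granted this approximation, the ``only if'' direction proceeds as follows. Suppose $\{X_n\}_{n=1}^\infty$ infinite dissipates with atoms $\alpha$, and let $P$ be the weak limit of some weakly convergent subsequence. Then $\{X_n\}$ $\delta$-dissipates with atoms $\alpha$ for every $\delta>0$, so Proposition \ref{prop:dissip_pyramid} yields $\cal X_\alpha^\delta\subset P$ for every $\delta>0$. Taking the union and invoking that $P$ is box-closed,
\[
P\supset \overline{\bigcup_{\delta>0}\cal X_\alpha^\delta}=\cal X_\alpha.
\]
Conversely, if every weak subsequential limit of $\{X_n\}$ contains $\cal X_\alpha$, then in particular it contains $\cal X_\alpha^\delta$ for each $\delta>0$, so by Proposition \ref{prop:dissip_pyramid} the sequence $\delta$-dissipates with atoms $\alpha$ for every $\delta>0$, i.e.\ it infinite dissipates with atoms $\alpha$.

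The only substantive step is the truncation lemma $\cal X_\alpha=\overline{\bigcup_{\delta>0}\cal X_\alpha^\delta}$; the rest is bookkeeping on top of Proposition \ref{prop:dissip_pyramid}. I do not anticipate a genuine obstacle, but one should be careful that the truncated metric still gives a complete separable space (Cauchy sequences for $\min\{d_X,R\}$ are eventually Cauchy for $d_X$ once their oscillation drops below $R$) and that the atom structure carried by the map $f$ is preserved under the truncation, both of which are immediate.
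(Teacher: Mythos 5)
Your argument is correct and is essentially the intended one: the corollary is Proposition \ref{prop:dissip_pyramid} combined with the density of $\bigcup_{\delta>0}\cal X_\alpha^\delta$ in $\cal X_\alpha$, and your truncation $\min\{d_X,R\}$ is exactly the device the paper uses elsewhere (in the proof of Theorem \ref{thm:homeo_with_delta}, citing \cite{K:mtf}) to approximate a space in $\cal X_\alpha$ by bounded-diameter ones. Note that the density step is also immediate from Lemma \ref{lem:fin_is_dense} applied with $\delta=\infty$, since the finite pm-spaces produced there have finite diameter and hence lie in some $\cal X_\alpha^{\delta'}$ with $\delta'<\infty$.
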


\begin{remark} \label{rem:X_alpha_is_blow-up_limit}
For each $\alpha \in V$, let $X_\alpha$ be as \eqref{eq:X_alpha}.
Then we have
\[
\cal X_\alpha = \infty X_\alpha.
\]
Indeed, $\cal X_\alpha \supset \infty X_\alpha$ is clear.
The converse inclusion follows from
Corollary \ref{cor:dissp_pyramid_02}
({see also Proposition \ref{prop:dissip}}).
\end{remark}

\begin{corollary} \label{cor:dissip_characterization}
Let $P$ be a pyramid and let $\delta > 0$ and $\alpha \in V$. Then the following conditions {\rm (}1{\rm )}--{\rm (}4{\rm )} are equivalent to each other.
\begin{enumerate}
\item $P$ contains the pyramid ${\mathcal X}_\alpha^\delta$.
\item There exists a sequence of mm-spaces convergent weakly to $P$ $\delta$-dissipates with atoms $\alpha$.
\item Any sequence of pyramids convergent weakly to $P$ $\delta$-dissipates with atoms $\alpha$.
\item For any finite sequence $\kappa = (\kappa_1, \ldots, \kappa_M)$ with $\|\kappa\|_1+\|\alpha\|_1 \le 1$,
\[
\Sep(P; \alpha, \kappa) \ge \delta.
\]
\end{enumerate}
\end{corollary}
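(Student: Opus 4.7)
The plan is to derive the four-way equivalence by chaining the two substantive propositions established in this subsection, namely Proposition \ref{prop:dissip_sep} (which relates dissipation with atoms of a sequence of pyramids to the separation-distance inequality for the weak limit) and Proposition \ref{prop:dissip_pyramid} (which relates dissipation with atoms of a sequence of pm-spaces to the containment of $\mathcal{X}_\alpha^\delta$ in the weak limit). First I would dispose of the trivial edge case $\alpha = 1$: here $\mathcal{X}_\alpha^\delta \subset \mathcal{X}_1 = \{\ast\}$ is contained in every pyramid, so (1) is automatic; and in (4) the constraint $\|\alpha\|_1 + \|\kappa\|_1 \le 1$ forces $\kappa = 0$, after which taking $A_1 = \mathrm{supp}\, m_X$ and $A_j = \emptyset$ for $j \ge 2$ gives separation $\infty$, so (4) is also automatic, and (2), (3) follow from the remark after Definition \ref{dfn:pyramid_dissip}. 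So I assume $\alpha \neq 1$ throughout.

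Next I would prove the cycle $(1)\Rightarrow(2)\Rightarrow(4)\Rightarrow(3)\Rightarrow(2)$ together with $(2)\Rightarrow(1)$. For $(1)\Leftrightarrow(2)$, fix any approximation $\{X_n\}$ of $P$ by pm-spaces (which exists by the discussion after Proposition \ref{prop:monotone_convergence}). Since every weakly convergent subsequence of $\{X_n\}$ has limit equal to $P$, Proposition \ref{prop:dissip_pyramid} gives exactly the equivalence of "$\{X_n\}$ $\delta$-dissipates with atoms $\alpha$" and "$P \supset \mathcal{X}_\alpha^\delta$", which provides (1)$\Leftrightarrow$(2) for this particular $\{X_n\}$, and both directions of Proposition \ref{prop:dissip_pyramid} suffice to finish the general case.

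For $(2)\Rightarrow(4)$, given the pm-space sequence $\{X_n\}$ from (2), apply Proposition \ref{prop:dissip_sep} to the pyramid sequence $\{\mathcal{P}X_n\}$: any weakly convergent subsequence has the unique limit $P$, and the proposition gives $\Sep(P;\alpha,\kappa) \ge \delta$ for every finite $\kappa$ with $\|\kappa\|_1 + \|\alpha\|_1 \le 1$. For $(4)\Rightarrow(3)$, let $\{P_n\}$ be any sequence of pyramids converging weakly to $P$; every convergent subsequence again has limit $P$, so (4) together with the other direction of Proposition \ref{prop:dissip_sep} shows $\{P_n\}$ $\delta$-dissipates with atoms $\alpha$. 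For $(3)\Rightarrow(2)$, take any approximation $\{X_n\}$ of $P$ by pm-spaces: the sequence $\{\mathcal{P}X_n\}$ converges weakly to $P$, so by (3) it $\delta$-dissipates with atoms $\alpha$, which by definition means $\{X_n\}$ itself does.

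There is no genuine obstacle to expect here; the statement is essentially the combination of Propositions \ref{prop:dissip_sep} and \ref{prop:dissip_pyramid} once the observation is made that every pyramid admits a pm-space approximation and that for a sequence converging to a single pyramid $P$, the "weak limit of any weakly convergent subsequence" condition collapses to a condition on $P$ alone. The only mildly delicate point is keeping track of the $\alpha = 1$ case separately, since both referenced propositions are stated under the hypothesis $\alpha \neq 1$.
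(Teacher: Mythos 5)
Your proposal is correct and follows essentially the same route as the paper, which likewise obtains (1) $\Leftrightarrow$ (2) from Proposition \ref{prop:dissip_pyramid} and (2) $\Leftrightarrow$ (4) $\Leftrightarrow$ (3) from Proposition \ref{prop:dissip_sep}; your version merely spells out the use of an approximation of $P$ and the $\alpha=1$ edge case, which the paper leaves implicit.
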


\begin{proof}
Proposition \ref{prop:dissip_pyramid} means (1) $\Leftrightarrow$ (2) and Proposition \ref{prop:dissip_sep} means (2) $\Leftrightarrow$ (4) $\Leftrightarrow$ (3).
\end{proof}

\begin{corollary}
Let $\alpha \in V$ with $\alpha \neq 1$. Then we have
\[
\Sep({\mathcal X}_\alpha^\delta; \alpha, \kappa) = \delta
\]
for any finite sequence $\kappa = (\kappa_1, \ldots, \kappa_M)$ with $\|\kappa\|_1+\|\alpha\|_1 \le 1$ and $N(\alpha, \kappa) \ge 2$.
\end{corollary}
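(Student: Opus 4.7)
The plan is to prove the two inequalities $\Sep(\mathcal{X}_\alpha^\delta; \alpha, \kappa) \ge \delta$ and $\Sep(\mathcal{X}_\alpha^\delta; \alpha, \kappa) \le \delta$ separately. The lower bound is immediate from the preceding theory; the real content is in the upper bound, which uses the diameter constraint built into $\mathcal{X}_\alpha^\delta$.

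For the lower bound, I would simply apply Corollary \ref{cor:dissip_characterization} with $P = \mathcal{X}_\alpha^\delta$. Since any pyramid trivially contains itself, condition (1) of that corollary is satisfied, so condition (4) immediately yields $\Sep(\mathcal{X}_\alpha^\delta; \alpha, \kappa) \ge \delta$ for every finite nonnegative sequence $\kappa$ with $\|\kappa\|_1 + \|\alpha\|_1 \le 1$.

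For the upper bound, the key observation is that every $X \in \mathcal{X}_\alpha^\delta$ satisfies $\diam\, X \le \delta$ by definition of $\mathcal{X}^\delta$. Since $N(\alpha, \kappa) \ge 2$, the combined sequence $(\alpha,\kappa)$ has at least two strictly positive entries; let $c > 0$ be a common lower bound for two such entries. For any $\eta \in (0, c)$ and any family $\{A_i\}$ of closed subsets of $X$ witnessing the supremum in $\Sep(X; (\alpha,\kappa)-\overrightarrow{\eta})$, the two sets indexed by those positive entries must have positive measure, hence must be nonempty. Therefore their distance is at most $\diam\, X \le \delta$, and consequently $\inf_{i\ne j} d_X(A_i, A_j) \le \delta$. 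Taking the supremum over such families and then over $X \in \mathcal{X}_\alpha^\delta$ gives
\[
\sup_{X \in \mathcal{X}_\alpha^\delta} \Sep(X; (\alpha,\kappa) - \overrightarrow{\eta}) \le \delta
\]
for every sufficiently small $\eta > 0$. Letting $\eta \to 0+$ in the definition of $\Sep(P;\cdot)$ produces $\Sep(\mathcal{X}_\alpha^\delta; \alpha, \kappa) \le \delta$, completing the proof.

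I do not anticipate a serious obstacle: both directions are essentially formal once one notices that the diameter bound controls the separation whenever at least two of the required measure constraints are positive. The mild point to be careful about is that $\alpha$ may have infinitely many nonzero entries, so $(\alpha,\kappa) - \overrightarrow{\eta}$ has infinitely many negative tail entries; this is handled by \eqref{eq:negative_part_is_omitted}, which lets us ignore indices where $A_i$ is allowed to be empty and reduces the infimum to a finite collection containing at least one pair of nonempty sets.
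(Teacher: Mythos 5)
Your proposal is correct and follows essentially the same route as the paper: the lower bound via Corollary \ref{cor:dissip_characterization} applied to $P=\mathcal X_\alpha^\delta$ itself, and the upper bound from the observation that $N(\alpha,\kappa)\ge 2$ forces two of the sets to be nonempty, so $\Sep(X;(\alpha,\kappa)-\overrightarrow{\epsilon})\le\diam X\le\delta$ for every $X\in\mathcal X_\alpha^\delta$. Your extra care about the negative tail entries and the convention $d(A,\emptyset)=\infty$ is a sensible elaboration of what the paper leaves implicit.
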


\begin{proof}
We take any sequence $\kappa = (\kappa_1, \ldots, \kappa_M)$ with $\|\kappa\|_1+\|\alpha\|_1 \le 1$. Then
\[
\Sep({\mathcal X}_\alpha^\delta; \alpha, \kappa) \ge \delta
\]
by Corollary \ref{cor:dissip_characterization}. Moreover, if $N(\alpha, \kappa) \ge 2$, it must hold
\[
\Sep(X; (\alpha, \kappa)-\overrightarrow{\epsilon}) \le \diam{X} \le \delta
\]
for any $X \in {\mathcal X}_\alpha^\delta$ and $\epsilon > 0$. Thus we obtain the conclusion.
\end{proof}

\subsection{Continuity}

{We prove Theorem \ref{thm:mugen} in this subsection.}

\begin{lemma} \label{lem:continuity02}
For $\alpha, \beta \in \widetilde V$, we have
\[
\rho(\cal X_\alpha, \cal X_\beta) \le \|\alpha-\beta\|_1.
\]
In particular,
\[
\rho(\cal X_\alpha, \cal X_\beta) \le \max\{N(\alpha), N(\beta) \} \|\alpha-\beta\|_\infty.
\]
\end{lemma}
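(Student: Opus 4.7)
The plan is to work directly from the series formula for $\rho$ (Definition~\ref{def:rho}) and bound $d_\Hau^{\,\square}(\cal X_\alpha \cap \cal X(k,k), \cal X_\beta \cap \cal X(k,k)) \le \|\alpha-\beta\|_1$ uniformly in $k$. Since $\sum_{k=1}^\infty (4k\cdot 2^k)^{-1} = (\ln 2)/4 \le 1$, summing then yields the first inequality. The heart of the argument is therefore: for every $X\in \cal X_\alpha \cap \cal X(k,k)$, produce $Y\in \cal X_\beta \cap \cal X(k,k)$ with $\square(X,Y) \le \|\alpha-\beta\|_1$, together with the symmetric statement obtained by swapping $\alpha$ and $\beta$.

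Given $X\in \cal X_\alpha$ with witness $f : \mathbb N \to X$ and residual $\nu_X := m_X - f_\#\alpha \ge 0$, I would keep the underlying metric space $(X,d_X)$ and only rearrange the measure, setting
\[
m_Y := (1 - \|\beta\|_1)\mu' + f_\# \beta,
\]
where $\mu' := \nu_X/(1-\|\alpha\|_1)$ in the generic case $\|\alpha\|_1 < 1$ and $\mu' := m_X$ in the boundary case $\|\alpha\|_1 = 1$ (where $\nu_X = 0$). Relabeling $f(i)$ to a fixed reference point of $\supp m_X$ for those indices with $\alpha_i = 0$ ensures $\supp f_\#\beta \subseteq \supp m_X$, hence $\supp m_Y \subseteq \supp m_X$. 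A direct verification gives that $m_Y$ is a probability measure with $m_Y - f_\#\beta = (1-\|\beta\|_1)\mu' \ge 0$, so $Y := (X,d_X,m_Y) \in \cal X_\beta$, and the support inclusion shows that $Y \in \cal X(k,k)$.

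For the comparison, writing $m_Y - m_X = -r\nu_X + f_\#(\beta-\alpha)$ with $r(1-\|\alpha\|_1) = \|\beta\|_1 - \|\alpha\|_1$ (and an analogous identity in the boundary case), the triangle inequality combined with $\|f_\#(\beta-\alpha)\|_{\mathrm{TV}} \le \|\alpha-\beta\|_1$ and $\bigl|\|\alpha\|_1 - \|\beta\|_1\bigr| \le \|\alpha-\beta\|_1$ yields $\|m_X - m_Y\|_{\mathrm{TV}} \le 2\|\alpha-\beta\|_1$. Because $X$ and $Y$ share the same underlying metric space, the maximal coupling of $m_X$ and $m_Y$ concentrates mass $1 - \tfrac{1}{2}\|m_X - m_Y\|_{\mathrm{TV}}$ on the diagonal of $X \times X$; parametrizing it by $[0,1]$ produces Borel parameters $a, b : [0,1] \to X$ that coincide on a subset $I_0 \subset [0,1]$ of $\cal L^1$-measure at least $1-\tfrac{1}{2}\|m_X-m_Y\|_{\mathrm{TV}}$, on which $a^\ast d_X = b^\ast d_X = b^\ast d_Y$. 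Thus $\square(X,Y) \le \tfrac{1}{2}\|m_X-m_Y\|_{\mathrm{TV}} \le \|\alpha-\beta\|_1$, and by symmetry $d_\Hau^{\,\square}(\cal X_\alpha \cap \cal X(k,k), \cal X_\beta \cap \cal X(k,k)) \le \|\alpha-\beta\|_1$ for every $k$.

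The ``in particular'' statement follows by taking sorted representatives in $V$: the nonzero coordinates of $\alpha$ and $\beta$ then live in $\{1,\dots,\max\{N(\alpha), N(\beta)\}\}$, so $\|\alpha-\beta\|_1 \le \max\{N(\alpha), N(\beta)\}\,\|\alpha-\beta\|_\infty$, and composing with the first inequality closes the argument. The step I expect to be most delicate is guaranteeing $\supp m_Y \subseteq \supp m_X$ so that $Y$ remains in $\cal X(k,k)$; this is precisely what dictates both the relabeling of $f$ at indices with $\alpha_i = 0$ and the use of $\nu_X$ (rather than an external auxiliary probability measure) in the construction of $m_Y$. The boundary case $\|\alpha\|_1 = 1$ and the tracking of the factor of $2$ in the TV estimate are essentially routine.
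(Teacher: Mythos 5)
Your proposal is correct and follows essentially the same route as the paper: fix the underlying metric space of $X$, redistribute the measure as a combination of the rescaled residual $\nu_X$ and $f_\#\beta$, bound $\|m_X - m_Y\|_{\mathrm{TV}} \le 2\|\alpha-\beta\|_1$, convert to a box-distance bound, and sum the series defining $\rho$. The only (harmless) deviations are that you obtain $\square(X,Y)\le\tfrac12\|m_X-m_Y\|_{\mathrm{TV}}$ via a maximal coupling where the paper cites $\square\le 2\|\cdot\|_{\mathrm{TV}}$, and that you are slightly more careful about keeping $\supp m_Y\subseteq\supp m_X$ and about passing to sorted representatives before invoking the $\ell_\infty$ bound.
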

This lemma guarantees the Lipschitz continuity of the correspondence $(V, \|\cdot\|_\infty) \ni \alpha \mapsto \cal X_\alpha \in \Pi$ on $\{\alpha \in V \mid N(\alpha) \le N\}$ for each $N \in \mathbb N$.

\begin{proof}[{Proof of Lemma \ref{lem:continuity02}}]
Let $N \ge 1$ and $R > 0$.
Take any $X \in \cal X_\alpha \cap \cal X(N,R)$ and prove that there exists $Y \in \cal X_\beta \cap \cal X(N,R)$ such that $\square(X, Y) \le 4\|\alpha-\beta\|_1$.
If $\beta = 0$, then let $Y := X$.
We now assume $\beta \neq 0$.
There exists a map $f : \mathbb N \to X$ such that
\[
\nu := m_X - f_\# \alpha
\]
is a nonnegative measure.
Let us consider the probability measure $\tilde m$ on $X$ defined by
\[
\widetilde m = \left\{
\begin{aligned}
& \frac{1}{\|\beta\|_1}f_\# \beta &&\text{if } \|\alpha\|_1 = 1, \\
& \frac{1-\|\beta\|_1}{1-\|\alpha\|_1} \nu + f_\# \beta
&&\text{if } \|\alpha\|_1 < 1.
\end{aligned}
\right.
\]
Note that $Y := (X,d_X,\widetilde m) \in \cal X_\beta \cap \cal X(N,R)$.
Let $\tilde \nu := \widetilde m - f_\# \beta$.
Then we have
\begin{align*}
\| m_X - \widetilde m \|_\mathrm{TV} & \le \| f_\# \alpha - f_\# \beta\|_\mathrm{TV} + \| \nu - \tilde \nu\|_\mathrm{TV} \\
&\le \|\alpha-\beta\|_1 + \left|\|\alpha\|_1-\|\beta\|_1 \right| \\
&\le 2 \|\alpha-\beta\|_1.
\end{align*}
Since $\square((Y,d,\xi), (Y,d,\xi')) \le 2 \|\xi-\xi'\|_\mathrm{TV}$ holds for every complete metric space $(Y,d)$ and probability measures $\xi,\xi' \in \mathscr P(Y)$, we obtain
\[
\square(X,Y) \le 4 \|\alpha-\beta\|_1.
\]
By the definition of $\rho$ (Definition \ref{def:rho}),
we have
\[
\rho(\cal X_\alpha, \cal X_\beta) \le \|\alpha-\beta\|_1.
\]
So, the first statement holds.
The second statement follows from
\[
\| \alpha- \beta\|_1 \le \max\{N(\alpha), N(\beta)\} \| \alpha - \beta\|_\infty.
\]
This completes the proof.
\end{proof}

\begin{proof}[{Proof of Theorem \ref{thm:mugen}}]
Recall that the injectivity is already proved by Proposition \ref{prop:injective}.
Since $(V, \|\cdot\|_\infty)$ is compact (Lemma \ref{lem:uniform_vs_pointwise}) and $\Pi$ is Hausdorff, we only prove the continuity.
Let us take $\alpha_n, \alpha \in V$ with $\|\alpha_n - \alpha\|_\infty \to 0$ as $n \to \infty$.

By the compactness of $\Pi$, we can assume that $\cal X_{\alpha_n}$ converges weakly to a pyramid $P$.
Let us prove $P = \cal X_\alpha$.
For any $n$ and $N$, we have $\cal X_{\alpha_n} \subset \cal X_{\alpha_n[N]}$.
By Lemma \ref{lem:continuity02},
$\cal X_{\alpha_n[N]}$ converges weakly to $\cal X_{\alpha[N]}$, so that $P \subset \cal X_{\alpha[N]}$.
Thus we obtain
\[
P \subset \bigcap_{N=1}^\infty \cal X_{\alpha[N]} = \cal X_\alpha
\]
by Lemma \ref{prop:intersection}.
We prove the converse inclusion by constructing a sequence $\{X_n\}_{n=1}^\infty$ of pm-spaces with $X_n \in \cal X_{\alpha_n}$ such that $\{X_n\}_{n=1}^\infty$ infinite dissipates with atoms $\alpha_c[N]$ for every $N$.
Indeed, if such sequence $\{X_n\}_{n=1}^\infty$ exists, then a weak limit of weakly convergent subsequence of $\{X_n\}_{n=1}^\infty$ contains $\cal X_{\alpha_c[N]}$ by
Corollary \ref{cor:dissp_pyramid_02}
and is contained in $P$ since $X_n \in \cal X_{\alpha_n}$, so that we obtain
\[
\cal X_\alpha = \overline{\bigcup_{N=1}^\infty \cal X_{\alpha_c[N]}} \subset P.
\]
We define a pm-space $X_n$ as $X_n = \{y_1, \ldots, y_n, x_1, x_2, \ldots \}$ with metric
\[
d_{X_n}(x,x') = n \text{ if } x \neq x'
\]
and measure
\[
m_{X_n} = \sum_{i=1}^\infty \alpha_{ni}\delta_{x_i}+ \sum_{i=1}^n \frac{1-\|\alpha_n\|_1}{n} \delta_{y_i}.
\]
Note that $X_n \in \cal X_{\alpha_n}$. Let us prove that $\{X_n\}_{n=1}^\infty$ infinite dissipates with atoms $\alpha_c[N]$. Take any $\kappa_1, \ldots, \kappa_M \ge 0$ with $\sum_{i=1}^M \kappa_i + \|\alpha\|_1 \le 1$ and take any $\epsilon > 0$.
Then, by \eqref{eq:unform_convergence001}, there exists $i_0 > N$ such that $\sup_{n} \alpha_{ni_0} < \epsilon$.
Take any sufficiently large $n$ satisfying
\[
\sum_{i=1}^{i_0}\left|\alpha_{ni} - \alpha_i\right| < \epsilon, \quad  \text{and} \quad \frac{1-\|\alpha_n\|_1}{n} < \epsilon.
\]
We rename points of $X_n$ as
\[
z_i := \begin{cases}
y_i & \text{if } i=1,\ldots,n, \\
x_{i-n+i_0} & \text{if } i > n.
\end{cases}
\]
Then
\[
X_n=\{x_1,\ldots,x_{i_0},z_1,z_2,\ldots\}.
\]
We see that $m_{X_n}(\{z_i\}) < \epsilon$ for any $i$.
Moreover, we have
\begin{equation}\label{eq:homeo_thm}
m_{X_n}\left(\{z_1,z_2,\ldots\} \right) = 1-\sum_{i=1}^{i_0} \alpha_{ni} \ge 1-\sum_{i=1}^{i_0}\alpha_i -\epsilon \ge \sum_{i=1}^M \kappa_i -\epsilon.
\end{equation}
Borel subsets $A_1, \ldots, A_M$ of $X_n$ are defined as
\begin{align*}
A_i := \{z_{\ell_{i-1}+1}, \ldots, z_{\ell_i}\},
\end{align*}
where $\ell_0 := 0$ and
\[
\ell_i := \min\left\{\ell \ge \ell_{i-1} \, \middle| \, m_{X_n}(\{z_{\ell_{i-1}+1}, \ldots, z_\ell\})\ge \kappa_i -2\epsilon\right\}
\]
for $1\le i\le M$. Here, each $\ell_i$ is well-defined, because $m_{X_n}(A_i) \le \kappa_i-\epsilon$ whenever $\kappa_i - 2\epsilon > 0$ and \eqref{eq:homeo_thm} hold.
And let
\begin{align*}
A_0 &:= X_n \setminus \left(\bigcup_{i=1}^M A_i \cup \{x_1, \ldots, x_{N-1}\}\right)\\
&= \{x_N, \ldots, x_{i_0}, z_{\ell_M+1}, z_{\ell_M+2},\ldots \}.
\end{align*}
Then we have
\[
m_{X_n}(A_0) \ge 1- \sum_{i=1}^{N-1} \alpha_{ni} - \sum_{i=1}^{M} m_{X_n}(A_i) \ge 1 - \sum_{i=1}^{N-1} \alpha_i - \sum_{i=1}^{M} \kappa_i -\epsilon  \ge \sum_{i=N}^\infty \alpha_i -\epsilon.
\]
Therefore we obtain
\begin{align*}
&\Sep \left(X_n; \alpha_1-\epsilon, \ldots, \alpha_{N-1}-\epsilon, \sum_{i=N}^\infty \alpha_i-\epsilon, \kappa_1-2\epsilon, \ldots, \kappa_M-2\epsilon \right) \\
&\ge \min \left\{\min_{1\le i\neq j \le N-1} d_{X_n}(x_i, x_j), \min_{\substack{1\le i\le N-1,\\ 0\le j \le M}}d_{X_n}(x_i, A_j), \min_{0\le i\neq j \le M} d_{X_n}(A_i, A_j) \right\} \\
&= n,
\end{align*}
which implies that $\{X_n\}_{n=1}^\infty$ infinite dissipates with atoms $\alpha_c[N]$. The proof is completed.
\end{proof}

\begin{remark}
Our proof of Theorem {\ref{thm:mugen}} is due to an application of dissipation property (Proposition \ref{prop:dissip_pyramid} and Corollary \ref{cor:dissp_pyramid_02}).
Another direct proof appears in \cite{KNS}.
\end{remark}

\subsection{Pyramids {generated by atoms with bounded diameter}}
We are going to determine the topology of the space
\[
\cal S := \left\{ \cal X_\alpha^\delta \in \Pi \,\middle|\, \alpha \in V, \delta \in [0,\infty] \right\}.
\]
Here, $\cal X_\alpha^\infty := \cal X_\alpha$.
Note that
\[
\cal X_\alpha^\delta = \{\ast\} \iff \alpha = 1 \text{ or } \delta = 0.
\]
Let us consider the map
\[
\Phi : V \times [0,\infty] \ni (\alpha, \delta) \mapsto \cal X_\alpha^\delta \in \cal S.
\]
We introduce the equivalent relation on $V \times [0,\infty]$ generated by
\[
(\alpha, \delta) \sim (\beta, \delta') \iff
(\alpha=1 \text{ or } \delta = 0) \text{ and } (\beta = 1 \text{ or } \delta' = 0).
\]
Let us denote by $[\alpha, \delta]$ the equivalent class of $(\alpha, \delta)$.
Then the map
\[
\Psi([\alpha, \delta]) := \Phi(\alpha, \delta)
\]
is well-defined.
Furthermore, we have
\begin{theorem} \label{thm:homeo_with_delta}
The map $\Psi : V \times [0,\infty]/\!\!\sim\, \to \mathcal S$ is a homeomorphism.
Here, $V \times [0,\infty]/\!\!\sim$ is equipped with the quotient topology of the product topology of $V$ and $[0,\infty]$.
\end{theorem}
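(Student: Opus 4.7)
The plan is to show $\Psi$ is a continuous bijection from the compact quotient $V\times[0,\infty]/{\sim}$ (compact by Lemma \ref{lem:uniform_vs_pointwise}) into the Hausdorff space $\Pi$, which forces it to be a homeomorphism onto $\mathcal S$. Well-definedness follows from $\mathcal X_1=\mathcal X_\alpha^0=\{\ast\}$, and surjectivity is the definition of $\mathcal S$. For injectivity, suppose $\mathcal X_\alpha^\delta=\mathcal X_\beta^{\delta'}$ with neither side equal to $\{\ast\}$, so $\alpha,\beta\neq 1$ and $\delta,\delta'>0$. The corollary closing Subsection \ref{subsec:dissip}, applied to a finite $\kappa$ with $N(\alpha,\kappa)\ge 2$ and $\|\kappa\|_1+\|\alpha\|_1\le 1$, recovers $\delta=\Sep(\mathcal X_\alpha^\delta;\alpha,\kappa)$; the same statement with $\beta$ in place of $\alpha$ forces $\delta=\delta'$. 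To identify $\alpha=\beta$, I construct a bounded-diameter analogue of \eqref{eq:X_alpha} on $[-1,0]\cup\mathbb N$ with $d(x,y)=\delta|x-y|$ on $[-1,0]\times[-1,0]$ and $d\equiv\delta$ on every other pair of distinct points, and measure $(1-\|\alpha\|_1)\mathcal L^1\lfloor_{[-1,0]}+\sum\alpha_i\delta_i$. This pm-space lies in $\mathcal X_\alpha^\delta=\mathcal X_\beta^\delta$, so there is $f:\mathbb N\to[-1,0]\cup\mathbb N$ with $m\ge f_\#\beta$; as the continuous piece is atom-free, $f(i)\in\mathbb N$ whenever $\beta_i>0$, and comparing atomic parts yields $\sum_{f(i)=j}\beta_i\le\alpha_j$. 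Swapping $\alpha$ and $\beta$ forces $\|\alpha\|_1=\|\beta\|_1$ and hence equality throughout, exhibiting $\alpha$ as a contraction of $\beta$; by Proposition \ref{prop:bi-contraction}, $\alpha=\beta$.

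The main step is continuity of $\Phi(\alpha,\delta):=\mathcal X_\alpha^\delta$ at a point with $\alpha\neq 1$ and $\delta>0$. Given $(\alpha_n,\delta_n)\to(\alpha,\delta)$, compactness of $\Pi$ lets us pass to a subsequence with $\mathcal X_{\alpha_n}^{\delta_n}\to Q$ weakly. For $Q\subset\mathcal X_\alpha^\delta$, lower semicontinuity of $\mathrm{diam}$ in the box topology gives $Q\subset\mathcal X^\delta$, while for $X\in Q$ realized as a box-limit of $X_n\in\mathcal X_{\alpha_n}^{\delta_n}$ I embed everything in a common complete separable metric space with $m_{X_n}\to m_X$ weakly (Lemma \ref{lem:mG_convergece}), choose $f_n:\mathbb N\to X_n$ with $m_{X_n}\ge(f_n)_\#\alpha_n$, extract by tightness and a diagonal argument convergent $f_n(i)\to x_i$, and use the pointwise convergence $\alpha_{n,i}\to\alpha_i$ (Lemma \ref{lem:uniform_vs_pointwise}) to pass each partial sum $\sum_{i=1}^N\alpha_{n,i}\delta_{f_n(i)}$ to the weak limit, concluding $m_X\ge\sum\alpha_i\delta_{x_i}$ and $X\in\mathcal X_\alpha$.

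For the opposite inclusion $Q\supset\mathcal X_\alpha^\delta$, I adapt the dissipation construction from the proof of Theorem \ref{thm:mugen}: put $Y_n:=\{y_1,\dots,y_n,x_1,x_2,\dots\}$ with all distinct pairs at distance $\delta_n$ and measure $\sum\alpha_{n,i}\delta_{x_i}+\frac{1-\|\alpha_n\|_1}{n}\sum_{j=1}^n\delta_{y_j}$, so that $Y_n\in\mathcal X_{\alpha_n}^{\delta_n}$. The argument of Theorem \ref{thm:mugen} then applies almost verbatim, using the uniform tail bound \eqref{eq:unform_convergence001}, except that the separation estimates produce $\ge\delta_n$ instead of $\ge n$; hence $\{Y_n\}$ $\delta$-dissipates with atoms $\alpha_c[N]$ for every $N$, and thus with atoms $\alpha$ by Corollary \ref{cor:dissip_pyramid_01}. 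Proposition \ref{prop:dissip_pyramid} applied to any weakly convergent subsequence of $\{\mathcal PY_n\}$ produces a limit containing $\mathcal X_\alpha^\delta$, and this limit sits inside $Q$ because $\mathcal PY_n\subset\mathcal X_{\alpha_n}^{\delta_n}$.

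The degenerate cases are direct: if $\delta_n\to 0$ then every $X\in\mathcal X_{\alpha_n}^{\delta_n}$ has $\square(X,\ast)\le\mathrm{diam}(X)\le\delta_n\to 0$, so $Q=\{\ast\}$; if $\alpha_n\to 1$ uniformly, then $\alpha_{n,1}\to 1$ makes every $X\in\mathcal X_{\alpha_n}$ carry an atom of mass at least $\alpha_{n,1}$, and the parameter that sends $[0,\alpha_{n,1}]$ to that atom shows $\square(X,\ast)\to 0$, again giving $Q=\{\ast\}$. Hence $\Phi$ is continuous on all of $V\times[0,\infty]$ and descends to the required $\Psi$. The principal obstacle I anticipate is guaranteeing that the construction $\{Y_n\}$ above really $\delta$-dissipates with atoms $\alpha$ when $\alpha_n\to\alpha$ only in $\ell_\infty$; the uniform control of the tails supplied by \eqref{eq:unform_convergence001} from Lemma \ref{lem:uniform_vs_pointwise} is exactly what makes the argument of Theorem \ref{thm:mugen} transfer to this setting.
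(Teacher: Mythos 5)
Your proof is correct, and its overall skeleton (continuous bijection from the compact quotient into the Hausdorff space $\Pi$, injectivity via a concrete test space plus Proposition \ref{prop:bi-contraction}, the containment $Q\supset\cal X_\alpha^\delta$ via dissipation and Proposition \ref{prop:dissip_pyramid}) matches the paper's. There are two genuine differences in route worth recording. First, to recover $\delta=\delta'$ you invoke the corollary $\Sep(\cal X_\alpha^\delta;\alpha,\kappa)=\delta$, whereas the paper reads $\delta$ and $\|\alpha\|_1$ off a single test space $([0,\delta]\cup\{x_i\},d,\mu)$ using that the Lebesgue part is atom-free; both work, and yours makes the diameter detection a consequence of the separation-distance calculus rather than of a bespoke measure comparison. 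Second, and more substantively, you establish joint continuity of $\Phi$ at $(\alpha,\delta)$ directly, by extracting a weak limit $Q$ of $\cal X_{\alpha_n}^{\delta_n}$ and proving both inclusions with both parameters varying simultaneously (the upper inclusion by a tightness/diagonal extraction of the atom locations together with lower semicontinuity of $\diam$, the lower one by rerunning the dissipation construction of Theorem \ref{thm:mugen} at scale $\delta_n$ and using the uniform tail bound \eqref{eq:unform_convergence001}). The paper instead proves continuity separately in $\alpha$ (for fixed $\delta$, by the Theorem \ref{thm:mugen} argument) and in $\delta$ (for fixed $\alpha$, via Proposition \ref{prop:monotone_convergence} and the metric truncation $\min\{d_X,\delta'\}$); since separate continuity does not formally imply joint continuity, your one-shot argument is, if anything, the more airtight organization of this step. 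Two small points you should make explicit in a final write-up: the case $\delta=\infty$ (where your $Y_n$ should carry distance, say, $\min\{\delta_n,n\}$ and you should quote Corollary \ref{cor:dissp_pyramid_02} in place of Proposition \ref{prop:dissip_pyramid}), and the justification that a weak limit of $\{\cal PY_{n_k}\}$ is contained in $Q$ because $\cal PY_{n_k}\subset\cal X_{\alpha_{n_k}}^{\delta_{n_k}}$; both are routine.
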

\begin{proof}
To prove the injectivity of $\Psi$, it suffices to show that $\Phi$ is injective on $(V\times [0,\infty]) \setminus \{ (\alpha ,\delta) \mid \alpha = 1 \text{ or } \delta = 0\}$.
This follows from an argument similar to the proof of Proposition \ref{prop:injective}.
We give a proof for reader's convenience.
Let $(\alpha, \delta), (\beta, \delta')$ with $\alpha \ne 1 \ne \beta$ and $\delta, \delta' > 0$.
Suppose $\cal X_\alpha^\delta = \cal X_{\beta}^{\delta'}$.
We may assume $\delta, \delta' < \infty$.
Let us consider a pm-space
\[
X_\alpha = ([0,\delta] \cup \left\{ x_i\right\}_{i=1}^{N(\alpha)}, d, \mu)
\]
where the metric and the measure are defined by
\[
d(t, x_i) = \delta = d(x_i,x_j) \text{ and } d(s,t) = |s-t|
\]
for $i \ne j$ and $s,t \in [0,\delta]$, and
\[
\mu = (1 - \|\alpha\|_1) 
\cal L^1 \lfloor_{[0,\delta]}
+ \sum_{i=1}^{N(\alpha)} \alpha_i \delta_{x_i}.
\]
Since $X_\alpha \in \cal X_\beta^{\delta'}$ and the Lebesgue measure $\cal L^1$ has no atom, $\|\beta\|_1 \le \|\alpha\|_1$ and $\delta \le \delta'$.
By symmetry, we have $\|\alpha\|_1 = \| \beta\|_1$ and $\delta = \delta'$.
Using $X_\alpha \in \cal X_\beta^{\delta'} \subset \cal X_\beta$ again and Proposition \ref{prop:bi-contraction}, we have $\alpha = \beta$.

Next, we prove the continuity of $\Psi$.
To do this, it suffices to show that $\Phi$ is continuous on $V \times [0, \infty]$.
By an argument similar to the proof of Theorem {\ref{thm:mugen}}, we know that $\Phi(\alpha, \delta)$ is continuous in $\alpha$.
Indeed, we obtain
\[
\rho(\cal X_\alpha^\delta, \cal X_\beta^\delta) \le \|\alpha-\beta\|_1
\]
and construct a sequence of pm-spaces $\delta$-dissipating with certain atoms.
We prove that $\Phi(\alpha, \delta)$ is continuous in $\delta$.
If $\delta' \to \delta + 0$, then
\[
\lim_{\delta' \to \delta+0} \cal X_\alpha^{\delta'} = \bigcap_{\delta' > \delta} \cal X_\alpha^{\delta'} = \cal X_\alpha^\delta
\]
holds.
If $\delta' \to \delta-0$, then
\[
\lim_{\delta' \to \delta-0} \cal X_\alpha^{\delta'} = \overline{ \bigcup_{\delta' < \delta} \cal X_\alpha^{\delta'}} \subset \cal X_\alpha^\delta
\]
holds.
Let us show the reverse inclusion.
We may assume $\delta > 0$.
Let us take $X =(X,d_X, m_X) \in \cal X_\alpha^\delta$.
For $\delta' < \delta$, we consider
\[
X^{\delta'} := (X, \min\{d_X, \delta'\}, m_X).
\]
Then, $X^{\delta'} \in \cal X_\alpha^{\delta'}$.
By \cite{K:mtf}, we have $\square(X^{\delta'}, X) \to 0$ as $\delta' \to \delta-0$.
So, the above inclusion is actually the equality.
This completes the proof.
\end{proof}

\begin{proposition} \label{prop:concentrated_X_alpha^delta}
\label{prop:delta_alpha_conc}
The pyramid $\cal X^\delta_\alpha$ is concentrated
if and only if $0 \le \delta < \infty$ and $\|\alpha\|_1 = 1$.
In particular, in the case of $\delta = \infty$ or $\|\alpha\|_1<1$, 
$\cal X_\alpha^\delta$ is non-trivial or is $\cal X$.
\end{proposition}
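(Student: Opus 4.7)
The plan is to prove the equivalence by separating the three regimes: $(\delta<\infty, \|\alpha\|_1=1)$ gives concentration, while each of $(\delta=\infty, \alpha\ne 1)$ and $(\delta<\infty, \|\alpha\|_1<1)$ prevents it. Throughout I will use the definition of $d_\conc$ as the Gromov-Hausdorff distance between $(\cal L_1(X),d_{\mathrm{KF}}^{m_X})$, so that a pyramid $P$ is concentrated precisely when $\{\cal L_1(X):X\in P\}$ is uniformly Gromov-Hausdorff precompact, equivalently when $P\subset\cal X$ is totally bounded in $d_\conc$.

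For the sufficient direction, assume $\delta<\infty$ and $\|\alpha\|_1=1$. Every $X\in\cal X_\alpha^\delta$ is purely atomic with $m_X=\sum_i\alpha_i\delta_{f(i)}$ and $\mathrm{diam}\,X\le\delta$, and the truncation from the proof of Proposition~\ref{prop:lower_approximation_of_pyramid_with_atoms} assigns to each such $X$ an $X_N\in\cal X_{\alpha_c[N]}^\delta$ satisfying $\square(X,X_N)\le 4\sum_{i\ge N}\alpha_i$, a bound depending only on $N$ and $\alpha$ (this uniformity is exactly what $\|\alpha\|_1=1$ buys). Each $\cal X_{\alpha_c[N]}^\delta$ consists of pm-spaces supported on at most $N$ atoms of prescribed masses summing to $1$ with pairwise distances in $[0,\delta]$, and is box-compact by a standard diagonal extraction on the distance matrix. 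Since the box topology refines $d_\conc$, each truncation is $d_\conc$-compact, so the uniform box-approximation forces $\cal X_\alpha^\delta$ to be totally bounded in $d_\conc$, i.e.\ concentrated.

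For the necessary direction, in each failure regime I produce a family inside $\cal X_\alpha^\delta$ whose $\cal L_1$'s have unbounded covering number at a scale independent of the family parameter. If $\delta=\infty$ and $\alpha\ne 1$, then $\cal X_\alpha$ is scale-invariant and nontrivial by Theorem~\ref{thm:mugen}, so $\{tX_0\}_{t>0}\subset\cal X_\alpha$ for some $X_0\ne\ast$; picking $f_0\in\Lip_1(X_0)$ witnessing $D:=\ObsDiam(X_0;-\kappa)>0$ for a suitable $\kappa\in(0,1)$ (e.g.\ $\kappa$ smaller than the masses of two disjoint small balls in $\supp m_{X_0}$), the subfamily $\{[\lambda tf_0]:\lambda=k\cdot 2\kappa/(tD),\, 0\le k\le tD/(2\kappa)\}\subset\cal L_1(tX_0)$ is pairwise $\ge\kappa$-separated in Ky Fan by a direct partial-diameter computation and has cardinality $\sim tD/\kappa\to\infty$. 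If instead $\delta<\infty$ and $\|\alpha\|_1<1$, the observable diameter is trapped in $[0,\delta]$, so I construct $X_N\in\cal X_\alpha^\delta$ carrying the $\alpha$-atoms at $x_i$ together with $N$ auxiliary atoms $y_1,\ldots,y_N$ of common mass $(1-\|\alpha\|_1)/N$, all at mutual distance $\delta$. For $A\subset\{1,\ldots,N\}$ of size $\lfloor N/2\rfloor$, the function $f_A$ defined by $f_A(y_j)=\delta\cdot 1_{j\in A}$ and $f_A(x_i)=0$ is $1$-Lipschitz, and the symmetry $|A|=|B|$ ensures the optimal additive constant in the quotient $\cal L_1$ vanishes, yielding $d_{\mathrm{KF}}([f_A],[f_B])=\min\{\delta,(1-\|\alpha\|_1)|A\triangle B|/N\}$. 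A Gilbert--Varshamov-type count produces $2^{cN}$ such subsets with $|A\triangle B|\ge N/4$, giving $2^{cN}$ many $\epsilon_0$-separated classes in $\cal L_1(X_N)$ at scale $\epsilon_0:=\min\{\delta,(1-\|\alpha\|_1)/4\}$, uniform in $N$.

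The main obstacle is the third regime: since observable diameters are bounded by $\delta$, one cannot reduce to the scaling argument of the second regime and must probe the metric geometry of $\cal L_1$ directly. The combinatorial input (exponentially many binary strings with large pairwise Hamming distance) is standard, and the principal care lies in verifying that the quotient $\cal L_1(X_N)=\Lip_1(X_N)/\mathbb R$ does not collapse the separation, which is handled by restricting to equal-size subsets so that the three-valued random variable $f_A-f_B$ is symmetric about $0$ and the optimal additive constant is trivially zero.
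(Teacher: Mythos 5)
Your proof is correct in substance, but it takes a genuinely different route from the paper in two of the three regimes. For sufficiency, the paper observes in one line that when $\|\alpha\|_1=1$ and $\delta<\infty$ the whole pyramid collapses to $\cal X_\alpha^\delta=\cal PX$ for the single countable discrete space $X$ carrying atoms of mass $\alpha_i$ at mutual distance $\delta$ (every $Y\in\cal X_\alpha^\delta$ satisfies $m_Y=f_\#\alpha$, and $x_i\mapsto f(i)$ is $1$-Lipschitz since $\diam Y\le\delta$), and a pyramid with an apex is automatically concentrated; your truncation-plus-compactness argument via $\cal X_{\alpha_c[N]}^\delta$ is valid but reproves with total boundedness what the single-apex observation gives for free. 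For the regime $\|\alpha\|_1<1$, the paper instead passes through the infinite $\ell_\infty$-product: it generalizes \cite[Proposition 7.37]{S} to show that the sequence $(X^n,d_{\ell_\infty},m_n)$ built from any $X\in\cal X_\alpha^\delta$ with $\#\supp(m_X-f_\#\alpha)\ge 2$ is not $d_\conc$-Cauchy (using the coordinate functions $\varphi\circ\mathrm{pr}_i$, which stay at a fixed positive Ky Fan distance from one another), and that its weak limit sits inside $\cal X_\alpha^\delta$. Your single-space construction with $N$ equal auxiliary atoms and indicator functions of half-size subsets is more elementary and self-contained, at the cost of the bookkeeping about the quotient by constants; note that your claimed equality $d_{\mathrm{KF}}([f_A],[f_B])=\min\{\delta,(1-\|\alpha\|_1)|A\triangle B|/N\}$ is not quite what a careful minimization over the additive constant gives --- one gets a uniform lower bound of the form $\min\{\delta/2,\,(1-\|\alpha\|_1)|A\triangle B|/(2N)\}$, which is all you need, and even a linearly growing separated family would suffice in place of the Gilbert--Varshamov count. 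For $\delta=\infty$ the two arguments coincide in spirit: the paper computes $\ObsDiam(\cal X_\alpha;-\kappa)=\infty$ for $\kappa<\min\{\alpha_1,1-\alpha_1\}$ and invokes the standard fact that unbounded observable diameter precludes concentration, while you unpack that fact by exhibiting the separated family $\{[\lambda tf_0]\}$ in $\cal L_1(tX_0)$.
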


In order to prove this {proposition}, we first generalize \cite[Proposition 7.37]{S} to the following Proposition \ref{prop:7.37_alpha}.

\begin{proposition}\label{prop:7.37_alpha}
Let $\alpha \in V$ with $\|\alpha\|_1<1$ and let $p \in [1,\infty]$.
Assume that $X \in \cal X_\alpha$ {satisfies} $\#\supp\,(m_X-f_\#\alpha)\ge 2$, where $f$ is a map in the definition of $\cal X_\alpha$.
Put $\nu_X := m_X-f_\#\alpha$ and define a probability measure $m_n$ on $X^n$ by
\[
m_n :=  (1-\|\alpha\|_1)\left(\frac{\nu_X}{1-\|\alpha\|_1}\right)^{\otimes n} + \sum_{i=1}^\infty \alpha_i \delta_{(f(i), f(i), \ldots, f(i))}.
\]
Then $\{(X^n, d_{\ell_p}, m_n)\}$ is not $d_\conc$-Cauchy.
\end{proposition}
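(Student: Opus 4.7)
The plan is to derive the non-Cauchy property by comparing $\{X_n := (X^n, d_{\ell_p}, m_n)\}$ with the purely iid sequence $\{\widetilde Y^n_p\}$, where $\widetilde Y := (X, d_X, \mu)$ and $\mu := \nu_X/c$ with $c := 1-\|\alpha\|_1 \in (0,1]$. Under the hypothesis $\#\supp\nu_X \ge 2$ we have $\#\supp\mu \ge 2$, so $\widetilde Y \ne \ast$, and Shioya's \cite[Proposition 7.37]{S} guarantees that the pyramid $\widetilde Y^\infty_p$ is not concentrated. The single observation underlying the argument is that, as Borel measures on $X^n$,
\[
m_n \ge c\,\mu^{\otimes n},
\]
which will allow me to transfer non-concentration of $\widetilde Y^\infty_p$ to non-Cauchyness of $\{X_n\}$.

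From this measure-theoretic inequality I first deduce a Ky Fan comparison: for every 1-Lipschitz $f, g : (X^n, d_{\ell_p}) \to \mathbb R$,
\[
c\,d_{\mathrm{KF}}^{\mu^{\otimes n}}(f, g) \le d_{\mathrm{KF}}^{m_n}(f, g),
\]
because if $\epsilon := d_{\mathrm{KF}}^{m_n}(f,g)$ then $c\,\mu^{\otimes n}(|f-g|>\epsilon) \le m_n(|f-g|>\epsilon) \le \epsilon$, giving $\mu^{\otimes n}(|f-g|>\epsilon) \le \epsilon/c$. Taking the infimum over constant shifts, the same inequality persists on the quotient $\cal L_1(X^n, d_{\ell_p})$, so the identity map is $c^{-1}$-Lipschitz from $(\cal L_1(X^n, d_{\ell_p}), d_{\mathrm{KF}}^{m_n})$ to $(\cal L_1(X^n, d_{\ell_p}), d_{\mathrm{KF}}^{\mu^{\otimes n}})$. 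Since both metrics have diameter at most $1$, GH-precompactness of the first family over $n$ forces GH-precompactness of the second.

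Assuming for contradiction that $\{X_n\}$ is $d_{\conc}$-Cauchy, I proceed as follows. The coordinate projection $\pi_n : X^{n+1} \to X^n$ is 1-Lipschitz and satisfies $(\pi_n)_\# m_{n+1} = m_n$, so $\{X_n\}$ is monotone nondecreasing in $\prec$; by Proposition \ref{prop:monotone_convergence}, its weak limit $P \in \Pi$ exists. Shioya's identification of concentrated pyramids with the image of the $d_{\conc}$-completion $\overline{\cal X}$ in $\Pi$ (see the discussion in Subsection \ref{subsec:conc_pyramid}) then forces $P$ to be concentrated, hence $\{\cal L_1(X_n)\}_n$ is GH-precompact. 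By the comparison above, $\{\cal L_1(\widetilde Y^n_p)\}_n$ is GH-precompact as well. Since every $Y \prec \widetilde Y^n_p$ yields an isometric embedding $\cal L_1(Y) \hookrightarrow \cal L_1(\widetilde Y^n_p)$ via the pullback of 1-Lipschitz functions, and $\cal L_1$ is continuous under box (hence $d_{\conc}$) convergence, a standard approximation argument upgrades this to GH-precompactness of $\{\cal L_1(Y) : Y \in \widetilde Y^\infty_p\}$; thus $\widetilde Y^\infty_p$ is concentrated, contradicting \cite[Proposition 7.37]{S}.

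The main obstacle I foresee is the final approximation step, where GH-precompactness of the countable subfamily $\{\cal L_1(\widetilde Y^n_p)\}_n$ must be upgraded to that of the full family $\{\cal L_1(Y) : Y \in \widetilde Y^\infty_p\}$ demanded by the definition of concentrated pyramid. This relies on stability of GH-precompactness under isometric subspaces and under GH-limits, combined with the identity $\widetilde Y^\infty_p = \overline{\bigcup_n \cal P\widetilde Y^n_p}$, but must be spelled out carefully. Every other step reduces directly to the measure-wise inequality $m_n \ge c\,\mu^{\otimes n}$ and facts about pyramids already developed earlier in the paper.
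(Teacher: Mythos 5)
Your proposal is correct, but it takes a genuinely different route from the paper's own proof. The paper argues directly and self-containedly: choosing a $1$-Lipschitz $\varphi$ that is non-constant on $\supp\,\nu_X$, the coordinate functions $\varphi_{n,i}=\varphi\circ\mathrm{pr}_i$ form, inside $(\cal L_1(X^n,d_{\ell_p}),d_{\mathrm{KF}}^{m_n})$, an $n$-point set whose pairwise distances all equal the fixed positive number $\epsilon_0=d_{\mathrm{KF}}^{m_2}([\varphi_{2,1}],[\varphi_{2,2}])$ (because $(\mathrm{pr}_i,\mathrm{pr}_j)_\#m_n=m_2$ for $i\neq j$); hence $\{\cal L_1(X^n,d_{\ell_p},m_n)\}_n$ is not uniformly totally bounded, so not GH-precompact, and the sequence cannot be $d_\conc$-Cauchy. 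You instead reduce to Shioya's \cite[Proposition 7.37]{S} for the iid product $\widetilde Y^n_p$ via the measure bound $m_n\ge(1-\|\alpha\|_1)\mu^{\otimes n}$; the individual steps are sound --- the Ky Fan comparison $c\,d_{\mathrm{KF}}^{\mu^{\otimes n}}\le d_{\mathrm{KF}}^{m_n}$ is verified correctly, uniform total boundedness does transfer along the $c^{-1}$-Lipschitz identity map via $\epsilon$-nets, and the passage between $d_\conc$-Cauchyness, concentratedness of the monotone weak limit, and GH-precompactness of the $\cal L_1$'s is exactly the machinery of \cite[Theorems 7.25 and 7.27]{S} that the paper also invokes implicitly. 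What the paper's argument buys is brevity and independence from \cite[Proposition 7.37]{S} (indeed it is essentially the same separated-set argument that proves that proposition, adapted to the non-product measure $m_n$). What your reduction buys is conceptual economy, but note two points: first, the ``upgrade'' step you flag as the main obstacle can be short-circuited, since for a monotone nondecreasing sequence GH-precompactness of $\{\cal L_1(\widetilde Y^n_p)\}_n$ already yields $d_\conc$-Cauchyness of $\{\widetilde Y^n_p\}$ by \cite[Theorem 7.25]{S}, contradicting \cite[Proposition 7.37]{S} directly without discussing the full family $\{\cal L_1(Y)\mid Y\in\widetilde Y_p^\infty\}$; second, you must use \cite[Proposition 7.37]{S} in the form ``$\{\widetilde Y^n_p\}$ is not $d_\conc$-Cauchy'' (equivalently, $\widetilde Y_p^\infty$ is not concentrated), which is its actual statement, rather than the weaker ``$\widetilde Y_p^\infty$ is not represented by a pm-space''.
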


\begin{proof}
Since $\#\supp\,\nu_X \ge 2$, there exists a 1-Lipschitz function $\varphi \colon X \to \mathbb R$ that is not constant on $\supp\,\nu_X$.
We define functions $\varphi_{n,i} \colon X^n \to \mathbb R$, $i=1,2,\ldots,n$, by
\[
\varphi_{n,i}(x_1, \ldots, x_n) := \varphi(x_i) = \varphi \circ \mathrm{pr}_i (x_1, \ldots, x_n).
\]
Here, $\mathrm{pr}_i : X^n \to X$ denotes the projection into the $i$-th coordinate.
Clearly, $\varphi_{n,i}$ are 1-Lipschitz functions on $X^n$.
Then we have
\[
d_{\mathrm{KF}}^{m_n}([\varphi_{n,i}], [\varphi_{n,j}])
= d_{\mathrm{KF}}^{m_2}([\varphi_{2,1}], [\varphi_{2,2}])
= d_{\mathrm{KF}}^{m_2}(\varphi_{2,1}, \varphi_{2,2}+c)
\]
for any $i,j$ and for some $c\in \mathbb R$ since
${(\mathrm{pr}_{i}, \mathrm{pr}_{j})}_\# m_n = m_2$.
Moreover, we have $d_{\mathrm{KF}}^{m_2}({\varphi_{2,1}}, {\varphi_{2,2}}+c) > 0$.
Indeed, if not, then for any $x, x' \in \supp\,\nu_X$, $\varphi(x)=\varphi(x')+c$ holds. This contradicts that $\varphi$ is not constant.
Therefore
\[
\epsilon_0 := d_{\mathrm{KF}}^{m_n}([\varphi_{n,i}], [\varphi_{n,j}])
\]
does not depend on $n,i,j$ with $i\neq j$.
This implies $\{\cal L_1(X^n, d_{\ell_p},m_n)\}_{n}$ is not precompact in the Gromov-Hausdorff topology.
On the other hand, from the construction, $\{(X^n, d_{\ell_p},m_n)\}$ is monotone nondecreasing in the Lipschitz order.
So, it has the weak limit.
Therefore, $\{(X^n,d_{\ell_p}, m_n)\}_n$ is not $d_\conc$-Cauchy.
\end{proof}

\begin{remark}
In Proposition \ref{prop:7.37_alpha}, the assumption of $\#\supp\,\nu_X\ge 2$ is essential. Indeed, if the pm-space $X$ is $\mathbb N \cup \{0\}$ with measure
\[
m_X = \sum_{i=1}^\infty\alpha_i\delta_{i} + (1-\|\alpha\|_1)\delta_{0},
\]
then
\[
m_n = \sum_{i=1}^\infty\alpha_i\delta_{(i, \ldots, i)} + (1-\|\alpha\|_1)\delta_{(0,\ldots, 0)}
\]
and $(X^n, d_{\ell_p}, m_n)$ is mm-isomorphic to $(X, n^{1/p}d_X, m_X)$.
If $p=\infty$ in addition, the product space $(X^n, d_{\ell_\infty}, m_n)$ is mm-isomorphic to $X$ for any $n$.
\end{remark}

\begin{proof}[Proof of Proposition \ref{prop:delta_alpha_conc}]
We first prove that $\cal X^\delta_\alpha$ is concentrated if $0 \le \delta < \infty$ and $\|\alpha\|_1 = 1$.
Indeed, the pm-space $X$ given by
\[
X := \{x_1, x_2,\ldots\}, \quad d_X(x_i, x_j) := \delta \text{ if } i\neq j, \quad m_X := \sum_{i=1}^\infty \alpha_i \delta_{x_i}
\]
satisfies $\cal X_\alpha^\delta = \cal P X$. In particular, $\cal X_\alpha^\delta$ is concentrated.

We next prove that $\cal X^\delta_\alpha$ is not concentrated if $\|\alpha\|_1 < 1$.
Take a pm-space $X$ satisfying the assumption of Proposition \ref{prop:7.37_alpha} and $\diam{X} \le \delta$.
Let $P$ be the weak limit of $\{(X^n, d_{\ell_\infty}, m_n)\}_{n=1}^\infty$.
Proposition \ref{prop:7.37_alpha} implies that $P$ is not concentrated.
Since $(X^n, d_{\ell_\infty}, m_n) \in \cal X^\delta_\alpha$ for any $n$, we have $P \subset \cal X^\delta_\alpha$.
Combining them implies that $\cal X^\delta_\alpha$ is not concentrated.

We finally prove that $\cal X^\delta_\alpha$ is not concentrated if $\delta=\infty$. In this case, if $\kappa < \min\{\alpha_1, 1-\alpha_1\}$, then we have
\[
\ObsDiam(\cal X_\alpha;-\kappa) \ge \diam_{\mathbb R}(\alpha_1\delta_0+(1-\alpha_1)\delta_t; 1-\kappa) = t
\]
for any $t > 0$, so that
\[
\ObsDiam(\cal X_\alpha;-\kappa) = \infty,
\]
which implies $\cal X_\alpha$ is not concentrated.
We finish the proof.
\end{proof}

\begin{corollary}
Let $\overline{\cal X}$ denote the completion of $\cal X$ with respect to $d_\conc$.
Then we have $\dim (\Pi \setminus \iota(\overline{\cal X})) = \infty$.
\end{corollary}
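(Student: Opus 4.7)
The plan is to exhibit an explicit subset of $\Pi \setminus \iota(\overline{\cal X})$ that is homeomorphic to the moduli space $V$ of atoms, and then to observe that $V$ itself is infinite-dimensional.

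First I would recall that, by the characterization of concentrated pyramids cited just before Subsection \ref{subsec:conc_pyramid} (Shioya's result $\iota(\overline{\cal X}) = \{P \in \Pi \mid P$ is concentrated$\}$), the complement $\Pi \setminus \iota(\overline{\cal X})$ is exactly the set of non-concentrated pyramids. Proposition \ref{prop:concentrated_X_alpha^delta} then supplies a huge supply of non-concentrated pyramids: taking $\delta = \infty$, the pyramid $\cal X_\alpha = \cal X_\alpha^\infty$ is non-concentrated for every $\alpha \in V$. Hence the injective continuous map $V \ni \alpha \mapsto \cal X_\alpha \in \Pi$ from Theorem \ref{thm:mugen} lands in $\Pi \setminus \iota(\overline{\cal X})$, and by that same theorem it is a homeomorphism onto its image.

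Next I would show that $V$ has infinite topological (covering) dimension. For each $N \in \mathbb N$, the standard closed $N$-simplex
\[
\Delta_N := \left\{ (a_1, \dots, a_N, 0, 0, \dots) \,\middle|\, a_1 \ge a_2 \ge \cdots \ge a_N \ge 0, \ \textstyle\sum_{i=1}^N a_i \le 1 \right\}
\]
embeds homeomorphically into $(V, \|\cdot\|_\infty)$ (on this finite-dimensional piece, the $\ell^\infty$ and Euclidean topologies coincide, and the ordering restriction gives a closed subset of a simplex of positive $N$-dimensional measure, hence homeomorphic to a closed $N$-cell). Since topological dimension is monotone under closed subspaces and $\dim \Delta_N = N$, we obtain $\dim V \ge N$ for every $N$, so $\dim V = \infty$.

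Finally, I would combine these two observations: the image $\{\cal X_\alpha \mid \alpha \in V\}$ is a subspace of $\Pi \setminus \iota(\overline{\cal X})$ homeomorphic to $V$, and topological dimension is monotone under subspaces, so $\dim(\Pi \setminus \iota(\overline{\cal X})) \ge \dim V = \infty$. There is no real obstacle here — everything is a direct bookkeeping consequence of Theorem \ref{thm:mugen} and Proposition \ref{prop:concentrated_X_alpha^delta}; the only minor subtlety is to make sure that the convention of ``dimension'' being used is one for which monotonicity under (closed) subspaces holds and for which $\dim \Delta_N = N$, which is standard for the covering dimension on metrizable spaces such as $(\Pi, \rho)$.
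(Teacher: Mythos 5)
Your proposal follows essentially the same route as the paper: it places the family $\{\cal X_\alpha\}$ inside $\Pi \setminus \iota(\overline{\cal X})$ via Proposition \ref{prop:concentrated_X_alpha^delta} and then transfers the infinite dimensionality of $V$ through the embedding of Theorem \ref{thm:mugen}, with your explicit simplex argument for $\dim V = \infty$ merely spelling out what the paper leaves implicit. The one slip is the claim that $\cal X_\alpha$ is non-concentrated for \emph{every} $\alpha \in V$: for $\alpha = 1$ one has $\cal X_1 = \{\ast\} = \cal P\ast \in \iota(\cal X)$, so this point must be excluded (the paper restricts to $\|\alpha\|_1 < 1$ for exactly this reason); since deleting a point, or replacing $\Delta_N$ by $\tfrac{1}{2}\Delta_N$, does not lower the topological dimension, your argument goes through unchanged.
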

\begin{proof}
By Proposition \ref{prop:concentrated_X_alpha^delta}, $\Pi \setminus \iota(\overline{\cal X})$ contains
\[
\{ \cal X_\alpha \in \Pi \mid \alpha \in V \text{ with } \|\alpha\|_1 < 1 \}
\]
By Theorem {\ref{thm:mugen}}, it has infinite dimension.
This completes the proof.
\end{proof}

\subsection{Nondissipation}

In this subsection, we define a generalized notion of non-dissipation and show some properties. 
The original one was introduced in \cite{G:green}.

\begin{definition} \label{def:is_not_dissipated}
We say that a pyramid $P$ {{\it is non-dissipated}} if $P$ does not contain $\cal X_\alpha^\delta$ for every $\delta \in (0,\infty]$ and $\alpha \in V$ with $\alpha \ne 1$.
Furthermore, a sequence $\{X_n\}$ of pm-spaces {{\it is non-dissipated}} if the weak limit of any weakly convergent subsequence of $\{X_n\}$ {is non-dissipated}.
\end{definition}

Note that if $P$ is non-dissipated in the sense of Definition \ref{def:is_not_dissipated}, then $P$ is not $\delta$-dissipated in the sense of
\cite[Definition 6.2]{OS:lim} for every $\delta > 0$.

\begin{proposition} \label{prop:Poincare_vs_atoms}
Let $P$ be a pyramid with $C_{2,2}(P) < \infty$.
Then $P$ does not contain $\cal X_\alpha^\delta$ for every $\delta \in (0,\infty]$ and $\alpha \in V$ with $\alpha \ne 1$.
\end{proposition}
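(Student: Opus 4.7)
The plan is to argue by contradiction: suppose $\cal X_\alpha^\delta \subset P$ for some $\alpha \in V$ with $\alpha \ne 1$ and some $\delta \in (0,\infty]$. I will produce a single mm-disconnected pm-space $X \in \cal X_\alpha^\delta$. Then Proposition \ref{prop:Poincare=infty} yields $C_{2,2}(X) = \infty$, and by the monotonicity of the canonical extension of $C_{2,2}$ to $\Pi$ (Theorem \ref{thm:(p,q)-P_is_lsc} together with the extension procedure of Section \ref{sec:invariant}), I conclude $C_{2,2}(P) \ge C_{2,2}(X) = \infty$, contradicting the hypothesis $C_{2,2}(P) < \infty$.

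To build the witness $X$, I pick any $r \in (0, \infty)$ with $r \le \delta$ and let $X := \{x_1, x_2\}$ with $d_X(x_1, x_2) := r$, so that $X$ is automatically mm-disconnected and $\mathrm{diam}\, X \le \delta$ regardless of the measure assigned. It remains to equip $X$ with a probability measure so that $X \in \cal X_\alpha$. If $\alpha = 0$ this is trivial since $\cal X_0 = \cal X$ contains any two-point space. Otherwise $\alpha_1 > 0$; since $\alpha \ne 1$ and $\alpha$ is monotone nonincreasing with $\|\alpha\|_1 \le 1$, one checks that $\alpha_1 < 1$ (indeed, $\alpha_1 = 1$ forces $\alpha = 1$). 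Set
\[
m_X := \alpha_1 \delta_{x_1} + (1-\alpha_1)\delta_{x_2},
\]
and define $f : \mathbb N \to X$ by $f(1) := x_1$ and $f(i) := x_2$ for $i \ge 2$. Then $f_\# \alpha = \alpha_1 \delta_{x_1} + (\|\alpha\|_1 - \alpha_1)\delta_{x_2}$, so
\[
m_X - f_\# \alpha = (1 - \|\alpha\|_1)\delta_{x_2} \ge 0,
\]
confirming $X \in \cal X_\alpha$. This covers uniformly the two sub-cases $\|\alpha\|_1 < 1$ and $\|\alpha\|_1 = 1$ (where in the latter case $\alpha \ne 1$ forces $N(\alpha) \ge 2$, but the residual term simply vanishes).

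There is no real obstacle: the content is already bundled in Proposition \ref{prop:Poincare=infty} (mm-disconnected pm-spaces have infinite $(2,2)$-Poincar\'e constant) and in the monotonicity of $C_{2,2}$ on $\Pi$. The only care needed is to verify that a genuinely mm-disconnected witness with diameter at most $\delta$ lies in $\cal X_\alpha^\delta$ for each admissible $(\alpha, \delta)$ with $\alpha \ne 1$, which is the elementary case analysis above.
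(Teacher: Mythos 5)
Your proof is correct and follows essentially the same route as the paper: the authors likewise observe that $\cal X_\alpha^\delta$ contains a two-points (hence mm-disconnected) pm-space and invoke Proposition \ref{prop:Poincare=infty} together with the monotonicity of $C_{2,2}$ on $\Pi$ (via Proposition \ref{prop:Poincare_vs_discrete}). Your explicit construction of the witness measure $m_X = \alpha_1\delta_{x_1} + (1-\alpha_1)\delta_{x_2}$ and the verification that $\alpha \ne 1$ forces $0 < \alpha_1 < 1$ simply spell out the detail the paper leaves implicit.
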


\begin{proposition} \label{prop:nondissip_poincare}
Let $\{X_n\}_{n=1}^\infty$ be a sequence of pm-spaces.
If
\[
\sup_{n} C_{2,2}(X_n) < \infty,
\]
then $\{X_n\}_{n=1}^\infty$ is non-dissipated.
\end{proposition}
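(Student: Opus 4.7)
The plan is to chain together two facts already established in the paper: the lower semicontinuity of the $(2,2)$-Poincar\'e constant on $\Pi$, and Proposition \ref{prop:Poincare_vs_atoms} which forbids pyramids of finite $(2,2)$-Poincar\'e constant from containing $\cal X_\alpha^\delta$. So the proof will just be a bookkeeping argument tying these two ingredients together.

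More precisely, let $C := \sup_n C_{2,2}(X_n) < \infty$. Take any weakly convergent subsequence $\{X_{n_k}\}$, and let $P \in \Pi$ be its weak limit, i.e.\ $\cal P X_{n_k} \to P$ in $(\Pi, \rho)$. By Theorem \ref{thm:(p,q)-P_is_lsc} with $p=q=2$, the functional $C_{2,2}$ is a monotone, $1$-homogeneous, lower semicontinuous invariant on $\cal X$; in particular it extends canonically to $\Pi$ via the rule of \eqref{eq:extension}, and by Lemma \ref{lem:general_lemma}, this extension $\Pi(C_{2,2})$ is lower semicontinuous on $(\Pi, \rho)$. Applying lower semicontinuity along the subsequence yields
\[
C_{2,2}(P) \le \liminf_{k \to \infty} C_{2,2}(\cal P X_{n_k}) = \liminf_{k \to \infty} C_{2,2}(X_{n_k}) \le C < \infty.
\]

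Now Proposition \ref{prop:Poincare_vs_atoms} applies directly: since $C_{2,2}(P) < \infty$, the pyramid $P$ cannot contain $\cal X_\alpha^\delta$ for any $\delta \in (0,\infty]$ and any $\alpha \in V$ with $\alpha \ne 1$. Thus every subsequential weak limit of $\{X_n\}$ is non-dissipated in the sense of Definition \ref{def:is_not_dissipated}, which is exactly the assertion that $\{X_n\}$ is non-dissipated.

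There is essentially no obstacle here; the only thing to check is that one is entitled to use the extension of $C_{2,2}$ from $\cal X$ to $\Pi$ with the weak-lower-semicontinuity property, and this is precisely the content of Theorem \ref{thm:(p,q)-P_is_lsc} combined with the framework of Section \ref{sec:invariant}. Everything else is immediate from the definitions.
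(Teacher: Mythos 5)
Your proof is correct and follows essentially the same route as the paper: the paper deduces this proposition (together with Proposition \ref{prop:Poincare_vs_atoms}) from Proposition \ref{prop:Poincare_vs_discrete}, whose content is exactly your chain of lower semicontinuity of $C_{2,2}$ on $\Pi$ plus the fact that $\cal X_\alpha^\delta$ contains a two-point pm-space of infinite Poincar\'e constant. Nothing further is needed.
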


Propositions \ref{prop:Poincare_vs_atoms} and \ref{prop:nondissip_poincare} follow from Proposition \ref{prop:Poincare_vs_discrete}, because $\cal X_\alpha^\delta$ contains a two-points pm-space.

\begin{remark} \label{rem:nondissip_poincare_is_strong}
The statement of Proposition \ref{prop:nondissip_poincare}
is similar to but is stronger than \cite[Theorem 8.13]{S}.
Our proof is another to the proof in \cite{S}.
\end{remark}

\begin{corollary}
Let $(M,d_g,m)$ be a smooth pm-space, where $g$ is a complete Riemannian metric on $M$, $d_g$ stands for the induced distance function and $m = e^{-f}\,\mathrm{vol}_g$. Assume that it satisfies Bakry-\'Emery lower Ricci curvature bound
	\[
	\mathrm{Ric}_g + \mathrm{Hess}_g(f) \ge K g,
	\]
	where $K > 0$ is a constant. Then the sequence of $\ell_2$-product $M_2^n = (M,d_g,m)^n_2$ 
is non-dissipated.
\end{corollary}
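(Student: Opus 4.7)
The plan is to reduce the claim to the existing machinery on Poincar\'e constants and pyramids generated by atoms. The key initial observation is that the sequence $\{M_2^n\}_{n=1}^\infty$ is monotone nondecreasing in the Lipschitz order, so by Proposition \ref{prop:monotone_convergence} it weakly converges to $M_2^\infty$; hence every weakly convergent subsequence has the same limit $M_2^\infty$. It therefore suffices to prove that the pyramid $M_2^\infty$ is non-dissipated in the sense of Definition \ref{def:is_not_dissipated}.

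First, I would invoke the classical Bakry-\'Emery theorem (see, e.g., \cite{BGL}): under the curvature-dimension bound $\mathrm{Ric}_g+\mathrm{Hess}_g(f) \ge Kg$ with $K>0$, the smooth pm-space $(M,d_g,m)$ satisfies a logarithmic Sobolev inequality with a finite constant. In particular, $C_\mathrm{LS}(M)<\infty$. Next, applying the tensorization property Proposition \ref{prop:LS_tensor}, we obtain
\[
C_\mathrm{LS}(M_2^\infty) = C_\mathrm{LS}(M) < \infty,
\]
and from the universal relation \eqref{eq:Poincare_vs_LS} this yields $C_{2,2}(M_2^\infty) \le C_\mathrm{LS}(M_2^\infty) < \infty$.

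Finally, Proposition \ref{prop:Poincare_vs_atoms} applied to $P = M_2^\infty$ states precisely that a pyramid with finite $(2,2)$-Poincar\'e constant cannot contain $\mathcal X_\alpha^\delta$ for any $\delta\in(0,\infty]$ and any $\alpha\in V$ with $\alpha\ne 1$. Therefore $M_2^\infty$ is non-dissipated, and so is the sequence $\{M_2^n\}_{n=1}^\infty$ in the sense of Definition \ref{def:is_not_dissipated}. There is essentially no obstacle here beyond assembling the ingredients: the argument is a concatenation of the classical Bakry-\'Emery criterion, the tensorization of $C_\mathrm{LS}$ for smooth spaces proved earlier, and the structural result Proposition \ref{prop:Poincare_vs_atoms} relating finite Poincar\'e constants to the absence of atom-generated sub-pyramids.
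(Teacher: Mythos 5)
Your proof is correct and takes essentially the same route as the paper's: a uniform bound on a functional-inequality constant under $\ell_2$-products (Bakry--\'Emery plus tensorization) combined with the structural fact that a pyramid with finite $(2,2)$-Poincar\'e constant cannot contain any $\mathcal X_\alpha^\delta$. The paper argues at the level of the sequence, citing the tensorization of $C_{2,2}$ and Proposition \ref{prop:nondissip_poincare}, whereas you pass to the limit pyramid $M_2^\infty$ and route through $C_\mathrm{LS}$ and Proposition \ref{prop:Poincare_vs_atoms}; these are interchangeable assemblies of the same ingredients.
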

\begin{proof}
This follows from the tensorization property (\cite{BGL}) and Proposition \ref{prop:nondissip_poincare}.
\end{proof}

\begin{lemma}
\label{lem:dissip_unif}
Let $\{X_n\}_{n=1}^\infty$ and $\{Y_n\}_{n=1}^\infty$ be two sequences of pm-spaces and let $f_n : X_n \to Y_n$ be uniformly equicontinuous maps with ${f_n}_\# m_{X_n}=m_{Y_n}$.
If $\{X_n\}_{n=1}^\infty$ is non-dissipated,
then neither is $\{Y_n\}_{n=1}^\infty$.
\end{lemma}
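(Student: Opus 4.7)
The plan is to prove the contrapositive: if $\{Y_n\}$ fails to be non-dissipated, then so does $\{X_n\}$.

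First, I would extract a common modulus of continuity by defining $\omega:[0,\infty)\to[0,\infty]$ via $\omega(r):=\sup_n\sup\{\,d_{Y_n}(f_n(x),f_n(x'))\mid d_{X_n}(x,x')\le r\,\}$. Uniform equicontinuity gives $\omega(r)\to 0$ as $r\to 0+$, so the inverse profile $\omega^*(s):=\sup\{\,r\ge 0\mid \omega(r)<s\,\}$ satisfies $\omega^*(s)>0$ for every $s>0$. The key auxiliary estimate is
\[
\Sep(X_n;\kappa)\ge\omega^*(\Sep(Y_n;\kappa))
\]
for every $\kappa\in\mathbb R^{\mathbb N}$. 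This is almost immediate: for (possibly empty) closed sets $\{A_i\}_{i=1}^\infty\subset Y_n$ with $m_{Y_n}(A_i)\ge\kappa_i$, the preimages $\{f_n^{-1}(A_i)\}_{i=1}^\infty$ are closed subsets of $X_n$ with $m_{X_n}(f_n^{-1}(A_i))=m_{Y_n}(A_i)\ge\kappa_i$ thanks to the push-forward condition, and the bound $d_{Y_n}(A_i,A_j)>\omega(r)$ forces $d_{X_n}(f_n^{-1}(A_i),f_n^{-1}(A_j))\ge r$ by the modulus.

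Next, suppose that $\{Y_n\}$ is not non-dissipated: some weakly convergent subsequence $\{Y_{n_k}\}$ has weak limit $Q$ containing $\cal X_\alpha^\delta$ for some $\alpha\in V$ with $\alpha\ne 1$ and some $\delta\in(0,\infty]$. Choosing if necessary a smaller $\delta'\in(0,\infty)$, I may assume $\delta$ is finite using $\cal X_\alpha^{\delta'}\subset\cal X_\alpha^\delta$. By Corollary \ref{cor:dissip_characterization}, the sequence $\{Y_{n_k}\}$ $\delta$-dissipates with atoms $\alpha$. Applying the $\Sep$ inequality above coordinatewise to each admissible test sequence $(\alpha,\kappa)-\overrightarrow{\epsilon}$ and then letting $\epsilon\to 0+$, I conclude that $\{X_{n_k}\}$ $\omega^*(\delta)$-dissipates with atoms $\alpha$.

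Finally, I would extract a weakly convergent sub-subsequence $\{X_{n_{k_j}}\}$ with weak limit $P$. Since $\liminf$ over a subsequence dominates the original $\liminf$, this sub-subsequence still $\omega^*(\delta)$-dissipates with atoms $\alpha$, so by Proposition \ref{prop:dissip_pyramid} its limit satisfies $\cal X_\alpha^{\omega^*(\delta)}\subset P$. Because $\omega^*(\delta)>0$ and $\alpha\ne 1$, this contradicts the assumption that $\{X_n\}$ is non-dissipated, completing the proof. There is no serious obstacle; the only point requiring care is verifying the quantitative positivity $\omega^*(\delta)>0$ from uniform equicontinuity, after which the argument is a formal contrapositive via Proposition \ref{prop:dissip_pyramid} and Corollary \ref{cor:dissip_characterization}.
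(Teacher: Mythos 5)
Your proof is correct and follows essentially the same route as the paper's: argue the contrapositive and transfer separation-distance lower bounds from $Y_n$ to $X_n$ by pulling back separated closed sets through $f_n$ and invoking uniform equicontinuity --- the quantitative step the paper delegates to Lemma 8.17 of Shioya's book, which you instead write out via the modulus $\omega$ and its inverse profile $\omega^*$. Your explicit treatment of the subsequence extraction and of the case $\delta=\infty$ is if anything slightly more careful than the paper's, but the underlying argument is identical.
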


\begin{proof}
Suppose that $\{Y_n\}$ $\delta$-dissipates with atoms $\alpha$ for some $\delta > 0$ and $\alpha \in V$ with $\alpha \neq 1$.
Take any $\kappa_1, \ldots, \kappa_M \ge 0$ with $\sum_{i=1}^M \kappa_i + \|\alpha\|_1 \le 1$.
Then, for any $\epsilon > 0$, there exists a number $n_0$ such that
\[
\Sep(Y_n; (\alpha, \kappa) - \overrightarrow{\epsilon}) > \frac{\delta}{2}
\]
if $n \ge n_0$.
By the same discussion as in \cite[Lemma 8.17]{S}, there exists $\delta_0 > 0$ such that
\[
\Sep(X_n; (\alpha, \kappa) - \overrightarrow{\epsilon})  > \delta_0
\]
if $n \ge n_0$. Therefore $\{X_n\}$ also $\delta_0$-dissipates with atom $\alpha$. This completes the proof.
\end{proof}

\begin{theorem}[Nondissipation theorem]
Let $X$ be a compact, connected, and locally connected pm-space.
Then the sequence $\{X^n_\infty\}_{n=1}^\infty$ of $\ell_\infty$-product of 
$X$ is non-dissipated.
In particular, the infinite $\ell_\infty$-product $X^\infty_\infty$ does not contain $\cal X^\delta_\alpha$.
\end{theorem}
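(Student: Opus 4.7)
The plan is to use the Hahn--Mazurkiewicz theorem to reduce the theorem to the case of $\ell_\infty$-products of a parametrized unit interval, and then rule out dissipation via a contradiction argument exploiting the one-dimensional structure of $[0,1]$.

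For the reduction, the Hahn--Mazurkiewicz theorem applied to the Peano continuum $\supp m_X$ yields a continuous surjection $\pi : [0,1] \to \supp m_X$, which is uniformly continuous with some modulus of continuity $\omega$. Picking $\mu \in \mathscr P([0,1])$ with $\pi_\# \mu = m_X$ (possible by surjectivity), set $Y := ([0,1], |\cdot|, \mu)$. The key point is that the $n$-fold product map $\pi^{\times n} : Y^n_\infty \to X^n_\infty$ inherits the same modulus of continuity $\omega$ uniformly in $n$, precisely because both sides carry the $\ell_\infty$-product metric (the maximum, not the sum, of coordinate differences), and $(\pi^{\times n})_\# \mu^{\otimes n} = m_X^{\otimes n}$. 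Applying Lemma \ref{lem:dissip_unif} to the sequence $\{\pi^{\times n}\}_n$ reduces the theorem to showing that $\{Y^n_\infty\}$ is non-dissipated.

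For the interval case, I would argue by contradiction: assume $\{Y^n_\infty\}$ $\delta$-dissipates with atoms $\alpha \in V \setminus \{1\}$ for some $\delta > 0$. By Corollary \ref{cor:dissip_characterization}, $\Sep(Y^n_\infty; (\alpha, \kappa) - \overrightarrow{\epsilon}) \ge \delta - o(1)$ as $n \to \infty$ and $\epsilon \to 0$, for every admissible finite $\kappa$ with $\|\alpha\|_1 + \|\kappa\|_1 \le 1$. I would choose $\kappa$ with many components and $\|\alpha\|_1 + \|\kappa\|_1$ close to $1$, extracting for each large $n$ a family $A_1, \dots, A_M$ of mutually $\delta/2$-separated closed subsets of $Y^n_\infty$ that cover nearly all the $\mu^{\otimes n}$-measure. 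The crucial observation is that $\ell_\infty$-separation descends pointwise to each coordinate section: fixing all coordinates but the $k$-th, the sections of the $A_i$'s are mutually $\delta/2$-separated closed subsets of a single copy of $[0,1]$, and by the connectedness of $[0,1]$ the number of non-empty sections at any single fiber is bounded by $\lfloor 2/\delta \rfloor + 1$. Combining this fiber-wise combinatorial bound across all $n$ coordinates against the prescribed measure profile $(\alpha, \kappa)$ should produce the contradiction.

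The main obstacle will be making the fiber-wise one-dimensional bound strong enough in every coordinate simultaneously: a naive projection onto a single coordinate loses information, because the same family of $\ell_\infty$-separated sets can project non-disjointly in any given coordinate, with different pairs witnessing their separation in different coordinates. The remedy is to track which coordinate witnesses the separation of each pair $(A_i, A_j)$ and combine the fiber-wise one-dimensional estimate with a pigeonhole over coordinates and pairs; this coordinate-bookkeeping step is where the connectedness of $[0,1]$ enters essentially. Finally, the ``In particular'' statement follows at once from Corollary \ref{cor:dissp_pyramid_02} applied to the weak limit $X^\infty_\infty = \lim_n \cal P X^n_\infty$.
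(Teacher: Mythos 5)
Your overall architecture (a Hahn--Mazurkiewicz reduction through Lemma \ref{lem:dissip_unif}, followed by a separate argument for powers of an interval) is the same as the paper's, which invokes the corresponding discussion from Shioya's book together with Proposition \ref{prop:nondissip_poincare} and Lemma \ref{lem:dissip_unif}; your handling of the ``in particular'' clause via Corollary \ref{cor:dissp_pyramid_02} is also fine. However, the reduction as you set it up has a genuine gap. Lemma \ref{lem:dissip_unif} only transfers non-dissipation \emph{downstream}, so you must prove that the upstream sequence $\{Y^n_\infty\}$ with $Y=([0,1],|\cdot|,\mu)$ is non-dissipated, and this is false for a general $\mu$ with $\pi_\#\mu=m_X$. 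Concretely, take $X=([0,1],|\cdot|,\mathcal L^1)$, let $\pi$ be the Cantor--Lebesgue function and $\mu$ the Cantor measure, so that $\pi_\#\mu=\mathcal L^1$. Writing the Cantor set as $C_0\sqcup C_1$ with $d(C_0,C_1)=1/3$ and $\mu(C_0)=\mu(C_1)=1/2$, the $2^n$ products $C_{w_1}\times\cdots\times C_{w_n}$ are mutually $1/3$-separated in $\ell_\infty$ and each carries measure $2^{-n}$, so $\{Y^n_\infty\}$ $(1/3)$-dissipates and your reduction yields nothing. What is actually needed is the measure-preserving refinement of Hahn--Mazurkiewicz: a continuous surjection $\pi:[0,1]\to\supp m_X$ with $\pi_\#\mathcal L^1=m_X$, a nontrivial step you have not supplied.

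The second gap is in the interval case itself. Your fiber-wise observation is correct (two points on an axis-parallel line differ in only one coordinate, so $\ell_\infty$-separation of the $A_i$ forces separation of their sections, whence at most $\lfloor 2/\delta\rfloor+1$ of them meet any such line), but it cannot by itself produce the contradiction: in the Cantor example above every axis-parallel line meets at most two of the $1/3$-separated pieces, so the fiber-wise bound is comfortably satisfied even though the sequence dissipates. Hence no amount of coordinate bookkeeping layered on that bound suffices; what must enter is a quantitative use of the connectedness of the \emph{support of the measure}, not merely of $[0,1]$. The paper sidesteps this entirely by an analytic argument: $I^n_\infty\prec I^n_2$ and tensorization give $C_{2,2}(I^n_\infty)\le C_{2,2}(I^n_2)=1/\pi$ uniformly in $n$, so Proposition \ref{prop:nondissip_poincare} applies, and then Lemma \ref{lem:dissip_unif} transfers non-dissipation to $\{X^n_\infty\}$. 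I would recommend adopting that route for the interval step, or else substantially strengthening your combinatorial argument so that it genuinely exploits full support of the Lebesgue measure.
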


\begin{proof}
This follows from the same discussion {as in} \cite[Theorem 8.8]{S} since
{Proposition \ref{prop:nondissip_poincare}}
and Lemma \ref{lem:dissip_unif} are prepared.
The last statement follows from Proposition \ref{prop:dissip_pyramid}.
\end{proof}

\subsection{Algebraic property of pyramids generated by atoms} \label{subsec:X_alpha-algebra}

Recall that each $\gamma \in \widetilde V$ admits an injective map $\varphi : \mathbb N \to \mathbb N$ such that $\gamma \circ \varphi \in V$ and it is corresponding to the equivalent class of $\gamma$.
We call the map $\widetilde V \ni \gamma \mapsto \gamma \circ \varphi \in V$ the {\it sorting map}.

For $\alpha, \beta \in V$, we give the double sequence
\[
\mathbb N^2 \ni (i,j) \mapsto \alpha_i \beta_j \in [0,1].
\]
Let us fix some bijection $\mathbb N \to \mathbb N^2$, the double sequence $(\alpha_i \beta_j)_{(i,j) \in \mathbb N^2}$ is regarded as a single sequence.
Furthermore, composing to the sorting map, we obtain a single monotone sequence $\alpha \beta \in V$.
We call $\alpha \beta$ the {\it product} of $\alpha$ and $\beta$.
Note that
\[
\|\alpha \beta\|_1 = \|\alpha \|_1 \cdot \|\beta\|_1
\]
holds.
It is clear that $\alpha \beta = \beta \alpha$, $\alpha 1 = \alpha$ and $\alpha 0 = 0$ hold.

\begin{lemma} \label{lem:sorting_map_is_continuous}
The sorting map $\widetilde V \to V$ is continuous in the $\ell_\infty$-topology.
\end{lemma}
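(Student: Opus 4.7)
The plan is to show that the sorting map $S : \widetilde V \to V$ is actually $1$-Lipschitz in the $\ell_\infty$-norm, from which continuity is automatic. The slick way to do this is via the \emph{distribution function} of $\gamma \in \widetilde V$,
\[
\mu_\gamma(t) := \#\{ i \in \mathbb N \mid \gamma_i > t \},
\]
which is finite for every $t > 0$ because $\|\gamma\|_1 \le 1$ implies $\mu_\gamma(t) \le 1/t$.

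The first step is to verify the characterization
\[
S(\gamma)_k = \sup \{ t \ge 0 \mid \mu_\gamma(t) \ge k \}, \qquad k \ge 1,
\]
with the convention $\sup \emptyset = 0$. This is a direct unwinding of the definition: for any injective $\varphi : \mathbb N \to \mathbb N$ with $\gamma \circ \varphi \in V$, the image of $\varphi$ contains all indices where $\gamma$ is positive in decreasing order, and the right-hand side above is the $k$-th largest value of $\gamma$ (with appropriate multiplicity).

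The second step is the Lipschitz bound. Fix $\gamma, \gamma' \in \widetilde V$ and put $\epsilon := \|\gamma - \gamma'\|_\infty$. For each $t \ge 0$, the pointwise bound $|\gamma_i - \gamma'_i| \le \epsilon$ gives the inclusion $\{ i \mid \gamma_i > t + \epsilon\} \subset \{i \mid \gamma'_i > t\}$, hence $\mu_\gamma(t+\epsilon) \le \mu_{\gamma'}(t)$. Substituting into the supremum formula and performing the change of variable $u = t - \epsilon$ yields $S(\gamma)_k \le S(\gamma')_k + \epsilon$ for every $k$. Swapping the roles of $\gamma$ and $\gamma'$ gives $\|S(\gamma) - S(\gamma')\|_\infty \le \|\gamma - \gamma'\|_\infty$, which is the desired Lipschitz bound and therefore the continuity claim.

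The main (and essentially only) obstacle is making the first step precise in the infinite setting: one has to be a bit careful that the injective map $\varphi$ need not be a bijection (the $\gamma_i$ can vanish for many $i$ and the nonzero terms need not sit at the front), so one must check that the supremum formula really does compute the $k$-th entry of the sorted sequence in $V$. Once this bookkeeping is done, the rest is the short calculation above.
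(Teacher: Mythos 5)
Your proof is correct, and it takes a genuinely different route from the paper. The paper argues by sequences: it shows coordinatewise convergence $\Phi(\alpha_n)_i \to \Phi(\alpha)_i$ by induction on $i$ (the first sorted entry is $\|\cdot\|_\infty$, which is clearly continuous; then one deletes the maximal entry and repeats), and then upgrades pointwise convergence to $\ell_\infty$-convergence by invoking Lemma \ref{lem:uniform_vs_pointwise}, which in turn relies on the constraint $\|\alpha\|_1\le 1$ and monotonicity. You instead prove the sharper quantitative statement that the sorting map is $1$-Lipschitz, via the distribution function $\mu_\gamma(t)=\#\{i \mid \gamma_i>t\}$ and the layer-cake identity $S(\gamma)_k=\sup\{t\ge 0 \mid \mu_\gamma(t)\ge k\}$ --- essentially the classical fact that the decreasing rearrangement is non-expansive in the sup norm. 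Your argument is sequence-free, bypasses the compactness lemma entirely, and uses the hypothesis $\|\gamma\|_1\le 1$ only to guarantee $\mu_\gamma(t)\le 1/t<\infty$ so that the rearrangement is well-defined; it also yields a uniform modulus of continuity rather than bare continuity. The only points demanding care are the ones you flag: verifying the supremum formula for the $k$-th sorted entry (including ties, finitely supported $\gamma$, and the convention $\sup\emptyset=0$), and, in the change of variable, treating separately the trivial case $t<\epsilon$ where the shifted parameter would be negative. Both are routine, so the proposal stands as a complete and arguably cleaner alternative proof.
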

\begin{proof}
Let us denote by $\Phi$ the sorting map.
Suppose that $\alpha_n, \alpha \in \widetilde V$ 
satisfy
$\|\alpha_n - \alpha\|_\infty \to 0$ as $n \to \infty$.
We are going to show that
\begin{equation} \label{eq:sorting_is_conti01}
\text{$\Phi(\alpha_n)_i \to \Phi(\alpha)_i$ as $n \to \infty$}
\end{equation}
for every $i \in \mathbb N$.
Note that if this is true, then the lemma is proved, due to Lemma \ref{lem:uniform_vs_pointwise}.

By the triangle inequality, we have $\left| \|\alpha_n\|_\infty - \|\alpha\|_\infty \right| \to 0$ as $n \to \infty$.
Since $\|\gamma\|_\infty = \Phi(\gamma)_1$ for every $\gamma \in \widetilde V$, we obtain \eqref{eq:sorting_is_conti01} for $i=1$.

We next prove \eqref{eq:sorting_is_conti01} for $i=2$.
Let us take
$J \in \mathbb N$ and $J_n \in \mathbb N$, for $n \in \mathbb N$, such that
\[
\alpha_{nJ_n} = \sup_{j} \alpha_{nj} \text{ and } \alpha_{J} = \sup_j \alpha_j,
\]
and consider new sequences $\beta_{n} = (\beta_{nj})_{j \ne J}, \beta =(\beta_j)_{j \ne J} \in [0,1]^{\mathbb N \setminus \{J\}}$ defined as follows.
Let $\beta_j = \alpha_j$ for every $j \ne J$.
If $J_n \ne J$, then we set
\[
\beta_{nj} = \left\{
\begin{aligned}
& \alpha_{nJ} &&\text{if } j = J_n, \\
& \alpha_{nj} &&\text{if } j\in \mathbb N \setminus \{J, J_n\}.
\end{aligned}
\right.
\]
When $J_n = J$, then we set
\[
\beta_{nj} = \alpha_{nj}
\]
for every $j \ne J$.
Then, we note that
\[
\|\beta_n \|_\infty = \Phi(\alpha_n)_2 \text{ and } \|\beta\|_\infty = \Phi(\alpha)_2.
\]
Therefore, to prove \eqref{eq:sorting_is_conti01} for $j=2$, it suffices to show that $\beta_n$ converges to $\beta$ as $n \to \infty$ uniformly.

From the assumption, for every $\epsilon > 0$, there exists $N \in \mathbb N$ such that
\begin{equation*}
|\alpha_{nj} - \alpha_{j} | \le \epsilon
\end{equation*}
for every $j \in \mathbb N$ and $n \ge N$.
From this, we have
\begin{equation*}
|\alpha_{nJ_n} - \alpha_{J}| \le \epsilon
\end{equation*}
for all $n \ge N$.
By these, we have
\[
|\alpha_{nJ} - \alpha_{J_n}| \le 3 \epsilon.
\]
Furthermore, these imply
\[
\|\beta_n - \beta\|_\infty \le 3 \epsilon
\]
if $n \ge N$.
Therefore, we have $\|\beta_n - \beta\|_\infty \to 0$ as $n \to \infty$.
Hence, \eqref{eq:sorting_is_conti01} holds for $i=2$.

By repeating this procedure inductively, we obtain \eqref{eq:sorting_is_conti01} for any $i \in \mathbb N$. This completes the proof.
\end{proof}

\begin{theorem} 
Let $p \in [1, \infty]$.
For $\alpha, \beta \in V$, we have
\[
\cal X_\alpha \otimes_p \cal X_\beta = \cal X_{\alpha \beta}.
\]
Furthermore, the map
\[
V \ni \alpha \mapsto \cal X_\alpha \in \{ \cal X_\beta \mid \beta \in V \}
\]
is an isomorphism as topological monoids.
\end{theorem}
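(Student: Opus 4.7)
The plan is to split the theorem into the product identity $\cal X_\alpha \otimes_p \cal X_\beta = \cal X_{\alpha\beta}$, proved by a double inclusion, and the topological-monoid statement, read off from the product identity combined with Theorem \ref{thm:mugen}. For the inclusion $\cal X_\alpha \otimes_p \cal X_\beta \subset \cal X_{\alpha\beta}$, I would pick $X \in \cal X_\alpha$ and $Y \in \cal X_\beta$ with witnesses $f : \mathbb N \to X$ and $g : \mathbb N \to Y$, so that
\[
m_X \otimes m_Y - \sum_{i,j} \alpha_i \beta_j \, \delta_{(f(i), g(j))} \geq 0.
\]
Composing $(f, g)$ with any bijection $\sigma : \mathbb N \to \mathbb N^2$ that sorts the double sequence $(\alpha_i \beta_j)$ into the monotone form $\alpha\beta \in V$ produces a single map $h : \mathbb N \to X \times Y$ witnessing $X \times_p Y \in \cal X_{\alpha\beta}$. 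Because $\cal X_{\alpha\beta}$ is a pyramid by Theorem \ref{thm:X_alpha_is_pyramid}, it is downward-closed and box-closed, so it contains the box-closure of the downward closure of all such products, which is precisely $\cal X_\alpha \otimes_p \cal X_\beta$.

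For the reverse inclusion, the strategy is to construct a dissipating sequence inside $\cal X_\alpha \otimes_p \cal X_\beta$ and invoke Corollary \ref{cor:dissp_pyramid_02}. Mimicking the construction in the proof of Theorem \ref{thm:mugen}, for each $n$ I would take
\[
X_n = \{x_i\}_{i\ge 1} \sqcup \{y_1^{(n)}, \dots, y_n^{(n)}\}, \quad d_{X_n} \equiv n \text{ on distinct points},
\]
with $m_{X_n} = \sum_i \alpha_i \delta_{x_i} + \frac{1-\|\alpha\|_1}{n}\sum_{i=1}^n \delta_{y_i^{(n)}}$, and analogously define $Y_n$ using $\beta$. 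Each $X_n$ lies in $\cal X_\alpha$ via $f(i) = x_i$, each $Y_n$ lies in $\cal X_\beta$, and the verification in the proof of Theorem \ref{thm:mugen} shows that they infinite dissipate with atoms $\alpha$ and $\beta$ respectively. The pivotal step is then to check that $Z_n := X_n \times_p Y_n$, which manifestly lies in $\cal X_\alpha \otimes_p \cal X_\beta$, infinite dissipates with atoms $\alpha\beta$. The main atoms $(x_i, x'_j)$ of $Z_n$ carry masses $\alpha_i\beta_j$, the three cross-wiggle families $(x_i, y_j^{(n)})$, $(y_i^{(n)}, x'_j)$, $(y_i^{(n)}, y_j^{(n)})$ each carry individual mass $O(1/n)$ with total mass $1 - \|\alpha\|_1 \|\beta\|_1$, and any two distinct points of $Z_n$ lie at $\ell_p$-distance at least $n$. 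For a finite $\kappa$ with $\|\kappa\|_1 + \|\alpha\beta\|_1 \leq 1$ and $\epsilon > 0$, I would take singletons at the main atoms to realize $(\alpha\beta)_k - \epsilon$ and disjoint unions of wiggle atoms of total mass $\kappa_\ell - \epsilon$ to realize the $\kappa_\ell$'s, giving $\Sep(Z_n; (\alpha\beta, \kappa) - \overrightarrow\epsilon) \geq n$. The main obstacle here is the purely combinatorial bookkeeping: one must verify that for $n$ large the wiggle mass $1 - \|\alpha\|_1\|\beta\|_1 \geq \|\kappa\|_1$ suffices and that the $O(1/n)$ granularity allows each block to be trimmed to within $\epsilon$ of its target without collisions. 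Any weak limit of a subsequence of $\{\cal P Z_n\}$ then contains $\cal X_{\alpha\beta}$ by Corollary \ref{cor:dissp_pyramid_02}, and is contained in the box-closed $\cal X_\alpha \otimes_p \cal X_\beta$.

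Finally, the product identity gives multiplicativity of $\alpha \mapsto \cal X_\alpha$; the identity $1 = (1, 0, 0, \dots) \in V$ maps to $\{\ast\}$, which is the $\otimes_p$-identity by Lemma \ref{lem:product01}(4); and injectivity is Proposition \ref{prop:injective}. Continuity of $\alpha \mapsto \cal X_\alpha$ is Theorem \ref{thm:mugen}, and since $(V, \|\cdot\|_\infty)$ is compact by Lemma \ref{lem:uniform_vs_pointwise} and $\Pi$ is Hausdorff, the inverse is automatically continuous. Continuity of the product on $V$ follows from Lemma \ref{lem:sorting_map_is_continuous} applied to the evidently continuous pointwise product, and continuity of $\otimes_p$ on the image is obtained by transport along the homeomorphism.
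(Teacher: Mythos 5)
Your proposal is correct and follows essentially the same route as the paper: one inclusion via the observation that the product of witnessing atomic parts yields atoms $\alpha\beta$ together with the fact that $\cal X_{\alpha\beta}$ is a pyramid, the other via a sequence in $\cal X_\alpha\otimes_p\cal X_\beta$ that infinite dissipates with atoms $\alpha\beta$ and Corollary \ref{cor:dissp_pyramid_02}, and the monoid statement from Proposition \ref{prop:injective}, Theorem \ref{thm:mugen}, Lemma \ref{lem:sorting_map_is_continuous} and compactness. The only (immaterial) difference is that the paper takes the specific approximation $\{nX_\alpha\times_p nX_\beta\}$ with $X_\alpha$ as in \eqref{eq:X_alpha} and invokes Lemma \ref{lem:approx_product}, whereas you use purely discrete model spaces and pass to weakly convergent subsequences.
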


\begin{proof}
For any $\alpha \in V$, let $X_\alpha$ be the pm-space given by \eqref{eq:X_alpha}.
Then $\{nX_\alpha\}_{n=1}^\infty$ is an approximation of the pyramid $\cal X_\alpha$ (see Remark \ref{rem:X_alpha_is_blow-up_limit}).
Thus, for any $\alpha, \beta \in V$, the sequence $\{nX_\alpha \times_p nX_\beta\}_{n=1}^\infty$ is an approximation of $\cal X_\alpha \otimes_p \cal X_\beta$ by Lemma \ref{lem:approx_product}.
For any $n$, since $nX_\alpha \times_p nX_\beta \in \cal X_{\alpha\beta}$, we have ${\cal X_\alpha \otimes_p \cal X_\beta} \subset \cal X_{\alpha \beta}$.
On the other hand, the sequence $\{nX_\alpha \times_p nX_\beta\}_{n=1}^\infty$  infinite dissipates with atoms $\alpha \beta$.
Thus $\cal X_\alpha \otimes_p \cal X_\beta \supset \cal X_{\alpha \beta}$ by Corollary \ref{cor:dissp_pyramid_02}.
Therefore we have $\cal X_\alpha \otimes_p \cal X_\beta = \cal X_{\alpha \beta}$.

The last statement follows from Lemma \ref{lem:sorting_map_is_continuous} and Theorem \ref{thm:mugen}.
\end{proof}

\begin{proposition}
Let $p \in [1,\infty]$. For any $\alpha, \beta \in V$ and $\delta, \epsilon > 0$, we have
\[
\cal X_{\alpha\beta}^{\min\{\delta, \epsilon\}} \subset \cal X_\alpha^\delta \otimes_p \cal X_\beta^\epsilon \subset \cal X_{\alpha\beta}^{\|(\delta, \epsilon)\|_p},
\]
where $\|(\delta, \epsilon)\|_p = (\delta^p+\epsilon^p)^{1/p}$ if $p<\infty$ and $\|(\delta, \epsilon)\|_\infty = \max\{\delta, \epsilon\}$ . In particular,
\[
\cal X_\alpha^\delta \otimes_\infty \cal X_\beta^\delta = \cal X_{\alpha\beta}^{\delta}
\]
for any $\delta > 0$.
\end{proposition}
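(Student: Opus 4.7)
The plan is to prove the two inclusions separately and deduce the last assertion.

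For the upper bound $\cal X_\alpha^\delta \otimes_p \cal X_\beta^\epsilon \subset \cal X_{\alpha\beta}^{\|(\delta, \epsilon)\|_p}$: given $X \in \cal X_\alpha^\delta$ and $Y \in \cal X_\beta^\epsilon$, the argument used in the proof of Theorem \ref{thm:algebra} (if maps $f : \mathbb N \to X$ and $g : \mathbb N \to Y$ witness $X \in \cal X_\alpha$ and $Y \in \cal X_\beta$, then $(i,j) \mapsto (f(i), g(j))$ produces atoms of $X \times_p Y$ of mass at least $\alpha_i\beta_j$, which rearrange to $\alpha\beta$) shows that $X \times_p Y \in \cal X_{\alpha\beta}$. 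Moreover $\diam(X \times_p Y) \le \|(\diam X, \diam Y)\|_p \le \|(\delta, \epsilon)\|_p$, so $X \times_p Y \in \cal X_{\alpha\beta}^{\|(\delta, \epsilon)\|_p}$. The generating set $\cal D_{\cal X_\alpha^\delta \otimes_p \cal X_\beta^\epsilon}$ is thus contained in the box-closed pyramid $\cal X_{\alpha\beta}^{\|(\delta, \epsilon)\|_p}$, and so is its box closure $\cal X_\alpha^\delta \otimes_p \cal X_\beta^\epsilon$.

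For the lower bound $\cal X_{\alpha\beta}^{\min\{\delta,\epsilon\}} \subset \cal X_\alpha^\delta \otimes_p \cal X_\beta^\epsilon$, I would adapt the dissipation construction in the proof of Theorem \ref{thm:mugen}. Define pm-spaces
\[
X_n := \{x_1, x_2, \ldots\} \cup \{y_1, \ldots, y_n\}, \quad d_{X_n}(u,v) = \delta \text{ if } u\ne v, \quad m_{X_n} = \sum_{i=1}^\infty \alpha_i \delta_{x_i} + \frac{1 - \|\alpha\|_1}{n}\sum_{k=1}^n \delta_{y_k},
\]
and $Y_n$ analogously using $\beta$ and $\epsilon$, so that $X_n \in \cal X_\alpha^\delta$ and $Y_n \in \cal X_\beta^\epsilon$. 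In $X_n \times_p Y_n$, the pair $(x_i, x'_j)$ is an atom of mass $\alpha_i\beta_j$, and any two distinct points differ in at least one coordinate, hence sit at distance at least $\min\{\delta, \epsilon\}$. Mimicking the partitioning at the end of the proof of Theorem \ref{thm:mugen} with the index running over pairs $(i,j)$, one checks that $\{X_n \times_p Y_n\}_n$ $\min\{\delta, \epsilon\}$-dissipates with atoms $(\alpha\beta)_c[K]$ for every $K$. By Corollary \ref{cor:dissip_pyramid_01}, this yields $\min\{\delta,\epsilon\}$-dissipation with atoms $\alpha\beta$, so Proposition \ref{prop:dissip_pyramid} shows that every weak subsequential limit of $\{\cal P(X_n \times_p Y_n)\}_n$ contains $\cal X_{\alpha\beta}^{\min\{\delta,\epsilon\}}$. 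Since $\cal P(X_n \times_p Y_n) \subset \cal X_\alpha^\delta \otimes_p \cal X_\beta^\epsilon$ for each $n$, any such limit lies in $\cal X_\alpha^\delta \otimes_p \cal X_\beta^\epsilon$, giving the inclusion.

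The final equality is immediate: for $p = \infty$ and $\epsilon = \delta$ one has $\min\{\delta,\delta\} = \|(\delta,\delta)\|_\infty = \delta$, so the two inclusions collapse to $\cal X_\alpha^\delta \otimes_\infty \cal X_\beta^\delta = \cal X_{\alpha\beta}^\delta$. The main obstacle is the combinatorial bookkeeping needed in the lower-bound step: one must distribute the mass of $X_n \times_p Y_n$ into finitely many disjoint subsets whose measures approximate $(\alpha\beta)_c[K]$ together with auxiliary filler masses $\kappa_1, \ldots, \kappa_M$, and verify that these subsets are pairwise separated by at least $\min\{\delta,\epsilon\}$. The geometry of the $\ell_p$-product of two uniformly $\delta$- and $\epsilon$-separated discrete pm-spaces, which produces exactly the minimum separation $\min\{\delta,\epsilon\}$ between any two distinct product points, is what makes this translation from Theorem \ref{thm:mugen} run through.
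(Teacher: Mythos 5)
Your proof is correct and follows essentially the same route as the paper: the hard (lower) inclusion is obtained there too by taking products of discrete model spaces in $\cal X_\alpha^\delta$ and $\cal X_\beta^\epsilon$ (the paper uses $2^n$ uniform filler points so that the sequences are honest approximations feeding into Lemma \ref{lem:approx_product}) and showing that these products $\min\{\delta,\epsilon\}$-dissipate with atoms $\alpha\beta$, then invoking the dissipation characterization. Your upper-bound step, which verifies directly that every generator $X \times_p Y$ lies in the box-closed pyramid $\cal X_{\alpha\beta}^{\|(\delta,\epsilon)\|_p}$ and passes to the closure, is marginally more direct than the paper's diameter computation on its specific approximating sequence, but the substance is the same.
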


\begin{proof}
For any $\alpha \in V$ and $\delta > 0$, let $X_n^{\alpha,\delta}$ be the pm-space given by
\[
X_n^{\alpha,\delta} = \{y_0, \ldots, y_{2^n-1}\} \sqcup \{x_i\}_{i=1}^{N(\alpha)}
\]
with metric
\[
d_{X_n^{\alpha,\delta}}(x, x') = \delta \text{ if } x\neq x'
\]
and with measure
\[
m_{X_n^{\alpha,\delta}} = \sum_{i=1}^{N(\alpha)} \alpha_i \delta_{x_i} + \sum_{i=0}^{2^n-1}\frac{1-\|\alpha\|_1}{2^n}\delta_{y_i}.
\]
Note that $X_n^{\alpha,\delta} \in \cal X_\alpha^\delta$.
Then $\{X_n^{\alpha,\delta}\}_{n=1}^\infty$ is an approximation of the pyramid $\cal X_\alpha^\delta$.
Indeed, the natural projection
\[
\{y_0, \ldots, y_{2^n-1}\} \ni y_i \mapsto y_{i \, \mathrm{mod} \, 2^{n-1}} \in \{y_0, \ldots, y_{2^{n-1}-1}\}
\]
implies
\[
X_1^{\alpha,\delta} \prec X_2^{\alpha,\delta} \prec \cdots \prec X_n^{\alpha,\delta} \prec \cdots.
\]
Moreover, $\{X_n^{\alpha,\delta}\}_{n=1}^\infty$ $\delta$-dissipates with atoms $\alpha$, which implies that $\{X_n^{\alpha,\delta}\}_{n=1}^\infty$ converges weakly to $\cal X_\alpha^\delta$.
Thus $\{X_n^{\alpha,\delta}\}_{n=1}^\infty$ is an approximation of $\cal X_\alpha^\delta$.

Therefore, for any $\alpha, \beta \in V$, any $\delta, \epsilon> 0$, and any $p \in [1,\infty]$,
the sequence $\{X_n^{\alpha,\delta} \times_p X_n^{\beta,\epsilon}\}_{n=1}^\infty$ is an approximation of $\cal X_\alpha^\delta \otimes_p \cal X_\beta^\epsilon$ by Lemma \ref{lem:approx_product}.
For any $n$, since
\[
\diam{(X_n^{\alpha,\delta} \times_p X_n^{\beta,\epsilon})} = \|(\delta, \epsilon)\|_p,
\]
we have $X_n^{\alpha,\delta} \times_p X_n^{\beta,\epsilon} \in \cal X_{\alpha\beta}^{\|(\delta,\epsilon)\|_p}$,
which implies $\cal X_\alpha^\delta \otimes_p \cal X_\beta^\epsilon \subset \cal X_{\alpha \beta}^{\|(\delta,\epsilon)\|_p}$.
On the other hand, the sequence $\{X_n^{\alpha,\delta} \times_p X_n^{\beta,\epsilon}\}_{n=1}^\infty$ $\min\{\delta, \epsilon\}$-dissipates with atoms $\alpha \beta$.
Thus $\cal X_\alpha^\delta \otimes_p \cal X_\beta^\epsilon \supset \cal X_{\alpha \beta}^{\min\{\delta, \epsilon\}}$.
The proof is completed.
\end{proof}

\begin{corollary}
Let $\alpha \in V \setminus \{1\}$ and $\delta \in (0,\infty]$.
Then, $(\cal X_\alpha^\delta)^{\otimes_\infty n}$
is different for each $n \in \mathbb N \cup \{\infty\}$.
\end{corollary}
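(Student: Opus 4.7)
The plan is to iterate the product formula $\mathcal X_\alpha^\delta \otimes_\infty \mathcal X_\beta^\delta = \mathcal X_{\alpha\beta}^\delta$ from the preceding proposition (and, for $\delta = \infty$, the identity $\mathcal X_\alpha \otimes_\infty \mathcal X_\beta = \mathcal X_{\alpha\beta}$ from Theorem \ref{thm:algebra}) to reduce the statement to distinguishing the powers $\alpha^n$ inside $V$, and then invoke the injectivity and continuity of the homeomorphism in Theorem \ref{thm:homeo_with_delta}.

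First I would note, by an easy induction on $n$, that
\[
(\mathcal X_\alpha^\delta)^{\otimes_\infty n} = \mathcal X_{\alpha^n}^\delta
\]
for every $n \in \mathbb N$, where $\alpha^n$ denotes the $n$-fold product of $\alpha$ with itself in $V$. Under the intended nontriviality hypothesis (one needs $\alpha \neq 0$ as well as $\alpha \neq 1$, since by Example \ref{ex:diam} the pyramid $\mathcal X_0^\delta = \mathcal X^\delta$ is itself $\ell_\infty$-idempotent), we have $0 < \alpha_1 < 1$. Directly from the construction of the product, $\|\alpha\beta\|_\infty = \|\alpha\|_\infty\|\beta\|_\infty$, so $\|\alpha^n\|_\infty = \alpha_1^n$ is strictly decreasing in $n$. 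In particular, the $\alpha^n$'s are pairwise distinct elements of $V$, and each differs from both $0$ and $1$. The injectivity in Theorem \ref{thm:homeo_with_delta} then gives that the pyramids $\mathcal X_{\alpha^n}^\delta$ are pairwise distinct for $n \in \mathbb N$.

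For $n = \infty$, by definition $(\mathcal X_\alpha^\delta)^{\otimes_\infty \infty}$ is the weak limit of the monotone nondecreasing sequence $\{\mathcal X_{\alpha^n}^\delta\}_{n=1}^\infty$ provided by Proposition \ref{prop:monotone_convergence}. Since $\|\alpha^n\|_\infty = \alpha_1^n \to 0$, Lemma \ref{lem:uniform_vs_pointwise} gives $\alpha^n \to 0$ in $V$, and the continuity in Theorem \ref{thm:homeo_with_delta} yields
\[
(\mathcal X_\alpha^\delta)^{\otimes_\infty \infty} = \lim_{n\to\infty} \mathcal X_{\alpha^n}^\delta = \mathcal X_0^\delta = \mathcal X^\delta.
\]
Because $[0, \delta] \neq [\alpha^n, \delta]$ in $(V \times [0, \infty])/\!\!\sim$ for every $n \in \mathbb N$, injectivity of $\Psi$ gives $\mathcal X^\delta \neq \mathcal X_{\alpha^n}^\delta$ for all finite $n$, completing the argument. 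There is no real obstacle here: the proof is essentially a bookkeeping exercise once the product formula of the preceding proposition is in hand, and the only mild point of care is invoking Theorem \ref{thm:algebra} rather than the preceding proposition when $\delta = \infty$.
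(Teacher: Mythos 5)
Your proof is correct and follows the route the paper clearly intends (the corollary is stated without proof immediately after the product formula $\mathcal X_\alpha^\delta \otimes_\infty \mathcal X_\beta^\delta = \mathcal X_{\alpha\beta}^\delta$): iterate that formula to get $(\mathcal X_\alpha^\delta)^{\otimes_\infty n} = \mathcal X_{\alpha^n}^\delta$, distinguish the $\alpha^n$ via $\|\alpha^n\|_\infty = \alpha_1^n$, identify the $n=\infty$ power as $\mathcal X^\delta$ by continuity, and conclude with the injectivity of $\Psi$ from Theorem \ref{thm:homeo_with_delta}. Your observation that the hypothesis must also exclude $\alpha = 0$ is a genuine and correct catch: $\mathcal X_0^\delta = \mathcal X^\delta$ is $\ell_\infty$-idempotent by Example \ref{ex:diam}, so the statement as printed fails for $\alpha = 0$, and your added assumption $0 < \alpha_1 < 1$ is exactly what makes the argument (and the corollary) work.
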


\end{document}